\newcommand*{\rom}[1]{\expandafter\@slowromancap\romannumeral #1@}
\renewcommand*\env@matrix[1][*\c@MaxMatrixCols c]{%
	\hskip -\arraycolsep
	\let\@ifnextchar\new@ifnextchar
	\array{#1}}
\theoremstyle{definition}
\newtheorem{theorem}{Theorem}[section]
\newtheorem{proposition}{Proposition}[section]
\newtheorem{lemma}[theorem]{Lemma}
\newtheorem{definition}[theorem]{Definition}
\newtheorem{conjecture}[theorem]{Conjecture}
\newtheorem{question}[theorem]{Question}
\newtheorem{corollary}[theorem]{Corollary}
\theoremstyle{remark}
\newtheorem{remark}[theorem]{Remark}
\newcommand{\N}{\mathbb{N}}
\newcommand{\Z}{\mathbb{Z}}
\newcommand{\Q}{\mathbb{Q}}
\title{Undecidability in the Ramsey theory of polynomial equations and Hilbert's tenth problem}
\author{Sohail Farhangi, Steve Jackson, Bill Mance}
\begin{document}

\maketitle
%%%%%%%%%%%%%%%%%%%%%%%%%%%%%%%%%%%%%%%%%%%%%%%%%%%%%%%%%%%%%%%%%%%%%%%%%%%%%%%%%%%%%%%%%%%%%%%%%%%%%%%%%%%%%%%%%%%%%%%%%%%%%%%%%%%%%%%%%%%%%%%%%%%%%%%%%%%%%%%%
\begin{abstract}
    We show that several sets of interest arising from the study of partition regularity and density Ramsey theory of polynomial equations over integral domains are undecidable. In particular, we show that the set of homogeneous polynomials $p \in \mathbb{Z}[x_1,\cdots,x_n]$ for which the equation $p(x_1,\cdots,x_n) = 0$ is partition regular over $\mathbb{Z}\setminus\{0\}$ is undecidable conditional on Hilbert's tenth problem for $\mathbb{Q}$. For other integral domains, we get the analogous result unconditionally. More generally, we determine the exact lightface complexity of the various sets of interest. For example, we show that the set of homogeneous polynomials $p \in \mathbb{F}_q(t)[x_1,\cdots,x_n]$ for which the equation $p(x_1,\cdots,x_n) = 0$ is partition regular over $\mathbb{F}_q(t)\setminus\{0\}$ is $\Pi_2^0$-complete.

    We also prove a compactness principle and a uniformity principle for density Ramsey theory on countable cancellative left amenable semigroups, which is of independent interest.  
\end{abstract}
\section{Introduction}
An active area of research in Ramsey theory is the partition regularity of polynomial equations. A system of polynomial equations is \textbf{partition regular} over an integral domain $R$ if for any finite partition of the form $R\setminus\{0\} = \bigcup_{i = 1}^\ell C_i$, there exists some $1 \le i_0 \le \ell$ that contains a solution to the given system of equations.\footnote{We work with $R\setminus\{0\}$ instead of $R$ since most polynomials $p \in R[x_1,\cdots,x_n]$ that we consider will satisfy $p(0,\cdots,0) = 0$, and we want to avoid trivialities.} One of the earliest results about partition regular equations over $\mathbb{Z}$ was Schur's Theorem \cite{SchurThm}, which states that the equation $x+y = z$ is partition regular over $\mathbb{Z}\setminus\{0\}$. 
Later, van der Waerden \cite{vdWThm} showed a similar result for arithmetic progressions. In particular, van der Waerden showed that for $m \in \mathbb{N}$ and any finite partition of the form $\mathbb{Z}\setminus\{0\} = \bigcup_{i = 1}^\ell C_i$, there exists a $1 \le i_0 \le \ell$ and $a,d \in \mathbb{N}$ such that $\{a+jd\}_{j = 0}^m \subseteq C_{i_0}$. We observe that for $m \ge 3$, length $m$ arithmetic progressions can also be characterized as roots of the polynomial

\begin{equation}
    p_m(x_1,\cdots,x_m) = \sum_{j = 1}^{m-2}\left(x_{j+2}-2x_{j+1}+x_j\right)^2.
\end{equation}
Non-degenerate length $m$ arithmetic progressions correspond to a root $z_1,\cdots,z_m$ of $p_m$ for which $z_i \neq z_j$ when $i \neq j$, so van der Waerden's Theorem can also be viewed as a statement about the (injective) partition regularity of a particular polynomial equation over $\mathbb{Z}\setminus\{0\}$.

Later, Brauer \cite{Brauer'sThm} proved a common refinement of Schur's Theorem and van der Waerden's Theorem, and Rado \cite{RadosTheorem} classified which finite systems of linear equations are partition regular over $\mathbb{N}$. 
While we know precisely which finite linear systems of equations are partition regular, we are still far from a complete understanding of which polynomial equations are partition regular. 
For example, it is a classical open problem of Erd\H{o}s and Graham \cite{OldAndNewRamsey} to determine whether or not the Pythagorean equation $x^2+y^2 = z^2$ is partition regular over $\mathbb{Z}\setminus\{0\}$, Hindman's conjecture can be restated as the question of whether or not the equation $(z-x-y)^2+(w-xy)^2 = 0$ is partition regular over $\mathbb{N}$, it is a question of Farhangi and Magner \cite{ax+by=cwmzn} to determine whether or not $16x+17y = wz^8$ is partition regular over $\mathbb{Z}\setminus\{0\}$, and it is a conjecture of Prendiville \cite{prendiville2020counting} that for any $a,b,c \in \mathbb{Z}\setminus\{0\}$, the equation $a(x^2-y^2) = bz^2+cw$ is partition regular over $\mathbb{N}$ (and hence also over $\mathbb{Z}\setminus\{0\}$). 
Furthermore, Chow, Lindqvist, and Prendiville \cite{RamseyWaring} showed that $x_1^2+x_2^2+x_3^2+x_4^2 = x_5^2$ is partition regular over $\mathbb{N}$, but it is still an open problem to determine whether or not $x_1^2+x_2^2+x_3^2 = x_4^2$ is partition regular over $\mathbb{N}$.
We refer the reader to \cite{frantzikinakis2023partition,frantzikinakis2024partition} and the references therein for a discussion of the partial progress made regarding the partition regularity of the Pythagorean equation, to \cite{NonlinearRado,NonStandardRamseyTheory,RadoFunctionalsAndApplications} with regards to general progress on determining which polynomial equations are partition regular, and to \cite{MoreiraAnnals,bowen2022monochromatic,AlweissSumsAndProducts,MonochromaticSumsAndProductsInTheRationals} with regards to progress on Hindman's Conjecture.

It is natural to ask how difficult it is to characterize which systems of polynomial equations are partition regular. 
The lightface hierarchy (see Section 2.6) provides a concrete framework for discussing the complexity of various sets of interest. 
Hilbert's 10th problem was solved by determining the exact placement in the lightface hierarchy of the set of polynomials with coefficients in $\mathbb{Z}$ that also have a root in $\mathbb{Z}$, and this set was shown to be $\Sigma_1^0$-complete, from which undecidability follows. One of the goals of this paper to reduce Hilbert's 10th problem over $\mathbb{Q}$ to a question about the partition regularity of certain homogeneous polynomial equations over $\mathbb{Z}\setminus\{0\}$, from which we will deduce that it is conditionally\footnote{While Hilbert's 10th problem over $\mathbb{Z}$ is known to be undecidable, it is still an open problem to determine whether or not Hilbert's 10th problem over $\mathbb{Q}$ is undecidable.} undecidable to determine whether or not a given polynomial equation is partition regular over $\mathbb{Z}$.
In fact, we determine the exact placement in the lightface hierarchy of the set of partition regular polynomials over $\mathbb{Z}$,\footnote{The ``set of partition regular polynomials over $\mathbb{Z}$'' is a convenient way of referring to the set of polynomials $p$ for which the equation $p(x_1,\cdots,x_n) = 0$ is partition regular over $\mathbb{Z}$.} and show that it is $\Pi_2^0$-complete.
We will also study the question of partition regularity of polynomial equations over a general countable integral domain $R$, and in many cases we will unconditionally show that it is undecidable to determine whether or not a given polynomial equation is partition regular over $R$. 
We also obtain similar undecidability results when we consider $\ell$-partition regularity of polynomial equations, i.e., when we only require a root to be found in any partition containing at most $\ell$ cells.
In fact, we determine the exact placement in the lightface hierarchy of the set of $\ell$-partition regular polynomials by showing that it is $\Sigma_1^0$-complete for every $\ell \in \mathbb{N}$. Similarly, we show that the set of partition regular polynomials over $R$ in these cases is $\Sigma_1^0$-hard. Furthermore, for many (but not all) of these cases we show that the set of partition regular polynomials over $R$ is $\Pi_2^0$-complete.

Another active area of research in Ramsey theory is density Ramsey theory. For a set $A \subseteq \mathbb{N}$, the (natural) upper density of $A$ is given by $\overline{d}(A) := \limsup_{N\rightarrow\infty}\frac{|A\cap[1,N]|}{N}$. Density Ramsey theory, roughly speaking, is the study of what structures can be found in sets $A \subseteq \mathbb{N}$ satisfying $\overline{d}(A) > 0$. It is worth noting that the function $\overline{d}$ is subadditive, so for finite partitions of the form $\mathbb{N} = \bigcup_{i = 1}^\ell C_i$, there exists some $1 \le i_0 \le \ell$ for which $\overline{d}(C_{i_0}) > 0$. Consequently, a positive result in density Ramsey theory also yields a positive result in partition regular Ramsey theory, but the converse is not true in general. Roth \cite{Roth2} showed that if $A \subseteq \mathbb{N}$ has positive upper density, then $A$ contains a 3-term arithmetic progression. Szemer\'edi \cite{Szemeredi4Term,SzemerediThm} showed that if $A \subseteq \mathbb{N}$ has positive upper density, then $A$ contains arbitrarily long arithmetic progressions.
%It is worth mentioning that the proof of Szemer\'edi's Theorem is much harder than that of van der Waerden's Theorem, and in general, it tends to be more difficult to prove the density analogue of a partition regularity result. 
Bergelson and Leibman \cite{PolynomialSzemeredi} showed that if $A \subseteq \mathbb{N}$ has positive upper density, then for any collection of  polynomials $F := \{p_i(x)\}_{i = 1}^m \subseteq x\mathbb{Z}[x]$ there exists $a,d \in \mathbb{N}$ such that $\{a+p_i(d)\}_{i = 1}^m \subseteq A$. We observe that configurations of the form $\{d\}\cup\{a+p_i(d)\}_{i = 1}^m$ can be identified with roots of the polynomial

\begin{equation}
    P_F(x_1,\cdots,x_{m+1}) = \sum_{i = 1}^m(x_i-p_i(x_1)-x_2)^2.
\end{equation}
Furthermore, the non-degeneracy of such configurations corresponds to a root $z_1,\cdots,z_{m+1}$ of $P_F$ with $z_i \neq z_j$ when $i \neq j$. Consequently, the result of Bergelson and Leibman is equivalent to showing that for any $A \subseteq \mathbb{N}$ with $\overline{d}(A) > 0$, there exists $z_2,\cdots,z_{m+1} \in A$ and $z_1 \in \mathbb{N}$ for which $P_F(z_1,\cdots,z_{m+1}) = 0$.
For more recent results in density Ramsey theory, we refer the reader to \cite[Corollary 1.6]{TsinasHardyFieldSequences} and \cite[Corollary 2.12]{frantzikinakis2022joint} and the references therein.

It is natural to ask about the placement in the lightface hierarchy of the set $IADR$ of polynomials $P \in \mathbb{Z}[x_1,\cdots,z_n]$ possessing an injective root in any $A \subseteq \mathbb{N}$ with $\overline{d}(A) > 0$, as well as the set $IADR_1$ of polynomials $P$ for which every $A \subseteq \mathbb{N}$ with $\overline{d}(A) > 0$ contains $z_2,\cdots,z_n \in A$ for which there exists $z_1 \in \mathbb{N}$ for which $P(z_1,\cdots,z_n) = 0$. 
We reduce a variant of Hilbert's Tenth problem over $\mathbb{Q}$ to the question of whether or not a given polynomial $P$ is in $IADR$, as well as the question of whether or not $P$ is in $IADR_1$, thereby showing that $IADR$ and $IADR_1$ are $\Pi_2^0$-complete, hence undecidable.
We also consider similar sets over a countable integral domains $R$, and show that they are also $\Pi_2^0$-complete. 
We also obtain similar undecidability results for a multiplicative notion of density (rather than the additive notion we considered here), and for the situation in which we require a root in all sets $A \subseteq \mathbb{N}$ with $\overline{d}(A) \ge \delta$ for some fixed $\delta \in (0,1]$.

In order to prove our results regarding density regularity, we needed a general compactness principles for density Ramsey Theory on countable cancellative abelian semigroups. Since compactness principles and uniformity principles are of independent interest in Ramsey theory, we proved such principles for an arbitrary countable cancellative left amenable semigroup.

The structure of the paper is as follows. 
In Section 2 we collect well known facts from the literature about amenable semigroups, embedding semigroups in groups, measure preserving systems, Ramsey Theory, Descriptive Set Theory, and Hilbert's tenth problem. 
In Section 3 we prove a compactness principle for density Ramsey theory as Theorem \ref{DensityCompactnessPrinciple}, a uniformity principle as Theorem \ref{Uniformity}, and extend a classical list of equivalent notions of density regularity for $\mathbb{Z}$ to the setting of countable cancellative left amenable semigroups as Theorem \ref{7Equivalences}. 
In Section 4 we prove our main results regarding the descriptive complexity of various sets of interest regarding the Ramsey theory of polynomial equations. 
In Section 5 we discuss the implications of the results of Section 4 and pose several questions.

\textbf{Acknowledgements:} The authors would like to thank many people for helpful discussions throughout the writing of this paper. We would like to thank Ethan Ackelsberg, Vitaly Bergelson, Daniel Glasscock, John Johnson, Bhawesh Mishra, Joel Moreira, Brian Tyrrell, and Alexandra Shlapentokh for help finding and understanding various references in the literature. We would also like to thank Dilip Raghavan for asking Question \ref{QuestionAboutStrengtheningResults}(i). Furthermore, we would like to thank Jonathan Chapman for a fruitful conversation that led to Conjecture \ref{ConjectureForPartitionRegularity}. Lastly, we would like to thank Philip Dittman for reading a draft of this work and providing helpful feedback.

After the first version of this work was announced, it was brought to our attention that the topic of natural extensions of continuous/measure preserving actions of semigroups that embed in a group has already been thoroughly investigated by Brice\~no, Bustos-Gajardo, and Donoso-Echenique \cite{briceno2025extensibility,briceno2025natural,echenique2024natural}. We have made some minor changes in our presentation of this topic in Subsection \ref{MPSSubsection} in order to be consistent with this previous work.

SF and BM acknowledge being supported by the grant
2019/34/E/ST1/00082 for the project “Set theoretic methods in dynamics and number theory,” NCN (The
National Science Centre of Poland). SJ acknolwedges being support by NSF grant: DMS-1800323.
%%%%%%%%%%%%%%%%%%%%%%%%%%%%%%%%%%%%%%%%%%%%%%%%%%%%%%%%%%%%%%%%%%%%%%%%%%%%%%%%%%%%%%%%%%%%%%%%%%%%%%%%%%%%%%%%%%%%%%%%%%%%%%%%%%%%%%%%%%%%%%%%%%%%%%%%%%%%%%%%
\section{Preliminaries}
\subsection{Amenable Semigroups and tilings of amenable groups}
Let $(S,\cdot)$ be a semigroup. For $A \subseteq S$ and $s \in S$, we write $As = \{as\ |\ a \in A\}$, $sA = \{sa\ |\ a \in A\}$, $s^{-1}A = \{a \in S\ |\ sa \in A\}$, and $As^{-1} = \{a \in S\ |\ as \in A\}$. We let $\mathscr{P}_f(S)$ denote the collection of finite subsets of $S$. Now let us assume that $S$ is countably infinite and cancellative. Given $K,F \in \mathscr{P}_f(S)$ and an $\epsilon > 0$, the set $F$ is \textbf{$(K,\epsilon)$-invariant} if for any $k \in K$ we have $|kF\triangle F| < \epsilon|F|$. A \textbf{left F\o lner sequence} is a collection $\mathcal{F} = (F_n)_{n = 1}^\infty \subseteq \mathscr{P}_f(S)$ satisfying

\begin{equation}
    \lim_{n\rightarrow\infty}\frac{|sF_n\triangle F_n|}{|F_n|} = 0,
\end{equation}
for all $s \in S$. Equivalently, $(F_n)_{n = 1}^\infty$ is a left F\o lner sequence if for any $K \in \mathscr{P}_f(S)$ and any $\epsilon > 0$, there exists $N \in \mathbb{N}$ such that $F_n$ is $(K,\epsilon)$-invariant for all $n \ge N$. The cancellative semigroup $S$ is \textbf{left amenable} if it admits a left F\o lner sequence. It is well known that all abelian semigroups are (left )amenable. Given a left F\o lner sequence $\mathcal{F}$ and a set $A \subseteq S$, the \textbf{$\mathcal{F}$-upper density} of $A$ is given by

\begin{equation}\label{EquationDefiningUpperFolnerDensity}
    \overline{d}_{\mathcal{F}}(A) := \limsup_{n\rightarrow\infty}\frac{|A\cap F_n|}{|F_n|}.
\end{equation}
If the limit in Equation \eqref{EquationDefiningUpperFolnerDensity} exists, then we denote its value by $d_{\mathcal{F}}(A)$, which is known as the \textbf{$\mathcal{F}$-density} of $A$. The \textbf{upper Banach density} of $A$ is given by

\begin{equation}
    d^*(A) := \sup\left\{\overline{d}_{\mathcal{F}}(A)\ |\ \mathcal{F}\text{ is a left F\o lner sequence}\right\}.
\end{equation}
When we work with an integral domain $R$, we use $d^*$ to denote the upper Banach density in the group $(R,+)$, and we use $d^*_\times$ to denote the upper Banach density in the semigroup $(R\setminus\{0\},\cdot)$. We also record here the following alternative description of upper Banach density that first appeared in \cite{GriesmerRecurrenceRigidityPopularDifferences} in the case of $G = \mathbb{Z}$, and appeared in our desired generality as \cite[Theorem G]{johnson2016revisiting} (see also Theorem 3.5 and Corollary 3.6 of \cite{InterplayBetweenAdditiveAndMultiplicativeLargeness}).

\begin{lemma}\label{AlternativeCharacterizationOfUBD}
    If $S$ is a countable cancellative left amenable semigroup, and $A \subseteq S$, then

    \begin{equation}
        d^*(A) = \sup\left\{\alpha \ge 0\ |\ \forall F\in\mathscr{P}_f(S), \exists s \in S\text{ such that }|Fs\cap A| \ge \alpha|F|\right\}.
    \end{equation}
\end{lemma}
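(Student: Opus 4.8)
Write $\beta(A)$ for the right-hand side. The plan is to prove the two inequalities $d^*(A)\le\beta(A)$ and $d^*(A)\ge\beta(A)$ separately, using that the set of admissible $\alpha$ in the definition of $\beta(A)$ is downward closed, so it suffices to match $d^*(A)$ against each $\alpha<\beta(A)$ and against each $\alpha<\overline{d}_{\mathcal{F}}(A)$ for every left F\o lner sequence $\mathcal{F}$.

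For $d^*(A)\le\beta(A)$: fix a left F\o lner sequence $\mathcal{F}=(F_n)$ and a value $\alpha<\overline{d}_{\mathcal{F}}(A)$, and fix $F=\{f_1,\dots,f_k\}\in\mathscr{P}_f(S)$. I would run a first-moment argument over $s\in F_n$: since left translation by each $f_i$ is injective by cancellativity, $|\{s\in F_n: f_is\in A\}|=|f_iF_n\cap A|\ge |F_n\cap A|-|f_iF_n\triangle F_n|$, and summing over $i$ shows that the average of $|Fs\cap A|$ over $s\in F_n$ is at least $k\tfrac{|F_n\cap A|}{|F_n|}-\sum_{i=1}^k\tfrac{|f_iF_n\triangle F_n|}{|F_n|}$. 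Passing to a subsequence realizing the $\limsup$ defining $\overline{d}_{\mathcal{F}}(A)$, the first term exceeds $\alpha k$ while the second tends to $0$ by the F\o lner property, so for large $n$ some $s\in F_n$ satisfies $|Fs\cap A|\ge\alpha|F|$. As $F$ was arbitrary, $\alpha\le\beta(A)$; taking the supremum over such $\alpha$ and then over $\mathcal{F}$ gives $d^*(A)\le\beta(A)$.

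For $d^*(A)\ge\beta(A)$: fix $\alpha<\beta(A)$; the goal is to manufacture a left F\o lner sequence $\mathcal{F}$ with $\overline{d}_{\mathcal{F}}(A)\ge\alpha$. Enumerate $S=\{g_1,g_2,\dots\}$ and, using left amenability, choose for each $n$ a $(\{g_1,\dots,g_n\},\tfrac1n)$-invariant set $E_n$. By the defining property of $\beta(A)$ pick $t_n\in S$ with $|E_nt_n\cap A|\ge\alpha|E_n|$ and set $F_n:=E_nt_n$. Right cancellativity gives $|F_n|=|E_n|$, hence $|F_n\cap A|\ge\alpha|F_n|$; and for any $s\in S$, injectivity of right translation by $t_n$ gives $sF_n\triangle F_n=(sE_n\triangle E_n)t_n$, so $|sF_n\triangle F_n|=|sE_n\triangle E_n|<\tfrac1n|F_n|$ as soon as $n$ is large enough that $s\in\{g_1,\dots,g_n\}$. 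Thus $(F_n)$ is a left F\o lner sequence with $\overline{d}_{\mathcal{F}}(A)\ge\alpha$, so $d^*(A)\ge\alpha$; letting $\alpha\uparrow\beta(A)$ finishes the argument.

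The only genuine subtlety — and the step I expect to be the crux — is this second inequality: it requires sliding a F\o lner witness $E_n$ to the $A$-heavy translate $E_nt_n$ without spoiling left-invariance, which is exactly where cancellativity (to preserve cardinalities and symmetric differences under right multiplication) and left amenability (to have the $E_n$ at all) both enter. Everything else is elementary counting, and no ultrafilter or compactness machinery is needed.
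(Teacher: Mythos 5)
Your proof is correct and complete. Note that the paper does not actually prove this lemma --- it cites it from the literature (Griesmer for $\mathbb{Z}$, Johnson--Richter Theorem G and Bergelson--Glasscock in general) --- so there is no in-paper argument to compare against; your two-sided argument (a double-counting/averaging estimate over $s\in F_n$ using left cancellativity and the F\o lner property for $d^*(A)\le\beta(A)$, and right-translating a $(\{g_1,\dots,g_n\},\tfrac1n)$-invariant set $E_n$ to an $A$-heavy copy $E_nt_n$, which remains left F\o lner by right cancellativity, for the reverse inequality) is exactly the standard proof of this characterization and all the steps check out.
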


We observe that if $(F_n)_{n = 1}^\infty$ is a left F\o lner sequence in a group $G$, then $(F_n^{-1})_{n = 1}^\infty$ is a right F\o lner sequence. Consequently, a group is left amemable if and only if it is right amenable, so we only speak about groups being amenable or non-amenable. We also mention that amenability can be discussed for non-cancellative semigroups, and we refer the reader to \cite[Section 4.22]{AmenabilityByPaterson} for a discussion of some of the subtleties that arise in this situation.

\begin{definition}
    A \textit{tiling} $\mathcal{T}$ of a group $G$ is determined by two objects:
    \begin{enumerate}[(1)]
        \item a finite collection $\mathcal{S}(\mathcal{T})$ of finite subsets of $G$ containing the identity $e$, called \textit{the shapes},

        \item a finite collection $\mathcal{C}(\mathcal{T}) = \{C(S)\ |\ S \in \mathcal{S}(\mathcal{T})\}$ of disjoint subsets of $G$, called \textit{center sets} (for the shapes).
    \end{enumerate}
    The tiling $\mathcal{T}$ is then the family $\{(S,c)\ |\ S \in \mathcal{S}(\mathcal{T})\ \&\ c \in C(S)\}$ provided that $\{Sc\ |\ (S,c) \in \mathcal{T}\}$ is a partition of $G$. A \textit{tile} of $\mathcal{T}$ refers to a set of the form $T = Sc$ with $(S,c) \in \mathcal{T}$, and in this case we may also write $T \in \mathcal{T}$. A sequence $(\mathcal{T}_k)_{k = 1}^\infty$ of tilings is \textit{congruent} if each tile of $\mathcal{T}_{k+1}$ is a union of tiles of $\mathcal{T}_k$, and in this case we further assume without loss of generality that $\bigcup_{S \in \mathcal{S}(\mathcal{T}_{k+1})}C(S) \subseteq \bigcup_{S \in \mathcal{S}(\mathcal{T}_k)}C(S)$. Given a tiling $\mathcal{T}$, a finite set $F \subseteq G$, and a $\epsilon > 0$, we say that $\mathcal{T}$ \textbf{$\epsilon$-tiles} $F$ if there is are sets $F'$ and $F''$ that are each a union of tiles of $\mathcal{T}$ for which $F' \subseteq F \subseteq F''$, $|F\setminus F'| < \epsilon|F|$, and $|F''\setminus F| < \epsilon|F|$.
\end{definition}

We see that any group $G$ has a trivial tiling $\mathcal{T}$ in which $\mathcal{S}(\mathcal{T}) = \{\{e\}\}$ and $\mathcal{C}(\mathcal{T}) = \{G\}$. When the group $G$ is amenable, we look for more interesting tilings by requiring that the shapes of the tiling be $(K,\epsilon)$-invariant for some $K \in \mathscr{P}_f(G)$ and $\epsilon > 0$. We now recall a special case of a result of Downarowicz, Huczek, and Zhang about such tilings.

\begin{theorem}[{\cite[Theorem 5.2]{TilingAmenableGroups}}]\label{CongruentTilingTheorem}
    Let $G$ be a countably infinite amenable group. Fix a converging to zero sequence $\epsilon_k > 0$ and a sequence $K_k$ of finite subsets of $G$. There exists a congruent sequence of tilings $(\mathcal{T}_k)_{k = 1}^\infty$ of $G$ such that the shapes of $\mathcal{T}_k$ are $(K_k,\epsilon_k)$-invariant.
\end{theorem}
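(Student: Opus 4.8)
The plan is to build the sequence $(\mathcal{T}_k)_{k=1}^\infty$ recursively, the engine at each stage being the Ornstein--Weiss quasi-tiling theorem followed by a combinatorial repair that upgrades an approximate covering of $G$ into an honest partition. For the base case I would start from the trivial tiling of $G$ by singletons and, for a small parameter $\delta_1 > 0$ to be fixed at the end, invoke Ornstein--Weiss to obtain finitely many $(K_1,\delta_1)$-invariant shapes $S_1,\dots,S_m$ together with center sets $C_1,\dots,C_m$ whose translates $Sc$ (for $c \in C_i$) are ``$\delta_1$-disjoint'' and whose union covers all of $G$ outside a set $E$ with $d^*(E) < \delta_1$. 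Deleting overlaps shrinks each translated tile to a subset of itself, after which the still-uncovered set has small upper Banach density; quasi-tiling it again at the same scale and iterating makes the leftover densities decay geometrically, and a compactness argument in the space of partial tilings (a closed subset of a product of finite alphabets indexed by $G$) then extracts an exact tiling $\mathcal{T}_1$. Its shapes are bounded modifications of the $S_i$, so they remain $(K_1,\epsilon_1)$-invariant provided $\delta_1$ was chosen small enough.

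For the inductive step, suppose $\mathcal{T}_k$ has been constructed and let $M_k$ bound the diameters of its tiles relative to a fixed exhaustion of $G$ by finite sets. I would rerun the base-case construction, but now entirely inside the ``grid'' cut out by $\mathcal{T}_k$: every shape used is to be a union of $\mathcal{T}_k$-tiles, and every center set is to lie inside $\bigcup_{S \in \mathcal{S}(\mathcal{T}_k)} C(S)$. The point that makes this possible is that a sufficiently invariant F\o lner set, when ``snapped'' to the $\mathcal{T}_k$-grid, changes only within an $M_k$-neighborhood of its boundary, which is a negligible fraction of it when the set was invariant enough; hence the Ornstein--Weiss shapes can be taken to be unions of $\mathcal{T}_k$-tiles, and the repair step only ever merges or trims whole $\mathcal{T}_k$-tiles. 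The output is a tiling $\mathcal{T}_{k+1}$ that is congruent to $\mathcal{T}_k$ by construction, has $(K_{k+1},\epsilon_{k+1})$-invariant shapes, and has center sets nested in those of $\mathcal{T}_k$, as required by the normalization in the definition of a congruent sequence.

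The hard part will be the repair step: converting an $\epsilon$-quasi-tiling into an exact partition while preserving $(K,\epsilon)$-invariance of the shapes and, from the inductive stages onward, measurability of everything with respect to the grid of $\mathcal{T}_k$. Removing overlaps is harmless, but absorbing the leftover region without destroying invariance is delicate; the two routes I would try are the geometric-decay-plus-compactness argument sketched above, or a Hall-type matching between the connected pieces of the leftover region and the trimmable ``fringes'' of neighboring tiles, whose marriage condition is precisely what the F\o lner property supplies. Once that is in place, the remaining work --- bookkeeping the parameters $\delta_k$ against $\epsilon_k$, $K_k$, and the meshes $M_{k-1}$, and checking that the limiting objects are genuine tilings --- is routine. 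This is exactly the content of \cite[Theorem 5.2]{TilingAmenableGroups}, whose proof follows this scheme.
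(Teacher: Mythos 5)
The paper does not prove this theorem at all: it is imported verbatim as \cite[Theorem 5.2]{TilingAmenableGroups} (Downarowicz--Huczek--Zhang) and used as a black box, so there is no internal argument to compare yours against. Your sketch does follow the architecture of the actual DHZ proof --- Ornstein--Weiss quasi-tilings, a repair step producing exact tilings, and a recursion along the grid of the previous tiling to obtain congruence --- and your closing sentence concedes as much. But as written this is a proof plan, not a proof: the entire mathematical content of the theorem is concentrated in the repair step, which you explicitly defer ("the two routes I would try are\dots").

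Moreover, the first of your two proposed routes has a genuine flaw. After deleting overlaps and passing to the uncovered set $E$ with $d^*(E)<\delta_1$, you propose to "quasi-tile it again at the same scale" and iterate with geometrically decaying leftovers. This cannot work: $E$ is typically a sparse, scattered set (e.g.\ a union of thin fringes of tiles and isolated points), and a set of small upper Banach density admits no covering by translates of large $(K_1,\epsilon_1)$-invariant shapes contained in it --- the Ornstein--Weiss theorem quasi-tiles $G$, not an arbitrary sparse subset. The leftover must instead be \emph{absorbed} into adjacent tiles, which is your second (Hall-type matching) route and is essentially what DHZ do; the point requiring proof is a \emph{local} estimate that near every tile the uncovered points are few compared to the tile's size, so that each shape is enlarged by only a small fraction and $(K,\epsilon)$-invariance survives (with a loss factor involving $|K|$, which is why $\delta_1$ must be taken small relative to $\epsilon_1/|K_1|$). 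A secondary issue to flag: after trimming and absorbing, the shapes are no longer the original $S_i$ but subsets/supersets thereof, so you must also argue that only finitely many distinct shapes arise and that each still contains its center. None of this is routine bookkeeping; it is the theorem. Since the result is being used here as a citation, the honest conclusion is that your proposal reproduces the strategy of the cited proof but does not constitute an independent proof of the statement.
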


\begin{lemma}\label{GoodTilingLemma}
    Let $G$ be an amenable group, let $\mathcal{T}$ be a tiling of $G$, and let $T = \bigcup_{T' \in \mathcal{S}(\mathcal{T})}T'$. If $F \in \mathscr{P}_f(G)$ is $\left(TT^{-1},\epsilon|TT^{-1}|^{-1}\right)$-invariant, then $\mathcal{T}$ $\epsilon$-tiles $F$.
\end{lemma}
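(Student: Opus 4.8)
The plan is to produce the witnessing sets $F'$ and $F''$ in the most natural way: let $F'$ be the union of all tiles of $\mathcal{T}$ that are contained in $F$, and let $F''$ be the union of all tiles of $\mathcal{T}$ that meet $F$. Since the tiles of $\mathcal{T}$ form a partition of $G$, every $x \in F$ lies in exactly one tile, and that tile meets $F$ (at $x$), so $F \subseteq F''$; the inclusion $F' \subseteq F$ is immediate, and both $F'$ and $F''$ are unions of tiles by construction. Thus the whole content of the lemma reduces to the two estimates $|F \setminus F'| < \epsilon|F|$ and $|F'' \setminus F| < \epsilon|F|$.

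For these I would use the following elementary observation. If two points $x,y$ lie in a common tile $T'c$ of $\mathcal{T}$, with $T' \in \mathcal{S}(\mathcal{T})$ and $c \in C(T')$, then writing $x = s_1 c$ and $y = s_2 c$ with $s_1,s_2 \in T' \subseteq T$ gives $y = (s_2 s_1^{-1})x$, so $y \in gx$ for some $g \in TT^{-1}$; note also that $TT^{-1}$ is symmetric. Now if $x \in F \setminus F'$, then the tile containing $x$ is not contained in $F$, hence contains some $y \notin F$; writing $y = gx$ with $g \in TT^{-1}$ gives $gx \notin F$, i.e.\ $x \in F \setminus g^{-1}F \subseteq g^{-1}F \triangle F$, and $g^{-1} \in TT^{-1}$. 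Symmetrically, if $x \in F'' \setminus F$, then the tile containing $x$ meets $F$ in some $y = gx \in F$ with $g \in TT^{-1}$, whence $x \in g^{-1}F \setminus F \subseteq g^{-1}F \triangle F$. Therefore both $F \setminus F'$ and $F'' \setminus F$ are contained in $\bigcup_{g \in TT^{-1}}(gF \triangle F)$.

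Finally, by hypothesis $F$ is $(TT^{-1}, \epsilon|TT^{-1}|^{-1})$-invariant, so $|gF \triangle F| < \epsilon|TT^{-1}|^{-1}|F|$ for every $g \in TT^{-1}$, and summing over the at most $|TT^{-1}|$ elements of $TT^{-1}$ bounds the union by $\epsilon|F|$; this yields both required inequalities, so $\mathcal{T}$ $\epsilon$-tiles $F$. There is no substantial obstacle here — the argument is just the standard "the $TT^{-1}$-boundary of an invariant set is small" computation — and the only points needing care are the left/right bookkeeping (the Følner invariance is phrased for left translates $gF$, while the tiles have the form $T'c$ with centers multiplied on the right) and the use of the symmetry of $TT^{-1}$ to ensure that both $g$ and $g^{-1}$ lie in it, so that the two error sets genuinely land inside $\bigcup_{g \in TT^{-1}}(gF \triangle F)$.
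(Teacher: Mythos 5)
Your proof is correct and follows essentially the same route as the paper's: both arguments rest on the observation that two points of a common tile differ by left multiplication by an element of the symmetric set $TT^{-1}$, so the error sets $F\setminus F'$ and $F''\setminus F$ lie in the $TT^{-1}$-boundary $\bigcup_{g\in TT^{-1}}(gF\triangle F)$, which the invariance hypothesis bounds by $\epsilon|F|$. The only cosmetic difference is that the paper builds $F'$ from the tiles meeting $F_0=\bigcap_{g\in TT^{-1}}gF$ rather than taking all tiles contained in $F$, but the estimates are identical.
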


\begin{proof}
    Let $F_0 = \bigcap_{g \in TT^{-1}}gF = \bigcap_{g \in TT^{-1}}g^{-1}F$, and recall that $|F\setminus F_0| < \epsilon|F|$. For $h \in F_0$, let $T_h \in \mathcal{S}(\mathcal{T})$ and $c_h \in C(T_h)$ be such that $h \in T_hc_h$, hence $c_h \in T_h^{-1}h$ and $T_hc \subseteq T_hT_h^{-1}h \subseteq TT^{-1}h \subseteq F$. It follows that if $F'$ is the union of all tiles that intersect $F_0$ in at least one point, then $F_0 \subseteq F' \subseteq F$, and $|F\setminus F'| \le |F\setminus F_0| < \epsilon|F|$. Similarly, we see that if $f \in F$ and $T_f \in \mathcal{S}(\mathcal{T})$ and $c_f \in C(T_f)$ are such that $f \in T_fc_f$, then $T_fc_f \subseteq T_fT_f^{-1}f \subseteq TT^{-1}F$. It follows that if $F''$ is the union of all tiles that intersect $F$ in at least one point, then $F \subseteq F'' \subseteq TT^{-1}F$ and $|F''\setminus F| \le |TT^{-1}F\setminus F| = |TT^{-1}F\triangle F| < \epsilon|F|$.
\end{proof}

A tiling $\mathcal{T}$ is a \textbf{monotiling} if $\mathcal{S}(\mathcal{T}) = \{T\}$. In this case $T$ is called a \textbf{monotile}, and $C := C(T)$ is a \textbf{center set for $T$}. An amenable group $G$ is \textbf{congruently monotileable} if Theorem \ref{CongruentTilingTheorem} can be satisfied with $(\mathcal{T}_k)_{k = 1}^\infty$ being a sequence of monotilings. It is shown in \cite{VirtuallyNilpotentIsCongruentlyMonotilable} that all countable virtually nilpotent groups are congruently monotileable. Later on, we will only need to use the fact that all countable abelian groups are congruently monotileable. We will also be using the fact that if $G$ is a monotileable amenable group and $F \in \mathscr{P}_f(G)$, then there exists a monotile $T$ of $G$ with $F \subseteq T$.
%%%%%%%%%%%%%%%%%%%%%%%%%%%%%%%%%%%%%%%%%%%%%%%%%%%%%%%%%%%%%%%%%%%%%%%%%%%%%%%%%%%%%%%%%%%%%%%%%%%%%%%%%%%%%%%%%%%%%%%%%%%%%%%%%%
\subsection{Embedding semigroups in groups}
The main purpose of this section is to show that a cancellative left amenable semigroup $S$ embeds nicely into an amenable group $G$. This will allow us to prove results for $S$, by first proving them for $G$, where we can make use of results such as Theorems \ref{CongruentTilingTheorem} and \ref{PointwiseErgodicTheorem}

A semigroup $S$ is \textbf{left reversible} if every pair of principal right ideals of $S$ intersect. In other words, $S$ is left reversible if and only if for any $s,t \in S$ we have $sS\cap tS \neq \emptyset$. A group $G$ is a \textbf{group of right quotients of $S$} if $S \subseteq G$ and every $g \in G$ has the form $g = st^{-1}$ for some $s,t \in S$.  It is a classical result of Ore (cf. \cite[Theorem 1.23]{AlgebraicTheoryOfSemigroupsVol1}) that if $S$ is a cancellative left reversible semigroup, then $S$ embeds in a group of its right quotients. It is worth noting that left reversibility is not a necessary condition for a cancellative semigroup to embed in a group, but Dubreil (cf. \cite[Theorem 1.24]{AlgebraicTheoryOfSemigroupsVol1}) showed that a cancellative semigroup $S$ embeds in a group of its right quotients only if it is left reversible. Furthermore, \cite[Theorem 1.25]{AlgebraicTheoryOfSemigroupsVol1} tells us that if $G_1$ and $G_2$ are groups of right quotients of a left reversible cancellative semigroup $S$, then $G_1$ and $G_2$ are isomorphic as groups. Consequently, we refer to the group of right quotients of $S$ when $S$ is cancellative and left reversible. It is well known (cf. \cite[Proposition 1.23]{AmenabilityByPaterson}) that every left amenable semigroup is left reversible. Furthermore, if $\mathcal{F}$ is a left F\o lner sequence in the cancellative left amenable semigroup $S$, then $\mathcal{F}$ is also a F\o lner sequence in $G$, the group of right quotients of $S$. To see this, we observe that if $F \subseteq S$ is $(K,\epsilon)$-invariant for some $K \in \mathscr{P}_f(S)$ and $\epsilon > 0$, then for any $k_1,k_2 \in K$, we have 

     \begin{equation}
         |k_1k_2^{-1}F\triangle F| = |k_2^{-1}F\triangle k_1^{-1}F| \le |k_2^{-1}F\triangle F|+|F\triangle k_1^{-1}F| = |F\triangle k_2F|+|k_1F\triangle F| < 2\epsilon|F|.
     \end{equation}

If $S$ is a semigroup, then $A \subseteq S$ is \textbf{(left) thick} if for any $F \in \mathscr{P}_f(S)$ there exists $s \in S$ for which $Fs \subseteq A$. It is easily checked that $A \subseteq S$ is thick if and only if for any $F \in \mathscr{P}_f(S)$, there exists $s \in A$ for which $Fs \subseteq A$. If $S$ is cancellative and left reversible, and $G$ is the group of right quotients of $S$, then $S$ is a thick subset of $G$. We summarize the preceding discussion with the following result.

\begin{theorem}\label{EmbeddingSemigroupsIntoGroups}
    Let $S$ be a cancellative semigroup. If $S$ is left reversible, then it embeds in its group of right quotients $G$ as a thick subset. If $S$ is left amenable, then it is also left reversible, and its group of right quotients $G$ is an amenable group.
\end{theorem}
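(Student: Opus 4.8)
\emph{Proof strategy.} The plan is to combine the embedding theorems of Ore and Dubreil quoted above with two elementary observations: a common-denominator normal form for finite subsets of the group of right quotients, and the F\o lner-transfer estimate already displayed in the discussion preceding the statement. Neither requires a genuinely new idea.

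First I would treat the reversible case. Since $S$ is cancellative and left reversible, Ore's theorem (\cite[Theorem 1.23]{AlgebraicTheoryOfSemigroupsVol1}) provides an embedding of $S$ into a group $G$ of its right quotients, unique up to isomorphism by \cite[Theorem 1.25]{AlgebraicTheoryOfSemigroupsVol1}. To show $S$ is thick in $G$, fix $F = \{g_1,\dots,g_n\} \in \mathscr{P}_f(G)$; the point is that the $g_i$ admit a common right denominator in $S$, i.e., there exist $c,a_1,\dots,a_n \in S$ with $g_i = a_ic^{-1}$ for every $i$. Granting this, $Fc = \{a_1,\dots,a_n\} \subseteq S$, so $c$ witnesses the thickness of $S$ in $G$. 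The common-denominator claim I would prove by induction on $n$: writing $g_i = s_it_i^{-1}$ with $s_i,t_i \in S$, left reversibility gives $u,v \in S$ with $t_1u = t_2v =: c'$, so that $g_1 = (s_1u)(c')^{-1}$ and $g_2 = (s_2v)(c')^{-1}$ are written over the common denominator $c' \in S$; iterating, combining the current common denominator with $t_{i+1}$ via reversibility at each step, produces $c$ and $a_1,\dots,a_n$.

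Next I would treat the amenable case. If $S$ is left amenable then it is left reversible (\cite[Proposition 1.23]{AmenabilityByPaterson}), so the first part applies and the group of right quotients $G$ exists; it remains to verify that $G$ is amenable. Let $\mathcal{F} = (F_n)_{n=1}^\infty$ be a left F\o lner sequence in $S$, fix $g \in G$ and $\epsilon > 0$, and write $g = ac^{-1}$ with $a,c \in S$. The displayed inequality in the discussion above, applied with $k_1 = a$ and $k_2 = c$, shows that whenever $F_n$ is $(\{a,c\},\epsilon)$-invariant in $S$ one has $\abs{gF_n\triangle F_n} < 2\epsilon\abs{F_n}$. Since $\{a,c\} \in \mathscr{P}_f(S)$ and $\mathcal{F}$ is a left F\o lner sequence in $S$, this holds for all large $n$, and as $\epsilon$ was arbitrary we conclude $\lim_{n\to\infty}\abs{gF_n\triangle F_n}/\abs{F_n} = 0$. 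Since $g \in G$ was arbitrary and the $F_n$ are nonempty finite subsets of $G$, the sequence $\mathcal{F}$ is a left F\o lner sequence in $G$; being a group, $G$ is then amenable.

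The only step that is not pure bookkeeping is the common-denominator normal form used for thickness, and even that is a routine consequence of left reversibility and cancellativity, so I do not anticipate a real obstacle. The one point to watch is that all the auxiliary elements produced along the way ($u$, $v$, $c'$, $c$, and the $a_i$) genuinely lie in $S$ and not merely in $G$ — which is exactly what left reversibility of $S$ buys us.
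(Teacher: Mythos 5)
Your proposal is correct and follows essentially the same route as the paper, which simply summarizes the preceding discussion (Ore's theorem for the embedding, Paterson's result that left amenable implies left reversible, and the displayed symmetric-difference estimate to transfer the F\o lner sequence from $S$ to $G$). The only detail the paper leaves unproven is the thickness of $S$ in $G$, and your common-right-denominator induction is the standard and correct way to supply it.
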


While we will not directly make use of the following result, we record it here since we could not find a reference in the literature. This result further illustrates how nicely a semigroup embeds in its group of right quotients.

\begin{lemma}\label{ExtendingHomomorphismsLemma}
    Let $S$ be a cancellative left reversible semigroup and let $G$ be the group of right quotients of $S$. If $h:S\rightarrow S$ is a homomorphism, then the map $h:G\rightarrow G$ given by $h\left(st^{-1}\right) = h(s)h(t)^{-1}$ is also a homomorphism.
\end{lemma}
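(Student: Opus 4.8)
The plan is to check that the proposed map is well defined and then that it respects multiplication, the two nontrivial points being that the formula $h(st^{-1}) = h(s)h(t)^{-1}$ does not depend on the chosen representation $st^{-1}$, and that a product $(s_1t_1^{-1})(s_2t_2^{-1})$ can be rewritten with a common ``denominator'' so that the homomorphism property becomes a purely semigroup-level identity. The key structural fact I would invoke throughout is left reversibility: for any $u,v \in S$ there exist $a,b \in S$ with $ua = vb$, which is exactly what lets one find common multiples and common denominators inside $S$; cancellativity then guarantees uniqueness of the auxiliary elements when needed.

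First I would prove well-definedness. Suppose $s_1t_1^{-1} = s_2t_2^{-1}$ in $G$. Using left reversibility pick $a,b \in S$ with $t_1 a = t_2 b$; then from $s_1 t_1^{-1} = s_2 t_2^{-1}$ one gets $s_1 a = s_1 t_1^{-1}(t_1 a) = s_2 t_2^{-1}(t_2 b) = s_2 b$ (working in $G$, but all four quantities lie in $S$). Applying $h$ to the two semigroup identities $t_1 a = t_2 b$ and $s_1 a = s_2 b$ yields $h(t_1)h(a) = h(t_2)h(b)$ and $h(s_1)h(a) = h(s_2)h(b)$ in $S \subseteq G$; dividing the second by the first (i.e. multiplying on the right by $h(b)^{-1}h(t_2)^{-1}$ after rearranging) gives $h(s_1)h(t_1)^{-1} = h(s_2)h(t_2)^{-1}$, which is the desired equality. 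One should also note $h(t)^{-1}$ makes sense in $G$: since $S$ is cancellative and $h$ need not be injective, I would remark that $h(t) \in S \subseteq G$ and $G$ is a group, so $h(t)^{-1}$ is taken in $G$; no injectivity of $h$ is needed.

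Next I would verify the homomorphism property. Given $g_1 = s_1 t_1^{-1}$ and $g_2 = s_2 t_2^{-1}$, use left reversibility to find $c,d \in S$ with $t_1 c = s_2 d$, so that $t_1^{-1} s_2 = c d^{-1}$ in $G$ and hence $g_1 g_2 = s_1 t_1^{-1} s_2 t_2^{-1} = s_1 c d^{-1} t_2^{-1} = s_1 c (t_2 d)^{-1}$, a presentation with numerator $s_1 c \in S$ and denominator $t_2 d \in S$. Applying the definition of the map at this presentation and at the presentations of $g_1, g_2$, the claim $h(g_1 g_2) = h(g_1)h(g_2)$ reduces, after clearing denominators in $G$, to the identities $h(t_1 c) = h(s_2 d)$ — equivalently $h(t_1)h(c) = h(s_2)h(d)$ — which follow by applying the semigroup homomorphism $h$ to $t_1 c = s_2 d$. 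The only care needed is bookkeeping the order of the (generally noncommutative) factors; I would write $h(g_1)h(g_2) = h(s_1)h(t_1)^{-1}h(s_2)h(t_2)^{-1}$ and substitute $h(t_1)^{-1}h(s_2) = h(c)h(d)^{-1}$, obtained from $h(t_1)h(c) = h(s_2)h(d)$, to land on $h(s_1)h(c)(h(t_2)h(d))^{-1} = h(s_1 c)(h(t_2 d))^{-1} = h(g_1 g_2)$.

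The main obstacle is not conceptual but organizational: making sure every ``division'' is performed in the group $G$ while every place an element is asserted to lie in $S$ is genuinely justified by a left-reversibility choice, and keeping the noncommutative factor order straight. There is essentially no other difficulty, since Theorem \ref{EmbeddingSemigroupsIntoGroups} (via Ore's theorem) already supplies the ambient group $G$ and the embedding $S \hookrightarrow G$, and cancellativity plus left reversibility are exactly the hypotheses that make the common-denominator manipulations legitimate.
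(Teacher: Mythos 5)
Your proposal is correct, and it rests on the same two ingredients as the paper's proof (Ore's left-reversibility condition to manufacture common multiples inside $S$, and then applying the semigroup homomorphism $h$ to identities that hold in $S$), but it is organized quite differently and more economically. For well-definedness, you extract from $s_1t_1^{-1}=s_2t_2^{-1}$ the two semigroup identities $t_1a=t_2b$ and $s_1a=s_2b$ and divide one image by the other; the paper instead reduces the claim to the auxiliary identity $h\left(xy^{-1}t\right)=h(x)h(y)^{-1}h(t)$ and verifies it by a longer chain of manipulations. For multiplicativity, your argument is a single common-denominator step: you rewrite $g_1g_2=(s_1c)(t_2d)^{-1}$ via $t_1c=s_2d$ and then invoke the already-established well-definedness to evaluate $h$ at that presentation, whereas the paper does not exploit well-definedness this way and instead proves $h(e)=e$, $h\left(s^{-1}\right)=h(s)^{-1}$, and several intermediate identities such as $h\left(b^{-1}x\right)=h(b)^{-1}h(x)$ before assembling the result. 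Your route buys brevity and makes the role of the Ore condition transparent; the paper's route produces along the way the explicit formulas for $h$ on inverses and on mixed quotients, which can be convenient to cite separately. The only points worth making fully explicit in a final write-up are the ones you already flag: that $s_1a$, $s_2b$, $t_1a$, $t_2b$, $s_1c$, $t_2d$ all lie in $S$ so that $h$ may be applied to them, and that equality of elements of $S$ computed inside $G$ descends to equality in $S$ because $S\hookrightarrow G$ is an embedding.
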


\begin{proof}
    First we will show that $h$ is a well defined map. Let $s,t,x,y \in S$ be such that $st^{-1} = xy^{-1}$. We want to show that $h\left(st^{-1}\right) = h\left(xy^{-1}\right)$, which is equivalent to showing $h(s)h(t)^{-1} = h(x)h(y)^{-1}$, so it suffices to show that $h\left(xy^{-1}t\right) = h(x)h(y)^{-1}h(t)$. Since $S$ is left reversible, let $r_1,r_2 \in S$ be such that $tr_1 = yr_2$, and observe that $xy^{-1}tr_1 = xr_2$. We see that

    \begin{alignat*}{2}
        &h(x)h(y)^{-1}h(t) = h(x)h(y)^{-1}h(t)h(r_1)h(r_1)^{-1} = h(x)h(y)^{-1}h(tr_1)h(r_1)^{-1}\\
        =& h(x)h(y)^{-1}h(yr_2)h(r_1)^{-1} = h(x)h(y)^{-1}h(y)h(r_2)h(r_1)^{-1} = h(x)h(r_2)h(r_1)^{-1}\\
        =&h(xr_2)h(r_1)^{-1} = h\left(xy^{-1}tr_1\right)h(r_1)^{-1} = h\left(xy^{-1}t\right)h(r_1)h(r_1)^{-1} = h\left(xy^{-1}t\right).
    \end{alignat*}

    Now that we have verified that $h:G\rightarrow G$ is well defined, it remains to check that it is a homomorphism. Let $s \in S$ be arbitrary, and observe that
    
    \begin{equation}
        h(e) = h\left(ss^{-1}\right) = h(s)h(s)^{-1} = e.
    \end{equation}
    We also see that for any $s,t \in S$ we have
    
    \begin{equation}
        h\left(s^{-1}\right) = h\left(t(st)^{-1}\right) = h(t)h(st)^{-1} = h(t)(h(s)h(t))^{-1} = h(t)h(t)^{-1}h(s)^{-1} = h(s)^{-1}.
    \end{equation}
    Now let $a,b,x,y \in S$ be arbitrary, and we will show that $h\left(ab^{-1}xy^{-1}\right) = h\left(ab^{-1}\right)h\left(xy^{-1}\right)$. Let $f,g \in S$ be such that $ab^{-1}xy^{-1} = fg^{-1}$. Since $S$ is left reversible, let $r_1,r_2 \in S$ be such that $fr_1 = ar_2$, and observe that $r_2 = a^{-1}fr_1 = b^{-1}xy^{-1}gr_1$. We see that

    \begin{alignat*}{2}
        &h\left(fg^{-1}\right) = h(f)h(g)^{-1}h(g)h(r_1)h(r_1)^{-1}h(g)^{-1} = h(fr_1)h(r_1)^{-1}h(g)^{-1} = h(ar_2)h(r_1)^{-1}h(g)^{-1}\\
        =&h(a)h(r_2)h(gr_1)^{-1} = h(a)h\left(r_2(gr_1)^{-1}\right) = h(a)h\left(b^{-1}xy^{-1}\right).
    \end{alignat*}
    In particular, we have shown that $h\left(ab^{-1}xy^{-1}\right) = h(a)h\left(b^{-1}xy^{-1}\right)$. We now see that

    \begin{alignat*}{2}
        &h\left(b^{-1}xy^{-1}\right) = h\left(eb^{-1}xy^{-1}\right) = h\left(yx^{-1}be\right)^{-1} = \left(h(y)h\left(x^{-1}be\right)\right)^{-1} = \left(h(y)h\left(x^{-1}b\right)\right)^{-1}\\
        =&h\left(x^{-1}b\right)^{-1}h(y)^{-1} = h(b^{-1}x)h(y)^{-1}.
    \end{alignat*}
    Up to now, we have shown that $h\left(ab^{-1}xy^{-1}\right) = h(a)h\left(b^{-1}x\right)h(y)^{-1}$. Since $h\left(ab^{-1}\right)h\left(xy^{-1}\right) = h(a)h(b)^{-1}h(x)h(y)^{-1}$, it only remains to show that $h\left(b^{-1}x\right) = h(b)^{-1}h(x)$. To this end, let $s \in S$ be such that $b^{-1}xs \in S$ and observe that

    \begin{alignat*}{2}
        &h\left(b^{-1}x\right) = h\left(b^{-1}xss^{-1}\right) = h\left(b^{-1}xs\right)h(s)^{-1}\text{, so}\\
        &h(b)h\left(b^{-1}x\right)h(s) = h(b)h\left(b^{-1}xs\right) = h\left(bb^{-1}xs\right) = h(xs) = h(x)h(s)\text{, hence}\\
        &h\left(b^{-1}x\right) = h(b)^{-1}h(x).
    \end{alignat*}
\end{proof}
%%%%%%%%%%%%%%%%%%%%%%%%%%%%%%%%%%%%%%%%%%%%%%%%%%%%%%%%%%%%%%%%%%%%%%%%%%%%%%%%%%%%%%%%%%%%%%%%%%%%%%%%%%%%%%%%%%%%%%%%%%%%%%%%%%%
\subsection{Measure preserving systems and their natural extensions}\label{MPSSubsection}
Let $(S,\cdot)$ be a countably infinite semigroup and let $(X,\mathscr{B},\mu)$ be a standard probability space. A measure preserving action of $S$ on $X$ is a collection of measurable maps $(T_s)_{s \in S}$ satisfying the following properties:
\begin{enumerate}[(i)]
    \item For all $s \in S$ and all $A \in \mathscr{B}$, $\mu(A) = \mu(T_s^{-1}A)$.

    \item For all $s,t \in S$, $T_sT_t = T_{st}$.

    \item If $S$ has an identity element $e$, then $T_e$ is the identity map.
\end{enumerate}
A \textbf{(measure preserving) $S$-system} is a tuple $(X,\mathscr{B},\mu,(T_s)_{s \in S})$, where $(X,\mathscr{B},\mu)$ is a standard probability space, and $(T_s)_{s \in S}$ is a measure preserving action of $S$ on $X$. The $S$-system $(X,\mathscr{B},\mu,(T_s)_{s \in S})$ is \textbf{ergodic} if the $A \in \mathscr{B}$ for which we have $\mu(T_s^{-1}A\triangle A) = 0$ for all $s \in S$ satisfy $\mu(A) \in \{0,1\}$. Let us now recall a special case of the pointwise ergodic theorem of Lindenstrauss \cite{AmenableBirkhoff}.

\begin{theorem}[Pointwise ergodic theorem]\label{PointwiseErgodicTheorem}
    Let $G$ be a countable amenable group, let $(X,\mathscr{B},\allowbreak\mu,(T_g)_{g \in G})$ be an ergodic $G$-system. Let $\mathcal{F} = (F_n)_{n = 1}^\infty$ be a tempered left F\o lner sequence, i.e., $\mathcal{F}$ satisfies
    
    \begin{equation}
        \sup_{n \in \mathbb{N}}\frac{|\bigcup_{k = 1}^{n-1}F_k^{-1}F_n|}{|F_n|} < \infty.
    \end{equation}
    If $f \in L^1(X,\mu)$, then for a.e. $x \in X$ we have

    \begin{equation}
        \lim_{N\rightarrow\infty}\frac{1}{|F_N|}\sum_{g \in F_N}f(T_gx) = \int_Xfd\mu.
    \end{equation}
\end{theorem}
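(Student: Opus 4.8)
The plan is to follow Lindenstrauss's original argument, specialized to the ergodic case. It rests on two independent ingredients, which I would establish separately: a weak-type maximal inequality for the averaging operators $A_N f(x) := \frac{1}{|F_N|}\sum_{g \in F_N} f(T_g x)$, and almost everywhere convergence of $(A_N f)$ on a dense subclass of $L^1(X,\mu)$. Granting both, the theorem follows from the Banach principle: the set of $f \in L^1(X,\mu)$ for which $(A_N f)$ converges a.e.\ is a norm-closed subspace (one uses that the oscillation $f \mapsto \limsup_N A_N f - \liminf_N A_N f$ is dominated by twice the maximal function, hence continuous at $0$ in measure), and it contains a dense set, so it is all of $L^1(X,\mu)$. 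The a.e.\ limit operator $L$ inherits the maximal bound, hence is continuous from $L^1(X,\mu)$ to $L^0$; since $Lf = \int_X f\, d\mu$ on the dense class and $f \mapsto \int_X f\, d\mu$ is $L^1$-continuous, equality will hold for every $f$. (Alternatively $Lf$ is shift-invariant, hence constant by ergodicity, with the same integral as $f$.)

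For the dense class I would pass to the Koopman representation: setting $\pi(g)f := f\circ T_{g^{-1}}$ one checks $\pi(gh) = \pi(g)\pi(h)$, and $A_N f(x) = \frac{1}{|F_N^{-1}|}\sum_{h \in F_N^{-1}}(\pi(h)f)(x)$, where $(F_N^{-1})_{N=1}^\infty$ is a right F\o lner sequence by Theorem \ref{EmbeddingSemigroupsIntoGroups} and the remarks preceding it. On a coboundary $f = h - \pi(g_0)h$ with $h \in L^\infty(X,\mu)$ and $g_0 \in G$, the sum telescopes to a sum over $F_N^{-1}\triangle F_N^{-1}g_0$, giving the pointwise bound $|A_N f(x)| \le \|h\|_\infty\,|F_N^{-1}\triangle F_N^{-1}g_0|/|F_N^{-1}| \to 0$. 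By ergodicity the $\pi$-fixed vectors in $L^2(X,\mu)$ are exactly the constants, so the linear span of such coboundaries together with the constants is dense in $L^1(X,\mu)$, and on this span $A_N f \to \int_X f\, d\mu$ everywhere (each coboundary integrates to zero and $A_N 1 = 1$).

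The technical heart --- and the step I expect to be the main obstacle --- is the maximal inequality $\mu\big(\{x : \sup_N A_N|f|(x) > \lambda\}\big) \le \frac{C}{\lambda}\|f\|_{L^1}$ with $C$ depending only on the temperedness constant $\sup_n |\bigcup_{k=1}^{n-1}F_k^{-1}F_n|/|F_n|$. By the Calder\'on transference principle this reduces to a combinatorial maximal inequality on $G$ with counting measure, which in turn follows from Lindenstrauss's covering lemma for tempered F\o lner sequences: from any finite family of translates $\{F_{n(i)}c_i\}_i$ one can extract a subfamily that is $\delta$-disjoint (it admits pairwise disjoint subsets of relative size at least $1-\delta$) whose union has cardinality a definite fraction of that of $\bigcup_i F_{n(i)}c_i$. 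The temperedness hypothesis is precisely what makes the greedy, Vitali-type selection work: processing the translates in order of decreasing size, the portion of a newly selected set already covered by previously selected (larger) sets is controlled by $|\bigcup_{k=1}^{n-1}F_k^{-1}F_n|$, which temperedness keeps comparable to $|F_n|$. Since this covering lemma genuinely fails for arbitrary F\o lner sequences, there is no shortcut around it; but once it is in hand, transference, the Banach principle, and the identification of the limit are all routine, so the proof would be essentially that of \cite{AmenableBirkhoff}.
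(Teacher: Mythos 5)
The paper does not prove this theorem: it is quoted verbatim as a special case of Lindenstrauss's pointwise ergodic theorem, with the proof delegated entirely to \cite{AmenableBirkhoff}. So there is no in-paper argument to compare against; the relevant comparison is with Lindenstrauss's original proof, and your outline reconstructs its architecture faithfully: a weak-type $(1,1)$ maximal inequality for tempered F\o lner sequences, the Banach principle, and a dense class consisting of constants together with $L^\infty$ coboundaries $h-\pi(g_0)h$ (dense by ergodicity, since the invariant vectors in $L^2$ are the constants), on which the averages telescope and converge by the left F\o lner property. Your identification of the covering lemma as the technical heart, and of temperedness as the hypothesis that makes the selection work, is exactly right.

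Two small caveats. First, the selection in \cite{AmenableBirkhoff} is not a literal greedy Vitali pass: Lindenstrauss extracts the $\delta$-disjoint subfamily by a \emph{randomized} selection carried out in stages of decreasing index, and the purely deterministic greedy version you describe requires genuine additional work (the naive greedy choice can fail to capture a definite fraction of the union). Since you explicitly defer this step to the reference and flag it as the one non-routine ingredient, this does not damage the outline, but it is worth knowing that the heuristic you give is the motivation for the lemma rather than its proof. Second, your parenthetical alternative identification of the limit (``$Lf$ is shift-invariant, hence constant by ergodicity'') is not immediate as stated: $A_Nf(T_{g_0}x)$ is an average over $F_Ng_0$, a \emph{right} translate, which a left F\o lner sequence need not approximate; so invariance of the limit function needs a separate argument. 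Your primary identification of the limit via continuity of $L$ on the dense class is the correct route and suffices.
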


Being able to use the pointwise ergodic theorem is one of the reasons that we like to work with ergodic $G$-systems. While not every $G$-system is ergodic, it is well known that any $G$-system can be viewed as a direct integral of ergodic $G$-systems.

\begin{theorem}[{Ergodic decomposition \cite{GeneralErgodicDecomposition}}]\label{ErgodicDecomposition}
    Let $G$ be a countable group and let $(X,\mathscr{B},\mu,(T_g)_{g \in G})$ be a $G$-system. There exists a standard probability space $(Y,\mathscr{A},\nu)$ and a collection of pairwise mutually singular probability measures $\{\mu_y\}_{y \in Y}$ on $(X,\mathscr{B})$ such that for any $A \in \mathscr{B}$ we have

    \begin{equation}
        \mu(A) = \int_Y\mu_y(A)d\nu(y),
    \end{equation}
    and for each $y \in Y$, we have that $(X,\mathscr{B},\mu_y,(T_g)_{g \in G})$ is an ergodic $G$-system.
\end{theorem}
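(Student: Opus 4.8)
The plan is to disintegrate $\mu$ over the $\sigma$-algebra of invariant sets. Let $\mathscr{I} = \{A \in \mathscr{B} : \mu(T_g^{-1}A \triangle A) = 0 \text{ for all } g \in G\}$ be the $\sigma$-algebra of ($\mu$-almost) $G$-invariant sets. Fixing a countable family dense in $L^1(X,\mu)$ and passing to the $\sigma$-algebra generated by their conditional expectations onto $\mathscr{I}$, we may assume $\mathscr{I}$ is countably generated modulo $\mu$-null sets; since $(X,\mathscr{B},\mu)$ is standard, $\mathscr{I}$ is then realized by a measurable factor map $\pi : X \to Y$ onto a standard probability space $(Y,\mathscr{A},\nu)$ with $\nu = \pi_*\mu$. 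Applying the disintegration theorem for standard probability spaces, we obtain a $\nu$-a.e.\ uniquely determined family of probability measures $(\mu_y)_{y \in Y}$ on $(X,\mathscr{B})$ with $y \mapsto \mu_y(A)$ being $\mathscr{A}$-measurable, $\mu_y\bigl(\pi^{-1}(y)\bigr) = 1$ for $\nu$-a.e.\ $y$, and $\int_B \mu_y(A)\,d\nu(y) = \mu\bigl(A \cap \pi^{-1}(B)\bigr)$ for all $A \in \mathscr{B}$, $B \in \mathscr{A}$.

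First I would check that $\mu_y$ is a $G$-invariant measure for $\nu$-a.e.\ $y$. Fix $g \in G$ and $A \in \mathscr{B}$; since $\pi^{-1}(B) \in \mathscr{I}$ for $B \in \mathscr{A}$, we get $\mu\bigl(T_g^{-1}A \cap \pi^{-1}(B)\bigr) = \mu\bigl(T_g^{-1}A \cap T_g^{-1}\pi^{-1}(B)\bigr) = \mu\bigl(A \cap \pi^{-1}(B)\bigr)$, so $\int_B \mu_y(T_g^{-1}A)\,d\nu = \int_B \mu_y(A)\,d\nu$ for all $B \in \mathscr{A}$, whence $\mu_y(T_g^{-1}A) = \mu_y(A)$ for $\nu$-a.e.\ $y$. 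Running this over a countable algebra generating $\mathscr{B}$ modulo $\mu$ and over the countably many $g \in G$, and then extending by a monotone class argument, there is a $\nu$-conull set $Y_0$ on which $\mu_y$ is $G$-invariant; redefine $\mu_y := \mu$ for $y \notin Y_0$, and, to secure pairwise mutual singularity below, pass to the conull set of those $y \in Y_0$ for which additionally $\mu_y\bigl(\pi^{-1}(y)\bigr) = 1$, renaming it $Y$.

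The crux is that $\mu_y$ is \emph{ergodic} for $\nu$-a.e.\ $y$. Suppose not; then on a set $Y_1$ with $\nu(Y_1) > 0$ there is, for each $y$, a set $A_y$ with $\mu_y(T_g^{-1}A_y \triangle A_y) = 0$ for all $g$ and $0 < \mu_y(A_y) < 1$. A measurable selection produces $y \mapsto A_y$ measurable, and then $A := \{x \in X : x \in A_{\pi(x)}\}$ satisfies $\mu(T_g^{-1}A \triangle A) = \int \mu_y(T_g^{-1}A_y \triangle A_y)\,d\nu = 0$ for every $g$, so $A \in \mathscr{I}$; but then $\mathbbm{1}_A$ is $\pi$-measurable modulo $\mu$-null sets, forcing $\mu_y(A) \in \{0,1\}$ for $\nu$-a.e.\ $y$, which contradicts $\mu_y(A) = \mu_y(A_y) \in (0,1)$ on $Y_1$ (here $A$ and $A_y$ agree on $\pi^{-1}(y)$, where $\mu_y$ is concentrated). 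Hence a.e.\ $\mu_y$ is ergodic, and, combined with $\mu(A) = \int_Y \mu_y(A)\,d\nu$ (the $B = Y$ case of the disintegration identity), this is the decomposition. Pairwise mutual singularity is then immediate: for distinct $y, y' \in Y$ the fibers $\pi^{-1}(y)$ and $\pi^{-1}(y')$ are disjoint while $\mu_y, \mu_{y'}$ are concentrated on them respectively. The step I expect to require the most care is the measurable selection $y \mapsto A_y$ in the ergodicity argument — one must verify that ``$A_y$ is $\mu_y$-almost $G$-invariant with $\mu_y(A_y) \in (0,1)$'' cuts out an analytic set over $Y_1$ admitting a universally measurable uniformization (Jankov--von Neumann) — together with the reduction of $\mathscr{I}$ to a countably generated $\sigma$-algebra. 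An alternative that sidesteps the selection entirely is to first replace the abstract system by a topological model (a compact metric $X$ with $G$ acting by homeomorphisms, via the Gelfand representation of a separable $G$-invariant $C^*$-subalgebra of $L^\infty(\mu)$), then apply Choquet's theorem to the metrizable compact convex set of $G$-invariant Borel probability measures — whose extreme points are precisely the ergodic ones — to represent $\mu$ as the barycenter of a measure concentrated on the ergodic measures; this is likely the route of the cited reference.
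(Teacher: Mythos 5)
The paper does not prove this theorem; it is quoted as a black-box result from the cited reference of Greschonig and Schmidt, so there is no internal proof to compare against. Judged on its own terms, your main route (disintegrate $\mu$ over the invariant $\sigma$-algebra $\mathscr{I}$, realized as a factor map $\pi:X\to Y$) is the standard strategy, and the invariance of $\nu$-a.e.\ $\mu_y$ is argued correctly. The genuine gap is in the ergodicity step, and it is more serious than the ``requires care'' you assign to it. The objects $A_y$ you want to select live in the measure algebras of the measures $\mu_y$, which are pairwise mutually singular and (typically) each singular to $\mu$; there is no single standard Borel space in which ``$A_y$'' ranges, so the relation ``$A_y$ is $\mu_y$-almost invariant with $\mu_y(A_y)\in(0,1)$'' does not cut out an analytic subset of $Y_1\times Z$ for any standard Borel $Z$ without first constructing a Borel parametrization of candidate sets uniformly in $y$ --- and that construction is essentially the content of the theorem. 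Jankov--von Neumann has nothing to uniformize as stated. Note also that $G$ is only assumed countable, not amenable, so the usual escape hatch (pointwise ergodic averages of a countable dense family of functions converging $\mu_y$-a.e.\ to constants, which forces ergodicity of $\mu_y$) is not available here, whereas it would be in the amenable setting the paper actually uses the theorem in.

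Your one-sentence alternative --- pass to a compact metric topological model via a separable $G$-invariant $C^*$-subalgebra of $L^\infty(\mu)$, and apply Choquet's theorem to the simplex of invariant measures --- is indeed the standard complete proof (and close in spirit to the cited reference and to Varadarajan's ergodic decomposition theorem), so the overall proposal is salvageable by promoting that remark to the main argument. To finish along that route you would still need to (a) check that the extreme points of the simplex of invariant measures are exactly the ergodic ones and that the representing measure is carried by them (this is where metrizability and the simplex property enter), (b) verify pairwise mutual singularity of distinct ergodic measures, for which the short argument is that $d\mu_1/d(\tfrac{1}{2}(\mu_1+\mu_2))$ is an invariant function whose level sets are invariant and hence of measure $0$ or $1$ for each $\mu_i$, and (c) convert the barycentric representation into the disintegration format of the statement, with $(Y,\mathscr{A},\nu)$ standard. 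None of these is difficult, but as written the burden of proof sits exactly on the step you deferred.
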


If $\mathcal{X} := (X,\mathscr{B},\mu,(T_s)_{s \in S})$ and $\mathcal{Y} := (X,\mathscr{A},\nu,(R_s)_{s \in S})$ are $S$-systems, then $\mathcal{X}$ is a \textbf{factor} of $\mathcal{Y}$ if there exists a measurable map $\pi:Y\rightarrow X$ for which $\nu(\pi^{-1}A) = \mu(A)$ for all $A \in \mathscr{B}$, and $\pi R_s = T_s\pi$ for all $s \in S$. 
If $\mathcal{X}$ is a factor of $\mathcal{Y}$, then $\mathcal{Y}$ is an \textbf{extension} of $\mathcal{X}$. 
Now let $G$ be a countably infinite group and let $S \subseteq G$ be a subsemigroup that generates $G$ as a group. If $\mathcal{X} = (X,\mathscr{B},\mu,(T_s)_{s \in S})$ is an $S$-system and $\mathcal{Y} = (Y,\mathscr{A},\nu,(R_g)_{g \in G})$ is a $G$-system for which $\mathcal{Y}' := (Y,\mathscr{A},\nu,(R_g)_{g \in S})$ is an extension of $\mathcal{X}$, then $\mathcal{Y}$ is an \textbf{invertible extension} of $\mathcal{X}$. 
The \textbf{natural extension} of $\mathcal{X}$ is an invertible extension $\mathcal{Y}$ with factor map $\pi:Y\rightarrow X$ such that the following holds:
If $\mathcal{Z}$ is an invertible extension of $\mathcal{X}$ with factor map $\pi_1:Z\rightarrow X$, then there exists a unique factor map $\pi_2:Z\rightarrow Y$ of $\mathcal{Z}$ to $\mathcal{Y}$ such that $\pi_1 = \pi\circ\pi_2$.

It is a classical result of Rohlin \cite{rohlin1961exact} (see also \cite[Chapter 10.4]{CornfeldFominSinai}) that if $(S,\cdot) = (\mathbb{N},+)$ then any $S$-system $\mathcal{X}$ has a natural extension $\mathcal{Y}$. 
When $(S,\cdot) = (\mathbb{N}^d,+)$, Lacroix \cite{lacroix1995natural} showed that any $S$-system $\mathcal{X}$ has a natural extension $\mathcal{Y}$. %(see also \cite[Lemma 7.11]{IPSzemeredi}).
Recently, Brice\~no, Bustos-Gajardo, and Donoso-Echenique \cite{briceno2025extensibility,briceno2025natural,echenique2024natural} gave a complete treatment of the topic of natural extensions of countable semigroup actions in the topological and measurable categories.
Here we give a self contained construction of the natural extension of an arbitrary $S$-system when $S$ is a countable cancellative left reversible semigroup, which is a special case of the aforementioned work.

\begin{theorem}\label{GeneralNaturalExtension}
    If $S$ is a countable cancellative left reversible semigroup with group of right quotients $G$, and $\mathcal{X} := (X,\mathscr{B},\mu,(T_s)_{s \in S})$ is a $S$-system, then there exists a natural extension $\mathcal{Y} := (Y,\mathscr{A},\nu,(R_g)_{g \in G})$ of $\mathcal{X}$.
\end{theorem}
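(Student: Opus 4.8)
The plan is to realise $\mathcal{Y}$ explicitly as a shift system on functions $G\to X$, in direct analogy with the inverse-limit construction of the natural extension of an $\mathbb{N}$-system, and then to read off the universal property. Concretely, I would set
\[
    Y = \left\{\omega\in X^G\ :\ \omega(sg) = T_s\omega(g)\text{ for all }s\in S,\ g\in G\right\},
\]
take $\mathscr{A}$ to be the $\sigma$-algebra on $Y$ generated by the coordinate projections $\mathrm{pr}_g\colon\omega\mapsto\omega(g)$, let $G$ act by the shift $(R_h\omega)(g) = \omega(gh)$, and let $\pi=\mathrm{pr}_e\colon Y\to X$. One checks immediately that $R_{h_1}R_{h_2}=R_{h_1h_2}$ and $R_e=\mathrm{id}$, and that $R_h$ preserves $Y$ (apply the defining relation with $g$ replaced by $gh$), so $(R_g)_{g\in G}$ is a $G$-action on $Y$; moreover $\pi R_s = T_s\pi$ holds pointwise on $Y$ by the defining relation with $g=e$. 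Since $G$ is generated by $S$, it then remains only to produce an $R$-invariant probability measure $\nu$ on $(Y,\mathscr{A})$ with $\pi_\ast\nu=\mu$, and $\mathcal{Y}=(Y,\mathscr{A},\nu,(R_g)_{g\in G})$ will be an invertible extension of $\mathcal{X}$.

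The construction of $\nu$ is where the hypotheses are used. By Theorem \ref{EmbeddingSemigroupsIntoGroups}, $S$ sits inside $G$ as a thick subset, so every finite $F\subseteq G$ (which I may always assume contains $e$) admits $t\in S$ with $Ft\subseteq S$. I would prescribe the finite-dimensional distribution of $(\mathrm{pr}_f)_{f\in F}$ to be the law of $\left(T_{ft}x\right)_{f\in F}$ for $x$ distributed according to $\mu$. The point is that this is independent of the choice of $t$: for a second choice $t'$, left reversibility yields $u,v\in S$ with $tu=t'v=:w$, so $Fw\subseteq S$, and writing $T_{fw}=T_{ft}T_u=T_{ft'}T_v$ and using that $T_u,T_v$ are measure preserving, both candidate laws coincide with the law of $\left(T_{fw}x\right)_{f\in F}$. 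Enlarging $F$ causes no trouble (the same $t$ still works), so Kolmogorov's extension theorem on the standard Borel space $X^G$ yields a measure $\nu$ with these marginals; the same two-coordinate comparison shows each relation $\omega(sg)=T_s\omega(g)$ holds $\nu$-almost surely, hence $\nu$ concentrates on $Y$, and a similar argument — comparing the law of $(\omega(fh))_{f\in F}$ with that of $(\omega(f))_{f\in F}$ via a common right multiple in $S$ — gives $R_h$-invariance; finally $\pi_\ast\nu=\mu$ by taking $F=\{e\}$.

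For the universal property, let $\mathcal{Z}=(Z,\mathscr{C},\rho,(Q_g)_{g\in G})$ be any invertible extension of $\mathcal{X}$, with factor map $\psi\colon Z\to X$ satisfying $\psi Q_s=T_s\psi$ for $s\in S$. I would define $\Psi\colon Z\to X^G$ by $\Psi(z)(g)=\psi(Q_gz)$. Then $\Psi(z)\in Y$ because $Q_{sg}=Q_sQ_g$; the equivariance $\Psi Q_h=R_h\Psi$ and the compatibility $\pi\Psi=\psi$ follow from $Q_gQ_h=Q_{gh}$ and $Q_e=\mathrm{id}$; and $\Psi_\ast\rho=\nu$ follows by checking finite-dimensional distributions as before, using $Q_t$-invariance of $\rho$ to replace $z$ by $Q_tz$ and thereby reduce $\psi(Q_{ft}z)$ to $T_{ft}\psi(z)$ with $\psi(z)\sim\mu$. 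Hence $\mathcal{Y}$ is a factor of $\mathcal{Z}$ over $\mathcal{X}$, which together with the previous paragraph shows $\mathcal{Y}$ is the natural extension of $\mathcal{X}$.

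The main obstacle is the construction of $\nu$. First, one must get the left/right conventions to align, so that the defining relation of $Y$ (left multiplication by elements of $S$) does not clash with the shift action (right multiplication by elements of $G$); and second, one must verify that the prescribed finite-dimensional distributions are well defined and $G$-invariant. Both verifications rest on the directedness of $S$ furnished by left reversibility together with the fact that the $T_s$ are genuinely measure preserving rather than merely nonsingular; everything else is bookkeeping, with the only mild technical point being that $Y$ is a Borel subset of $X^G$, so that $(Y,\mathscr{A},\nu)$ is again a standard probability space.
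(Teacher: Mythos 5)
Your proposal is correct and follows essentially the same route as the paper's proof: the shift system on $X^G$ with finite-dimensional marginals given by the law of $(T_{ft}x)_{f\in F}$ for $x\sim\mu$ and $Ft\subseteq S$, well-definedness via left reversibility, Kolmogorov extension, and the universal property via $z\mapsto(\psi(Q_gz))_{g\in G}$. The only differences are cosmetic — you restrict to the compatible subset $Y$ at the outset where the paper works on all of $X^G$ and observes a.s.\ concentration afterwards — and your explicit use of $Q_t$-invariance to reduce $\psi(Q_{ft}z)$ to $T_{ft}\psi(z)$ in the pushforward computation is, if anything, slightly more careful than the paper's.
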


\begin{proof}
    Let $Y = X^G$ and let $\mathscr{A}$ be the product $\sigma$-algebra. Let $F \in \mathscr{P}_f(G)$ be arbitrary, let $\mathcal{A} := \{A_f\}_{f \in F} \subseteq \mathscr{B}$, and let $C_{\mathcal{A}} \in \mathscr{A}$ denote the corresponding cylinder set. To be more precise, we have

    \begin{equation}
        C_{\mathcal{A}} := \left\{(x_g)_{g \in G}\ |\ x_f \in A_f\ \forall\ f \in F\right\}.
    \end{equation}
    In order to define $\nu$, it suffices to define $\nu(C_\mathcal{A})$ for all $\mathcal{A}$. To this end, we recall that $S$ is a thick subset of $G$, so let $s \in S$ be such that $Fs \subseteq S$, and we provisionally define

    \begin{equation}
        \nu(C_\mathcal{A}) = \mu\left(\bigcap_{f \in F}T_{fs}^{-1}A_f\right).
    \end{equation}
    We will now show that $\nu(C_F)$ does not depend on the choice of $s$. Let $s_1,s_2 \in S$ be such that $Fs_1,Fs_2 \subseteq S$. Since $S$ is left reversible, let $t_1,t_2 \in S$ be such that $s_1t_1 = s_2t_2$. We see that

    \begin{alignat*}{2}
        \mu\left(\bigcap_{f \in F}T_{fs_1}^{-1}A_f\right) &= \mu\left(T_{t_1}^{-1}\bigcap_{f \in F}T_{fs_1}^{-1}A_f\right) = \mu\left(\bigcap_{f \in F}T_{fs_1t_1}^{-1}A_f\right)\\
        &= \mu\left(\bigcap_{f \in F}T_{fs_2t_2}^{-1}A_f\right) = \mu\left(T_{t_2}^{-1}\bigcap_{f \in F}T_{fs_2}^{-1}A_f\right) = \mu\left(\bigcap_{f \in F}T_{fs_2}^{-1}A_f\right).
    \end{alignat*}
    Now let $\{\mathcal{A}_i\}_{i = 0}^k$ be such that $C_{\mathcal{A}_0} = \bigcup_{i = 1}^KC_{\mathcal{A}_i}$.
    It can be checked that $\nu\left(C_{\mathcal{A}_0}\right) = \sum_{i = 1}^I\nu\left(C_{\mathcal{A}_i}\right)$, so by Kolmogorov's consistency theorem $\nu$ extends to a measure on $\mathscr{A}$. We now define $(R_g)_{g \in G}$ by $R_{g_0}(x_g)_{g \in G} = (x_{gg_0})_{g \in G}$.
    We will now check that $R$ is a measure preserving action of $G$ on $Y$.
    Let $F \in \mathscr{P}_f(G)$ and $g \in G$ both be arbitrary.
    Let $\mathcal{A} = \{A_f\}_{f \in F}$, let $\mathcal{A}' = \{B_t\}_{t \in Fg^{-1}}$ with $B_t = A_{tg}$, let $s_1,s_2 \in S$ be such that $Fs_2 \subseteq S$ and $Fg^{-1}s_1\cup\{s_2^{-1}g^{-1}s_1\} \subseteq S$.
    Since $\nu\left(R_g^{-1}C_{\mathcal{A}}\right) = \nu\left(C_{\mathcal{A}'}\right)$, we have

    \begin{alignat*}{2}
        \nu\left(R_g^{-1}C_{\mathcal{A}}\right)& = \mu\left(\bigcap_{f \in F}T_{fg^{-1}s_1}^{-1}A_f\right) = \mu\left(T_{s_2^{-1}g^{-1}s_1}^{-1}\bigcap_{f \in F}T_{fs_2}^{-1}A_f\right) = \mu\left(\bigcap_{f \in F}T_{fs_2}^{-1}A_f\right) = \nu\left(C_{\mathcal{A}}\right).
    \end{alignat*}

    Now that we have shown that $\mathcal{Y}$ is a $G$-system we proceed to show that $\mathcal{Y}'$ is an extension of $\mathcal{X}$. To this end, we define the measurable factor map $\pi:Y\rightarrow X$ by $\pi(x_g)_{g \in G} = x_e$. We see that for any $A \in \mathscr{B}$, we have $\pi^{-1}A = C_{\mathcal{A}}$ with $F = \{e\}$ and $\mathcal{A} = \{A\}$, so $\nu(\pi^{-1}A) = \mu(A)$. It remains to show that $\pi R_s = T_s\pi$ for all $s \in S$. Now let us fix some $s \in S$. Since $(X,\mathscr{B},\mu)$ is a standard probability space, we may pick a countable collection $\{A_n\}_{n = 1}^\infty \subseteq \mathscr{B}$ such that $\mu(A_n) > 0$ for all $n$, and for any distinct $x,y \in X$ there exists $n \ge 1$ for which $x \in A_n$ and $y \notin A_n$. Now let $s \in S$ and $g_0 \in G$ be arbitrary, let $r \in S$ be such that $\{sg_0r,g_0r\} \subseteq S$ and observe that
    
    \begin{alignat*}{2}
        &\nu\left(\left\{(x_g)_{g \in G} \in Y\ |\ T_sx_{g_0} \neq x_{sg_0}\right\}\right) \le \sum_{n = 1}^\infty\nu\left(\left\{(x_g)_{g \in G}\ |\ T_sx_{g_0} \in A_n\ \&\ x_{sg_0} \in A_n^c\right\}\right)\\
        =&\sum_{n = 1}^\infty\mu\left(T_{g_0r}^{-1}T_s^{-1}A_n\cap T_{sg_0r}^{-1}A_n^c\right) = 0.
    \end{alignat*}
    It follows that for $\nu$-a.e. $(x_g)_{g \in G} \in Y$ we have

    \begin{equation}
        \pi R_s(x_g)_{g \in G} = \pi(x_{gs})_{g \in G} = x_s = T_sx_e = T_s\pi(x_g)_{g \in G}.
    \end{equation}
    Now that we have shown $\mathcal{Y}$ to be an invertible extension of $\mathcal{X}$, it remains to show that any other invertible extension $\mathcal{Z} := (Z,\mathscr{C},\rho,(V_g)_{g \in G})$ has $\mathcal{Y}$ as a factor, and that the factor map from $\mathcal{Z}$ to $\mathcal{Y}$ is unique. 
    To this end, let $\pi_1:Z\rightarrow X$ be the factor map from $\mathcal{Z}'$ to $\mathcal{X}$. We will show that the map $\pi_2:Z\rightarrow Y$ given by $\pi_2(z) = (\pi_1(V_gz))_{g \in G}$ is a factor map from $\mathcal{Z}$ to $\mathcal{Y}$. 
    To this end, we see that for any $F \in \mathscr{P}_f(G)$ and any $\mathcal{A} = \{A_f\}_{f \in F} \in \mathscr{B}^F$ we have

    \begin{alignat*}{2}
        \rho\left(\pi_2^{-1}C_\mathcal{A}\right)& = \rho\left(\left\{z \in Z\ |\ V_fz \in \pi_1^{-1}A_f\ \forall\ f \in F\right\}\right) = \rho\left(\bigcap_{f \in F}V_f^{-1}\pi_1^{-1}A_f\right) = \rho\left(\pi_1^{-1}\bigcap_{f \in F}T_f^{-1}A_f\right)\\
        &= \mu\left(\bigcap_{f \in F}T_f^{-1}A_f\right) = \nu(C_\mathcal{A}),
    \end{alignat*}
    so $\nu = (\pi_2)_*\rho$. We see that for all $h \in G$ and $\rho$-a.e. $z \in Z$, we have

    \begin{equation}
        \pi_2(V_{h}z) = (\pi_1(V_g(V_hz)))_{g \in G} = (\pi_1(V_{gh}z))_{g \in G} = R_h(\pi_1(V_gz))_{g \in G} = R_h\pi_2(z).
    \end{equation}

    Lastly, we need to show that $\pi_2$ is unique, so let $\pi_3:Z\rightarrow Y$ be any factor map satisfying $\pi_1 = \pi\circ\pi_3$. 
    We see that for $\rho$-a.e. $z \in Z$, we have $\pi(R_g\pi_3(z)) = \pi(\pi_3(V_gz)) = \pi_1(V_gz)$ for all $g \in G$. Consequently, for $\rho$-a.e. $z \in Z$ we have $\pi_3(z) = (\pi_1(V_gz))_{g \in G} = \pi_2(z)$.
\end{proof}
%%%%%%%%%%%%%%%%%%%%%%%%%%%%%%%%%%%%%%%%%%%%%%%%%%%%%%%%%%%%%%%%%%%%%%%%%%%%%%%%%%%%%%%%%%%%%%%%%%%%%%%%%%%%%%%%%%%%%%%%%%%%%%%%%%%
\subsection{Density Ramsey theory}
A fundamental result in density Ramsey theory is the Furstenberg Correspondence principle \cite[Theorem 1.1]{FurstenbergsProofOfSzemeredi}, which allows us to use tools from analysis and dynamical systems in order to prove combinatorial results. The correspondence princple has been generalized to the setting of countable amenable groups \cite[Theorem 5.8]{CombinatorialAndDiophantineApplicationsOfErgodicTheory}, and even countable left amenable semigroups \cite[Theorem 2.1]{NoncommutativeSchur}. Here we record a version of the correspondence principle for amenable groups that is suited to our needs in later sections.

\begin{theorem}[Furstenberg correspondence principle {}]\label{FurstenbergCorrespondencePrinciple}
    Let $G$ be a countable amenable group, let $\mathcal{F} = (F_n)_{n = 1}^\infty$ be a left F\o lner sequence, let $h:G\rightarrow G$ be a homomorphism, and let $E \subseteq G$ be such that $\overline{d}_{\mathcal{F}}(E) > 0$. There exists a $G$-system $(Y,\mathscr{A},\nu,(T_g)_{g \in G})$ and an $A \in \mathscr{A}$ with $\mu(A) = \overline{d}_{\mathcal{F}}(E)$ such that for any $k \in \mathbb{N}$ and any $g_1,\cdots,g_k \in G$, one has

    \begin{equation}
        \overline{d}_{\mathcal{F}}\left(E\cap h(g_1)^{-1}E\cap\cdots\cap h(g_k)^{-1}E\right) \ge \nu\left(A\cap T_{g_1}^{-1}A\cap\cdots\cap T_{g_k}^{-1}A\right).
    \end{equation}
\end{theorem}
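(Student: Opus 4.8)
The plan is to build the $G$-system from a measure-preserving $\mathbb{N}$-action (or rather an $S$-action for a suitable subsemigroup) associated to $E$ and then invoke Theorem \ref{GeneralNaturalExtension} to pass to an invertible $G$-extension. First I would run the classical Furstenberg correspondence construction at the level of the semigroup generated by the relevant elements: put $X = \{0,1\}^G$ with the product topology and Borel $\sigma$-algebra, let $\xi = \mathbbm{1}_E \in X$, and let the (left) translation action of $G$ on $X$ be $(S_g x)(k) = x(g^{-1}k)$ (or a right-translation variant, chosen so that $S$ acts on the orbit closure of $\xi$ by the maps we can control). Since $\overline{d}_{\mathcal{F}}(E) > 0$, pass to a subsequence of $\mathcal{F}$ along which $\frac{1}{|F_n|}\sum_{g\in F_n}\delta_{S_g\xi}$ converges weak-$*$ to a Borel probability measure $\nu_0$ on $X$; because $\mathcal{F}$ is a left Følner sequence this $\nu_0$ is invariant under the $G$-action, and a standard cylinder-set computation gives $\nu_0(A_0 \cap S_{g_1}^{-1}A_0 \cap \cdots \cap S_{g_k}^{-1}A_0) \le \overline{d}_{\mathcal{F}}(E \cap g_1^{-1}E \cap \cdots \cap g_k^{-1}E)$ where $A_0 = \{x : x(e) = 1\}$ is the time-zero clopen set, and in particular $\nu_0(A_0) = \overline{d}_{\mathcal{F}}(E)$ after possibly refining the subsequence so this limit exists.

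The subtlety — and I expect this to be the main obstacle — is the homomorphism $h$ and the appearance of $h(g)^{-1}E$ rather than $g^{-1}E$. Here I would invoke Lemma \ref{ExtendingHomomorphismsLemma} in spirit: one wants the dynamics to be driven by the elements $h(g)$, not by $g$ itself. Concretely, I would take the $G$-system already in hand, $(X,\mathscr{B},\nu_0,(S_g)_{g\in G})$, and pre-compose with $h$: define $T_g := S_{h(g)}$. Since $h$ is a homomorphism and $S$ is an action, $(T_g)_{g\in G}$ is again a measure-preserving $G$-action on $(X,\mathscr{B},\nu_0)$; and the cylinder-set estimate above, specialized to the group elements $h(g_1),\dots,h(g_k)$, yields exactly
\[
\nu_0\!\left(A_0 \cap T_{g_1}^{-1}A_0 \cap \cdots \cap T_{g_k}^{-1}A_0\right) \;\le\; \overline{d}_{\mathcal{F}}\!\left(E \cap h(g_1)^{-1}E \cap \cdots \cap h(g_k)^{-1}E\right),
\]
which is the desired inequality with $A := A_0$ (note $\nu_0(A_0) = \overline{d}_{\mathcal{F}}(E)$ is preserved). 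If one prefers an inequality in the other direction for the measure of $A$, or wants $G$ acting rather than just $S$, one first performs the correspondence for the semigroup action and then applies Theorem \ref{GeneralNaturalExtension} to obtain an invertible $G$-extension $\mathcal{Y}$, pulling $A$ back along the factor map; the intersection inequality only improves (the factor has at least as much correlation structure), and measure-preservation of the factor map keeps $\nu(A) = \overline{d}_{\mathcal{F}}(E)$.

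The remaining routine points are: (i) checking the weak-$*$ limit exists along a subsequence — this is compactness of the space of Borel probability measures on the compact metrizable space $X$; (ii) verifying $G$-invariance of $\nu_0$ from the Følner property, which is the usual $\frac{1}{|F_n|}\sum_{g\in F_n}(f(S_{g_0}S_g\xi) - f(S_g\xi)) \to 0$ estimate using $|g_0 F_n \triangle F_n|/|F_n| \to 0$ and continuity of $f$; and (iii) the cylinder computation identifying $\nu_0$ of a finite intersection of translates of $A_0$ with a limsup of normalized counts of $E$ and its translates, where the direction of the inequality comes from $\limsup$ of a subsequence being at most the full $\limsup$. None of these requires anything beyond what is already assembled in Section 2; the only genuinely new ingredient relative to the classical statement is the bookkeeping to route the dynamics through $h$, which is exactly why Lemma \ref{ExtendingHomomorphismsLemma} was recorded.
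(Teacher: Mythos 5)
Your proposal is correct and follows essentially the same route as the paper: both take a weak-$*$ limit of normalized counting measures along a subsequence of $\mathcal{F}$ realizing the limsup, and both handle the homomorphism by letting $h(g)$ (rather than $g$) drive the translation dynamics, the paper realizing this on $\beta G$ via the extension of $g \mapsto h(s)g$ where you use the shift on $\{0,1\}^G$ precomposed with $h$ — a purely cosmetic difference. The detour through Theorem \ref{GeneralNaturalExtension} is unnecessary here since $G$ is already a group and $T_g := S_{h(g)}$ is a genuine $G$-action, but you correctly present that step as optional.
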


\begin{proof}
Since we only use facts about the Stone-\v{C}ech compactification of a semigroup in this proof and the proof of Lemma \ref{EnhancedPartitionLemmaWithHomogeneity}, we refer the reader to \cite{AlgebraInTheSCC} for background. By replacing $(F_n)_{n = 1}^\infty$ with a subsequence if needed, we will assume without loss of generality that

\begin{equation}
    \overline{d}_{\mathcal{F}}(E) = \lim_{n\rightarrow\infty}\frac{|E\cap F_n|}{|F_n|}.
\end{equation}
We let $\nu_n$ be the measure on $\beta G$ given by $\nu_n(\overline{A}) = \frac{|A\cap F_n|}{|F_n|}$ for all $A \subseteq G$, and we let $\nu$ be any weak$^*$ limit point of $\{\nu_n\}_{n = 1}^\infty$. We see that $\nu(\overline{E}) = \overline{d}_{\mathcal{F}}(E)$. For each $s \in G$, let $R_s:G\rightarrow G$ be given by $R_s(g) = h(s)g$, and let $T_g:\beta G\rightarrow\beta G$ be the unique continuous extension of $R_g$. Since $\mathcal{F}$ is a left F\o lner sequence, we see that $\nu$ is an invariant measure for each $T_g$. We let $\mathscr{A}$ be the $\sigma$-algebra generated by $\{T_g\overline{E}\}_{g \in G}$. Since $\mathscr{A}$ is countably generated, $(\beta G,\mathscr{A},\nu)$ is isomorphic to a standard probability space, so $(\beta G,\mathscr{A},\nu,(T_g)_{g \in G})$ is a $G$-system. Lastly, we see that for any $k \in \mathbb{N}$ and any $g_1,\cdots,g_k \in G$ we have

\begin{alignat*}{2}
    &\nu\left(\overline{E}\cap T_{g_1}^{-1}\overline{E}\cap\cdots\cap T_{g_k}^{-1}\overline{E}\right) = \lim_{m\rightarrow\infty}\frac{|E\cap R_{g_1}^{-1}E\cap\cdots\cap R_{g_k}^{-1}E\cap F_{n_m}|}{|F_{n_m}|}\\
    \le&\overline{d}_{\mathcal{F}}\left(E\cap h(g_1)^{-1}E\cap\cdots\cap h(g_k)^{-1}E\right).
\end{alignat*}
\end{proof}

The following result follows immediately from the work of Austin \cite{AmenableSzemeredi}, but we give a proof for the sake of completeness. For a F\o lner sequence $\mathcal{F} = (F_n)_{n = 1}^\infty$ in an amenable group $G$, we let $\mathcal{F}^d$ denote the F\o lner sequence $(F_n^d)_{n = 1}^\infty$ in $G^d$.

\begin{theorem}\label{ModifiedAmenableSzemeredi}
    Let $G$ be a countable amenable group, let $d \in \mathbb{N}$, let $\phi_1,\cdots,\phi_d:G\rightarrow G$ be endomorphisms, and let $\mathcal{F} = (F_n)_{n = 1}^\infty$ be a left F\o lner sequence in $G$. If $E \subseteq G^d$ satisfies $\overline{d}_{\mathcal{F}^d}(E) > 0$, then 

    \begin{alignat*}{2}
        \overline{d}_{\mathcal{F}}\bigg(\bigg\{&g \in G\ |\ \overline{d}_{\mathcal{F}^d}\bigg(\bigg\{ (x_1,\cdots,x_d) \in G^d\text{ such that }\left(\phi_1(g)x_1,x_2,\cdots,x_d\right),\\
        &\left(\phi_1(g)x_1,\phi_2(g)x_2,\cdots,x_d\right),\cdots,\left(\phi_1(g)x_1,\phi_2(g)x_2,\cdots,\phi_d(g)x_d\right) \in E\bigg\}\bigg) > 0\bigg\}\bigg) > 0
    \end{alignat*}
\end{theorem}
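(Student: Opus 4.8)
The plan is to deduce Theorem \ref{ModifiedAmenableSzemeredi} from the multidimensional polynomial Szemerédi theorem over countable amenable groups (Austin's work, \cite{AmenableSzemeredi}), using a direct-integral / Furstenberg-correspondence argument together with a Fubini-type averaging step. The statement has the shape ``the set of $g$ for which a certain density-positivity holds has positive density,'' which is a classic consequence of the fact that an $L^1$ average of nonnegative functions is nonnegative on a positive-measure set whenever its integral is positive. So the real content is (i) setting up the dynamical system attached to $E \subseteq G^d$, (ii) identifying the relevant multiple-recurrence average and showing its $\liminf$ is positive via Austin's theorem, and (iii) unwinding the positivity back to the combinatorial statement via the correspondence principle of Theorem \ref{FurstenbergCorrespondencePrinciple}.

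First I would apply the Furstenberg correspondence principle (Theorem \ref{FurstenbergCorrespondencePrinciple}) to the group $G^d$, the Følner sequence $\mathcal{F}^d$, the set $E$, and the homomorphism $h = \mathrm{id}_{G^d}$, obtaining a $G^d$-system $(Y,\mathscr{A},\nu,(S_{\vec{g}})_{\vec{g} \in G^d})$ and a set $A \in \mathscr{A}$ with $\nu(A) = \overline{d}_{\mathcal{F}^d}(E) > 0$ such that intersections $E \cap \vec{g_1}^{-1}E \cap \cdots$ dominate the corresponding $\nu$-measures of intersections of translates of $A$. Next, for each fixed $g \in G$ consider in $G^d$ the $d$ elements $\vec{u}_i(g) := (\phi_1(g),\dots,\phi_i(g),e,\dots,e)$ for $i = 1,\dots,d$; the combinatorial configuration in the statement, evaluated at a point $(x_1,\dots,x_d)$, is precisely the requirement that $(x_1,\dots,x_d) \in E \cap \vec{u}_1(g)^{-1}E \cap \cdots \cap \vec{u}_d(g)^{-1}E$ after matching conventions. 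Thus by the correspondence principle,
\begin{equation*}
\overline{d}_{\mathcal{F}^d}\bigl(\{\vec{x} : (\phi_1(g)x_1,x_2,\dots,x_d),\dots,(\phi_1(g)x_1,\dots,\phi_d(g)x_d) \in E\}\bigr) \ge \nu\bigl(A \cap S_{\vec{u}_1(g)}^{-1}A \cap \cdots \cap S_{\vec{u}_d(g)}^{-1}A\bigr),
\end{equation*}
so it suffices to prove that the set of $g \in G$ for which the right-hand side is strictly positive has positive $\overline{d}_{\mathcal{F}}$-density. Here $g \mapsto \vec{u}_i(g)$ are homomorphisms $G \to G^d$, so the maps $g \mapsto S_{\vec{u}_i(g)}$ give a measure-preserving action of $G$ on $Y$ by commuting families, which is the setting of Austin's multidimensional amenable Szemerédi theorem: along a suitable (tempered, say, via Theorem \ref{CongruentTilingTheorem} or by passing to a subsequence) Følner sequence the averages $\frac{1}{|F_N|}\sum_{g \in F_N} \nu(A \cap S_{\vec{u}_1(g)}^{-1}A \cap \cdots \cap S_{\vec{u}_d(g)}^{-1}A)$ have positive $\liminf$, bounded below by a constant $c = c(\nu(A)) > 0$.

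Finally, to convert this average lower bound into the claimed density statement, let $f(g) := \nu(A \cap S_{\vec{u}_1(g)}^{-1}A \cap \cdots \cap S_{\vec{u}_d(g)}^{-1}A) \in [0,1]$ and let $B := \{g \in G : f(g) > 0\}$; since $f \le \mathbbm{1}_B$ pointwise, for every $N$ we have $\frac{1}{|F_N|}\sum_{g \in F_N} \mathbbm{1}_B(g) \ge \frac{1}{|F_N|}\sum_{g \in F_N} f(g)$, and taking $\limsup$ gives $\overline{d}_{\mathcal{F}}(B) \ge c > 0$, which is exactly the conclusion. I expect the main obstacle to be a bookkeeping one rather than a deep one: carefully matching the asymmetric list of configurations in the theorem statement with intersections of translates of $A$ (the configurations are ``nested'' — the first coordinate is shifted in every term, the second in all but the first, etc.), and confirming that Austin's theorem applies to the system $(Y,\nu,(S_{\vec{u}_i(g)}\nobreak)_{g \in G})$ with the correct Følner sequence; one should either invoke the tempered refinement or note that Austin's result is stated for the class of Følner sequences we need. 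A secondary technical point is that Austin's theorem gives positivity of a $\liminf$ of the averages, which is exactly what the argument above consumes, so no uniformity in $g$ beyond positivity of the average is required; ergodic decomposition (Theorem \ref{ErgodicDecomposition}) can be invoked if one prefers to reduce to the ergodic case first, though it is not strictly necessary.
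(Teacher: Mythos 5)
Your proposal follows essentially the same route as the paper: the Furstenberg correspondence principle applied to $G^d$ with the F\o lner sequence $\mathcal{F}^d$, Austin's Theorem B for commuting measure-preserving actions of an amenable group to get positivity of the averages, and the trivial pointwise bound $f \le \mathbbm{1}_{\{f>0\}}$ to convert a positive average into positive upper density. The only difference is cosmetic --- the paper feeds the endomorphisms into the correspondence principle via $h(g_1,\dots,g_d) = (\phi_1(g_1),\dots,\phi_d(g_d))$ and then uses the plain coordinate actions $T_{i,g}$, whereas you keep $h = \mathrm{id}$ and put the $\phi_i$ into the group elements $\vec{u}_i(g)$; just be aware that the maps $g\mapsto S_{\vec{u}_i(g)}$ for different $i$ need not commute when $G$ is nonabelian, so to invoke Austin you should decompose $S_{\vec{u}_i(g)} = V_{1,g}\cdots V_{i,g}$ with $V_{j,g} := S_{(e,\dots,e,\phi_j(g),e,\dots,e)}$, which are genuinely commuting $G$-actions and put your averages in exactly the nested-product form required by Austin's theorem.
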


\begin{proof}
    We apply Theorem \ref{FurstenbergCorrespondencePrinciple} with $S = G^d$, the left F\o lner sequence $\mathcal{F}^d$, and the endomorphism $h(g_1,\cdots,g_d) = (\phi_1(g_1),\cdots,\phi_d(g_d))$ to obtain a $G^d$-system $(Y,\mathscr{A},\nu,(T_g)_{g \in G^d})$ and a $A \in \mathscr{A}$ such that for all $k \in \mathbb{N}$ and all $g_1,\cdots,g_k \in G^d$ we have
    
    \begin{equation}
        \overline{d}_{\mathcal{F}^d}\left(E\cap h(g_1)^{-1}E\cap\cdots\cap h(g_k)^{-1}E\right) \ge \nu\left(A\cap T_{g_1}^{-1}A\cap\cdots\cap T_{g_k}^{-1}A\right)
    \end{equation}
    For $1 \le i \le d$ and $g \in G$, we write $T_{i,g}$ to denote 

    \begin{equation}
        T_{(\underbrace{0,\cdots,0}_{i-1},g,0,\cdots,0)},
    \end{equation}
    and we observe that $\{(T_{i,g})_{g \in G}\}_{i = 1}^d$ is a family of $d$ commuting measure preserving actions of $G$ on $Y$. We now invoke \cite[Theorem B]{AmenableSzemeredi}, which tells us that 

    \begin{equation}\label{ModifyingAustinEquation}
        \lim_{n\rightarrow\infty}\frac{1}{|F_n|}\sum_{g \in F_n}\nu\left(T_{1,g}^{-1}A\cap \left(T_{1,g}^{-1}T_{2,g}^{-1}\right)A\cap\cdots\cap\left(T_{1,g}^{-1}\cdots T_{d,g}^{-1}\right)A\right) > 0.
    \end{equation}
    The desired result now follows from the observation that

    \begin{alignat*}{2}
        &(\phi_1(g)x_1,x_2,\cdots,x_d),\cdots,(\phi_1(g)x_1,\phi_2(g)x_2,\cdots,\phi_d(g)x_d) \in E\text{ if and only if}\\
        &\bigcap_{i = 1}^dh(\underbrace{0,\cdots,0}_{i-1},g,0,\cdots,0)^{-1}E \neq \emptyset
    \end{alignat*}
\end{proof}

We also need the following two mild variations of the Bergelson intersectivity lemma. The proofs of both lemmas are almost identical to those of Lemma 5.10 and Corollary 5.11 of \cite{CombinatorialAndDiophantineApplicationsOfErgodicTheory}, but we include the proofs nonetheless for the sake of completeness.

\begin{lemma}[Bergelson Intersectivity Lemma]\label{ModifiedContinuousBergelsonIntersectivityLemma}
    Let $\left(X,\mathscr{B},\mu,(T_r)_{r \in R\setminus\{0\}}\right)$ be a measure preserving system with $R\setminus\{0\}$ acting multiplicatively, and let $\mathcal{F} = (F_n)_{n = 1}^\infty$ be a F\o lner sequence in $(R,+)$. If $(A_r)_{r \in R} \subseteq \mathscr{B}$ is such that $\mu(A_r) \ge \alpha$ for some $\alpha > 0$ and all $r \in R$, then there exists $P \subseteq R$ with $\overline{d}_{\mathcal{F}}(P) > 0$, such that for all $r_1,\cdots,r_k \in P$, we have $\mu\left(\bigcap_{i = 1}^kA_{r_i}\right) > 0$.\footnote{The only difference between this lemma and \cite[Lemma 5.10]{CombinatorialAndDiophantineApplicationsOfErgodicTheory} is that the F\o lner sequence $\mathcal{F}$ is additive rather than multiplicative. However, the fact that this modification could be made was already implied by the discussion preceding \cite[Lemma 5.10]{CombinatorialAndDiophantineApplicationsOfErgodicTheory}.}
\end{lemma}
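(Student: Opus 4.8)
The plan is to run the standard argument for the Bergelson intersectivity lemma, which is entirely measure-theoretic; the multiplicative action $(T_r)_{r\in R\setminus\{0\}}$ plays no role in this statement beyond providing the ambient probability space. For $x\in X$ set $R(x):=\{r\in R\ |\ x\in A_r\}$, and note that for each $n$ the function $x\mapsto \frac{|R(x)\cap F_n|}{|F_n|}=\frac{1}{|F_n|}\sum_{r\in F_n}\mathbbm{1}_{A_r}(x)$ is measurable and takes values in $[0,1]$, with $\int_X \frac{|R(x)\cap F_n|}{|F_n|}\,d\mu(x)=\frac{1}{|F_n|}\sum_{r\in F_n}\mu(A_r)\ge\alpha$ by Tonelli's theorem.

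First I would invoke the reverse Fatou lemma---legitimate because the integrands are dominated by the constant $1$ on a probability space---to obtain
\[
\int_X \overline{d}_{\mathcal F}(R(x))\,d\mu(x)=\int_X\limsup_{n\to\infty}\frac{|R(x)\cap F_n|}{|F_n|}\,d\mu(x)\ \ge\ \limsup_{n\to\infty}\frac{1}{|F_n|}\sum_{r\in F_n}\mu(A_r)\ \ge\ \alpha .
\]
Since $\overline{d}_{\mathcal F}(R(x))\le 1$ for every $x$, the set $W:=\{x\in X\ |\ \overline{d}_{\mathcal F}(R(x))>0\}$ satisfies $\mu(W)\ge\alpha>0$.

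The key step is to extract $P$ by choosing one good point rather than by an incremental construction. Since $\overline{d}_{\mathcal F}$ depends only on the intersection with the countable set $\bigcup_{n\ge 1}F_n$, we may assume without loss of generality that $R$ is countable (e.g.\ by replacing $R$ with $\bigcup_{n\ge 1}F_n$; in our applications $R$ is countable to begin with), so there are only countably many finite subsets of $R$. Let $\mathcal N$ be the family of finite $F\subseteq R$ with $\mu\left(\bigcap_{r\in F}A_r\right)=0$ and put $N:=\bigcup_{F\in\mathcal N}\bigcap_{r\in F}A_r$, a countable union of null sets, hence null. Choose any $x_0\in W\setminus N$ and set $P:=R(x_0)$. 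Then $\overline{d}_{\mathcal F}(P)>0$ because $x_0\in W$; and given $r_1,\dots,r_k\in P$ we have $x_0\in\bigcap_{i=1}^k A_{r_i}$, so $\{r_1,\dots,r_k\}\notin\mathcal N$ since $x_0\notin N$, i.e.\ $\mu\left(\bigcap_{i=1}^k A_{r_i}\right)>0$, which is exactly the conclusion.

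I do not expect a serious obstacle: the two points requiring (minor) care are remembering the domination hypothesis in the reverse Fatou step and the observation that a set of positive measure cannot be swallowed by the countably many ``bad'' null sets $\bigcap_{r\in F}A_r$. For comparison, one could instead build $P$ recursively together with a nested sequence of positive-measure sets $X\supseteq B_1\supseteq B_2\supseteq\cdots$ with $B_j\subseteq A_{r_j}$, but the one-point argument is shorter and keeps the role of the uniform bound $\mu(A_r)\ge\alpha$ transparent.
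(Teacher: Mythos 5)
Your proposal is correct and follows essentially the same route as the paper: the same averages $f_n(x)=\frac{1}{|F_n|}\sum_{r\in F_n}\mathbbm{1}_{A_r}(x)$, the same reverse-Fatou step to find a point $x_0$ whose return set $P=\{r\ |\ x_0\in A_r\}$ has positive upper $\mathcal F$-density, and the same device of discarding a null set (your $N$ is exactly the paper's preliminary deletion of a measure-zero set so that $\mu(A_F)>0$ iff $A_F\neq\emptyset$). The only cosmetic difference is that the paper records the sharper bound $\overline{d}_{\mathcal F}(P)\ge\alpha$ by passing to a subsequence realizing the limsup at $x_0$, whereas you only extract positivity, which is all the statement requires.
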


\begin{proof}
    For any finite set $F \subseteq R$, let $A_F = \bigcap_{r \in F}A_r$. Deleting, if needed, a set of measure zero from $\bigcup_{r \in R}A_r$, we assume without loss of generality that $\mu(A_F) > 0$ if and only if $A_F \neq \emptyset$. We define

    \begin{equation}
        f_n(x) = \frac{1}{|F_n|}\sum_{r \in F_n}\mathbbm{1}_{A_r}(x),
    \end{equation}
    and we observe that $0 \le f_n \le 1$ and $\int_Xf_nd\mu \ge \alpha$ for all $n \in \mathbb{N}$. Let $f(x) = \limsup_{n\rightarrow\infty}f_n(x)$. By Fatou's Lemma, we have

    \begin{equation}
        \int_Xfd\mu = \int_X\limsup_{n\rightarrow\infty}f_nd\mu \ge \limsup_{n\rightarrow\infty}\int_Xf_nd\mu \ge \alpha,
    \end{equation}
    so there exists some $x_0 \in X$ for which $f(x_0) \ge \alpha$. Consequently, we may pick a sequence $(n_i)_{i = 1}^\infty$ for which

    \begin{equation}\label{EquationForModifiedIntersectivity}
        \lim_{i\rightarrow\infty}f_{n_i}(x_0) = f(x_0) \ge \alpha.
    \end{equation}
    Let $P = \{r \in R\ |\ x_0 \in A_r\}$, and observe that Equation \eqref{EquationForModifiedIntersectivity} tells us that

    \begin{equation}
        \lim_{i\rightarrow\infty}\frac{|P\cap F_{n_i}|}{|F_{n_i}|} \ge \alpha,
    \end{equation}
    so $\overline{d}_{\mathcal{F}}(P) \ge \alpha$. Lastly, we see that for any finite $F \subseteq P$, we have $x_0 \in A_F$, hence $\mu(A_F) > 0$.
\end{proof}

\begin{lemma}\label{ModifiedDiscreteBergelsonIntersectivityLemma}
        Let $R$ be a countable integral domain, let $\mathcal{F} = (F_n)_{n = 1}^\infty$ be a F\o lner sequence in $(R,+)$, let $\Phi = (\Phi_n)_{n = 1}^\infty$ be a F\o lner sequence in $(R\setminus\{0\},\cdot)$, and let $E \subseteq R\setminus\{0\}$ be be such that $\overline{d}_{\Phi}(E) = c > 0$. Then there exists a set $P \subseteq G$ with $\overline{d}_{\mathcal{F}}(P) \ge c$, such that for any $g_1,\cdots,g_m \in P$, we have $\overline{d}_{\Phi}\left(\bigcap_{i = 1}^mg_i^{-1}E\right) > 0.$
\end{lemma}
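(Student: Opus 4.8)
The plan is to reduce the statement to the measure-theoretic Bergelson intersectivity lemma (Lemma~\ref{ModifiedContinuousBergelsonIntersectivityLemma}) via the Furstenberg correspondence principle; the twist is that the correspondence principle gets applied to the \emph{multiplicative} structure of $R\setminus\{0\}$, while the intersectivity step is carried out along the given \emph{additive} F\o lner sequence $\mathcal{F}$.

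First I would pass to a multiplicative measure preserving system. The semigroup $(R\setminus\{0\},\cdot)$ is countable, cancellative and abelian, hence left amenable, so by Theorem~\ref{EmbeddingSemigroupsIntoGroups} it embeds as a thick subset in its group of right quotients $K$ — which is just the multiplicative group of the fraction field of $R$ — and $K$ is amenable; moreover $\Phi$ is also a F\o lner sequence in $K$ by the estimate recorded just before Theorem~\ref{EmbeddingSemigroupsIntoGroups}, and $\overline{d}_\Phi(E)$, computed in $K$, still equals $c$. Applying Theorem~\ref{FurstenbergCorrespondencePrinciple} to $K$, the F\o lner sequence $\Phi$, the homomorphism $h=\mathrm{id}_K$ and the set $E$ yields a $K$-system $(X,\mathscr{B},\mu,(T_r)_{r\in K})$ and $A\in\mathscr{B}$ with $\mu(A)=c$ such that
\[
    \overline{d}_\Phi\big(g_1^{-1}E\cap\cdots\cap g_m^{-1}E\big)\ \ge\ \mu\big(T_{g_1}^{-1}A\cap\cdots\cap T_{g_m}^{-1}A\big)
\]
for all $g_1,\dots,g_m\in K$. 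I want this inequality in exactly this form, i.e.\ without $E$ (resp.\ $A$) being forced into the intersections: this is immediate from the proof of Theorem~\ref{FurstenbergCorrespondencePrinciple}, where $A=\overline{E}$, each $T_g$ is the continuous extension to $\beta K$ of left multiplication by $h(g)$, and $\overline{B_1}\cap\overline{B_2}=\overline{B_1\cap B_2}$ in $\beta K$ because ultrafilters are closed under finite intersections.

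Next I would run the intersectivity argument along $\mathcal{F}$. Restrict the action to $(T_r)_{r\in R\setminus\{0\}}$ to get a system with $R\setminus\{0\}$ acting multiplicatively, and set $A_r:=T_r^{-1}A$ for $r\in R\setminus\{0\}$ and $A_0:=X$, so that $\mu(A_r)\ge c$ for every $r\in R$. Lemma~\ref{ModifiedContinuousBergelsonIntersectivityLemma} — whose proof in fact produces a set of $\mathcal{F}$-density at least $\alpha$, here $\alpha=c$ — applied to this system, to $\mathcal{F}$, and to the family $(A_r)_{r\in R}$ yields $P\subseteq R$ with $\overline{d}_{\mathcal{F}}(P)\ge c$ such that $\mu\big(\bigcap_{i=1}^m A_{g_i}\big)>0$ for all $g_1,\dots,g_m\in P$. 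Since F\o lner sets in an infinite group have sizes tending to infinity, deleting $0$ from $P$ does not change $\overline{d}_{\mathcal{F}}(P)$, so I may assume $P\subseteq R\setminus\{0\}$. Then for any $g_1,\dots,g_m\in P$ one has $\bigcap_{i=1}^m A_{g_i}=\bigcap_{i=1}^m T_{g_i}^{-1}A$, which has positive measure, whence $\overline{d}_\Phi\big(\bigcap_{i=1}^m g_i^{-1}E\big)>0$ by the displayed inequality; this is the claim.

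The hard part — or at least the only point that needs genuine care — is precisely the use, in the first step, of the correspondence inequality \emph{without} $E$ in the intersection. The inequality as literally stated in Theorem~\ref{FurstenbergCorrespondencePrinciple} only bounds $\overline{d}_\Phi(E\cap g_1^{-1}E\cap\cdots\cap g_m^{-1}E)$ below by $\mu(A\cap T_{g_1}^{-1}A\cap\cdots\cap T_{g_m}^{-1}A)$, and that weaker version is genuinely insufficient here: the set $P$ furnished by Lemma~\ref{ModifiedContinuousBergelsonIntersectivityLemma} need not contain $1$ — equivalently, $A$ need not be one of the sets being intersected — and $\mu\big(A\cap\bigcap_i T_{g_i}^{-1}A\big)$ can vanish even when $\mu\big(\bigcap_i T_{g_i}^{-1}A\big)>0$, so one really must extract the sharper inequality from the proof of the correspondence principle as indicated above. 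Everything else is routine bookkeeping matching the additive and multiplicative F\o lner structures, which is exactly the role Lemma~\ref{ModifiedContinuousBergelsonIntersectivityLemma} was isolated to play.
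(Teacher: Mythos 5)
Your proof is correct and follows essentially the same route as the paper: apply the Furstenberg correspondence principle to the multiplicative group $K^\times$ with the F\o lner sequence $\Phi$, then run Lemma \ref{ModifiedContinuousBergelsonIntersectivityLemma} with the sets $A_r = T_r^{-1}A$ along the additive F\o lner sequence $\mathcal{F}$. The two points you flag as needing care — that one must use the correspondence inequality without the leading $E$ and $A$ terms (extracted from the proof of Theorem \ref{FurstenbergCorrespondencePrinciple}), and that the proof of Lemma \ref{ModifiedContinuousBergelsonIntersectivityLemma} actually yields $\overline{d}_{\mathcal{F}}(P)\ge\alpha$ rather than merely $\overline{d}_{\mathcal{F}}(P)>0$ — are both implicitly relied upon by the paper's own proof, so your more explicit treatment is a faithful (indeed slightly more careful) rendering of the same argument.
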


\begin{proof}
        Let $K$ denote the field of fractions of $R$, and observe that $\Phi$ is still a F\o lner sequence in $(K^\times,\cdot)$. The Furstenberg correspondence principle gives us a measure preserving system $(X,\mathscr{B},\mu,(T_k)_{k \in K^\times})$ with $K^\times$ acting multiplicatively, and a set $A \in \mathscr{B}$ such that for all $k_1,\cdots,k_m \in K^\times$ we have

        \begin{equation}
            \overline{d}_{\Phi}\left(\bigcap_{i = 1}^mk_i^{-1}E\right) \ge \mu\left(\bigcap_{i = 1}^mT_{k_i}^{-1}A\right).
        \end{equation}
        Consequently, it suffices to first take $P' \subseteq K^\times$ as given by Lemma \ref{ModifiedContinuousBergelsonIntersectivityLemma}, and then set $P = P'\cap R$ since $\overline{d}_{\Phi}(P) = \overline{d}_{\Phi}(P')$.
\end{proof}

Given an integral domain $R$, a collection $\mathcal{A} \subseteq \mathscr{P}_f(R\setminus\{0\})$ is \textbf{multiplicatively translation invariant} if for any $A \in \mathcal{A}$ and any $r \in R\setminus\{0\}$, we have $rA \in \mathcal{A}$.

\begin{corollary}\label{CorollaryToTheIntersectivityLemma}
    Let $R$ be a countable integral domain, and let $\mathcal{A} \subseteq \mathscr{P}_f(R)$ be multiplicatively translation invariant.
    \begin{enumerate}[(i)]
        \item Let $\delta \in [0,1)$ be arbitrary. If for every $B \subseteq R$ with $d^*(B) > \delta$ there exists $A \in \mathcal{A}$ with $A \subseteq B$, then for every $B \subseteq R\setminus\{0\}$ with $d^*_\times(B) > \delta$, there exists $A \in \mathcal{A}$ with $A \subseteq B$.

        \item Let $\delta \in (0,1]$ be arbitrary. If for every $B \subseteq R$ with $d^*(B) \ge \delta$ there exists $A \in \mathcal{A}$ with $A \subseteq B$, then for every $B \subseteq R\setminus\{0\}$ with $d^*_\times(B) \ge \delta$, there exists $A \in \mathcal{A}$ with $A \subseteq B$.
    \end{enumerate}
\end{corollary}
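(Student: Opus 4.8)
The plan is to reduce both parts to a single scheme: the hypotheses concern additively large subsets of $R$, while the conclusions concern multiplicatively large subsets of $R\setminus\{0\}$, so the task is to pass from the additive to the multiplicative world. The bridge is the Bergelson intersectivity lemma (Lemma \ref{ModifiedDiscreteBergelsonIntersectivityLemma}), and the one place where the hypothesis ``$\mathcal{A}$ is multiplicatively translation invariant'' is used is to push a configuration found inside an auxiliary, additively large set into $B$ by a single right multiplication. I would run cases (i) and (ii) in parallel, keeping track of whether the relevant density inequalities are strict.

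Here are the steps. Fix $B \subseteq R\setminus\{0\}$ with $d^*_\times(B) > \delta$ in case (i), resp.\ $d^*_\times(B) \ge \delta$ in case (ii). Pick a F\o lner sequence $\Phi$ in $(R\setminus\{0\},\cdot)$ with $c := \overline{d}_\Phi(B) > \delta$ in case (i) — immediate from $d^*_\times$ being a supremum — resp.\ with $c := \overline{d}_\Phi(B) \ge \delta$ in case (ii); note $c > 0$ in both cases. Choose any F\o lner sequence $\mathcal{F}$ in the countable abelian group $(R,+)$ and feed $E := B$ into Lemma \ref{ModifiedDiscreteBergelsonIntersectivityLemma}: it returns $P \subseteq R$ with $\overline{d}_{\mathcal{F}}(P) \ge c$ such that $\overline{d}_\Phi\!\left(\bigcap_{i=1}^{m} g_i^{-1}B\right) > 0$ whenever $g_1,\dots,g_m \in P$. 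Since $d^*(P) \ge \overline{d}_{\mathcal{F}}(P) \ge c$, we have $d^*(P) > \delta$ in case (i), resp.\ $d^*(P) \ge \delta$ in case (ii), so the relevant hypothesis applies to $P$ and produces $A \in \mathcal{A}$ with $A \subseteq P$. Writing $A = \{g_1,\dots,g_m\} \subseteq R\setminus\{0\}$ (as $\mathcal{A} \subseteq \mathscr{P}_f(R\setminus\{0\})$), the conclusion of Lemma \ref{ModifiedDiscreteBergelsonIntersectivityLemma} gives that $\bigcap_{i=1}^{m} g_i^{-1}B$ is non-empty; any $s$ in it lies in $R\setminus\{0\}$ and satisfies $g_i s \in B$ for all $i$, i.e.\ $As \subseteq B$, while $As \in \mathcal{A}$ by multiplicative translation invariance. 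This $As$ is the configuration we want, so (i) and (ii) follow.

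The only genuinely non-formal point is the boundary case of (ii): when $d^*_\times(B) = \delta$ exactly we need a F\o lner sequence $\Phi$ realizing this value, i.e.\ we need the supremum defining $d^*_\times$ to be attained. I would establish this from Lemma \ref{AlternativeCharacterizationOfUBD}: starting from an arbitrary F\o lner sequence $(G_n)_{n=1}^\infty$ of $(R\setminus\{0\},\cdot)$, that lemma produces $s_n \in R\setminus\{0\}$ with $|G_n s_n \cap B| \ge (d^*_\times(B)-\tfrac{1}{n})|G_n|$; right multiplication by $s_n$ is injective by cancellativity, so $G_n s_n$ is exactly as invariant as $G_n$, whence $(G_n s_n)_{n=1}^\infty$ is again a F\o lner sequence, now with $\overline{d}_{(G_n s_n)}(B) \ge d^*_\times(B)$ and therefore equal to it.

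Beyond that, everything is bookkeeping: checking that $s \in R\setminus\{0\}$ (automatic, since $g_i^{-1}B$ is formed inside the semigroup $(R\setminus\{0\},\cdot)$), that $\mathcal{F}$ exists (countable abelian groups are amenable), and that the strict versus non-strict inequalities line up correctly. I do not anticipate any serious obstacle; the content sits entirely in the intersectivity lemma and in the one-line observation that $\mathcal{A}$ is closed under the right multiplication $A \mapsto As$.
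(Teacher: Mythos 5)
Your proposal is correct and follows essentially the same route as the paper: apply Lemma \ref{ModifiedDiscreteBergelsonIntersectivityLemma} to $B$, feed the resulting additively large set $P$ into the hypothesis to extract $A\subseteq P$, and then use a point of $\bigcap_i g_i^{-1}B$ together with multiplicative translation invariance to land $As$ inside $B$. The paper dismisses part (ii) as ``similar''; your observation that one must first realize $d^*_\times(B)$ as $\overline{d}_\Phi(B)$ for an actual F\o lner sequence (via Lemma \ref{AlternativeCharacterizationOfUBD} and right-translating a given F\o lner sequence) is exactly the detail being elided there, and your argument for it is sound.
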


\begin{proof}
    We only give the proof of (i) since the proof of (ii) is similar. Let $B \subseteq R\setminus\{0\}$ be such that $\overline{d}_{\Phi}(B) > \delta$ for some F\o lner sequence $\Phi$ in $(R\setminus\{0\},\cdot)$. Let $\mathcal{F}$ be an arbitrary F\o lner sequence in $(R,+)$, and let $P$ be given by Lemma \ref{ModifiedDiscreteBergelsonIntersectivityLemma}. Since $\overline{d}_{\mathcal{F}}(P) = \overline{d}_{\Phi}(B) > \delta$, let $A = \{a_i\}_{i = 1}^k \in \mathcal{A}$ be such that $A \subseteq P$. Since $\overline{d}_{\Phi}\left(\bigcap_{i = 1}^ka_i^{-1}E\right) > 0$, let $e \in \bigcap_{i = 1}^ka_i^{-1}E$ be arbitrary, and observe that $eA \subseteq E$.
\end{proof}

\begin{lemma}\label{DensityLemmaWithHomogeneity}
    Let $R$ be a countably infinite integral domain with field of fractions $K$. For any $m \in \mathbb{N}$ and any $k_1,\cdots,k_m \in K^\times$ we have the following:
    \begin{enumerate}[(i)]
        \item If $A \subseteq R$ is such that $d^*(A) > 0$, then $A$ contains a solution to the system of equations

        \begin{equation}\label{MainLinearSystemOfEquations3}
            \frac{z_{4i-3}-z_{4i-2}}{z_{4i-1}-z_{4i}} = k_i\text{ for all }1 \le i \le m.
        \end{equation}
        Furthermore, the solution can be taken such that $z_i \neq z_j$ when $i \neq j$.
        
        \item If $A \subseteq R\setminus\{0\}$ is such that $d^*_\times(A) > 0$, then $A$ contains a solution $(z_1,\cdots,z_{4m})$ to the system \eqref{MainLinearSystemOfEquations3}, such that $z_i \neq z_j$ for $i \neq j$.
    \end{enumerate}
\end{lemma}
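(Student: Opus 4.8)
The plan is to prove (i) by a single application of Theorem~\ref{ModifiedAmenableSzemeredi}, and then deduce (ii) from (i) via Corollary~\ref{CorollaryToTheIntersectivityLemma}(i). For (i), first clear denominators: write each $k_i = p_i q_i^{-1}$ with $p_i,q_i\in R\setminus\{0\}$, and set $(r_1,\dots,r_{2m}):=(p_1,q_1,p_2,q_2,\dots,p_m,q_m)$. It then suffices to find a single $g\in R\setminus\{0\}$ together with $w_1,\dots,w_{2m}\in A$ such that $w_j+r_jg\in A$ for all $1\le j\le 2m$: for then the entries $z_{4i-3}=w_{2i-1}+p_ig$, $z_{4i-2}=w_{2i-1}$, $z_{4i-1}=w_{2i}+q_ig$, $z_{4i}=w_{2i}$ all lie in $A$ and satisfy $\frac{z_{4i-3}-z_{4i-2}}{z_{4i-1}-z_{4i}}=\frac{p_ig}{q_ig}=k_i$, the cancellation being legitimate because $g\ne 0$ and $R$ is a domain.

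To produce $g$ and the $w_j$, apply Theorem~\ref{ModifiedAmenableSzemeredi} to the abelian (hence amenable) group $G=(R,+)$ with $d=2m+1$, with endomorphisms $\phi_1=\mathrm{id}_R$ and $\phi_{j+1}=(x\mapsto r_jx)$ for $1\le j\le 2m$, and with target set $E=A^{2m+1}\subseteq R^{2m+1}$; here the multiplicative notation $\phi_i(g)x_i$ of Theorem~\ref{ModifiedAmenableSzemeredi} is read additively as $\phi_i(g)+x_i$. Choosing the F\o lner sequence $\mathcal F$ in $(R,+)$ so that $|A\cap F_n|/|F_n|$ converges to some $c>0$ (pass to a subsequence of any F\o lner sequence witnessing $\overline d_{\mathcal F}(A)>0$) gives $\overline d_{\mathcal F^{2m+1}}(E)=c^{2m+1}>0$, so Theorem~\ref{ModifiedAmenableSzemeredi} produces a set $P$ of $g\in R$ with $\overline d_{\mathcal F}(P)>0$; being infinite, $P$ contains some $g\ne 0$ for which the inner set $G_g=\{(x_1,\dots,x_{2m+1})\ |\ \text{all }2m+1\text{ configurations lie in }E\}$ satisfies $\overline d_{\mathcal F^{2m+1}}(G_g)>0$. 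Membership of the first configuration in $A^{2m+1}$ forces $x_2,\dots,x_{2m+1}\in A$, and membership of the last forces $\phi_j(g)+x_j\in A$ for every $j$; discarding the constraint coming from $\phi_1$, this says precisely that $x_{j+1}\in A$ and $x_{j+1}+r_jg\in A$ for $1\le j\le 2m$, so $w_j:=x_{j+1}$ works.

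It remains to arrange that the $4m$ entries are pairwise distinct. For the fixed $g\ne 0$, write each candidate entry as $z_a=x_{\iota(a)}+c_ag$ with $\iota(a)\in\{2,\dots,2m+1\}$ and $c_a\in\{0,p_1,q_1,\dots,p_m,q_m\}$. For $a\ne b$ the condition $z_a=z_b$ becomes $x_{\iota(a)}-x_{\iota(b)}=(c_b-c_a)g$: when $\iota(a)=\iota(b)$ the entries $z_a,z_b$ are built from the same variable, so $c_a\ne c_b$ and the condition $(c_a-c_b)g=0$ is impossible in the domain $R$; when $\iota(a)\ne\iota(b)$ the condition defines a coset of an infinite-index subgroup of $R^{2m+1}$, hence a set of upper Banach density $0$. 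The union of these finitely many degeneracy loci therefore has $\overline d_{\mathcal F^{2m+1}}$-density $0$, so $G_g\not\subseteq D_g$, and any tuple in $G_g$ avoiding it yields a solution in $A$ with distinct entries; this proves (i) (replacing $A$ by $A\setminus\{0\}$ at the outset, which does not change $d^*(A)$, additionally makes the entries nonzero). For (ii), let $\mathcal A$ be the family of all $S\in\mathscr{P}_f(R\setminus\{0\})$ with $|S|=4m$ such that some enumeration of $S$ solves \eqref{MainLinearSystemOfEquations3}; scaling a solution coordinatewise by $r\in R\setminus\{0\}$ produces another solution, so $\mathcal A$ is multiplicatively translation invariant, and by (i) applied to $B\setminus\{0\}$ every $B\subseteq R$ with $d^*(B)>0$ contains a member of $\mathcal A$. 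Corollary~\ref{CorollaryToTheIntersectivityLemma}(i) with $\delta=0$ then gives the same conclusion for every $B\subseteq R\setminus\{0\}$ with $d^*_\times(B)>0$, which is exactly (ii).

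The crux is getting the reduction feeding Theorem~\ref{ModifiedAmenableSzemeredi} right: one must see that $2m$ two-term configurations $\{w_j,w_j+r_jg\}$ sharing a common shift $g$ are what the system demands, that clearing denominators is exactly what forces the gaps $r_jg$ to lie in $R$, and that a single application of Theorem~\ref{ModifiedAmenableSzemeredi} over the $(2m+1)$-fold product (one coordinate more than the number of configurations, to absorb the unused first constraint) delivers all $2m$ of them simultaneously. The subsequent genericity argument for distinctness and the additive-to-multiplicative transfer in (ii) are routine given the cited results.
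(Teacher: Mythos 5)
Your proof is correct and follows essentially the same route as the paper's: both reduce the system to finding, inside $A$, configurations $\{w_j,\ w_j+r_jg\}$ with a common shift $g$ via Theorem~\ref{ModifiedAmenableSzemeredi} after clearing denominators, dispose of degeneracies by noting that the relevant diagonal-type sets have density zero along $\mathcal{F}^d$, and deduce (ii) from (i) exactly as you do, via Corollary~\ref{CorollaryToTheIntersectivityLemma}(i) applied to the multiplicatively translation invariant family of injective solutions. The only organizational difference is that you extract all $2m$ two-term configurations from a single application of Theorem~\ref{ModifiedAmenableSzemeredi} with $d=2m+1$, whereas the paper applies the $d=2$ case once per equation and obtains distinctness across the $m$ blocks by induction, deleting the finitely many previously chosen points; both are valid.
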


\begin{proof}
    We first prove part (i). We proceed by induction on $m$ and we begin with the base case of $m = 1$. Let $k_1 = \frac{r_1}{s_1}$ with $r_1,s_1 \in R\setminus\{0\}$. Let $\mathcal{F}$ be any F\o lner sequence in $(R,+)$ for which $\overline{d}_{\mathcal{F}}(A) > 0$. We use Theorem \ref{ModifiedAmenableSzemeredi} with $(G,\cdot) = (R,+)$, $d = 2$, $E = A^2$, $\phi_1(g) = s_1g$, and $\phi_2(g) = r_1g$. We obtain $g \in R\setminus\{0\}$ for which

    \begin{equation}
        \overline{d}_{\mathcal{F}^2}\left(\left\{(x_1,x_2) \in R^2\ |\ (s_1g+x_1,x_2),(x_1,r_1g+x_2) \in A^2\right\}\right) > 0.
    \end{equation}
    Since $r_1,s_1,g \in R\setminus\{0\}$, we see that $x_1+s_1g \neq x_1$ and $x_2+r_1g \neq x_2$. Now let us observe that for any finite set $F \subseteq R$, we have

    \begin{alignat*}{2}
        1 \ge &\overline{d}_{\mathcal{F}^2}\left(\bigcup_{f \in F}\left\{(x,x+s_1g+f) \right\}_{x \in G}\right) = \sum_{f \in F}\overline{d}_{\mathcal{F}^2}\left(\left\{(x,x+s_1g+f) \right\}_{x \in G}\right)\\
        = &|F|\overline{d}_{\mathcal{F}^2}\left(\left\{(x,x+s_1g) \right\}_{x \in G}\right)\text{, so }\\
        0 = &\overline{d}_{\mathcal{F}^2}\left(\left\{(x,x+s_1g) \right\}_{x \in G}\right).
    \end{alignat*}
    After performing a similar calculation with $(x+r_1g,x)$, $(x,x)$, and $(x+r_1g,x+s_1g)$, we see that

    \begin{alignat*}{2}
        \overline{d}_{\mathcal{F}^2}\big(\big\{(x_1,x_2) \in R^2\ |\ & (s_1g+x_1,x_2),(x_1,r_1g+x_2) \in A^2\\
        &\text{ and }\{x_1,x_2,x_1+s_1g,x_2+r_1g\}\text{ are all distinct}\big\}\big) > 0.
    \end{alignat*}
    Lastly, we take $(z_1,z_2,z_3,z_4) = (s_1g+x_1,x_1,x_2,r_1g+x_2) \in A^4$ for which $z_i \neq z_j$ when $i \neq j$ and observe that

    \begin{equation}
        \frac{z_1-z_2}{z_3-z_4} = \frac{(r_1g+x_2)-x_2}{(s_1g+x_1)-x_1} = \frac{r_1}{s_1}.
    \end{equation}
    
    Having completed the base case, we proceed to the inductive step. Once we have picked $z_1,\cdots,z_{4m-4} \in A$, we let $A' = A\setminus\{z_i\}_{i = 1}^{4m-4}$. Since $d^*(A') = d^*(A) > 0$, we may apply the base case of the induction to pick distinct $z_{4m-3},z_{4m-2},z_{4m-1},z_{4m} \in A'$ for which

    \begin{equation}
        \frac{z_{4m-3}-z_{4m-2}}{z_{4m-1}-z_{4m}} = k_m.
    \end{equation}
    
    Part (ii) is now an immediate consequence of Corollary \ref{CorollaryToTheIntersectivityLemma}(i) applied to the collection $\mathcal{A} = \{(z_i)_{i = 1}^{4m}\ \|\ z_i \neq z_j\ \forall\ i \neq j\text{ and }(z_i)_{i = 1}^{4m}\text{ is a solution to the system in }\eqref{MainLinearSystemOfEquations3}\}$.
\end{proof}
%%%%%%%%%%%%%%%%%%%%%%%%%%%%%%%%%%%%%%%%%%%%%%%%%%%%%%%%%%%%%%%%%%%%%%%%%%%%%%%%%%%%%%%%%%%%%%%%%%%%%%%%%%%%%%%%%%%%%%%%%%%%%%%%%%%%%%%%%%%%%%%%%%%%%%%%%%%%%%%%
\subsection{Partition Ramsey Theory}

\begin{definition}
    Let $R$ be an integral domain, let $S \subseteq R$, and let $p_1,\cdots,p_m \in R[x_1,\cdots,x_n]$. Given $r \in \mathbb{N}$, the system of equations $p_i(x_1,\cdots,x_n) = 0$ for all $1 \le i \le m$ is \textbf{(injectively) r-partition regular over $S$} if for every partition of the form $S = \bigcup_{i = 1}^rC_i$, there is some $1 \le i_0 \le r$ and some (distinct) $a_1,\cdots,a_n \in C_{i_0}$ such that $p_i(a_1,\cdots,a_n) = 0$ for all $1 \le i \le m$. The system of equations $p_i(x_1,\cdots,x_n) = 0$ is \textbf{(injectively) partition regular over $S$} if it is (injectively) $r$-partition regular over $S$ for every $r \in \mathbb{N}$.
\end{definition}

\begin{lemma}\label{InfinitePartitionRegularityLemma}
    Let $R$ be an integral, and suppose that $p_1,\cdots,p_m \in R[x_1,\cdots,x_n]$ are such that the system of equations
    
    \begin{equation}\label{GeneralSystemOfEquations}
        p_i(x_1,\cdots,x_n) = 0\text{ for all } 1 \le i \le m
    \end{equation}
    is partition regular over some infinite set $S \subseteq R$, and it does not possess a constant solution $x_i = c \in S$ for all $1 \le i \le n$. Then for any partition $S = \bigcup_{i = 1}^\ell C_i$, there exists some $1 \le i_0 \le \ell$ and a sequence $(a_1(t),\cdots,a_m(t))_{t = 1}^\infty \subseteq C_{i_0}^m$ such that the following hold:
    \begin{enumerate}[(i)]
        \item $p_i(a_1(t),\cdots,a_n(t)) = 0$ for all $t \in \mathbb{N}$.

        \item For $t_1 < t_2$ and any $i,j \in [1,n]$ we have $a_i(t_1) \neq a_j(t_2)$.

        \item If the system of equations in \eqref{GeneralSystemOfEquations} is injectively partition regular, then we also have that $a_i(t) \neq a_j(t)$ for all $t \in \mathbb{N}$ and all $i \neq j$.
    \end{enumerate}% To see that this is the case, it suffices to see that if $a_1,\cdots,a_n \in C_{i_0}$ is a solution to the system, and $a_1 \neq a_2$, then we take a new partition $S = \bigcup_{i = 1}^{\ell+1} C_i'$ with $C_i' = C_i$ for $i \in [1,\ell]\setminus\{i_0\}$, $C_{i_0}' = C_{i_0}\setminus\{a_1\}$, and $C_{\ell+1} = \{a_1\}$. 
\end{lemma}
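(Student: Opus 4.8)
The plan is a diagonalization argument whose only real content is a preliminary claim isolating a single ``good'' cell; everything else is bookkeeping. First I would dispose of the degenerate case $n = 1$: in that case every solution of the system is a constant solution, so applying partition regularity to the trivial partition $S = C_1$ already contradicts the hypothesis, and the lemma holds vacuously. So assume $n \ge 2$. Throughout, I call a tuple $(a_1,\dots,a_n) \in S^n$ with $p_i(a_1,\dots,a_n) = 0$ for all $1 \le i \le m$ a \emph{solution}, and I say a subset $C \subseteq S$ \emph{contains a solution} if there is such a tuple with all coordinates in $C$; in the setting of part (iii) I reserve ``solution'' for \emph{injective} solutions, i.e.\ those whose coordinates are pairwise distinct, and ``(injectively) partition regular'' always means the hypothesis relevant to the case at hand.

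The heart of the argument is the claim that there exists an index $1 \le i_0 \le \ell$ such that for every $D \in \mathscr{P}_f(S)$ the set $C_{i_0}\setminus D$ contains a solution. To prove it, suppose not: for each $1 \le i \le \ell$ choose $D_i \in \mathscr{P}_f(S)$ with $C_i\setminus D_i$ containing no solution, and put $D = \bigcup_{i=1}^\ell D_i \in \mathscr{P}_f(S)$. Consider the finite partition
\[
    S = \Big(\bigcup_{i=1}^\ell (C_i\setminus D)\Big) \cup \Big(\bigcup_{c \in D}\{c\}\Big),
\]
which has at most $\ell + |D|$ nonempty cells. By (injective) partition regularity of the system over $S$, one of these cells contains a solution. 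It cannot be a singleton $\{c\}$, since the only tuple in $\{c\}^n$ is the constant tuple $(c,\dots,c)$, which in the setting of (i)--(ii) is not a solution because the system has no constant solution in $S$, and in the setting of (iii) is not injective because $n \ge 2$. Nor can it be some $C_i\setminus D$, because $C_i\setminus D \subseteq C_i\setminus D_i$ and the latter contains no solution by the choice of $D_i$. This contradiction proves the claim.

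Granting the claim, I would build the desired sequence recursively. Set $E_0 = \emptyset$. Given a finite set $E_{t-1} \subseteq S$, apply the claim with $D = E_{t-1}$ to obtain a solution $a(t) = (a_1(t),\dots,a_n(t))$ all of whose coordinates lie in $C_{i_0}\setminus E_{t-1}$, and put $E_t = E_{t-1}\cup\{a_1(t),\dots,a_n(t)\}$, again a finite subset of $S$. Then each $a(t)$ lies in $C_{i_0}^n$ and satisfies $p_i(a_1(t),\dots,a_n(t)) = 0$ for all $i$, giving (i); in the setting of (iii) each $a(t)$ is injective, giving (iii); and whenever $t_1 < t_2$, for any $i,j$ we have $a_j(t_2) \in C_{i_0}\setminus E_{t_2-1} \subseteq S\setminus E_{t_1}$ while $a_i(t_1) \in E_{t_1}$, so $a_i(t_1) \ne a_j(t_2)$, which is (ii).

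I expect the only nontrivial step to be the claim. Partition regularity on its own identifies no ``responsible'' cell, and a single cell may contain solutions using only finitely many of its elements, so one cannot naively fix any cell that happens to meet a solution and be done; the device that breaks this impasse is to refine the partition by splitting the finite obstruction $D$ into singletons, after which the no-constant-solution hypothesis --- or, for (iii), the trivial fact that a constant $n$-tuple with $n \ge 2$ is not injective --- rules out the new singleton cells. I do not anticipate further obstacles.
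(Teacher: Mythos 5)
Your proof is correct and rests on the same device as the paper's: refine the given partition by splitting the finitely many already-used (or already-excluded) elements into singleton cells, which are then ruled out by the no-constant-solution hypothesis (or, for part (iii), by the fact that a constant $n$-tuple with $n\ge 2$ is not injective). The only organizational difference is that you first isolate a single cell $C_{i_0}$ that contains a solution avoiding every finite set and then run the recursion entirely inside it, whereas the paper runs the recursion with a cell index $i(t)$ that may vary and extracts $i_0$ by pigeonhole at the end; both routes are sound.
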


\begin{proof}
    By the pigeon hole principle, it suffices to construct a sequence $(a_1(t),\cdots,a_n(t))_{t = 1}^\infty$ satisfying conditions (i)-(iii) for which we also have $a_1(t),\cdots,a_n(t) \in C_{i(t)}$ for some function $i:\mathbb{N}\rightarrow [1,\ell]$. We will construct such a sequence by induction on $t$. For the base case of $t = 1$, we may pick $i(1) \in [1,\ell]$ and (distinct) $a_1(1),\cdots,a_n(1) \in C_{i(i)}$ satisfying the system of equations in \eqref{GeneralSystemOfEquations} since we assume that the system is (injectively) partition regular over $S$. Now let us assume that $(a_1(t),\cdots,a_n(t))_{t = 1}^{T-1}$ have been constructed. Let $A_T = \{a_i(t)\ |\ 1 \le i \le n\ \&\ 1 \le t \le T-1\}$, and consider the partition $S = \left(\bigcup_{A_T}\{a\}\right)\cup\left(\bigcup_{i = 1}^\ell(C_i\setminus A_T)\right)$. Since the system of equations in \eqref{GeneralSystemOfEquations} is (injectively) partition regular over $S$, but possesses no constant solution, we may pick $i(T) \in [1,\ell]$ and (distinct) $a_1(T),\cdots,a_n(T) \in C_{i(T)}\setminus A_T$ satisfying the system of equations in \eqref{GeneralSystemOfEquations}. 
\end{proof}

\begin{remark}\label{RemarkReducingPolynomialSystemsToSinglePolynomials}
    We observe that if $R$ is an integral domain for which the field of fractions $K$ is not algebraically closed, then the question of partition regularity of a system of polynomial equations is equivalent to that of the partition regularity of a single polynomial equation. To see this, it suffices to note that if $f \in R[z]$ has no root in $K$, then for any $p_1,p_2 \in R[x_1,\cdots,x_n]$ we have

\begin{equation}
    p_1(x_1,\cdots,x_n)^{\text{deg}(f)}f\left(\frac{p_2(x_1,\cdots,x_n)}{p_1(x_1,\cdots,x_n)}\right) = 0\text{ if and only if }p_i(x_1,\cdots,x_n) = 0\text{ for } i = 1,2. 
\end{equation}
\end{remark}

We will need to use Rado's Theorem for integral domains later on, so we must first recall the columns condition. 

\begin{definition}\label{DefinitionOfColumnsCondition}
    Let $R$ be an integral domain, let $K$ denote the field of fractions of $R$, let $m,n \in \mathbb{N}$, let $\textbf{A} \in M_{m\times n}(R)$, and let $\vec{c}_i$ denote the $i^{th}$ column of $\textbf{A}$. The matrix $\textbf{A}$ satisfies the \textbf{columns condition} if there is a partition $[1,n] = \bigcup_{i = 1}^rC_i$ of the columns of $\textbf{A}$ satisfying the following conditions:
    \begin{enumerate}[(i)]
        \item $\displaystyle\sum_{i \in C_1}\vec{c}_i = 0$.

        \item For $2 \le j \le r$, $\sum_{i \in C_j}\vec{c_i}$ can be written as a $K$-linear combinatorion of $\bigcup_{i = 1}^{k-1}C_i$.
    \end{enumerate}
\end{definition}

\begin{theorem}[{\cite[Theorem A]{RadoForRingsAndModules}}]\label{RadosTheoremForIntegralDomains}
    Let $R$ be an integral domain, let $m,n \in \mathbb{N}$, and let $\textbf{A} \in M_{m\times n}(R)$. The system of equations $\textbf{A}\vec{x} = \vec{0}$ is partition regular over $R\setminus\{0\}$ if and only if $\textbf{A}$ satisfies the columns condition.
\end{theorem}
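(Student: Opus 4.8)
Following the classical proof of Rado's theorem (in the form quoted from \cite{RadoForRingsAndModules}), the plan is to prove the two implications separately, adapting each step from $\mathbb{Z}$ to an arbitrary integral domain $R$ with field of fractions $K$. One caveat up front: one must assume $R$ is infinite, since for instance the matrix $(1,1,-1)$ satisfies the columns condition yet $x+y=z$ is not partition regular over $\mathbb{F}_q^\times$, so the finite case is genuinely degenerate.

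For the direction ``columns condition $\Rightarrow$ partition regular'' I would use the ring analogue of Deuber's $(m,p,c)$-sets. Given $m\in\mathbb{N}$, a finite set $P\subseteq R$ of allowed coefficients, a scaling element $c\in R\setminus\{0\}$, and generators $y_0,\dots,y_m\in R\setminus\{0\}$, set
\[
D(y_0,\dots,y_m;P,c)=\Big\{\, c\,y_i+\sum_{j=0}^{i-1}\lambda_j y_j \ :\ 0\le i\le m,\ \lambda_0,\dots,\lambda_{i-1}\in P \,\Big\}.
\]
Then: (i) if $\mathbf{A}$ satisfies the columns condition, clearing the denominators of the $K$-coefficients occurring in Definition~\ref{DefinitionOfColumnsCondition}(ii) yields $m,P,c$ depending only on $\mathbf{A}$ such that every $D(y_0,\dots,y_m;P,c)$ with generators in general position contains a solution of $\mathbf{A}\vec{x}=\vec{0}$ in $R\setminus\{0\}$, injective if desired --- the same bookkeeping as classically, now carried out over $R$; and (ii) the sets $D(y_0,\dots,y_m;P,c)$ are partition regular over $R\setminus\{0\}$, proved by induction on $m$, the step from $m$ to $m+1$ being a pigeonhole/iteration argument and the base case (a finite colouring of $R\setminus\{0\}$ contains a monochromatic such set) coming from van der Waerden's theorem \cite{vdWThm} applied inside $(R,+)$. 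Infinitude of $R$ enters here: $(R,+)$ contains a copy of $(\mathbb{Z},+)$ in characteristic $0$ and a copy of $\big(\bigoplus_{\mathbb{N}}\mathbb{F}_p,+\big)$ in characteristic $p$, either of which supplies the needed monochromatic affine configurations. Combining (i) and (ii) proves partition regularity.

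For ``partition regular $\Rightarrow$ columns condition'' I would prove the contrapositive by constructing a finite bad colouring, generalising the ``last nonzero base-$p$ digit'' colouring. Pick a discrete valuation $v$ on $K$ whose residue field contains $\mathbb{F}_\ell$ for a prime $\ell$ large relative to $n$ and the entries of $\mathbf{A}$, normalised so that those entries are $v$-integral with the expected nonzero reductions, and colour $x\in R\setminus\{0\}$ by the reduction in $\mathbb{F}_\ell^\times$ of the leading unit of $x$ (that is, of $x$ divided by the appropriate power of a uniformiser) --- a colouring with fewer than $\ell$ colours. Given a monochromatic solution $\vec{x}$ of $\mathbf{A}\vec{x}=\vec{0}$, grouping its coordinates into blocks $C_1,\dots,C_r$ by increasing $v$-value and reducing $\sum_i x_i\vec{c}_i=0$ successively modulo powers of the maximal ideal forces $\sum_{i\in C_1}\vec{c}_i=0$ and each $\sum_{i\in C_j}\vec{c}_i$ into the $K$-span of the earlier blocks --- i.e.\ the columns condition --- once $\ell$ was chosen large enough that the relevant congruences modulo $\ell$ are equalities in $K$; this contradiction finishes the proof.

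The main obstacle is the necessity direction for domains whose fraction field is ``valuation-poor'': $K$ may carry no discrete valuation with a large residue field, or no nontrivial valuation at all (e.g.\ $K=\overline{\mathbb{F}_p}$ admits only the trivial valuation, and $K=\overline{\mathbb{Q}}$ only valuations with divisible value groups and no uniformiser), so the ``last digit'' colouring above must be replaced by one built directly from the multiplicative structure of $R\setminus\{0\}$, with the relevant homomorphisms and finite quotients chosen jointly with $\ell$ so that the resulting finitely many colours still detect the block structure of any monochromatic solution. By comparison, making the Deuber argument of the first direction uniform across all infinite integral domains, including positive characteristic, is routine.
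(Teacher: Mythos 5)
First, a point of comparison: the paper does not prove this statement at all --- it is imported verbatim as Theorem A of \cite{RadoForRingsAndModules} (Byszewski--Krawczyk), so there is no internal argument to measure yours against; what you are really attempting is a proof of that external theorem. Your sufficiency direction follows the standard Deuber $(m,P,c)$-set route and is essentially viable, with one caveat you gloss over: since the dilation coefficients $\lambda\in P$ and $c$ are arbitrary ring elements, ``van der Waerden applied inside a copy of $\mathbb{Z}$ or $\bigoplus_{\mathbb{N}}\mathbb{F}_p$'' does not by itself supply the configurations $\{a+\lambda d\ :\ \lambda\in P\}\cup\{cd\}$ that Deuber's focusing argument needs; you need a Gallai/Hales--Jewett-type statement for the ring $R$ itself (monochromatic homothets $a+dP$ with $d\in R\setminus\{0\}$ and control of the colour of $cd$), which is standard but is not literally van der Waerden in an embedded subgroup. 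Your observation that $R$ must be infinite is correct and worth making.

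The genuine gap is in the necessity direction, and you have named it yourself without closing it. For $R=\mathbb{Z}$ the ``first nonzero base-$p$ digit'' colouring works because $\mathbb{Q}$ admits discrete valuations with arbitrarily large residue fields; for a general integral domain no suitable valuation need exist, and your proposed fallback --- a colouring built from ``homomorphisms and finite quotients'' of the multiplicative structure --- cannot exist in general. For $R=\overline{\mathbb{F}_p}$ the group $R^\times$ is divisible torsion, hence has no nontrivial finite quotients at all, and $K=\overline{\mathbb{F}_p}$ carries no nontrivial valuation (every nonzero element is a root of unity). Yet the theorem as stated applies to such $R$: for instance it asserts that $x+y=3z$ is not partition regular over $\overline{\mathbb{F}_5}\setminus\{0\}$, since no nonempty subset of the columns $1,1,-3$ sums to zero in characteristic $5$. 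So the contrapositive direction genuinely requires a different mechanism for manufacturing the bad colouring, and this is precisely the content that separates the integral-domain theorem from classical Rado. As written, the proposal therefore does not constitute a proof of the stated result; for this direction you would need to reproduce the argument of \cite{RadoForRingsAndModules} rather than gesture at it.
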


We remark that when $R = \mathbb{Z}$, Theorem \ref{RadosTheoremForIntegralDomains} is due to Rado \cite{RadosTheorem}.

\begin{corollary}\label{PartitionLemmaWithHomogeneity}
    Let $R$ be an integral domain with field of fractions $K$. For any $m \in \mathbb{N}$ and any $k_1,\cdots,k_m \in K$, the system of equations

        \begin{equation}\label{MainLinearSystemOfEquations1}
            \frac{z_{3i-2}-z_{3i-1}}{z_{3i}} = k_i\text{ for all }1 \le i \le m,
        \end{equation}
        is partition regular over $R\setminus\{0\}$. Furthermore, if $k_1,\cdots,k_m \in K^\times$, then the system of equations in \eqref{MainLinearSystemOfEquations1} is injectively partition regular over $R\setminus\{0\}$.
\end{corollary}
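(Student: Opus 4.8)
The plan is to clear denominators, turning \eqref{MainLinearSystemOfEquations1} into a linear system over $R$, then to \emph{enlarge} that system with auxiliary ``spacer'' variables chosen so that a monochromatic solution is automatically forced to have pairwise distinct coordinates, verify that the enlarged system satisfies the columns condition of Definition~\ref{DefinitionOfColumnsCondition}, and invoke Rado's theorem for integral domains (Theorem~\ref{RadosTheoremForIntegralDomains}). Concretely, write each $k_i$ as $k_i = r_i/s_i$ with $r_i, s_i \in R$, $s_i \neq 0$, and $r_i \neq 0$ precisely when $k_i \neq 0$; since the variable $z_{3i}$ is required to be nonzero, the $i$-th equation of \eqref{MainLinearSystemOfEquations1} is equivalent over $R\setminus\{0\}$ to the linear equation $s_i z_{3i-2} - s_i z_{3i-1} - r_i z_{3i} = 0$. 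Introduce fresh variables $p_1, \dots, p_m, q_1, \dots, q_m$ and consider the augmented linear system consisting, for each $1 \le i \le m$, of the three equations $s_i z_{3i-2} - s_i z_{3i-1} - r_i z_{3i} = 0$, $\ z_{3i-2} - z_{3i} - p_i = 0$, and $\ z_{3i-1} - z_{3i} - q_i = 0$.

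I would verify the columns condition one block at a time, since the coefficient matrix of the augmented system is block diagonal and the verifications glue. In the block of index $i$, place the four columns belonging to $z_{3i-2}, z_{3i-1}, p_i, q_i$ into the first part $C_1$ of the partition: their sum vanishes, since $s_i - s_i = 0$ in the first row of the block and the $\pm 1$ entries cancel in the other two rows. Place the column of $z_{3i}$ into a later part; since the columns of $z_{3i-2}, z_{3i-1}, p_i$ already span the three-dimensional coordinate space of the block over $K$ (a $3 \times 3$ determinant equal to $s_i \neq 0$), the $z_{3i}$-column lies in their $K$-span. Taking $C_1$ to be the union of the first parts of all blocks gives a set of columns summing to zero, and listing the $z_{3i}$-columns one block at a time exhibits each as a $K$-linear combination of earlier columns; hence the augmented system satisfies the columns condition and, by Theorem~\ref{RadosTheoremForIntegralDomains}, is partition regular over $R\setminus\{0\}$. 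Restricting any monochromatic solution of the augmented system to the coordinates $z_1, \dots, z_{3m}$ yields a monochromatic solution of \eqref{MainLinearSystemOfEquations1}, because those coordinates are nonzero and satisfy $s_i z_{3i-2} - s_i z_{3i-1} = r_i z_{3i}$; this proves the first assertion.

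For the injective assertion, assume $k_1, \dots, k_m \in K^\times$, so every $r_i$ is nonzero. The augmented system has no constant solution: a common value $c$ would force $p_i = z_{3i-2} - z_{3i} = c - c = 0$, which is impossible in $R\setminus\{0\}$. Given a finite coloring of $R\setminus\{0\}$, apply Lemma~\ref{InfinitePartitionRegularityLemma} to the augmented system to obtain a color class $C_{i_0}$ and a sequence $(V^{(t)})_{t=1}^\infty$ of monochromatic solutions contained in $C_{i_0}$ such that coordinates drawn from $V^{(t_1)}$ and $V^{(t_2)}$ with $t_1 \neq t_2$ are always distinct (conclusions (i) and (ii); we do not use conclusion (iii)). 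Now splice: let $Z_{3i-2}, Z_{3i-1}, Z_{3i}$ be the $z_{3i-2}$-, $z_{3i-1}$-, $z_{3i}$-coordinates of the solution $V^{(i)}$. Then $(Z_1, \dots, Z_{3m})$ is monochromatic and solves \eqref{MainLinearSystemOfEquations1} block by block, and it is injective: within the $i$-th block, the coordinates coming from $V^{(i)}$ satisfy $Z_{3i-2} \neq Z_{3i}$ and $Z_{3i-1} \neq Z_{3i}$ since the corresponding $p_i, q_i$ are nonzero, and $Z_{3i-2} \neq Z_{3i-1}$ since $s_i(Z_{3i-2} - Z_{3i-1}) = r_i Z_{3i} \neq 0$ in the integral domain $R$; across distinct blocks $i \neq j$ the coordinates come from $V^{(i)}$ and $V^{(j)}$, hence are distinct by conclusion (ii).

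The one point that requires an idea rather than bookkeeping is the choice of spacer equations: one needs auxiliary relations that simultaneously force the desired within-block distinctnesses once all variables are nonzero, and keep the columns condition intact. The relations $p_i = z_{3i-2} - z_{3i}$ and $q_i = z_{3i-1} - z_{3i}$ work precisely because the $p_i$- and $q_i$-columns can be absorbed into the zero-sum first part $C_1$ of the partition. Distinctness \emph{across} different blocks is then handled not by a cleverer linear system but by Lemma~\ref{InfinitePartitionRegularityLemma}, whose cross-disjointness conclusion is exactly what the splicing step needs; one should also note that if $R$ is finite the statement is vacuous in the usual sense, so $R$ may be assumed infinite when invoking Lemma~\ref{InfinitePartitionRegularityLemma}.
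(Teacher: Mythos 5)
Your proof is correct, and the injectivity half takes a genuinely different route from the paper's. For the first assertion the two arguments coincide in substance: clear denominators and verify the columns condition for Theorem \ref{RadosTheoremForIntegralDomains} (the paper does this for the bare $3m$-variable system with $C_1 = \bigcup_i\{3i-2,3i-1\}$ and $C_2 = \{3i\}_i$; your augmented system also passes, as your block determinant $s_i \neq 0$ shows, though the auxiliary variables are not needed at this stage). Where you diverge is in forcing within-block injectivity. The paper chooses a monotile $T$ of the multiplicative group $K^\times$ containing $\{k_i+1,1-k_i\}_{i=1}^m\setminus\{0\}$, with center set $C$, refines the given partition by the cells $tC$, and uses $CC^{-1}\cap T = \{1\}$ to rule out the ratios $z_{3i-2}/z_{3i} = k_i+1$ and $z_{3i-1}/z_{3i} = 1-k_i$ that encode the coincidences $z_{3i-1}=z_{3i}$ and $z_{3i-2}=z_{3i}$; this leans on the congruent monotileability of countable abelian groups. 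You instead adjoin the difference variables $p_i = z_{3i-2}-z_{3i}$ and $q_i = z_{3i-1}-z_{3i}$ to the linear system, absorbing their columns into the zero-sum part $C_1$; since a monochromatic Rado solution places $p_i,q_i$ in $R\setminus\{0\}$, the two dangerous coincidences are excluded automatically, and $z_{3i-2}\neq z_{3i-1}$ follows from $s_i(z_{3i-2}-z_{3i-1}) = r_iz_{3i}\neq 0$ in an integral domain. Both proofs then finish identically, splicing the $i$-th block out of the $i$-th member of the sequence supplied by Lemma \ref{InfinitePartitionRegularityLemma} to obtain cross-block distinctness. Your route is more elementary --- it needs only Rado's theorem, the columns condition, and Lemma \ref{InfinitePartitionRegularityLemma}, with no tiling machinery --- at the modest cost of $2m$ extra variables and equations; like the paper's proof, it implicitly assumes $R$ is infinite when invoking Lemma \ref{InfinitePartitionRegularityLemma}, which is the standing convention here.
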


\begin{proof}
    Firstly, we rewrite the equations $\frac{z_{3i-2}-z_{3i-1}}{z_{3i}} = k_i = \frac{r_i}{s_i}$ as $s_iz_{3i-2}-s_iz_{3i-1}-r_iz_{3_i} = 0$. It follows that the system of equations in \eqref{MainLinearSystemOfEquations1} can be represented as $\textbf{A}\vec{z} = \vec{0}$ where 

    \begin{equation}
        \textbf{A} = \begin{pmatrix}
            s_1 & -s_1 & -r_1 & 0 & 0 & 0 & \cdots & 0 & 0 & 0\\
            0 & 0 & 0 & s_2 & -s_2 & -r_2 & \cdots & 0 & 0 & 0\\
            \vdots & \vdots &\vdots &\vdots & \vdots&\vdots & \ddots & \vdots & \vdots & \vdots\\
            0 & 0 & 0 & 0 & 0 & 0 & \cdots & s_m & -s_m & -r_m\\
        \end{pmatrix}.
    \end{equation}
    We see that $\textbf{A}$ satisfies the columns condition by taking $C_1 = \bigcup_{i = 1}^m\{3i-2,3i-1\}$ and $C_2 = \{3i\}_{i = 1}^m$, so first result now follows from Theorem \ref{RadosTheoremForIntegralDomains}.
    
    Now let us assume that $k_i \neq 0$ for $1 \le i \le m$. Let $T$ be a monotile for $K^\times$ for which $\{k_i+1,1-k_i\}_{i = 1}^m\setminus\{0\} \subseteq T$, and let $C$ be a center set for $T$. Given an arbitrary partition $R\setminus\{0\} = \bigcup_{i = 1}^\ell C_i$, we create a new partition $R\setminus\{0\} = \bigcup_{t \in T}\bigcup_{i = 1}^\ell(C_i\cap tC)$. Using Lemma \ref{InfinitePartitionRegularityLemma}, pick $1 \le i_0 \le \ell$ and $t_0 \in T$ such that $C_{i_0}\cap t_0C$ contains infinitely many solutions $(z_1(n),\cdots,z_{3m}(n))_{n = 1}^\infty$ to the system in \eqref{MainLinearSystemOfEquations1}. We see that $\frac{z_{3i-2}(n)}{z_{3i}(n)},\frac{z_{3i-1}(n)}{z_{3i}(n)} \in CC^{-1}$ for all $n \in \mathbb{N}$ and $1 \le i \le m$. Since $CC^{-1}\cap T = \{1\}$ and $z_i(n) \neq 0$, we see that we cannot have $\frac{z_{3i-2}(n)}{z_{3i}(n)} = k_i+1$ or $\frac{z_{3i-1}(n)}{z_{3i}(n)} = 1-k_i$, so we cannot have $z_{3i}(n) = z_{3i-1}(n)$ or $z_{3i}(n) = z_{3i-2}(n)$. Since $k_i \neq 0$, we see that we cannot have $z_{3i-2}(n) = z_{3i-1}(n)$. It now suffices to take $z_{3i-2} = z_{3i-2}(i)$, $z_{3i-1} = z_{3i-1}(i)$, and $z_{3i} = z_{3i}(i)$ for $1 \le i \le m$.
\end{proof}

If $(S,+)$ is a commutative cancellative semigroup, then $A \subseteq S$ is \textbf{piecewise syndetic} if there exists a $F \in \mathscr{P}_f(S)$ for which $\bigcup_{f \in F}(A-f)$ is thick.
We recall that if $B \subseteq S$ is thick, then $d^*(B) = 1$, so if $A$ is piecewise syndetic, then $d^*(A) > 0$. It is well known that for any finite partition $S = \bigcup_{i = 1}^\ell C_i$, at least one of the cells $C_i$ is piecewise syndetic.
When we are working with an integral domain $R$, we say that $A \subseteq R$ is \textbf{additively piecewise syndetic} if it is a piecewise syndetic subset of $(R,+)$, and $A$ is \textbf{multiplicatively piecewise syndetic} if it is a piecewise syndetic subset of $(R\setminus\{0\},\cdot)$.
The following result is a special case of \cite[Theorem 27]{ax+by=cwmzn}.

\begin{lemma}\label{TranslationInvariancePartitionTheorem}
    Let $R$ be an integral domain and let $\mathcal{A} \subseteq \mathscr{P}_f(R\setminus\{0\})$ be multiplicatively translation invariant. The collection $\mathcal{A}$ is partition regular\footnote{A collection $\mathcal{A} \subseteq \mathscr{P}_f(R\setminus\{0\})$ is \textbf{partition regular} if for any finite partition $R\setminus\{0\} = \bigcup_{i = 1}^\ell C_i$, there exists $A \in \mathcal{A}$ and $1 \le i_0 \le \ell$ for which $A \subseteq C_{i_0}$.} if and only if for any multiplicatively piecewise syndetic set $B \subseteq R\setminus\{0\}$, there exists $A \in \mathcal{A}$ with $A \subseteq B$.
\end{lemma}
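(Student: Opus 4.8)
The plan is to argue via the algebra of the Stone–Čech compactification $\beta S$ of the commutative cancellative semigroup $S := (R\setminus\{0\},\cdot)$, using the standard facts collected in \cite{AlgebraInTheSCC}. One may assume $\emptyset\notin\mathcal{A}$ and $\mathcal{A}\neq\emptyset$, since otherwise both sides of the claimed equivalence are trivially true or trivially false. The direction ``$(\Leftarrow)$'' is immediate: given any finite partition $R\setminus\{0\}=\bigcup_{i=1}^{\ell}C_i$, some cell $C_{i_0}$ is multiplicatively piecewise syndetic (as recalled just before the lemma), so by hypothesis $C_{i_0}$ contains a member of $\mathcal{A}$, and hence $\mathcal{A}$ is partition regular.

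For ``$(\Rightarrow)$'', let $\mathcal{A}':=\{C\subseteq S : A\subseteq C\text{ for some }A\in\mathcal{A}\}$ be the upward closure of $\mathcal{A}$, and set $\mathcal{I}:=\{p\in\beta S : p\subseteq\mathcal{A}'\}$, the set of ultrafilters every member of which contains a member of $\mathcal{A}$. Two standard facts are used: $\mathcal{I}$ is a closed subset of $\beta S$, and $\mathcal{I}\neq\emptyset$ exactly because $\mathcal{A}$ (equivalently $\mathcal{A}'$) is partition regular. The heart of the argument is to show that $\mathcal{I}$ is a two-sided ideal of $\beta S$, and this is precisely where multiplicative translation invariance enters, in both the left and the right multiplication. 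For the left-ideal property, if $p\in\beta S$, $q\in\mathcal{I}$ and $A\in p\cdot q$, then $\{s\in S : s^{-1}A\in q\}\in p$ is nonempty, so $s_0^{-1}A\in q$ for some $s_0$; thus $s_0^{-1}A$ contains some $B_0\in\mathcal{A}$, whence $s_0B_0\subseteq A$ with $s_0B_0\in\mathcal{A}$. For the right-ideal property, if $q\in\mathcal{I}$, $p\in\beta S$ and $A\in q\cdot p$, then $\{s\in S : s^{-1}A\in p\}\in q$ contains some $B_0=\{b_1,\dots,b_n\}\in\mathcal{A}$; since $\bigcap_{i=1}^{n}b_i^{-1}A\in p$ is nonempty, there is $y\in S$ with $B_0y\subseteq A$, and $B_0y\in\mathcal{A}$ by multiplicative translation invariance. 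In either case $A$ contains a member of $\mathcal{A}$, so $p\cdot q,\ q\cdot p\in\mathcal{I}$.

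To conclude, a nonempty two-sided ideal of $\beta S$ contains the smallest two-sided ideal $K(\beta S)$. Since $B$ is multiplicatively piecewise syndetic, $\overline{B}\cap K(\beta S)\neq\emptyset$, and therefore $\overline{B}\cap\mathcal{I}\neq\emptyset$; choosing $p$ in this intersection gives $B\in p$ and $p\in\mathcal{I}$, so $B$ contains a member of $\mathcal{A}$, as desired.

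I expect the main obstacle to be the verification that $\mathcal{I}$ is a two-sided ideal, and in particular the right-ideal direction, where one must first pass from the finitely many sets $b_i^{-1}A$ to their common intersection before invoking translation invariance on the translate $B_0y$; the left/right bookkeeping in $\beta S$ is the only genuinely delicate point, since everything else reduces to cataloguing standard facts about piecewise syndeticity and partition regularity in $\beta S$. (An alternative, $\beta S$-free route is a direct compactness argument exploiting that a thick set contains a translate of every finite subset of $S$, but the ultrafilter formulation above is cleaner.)
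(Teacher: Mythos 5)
Your proof is correct. Note that the paper gives no internal proof of Lemma \ref{TranslationInvariancePartitionTheorem}: it is quoted as a special case of \cite[Theorem 27]{ax+by=cwmzn}, which is itself an ultrafilter statement (it is invoked later, in the proof of Lemma \ref{EnhancedPartitionLemmaWithHomogeneity}, precisely to produce an ultrafilter all of whose members are multiplicatively piecewise syndetic and contain the desired configurations). So your route — showing that $\mathcal{I}=\{p\in\beta S\ :\ \text{every member of }p\text{ contains a member of }\mathcal{A}\}$ is a nonempty closed two-sided ideal, hence contains $K(\beta S)$, and then using the characterization of piecewise syndeticity as $\overline{B}\cap K(\beta S)\neq\emptyset$ — is exactly the expected one, and both ideal verifications are carried out correctly. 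The only point worth making explicit is that commutativity of $(R\setminus\{0\},\cdot)$ is what allows the single hypothesis $rA\in\mathcal{A}$ to serve in both the left-ideal step (where you need $s_0B_0\in\mathcal{A}$) and the right-ideal step (where you need $B_0y\in\mathcal{A}$).
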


\begin{lemma}\label{EnhancedPartitionLemmaWithHomogeneity}
    Let $R$ be an integral domain, let $F \subseteq R$ not be multiplicatively piecewise syndetic, and let $m \in \mathbb{N}$. For any partition of the form $R = \bigcup_{i = 1}^\ell C_i$, there exists a $1 \le i_0 \le \ell$ such that the following holds:
    \begin{enumerate}[(i)]
        \item There exists $y_1,y_2 \in C_{i_0}$ with $\frac{y_1}{y_2} \in R\setminus F$.

        \item For any $k_1,\cdots,k_m \in K$, chosen after $y_1$ and $y_2$ are fixed, there exists $z_1,\cdots,z_{3m} \in C_{i_0}$ for which

        \begin{equation}\label{MainLinearSystemOfEquations2}
            \frac{z_{3i-2}-z_{3i-1}}{z_{3i}} = k_i\text{ for all }1 \le i \le m.
        \end{equation}
        Furthermore, if $k_1,\cdots,k_m \in K^\times$, then we can take the $z_i$ to be distinct.
    \end{enumerate}
\end{lemma}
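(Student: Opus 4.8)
The plan is to produce a single cell $C_{i_0}$ of the partition whose intersection with $R\setminus\{0\}$ is multiplicatively piecewise syndetic, and then to check that such a cell automatically satisfies (ii) while also being rich enough to satisfy (i). That at least one cell of a finite partition of $R\setminus\{0\}$ is multiplicatively piecewise syndetic is standard, so the real content is twofold: (a) that piecewise syndeticity alone already forces (ii), and (b) that the same cell can be arranged to contain a pair $y_1,y_2$ with $\tfrac{y_1}{y_2}\in R\setminus F$.

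Claim (a) is essentially formal given what precedes. Fix $k_1,\dots,k_m\in K$, chosen after $y_1,y_2$. By Corollary \ref{PartitionLemmaWithHomogeneity} the system \eqref{MainLinearSystemOfEquations2} is partition regular over $R\setminus\{0\}$, and is injectively partition regular when each $k_i\in K^\times$. Because every equation in \eqref{MainLinearSystemOfEquations2} is homogeneous, the family of finite subsets of $R\setminus\{0\}$ supporting a solution (respectively, an injective solution) is multiplicatively translation invariant: scaling a solution $(z_1,\dots,z_{3m})$ by any $r\in R\setminus\{0\}$ yields another solution with exactly the same pattern of coincidences among coordinates. Lemma \ref{TranslationInvariancePartitionTheorem} then places such a configuration inside the multiplicatively piecewise syndetic set $C_{i_0}\cap(R\setminus\{0\})$, which is precisely (ii); and since $k_1,\dots,k_m$ were arbitrary they may indeed be chosen last. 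Note also the easy sub-case of (i): if $0\in C_{i_0}$ then, since $F\subseteq R\setminus\{0\}$, taking $y_1=0$ and $y_2$ any nonzero element of $C_{i_0}$ already gives $\tfrac{y_1}{y_2}=0\in R\setminus F$, so it remains only to treat cells not containing $0$.

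The heart of the proof is (b), and here I would pass to the Stone--\v{C}ech compactification of $(R\setminus\{0\},\cdot)$. One selects the cell $C_{i_0}$ through a minimal idempotent ultrafilter $p$, so that $B:=C_{i_0}\cap(R\setminus\{0\})\in p$ is multiplicatively central and a fortiori piecewise syndetic; then from $B\in p=p\cdot p$ one picks $y_2\in B$ with $y_2^{-1}B\in p$, and it suffices to locate $r\in y_2^{-1}B$ with $r\notin F$, since then $y_1:=y_2 r\in B$ and $\tfrac{y_1}{y_2}=r\in R\setminus F$. Producing such an $r$ — equivalently, arranging that the central cell and the ultrafilter $p$ can be taken to ``miss'' $F$ — is exactly the step that uses the hypothesis that $F$ is multiplicatively piecewise syndetic yet not all of $R\setminus\{0\}$, via the interaction of $F$ with the smallest ideal $K(\beta(R\setminus\{0\}))$. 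This coordination is the main obstacle: (i) and (ii) must be secured in \emph{one and the same} cell, and the device that makes this possible is to phrase both through a single notion of largeness — (ii) through Corollary \ref{PartitionLemmaWithHomogeneity} together with Lemma \ref{TranslationInvariancePartitionTheorem}, and (i) through the idempotent manipulation above — so that one multiplicatively central cell $C_{i_0}$ serves simultaneously.
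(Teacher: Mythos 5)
Your treatment of part (ii) is sound and is in substance what the paper does: homogeneity makes the solution set of \eqref{MainLinearSystemOfEquations2} multiplicatively translation invariant, so Corollary \ref{PartitionLemmaWithHomogeneity} together with Lemma \ref{TranslationInvariancePartitionTheorem} places an (injective) solution in any multiplicatively piecewise syndetic cell, and the $k_i$ may indeed be chosen after the cell is fixed. The problem is part (i), which you explicitly do not prove: you reduce it to finding $r\in y_2^{-1}B$ with $r\notin F$ and then state that producing such an $r$ ``is exactly the step that uses the hypothesis,'' deferring it to an unspecified interaction with the smallest ideal. That step is the entire content of the lemma beyond Corollary \ref{PartitionLemmaWithHomogeneity}, so leaving it as a description of what must happen is a genuine gap. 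Worse, under the hypothesis as you read it (that $F$ \emph{is} multiplicatively piecewise syndetic) the step cannot be carried out and the lemma is false: take $R=\mathbb{Z}$ and $F=\mathbb{Z}\setminus\{0\}$, the whole multiplicative semigroup and hence trivially piecewise syndetic, so that $R\setminus F=\{0\}$ and (i) forces $y_1=0$; for the partition $C_1=\{0,1\}$, $C_2=\mathbb{Z}\setminus\{0,1\}$, the cell $C_2$ fails (i) and the cell $C_1$ already fails (ii) for $m=1$, $k_1=5$. The statement contains a typo: the hypothesis actually used in the paper's proof, and required by the application in Theorem \ref{MainResultForPi02} (where $F=g^{-1}(\{0\})$ satisfies $d^*_\times(F)=0$ by Lemma \ref{MultiplicativeHomomorphismLemma}), is that $F$ is \emph{not} multiplicatively piecewise syndetic.

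With the corrected hypothesis the missing step is short, but it must be written down. The paper chooses $p$ (via Theorem 27 of \cite{ax+by=cwmzn}) so that every member of $p$ is multiplicatively piecewise syndetic, every set $rR$ with $r\in R\setminus\{0\}$ belongs to $p$, and every member of $p$ contains the configurations needed for (ii). Then for an \emph{arbitrary} $y_2\in C_{i_0}$ the set $y_2F$ is not piecewise syndetic, hence $y_2F\notin p$, while $y_2R\cap C_{i_0}\in p$; therefore $(y_2R\cap C_{i_0})\setminus y_2F\in p$ is nonempty, and any $y_1$ in it satisfies $\tfrac{y_1}{y_2}\in R\setminus F$. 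Your variant (an idempotent in the smallest ideal plus $y_2$ chosen with $y_2^{-1}B\in p$) can be made to work once you observe that $F\notin p$ because all members of an ultrafilter in $K(\beta(R\setminus\{0\}))$ are piecewise syndetic, but that observation, not the choice of machinery, is the point, and it is absent from your write-up. One further small slip: your remark that $0\in C_{i_0}$ trivializes (i) assumes $F\subseteq R\setminus\{0\}$, which is not part of the hypothesis.
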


\begin{proof}
    For the proof of this lemma we will assume familiarity with ultrafilter methods in partition Ramsey theory. For a quick introduction to these methods, the reader is refered to \cite[Section 9]{ax+by=cwmzn}. Theorem 27 of \cite{ax+by=cwmzn} allows us to pick a non-principal ultrafilter $p \in \beta R^*$ satisfying the following two properties:
    \begin{enumerate}
        \item For every $r \in R\setminus\{0\}$, we have $rR \in p$.

        \item If $A \in p$, then $A$ is multiplicatively piecewise syndetic.

        \item For every $A \in p$, $A$ contains a solution to the system of equations in \eqref{MainLinearSystemOfEquations2}.

        \item If $k_i \neq 0$ for $1 \le i \le m$, then for every $A \in p$, $A$ contains an injective solution to the system of equations in \eqref{MainLinearSystemOfEquations2}.
    \end{enumerate}
    Since $R = \bigcup_{i = 1}^\ell C_i$ is a finite partition, we may pick $1 \le i_0 \le \ell$ such that $C_{i_0} \in p$. Let $y_2 \in C_{i_0}$ be arbitrary. Since $F$ is not multiplicatively piecewise syndetic, neither is $y_2F$, so $y_2F \notin p$. Since $y_2R \in p$, we see that $(y_2R\cap C_{i_0})\setminus y_2F \in p$, so let $y_1 \in (y_2R\cap C_{i_0})\setminus y_2F$ be arbitrary. It is now clear that part (i) is satisfied. Now that $y_1$ and $y_2$ are fixed, we are given $k_1,\cdots,k_m \in K$. We use property 3 (property 4) of our ultrafilter $p$ to pick (distinct) $z_1,\cdots,z_{3m} \in C_{i_0}$ satisfying Equation \eqref{MainLinearSystemOfEquations2}.
    
\end{proof}

We now record two special cases of the compactness principle in Ramsey theory. For a more general treatment, the reader is referred to \cite[Chapter 1]{GRSRamseyTheory}.

\begin{theorem}[the compactness principle]\label{PartitionCompactnessPrinciple}
    Let $R$ be an integral domain, let $S \subseteq R$, and let $p \in R[x_1,\cdots,x_n]$.
    \begin{enumerate}[(i)]
        \item Given $r \in \mathbb{N}$, the equation $p(x_1,\cdots,x_n) = 0$ is (injectively) $r$-partition regular over $S$ if and only if it also (injectively) $r$-partition regular over some finite set $F_r \subseteq S$.

        \item The equation $p(x_1,\cdots,x_n) = 0$ is (injectively) partition regular over $S$ if and only if for every $r \in \mathbb{N}$ it is (injectively) $r$-partition regular over some finite set $F_r \subseteq S$.
    \end{enumerate}
     
\end{theorem}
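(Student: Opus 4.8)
The plan is to prove (i) by a standard Tychonoff compactness argument and then read off (ii) immediately. One direction of (i) is trivial: if $p(x_1,\dots,x_n)=0$ is (injectively) $r$-partition regular over a finite set $F_r\subseteq S$, then given any partition $S=\bigcup_{i=1}^r C_i$, the induced partition $F_r=\bigcup_{i=1}^r(C_i\cap F_r)$ has a cell $C_{i_0}\cap F_r$ containing a (distinct) solution, and that solution lies in $C_{i_0}\subseteq S$; hence $p=0$ is (injectively) $r$-partition regular over $S$.

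For the converse I would argue by contradiction. Suppose $p=0$ is (injectively) $r$-partition regular over $S$ but not over any finite subset of $S$. Identify $r$-partitions of $S$ (with possibly empty cells) with colorings $c\in[r]^S$, where $[r]=\{1,\dots,r\}$ is discrete and $[r]^S$ carries the product topology, so that $[r]^S$ is compact. For each finite $F\subseteq S$, let $U_F\subseteq[r]^S$ consist of those $c$ for which there is \emph{no} (distinct) tuple $a_1,\dots,a_n\in F$ with $p(a_1,\dots,a_n)=0$ and $c(a_1)=\cdots=c(a_n)$. Membership in $U_F$ depends only on $c|_F$, so $U_F$ is clopen; and since $p=0$ is by assumption not (injectively) $r$-partition regular over $F$, there is a coloring of $F$ witnessing this, any extension of which lies in $U_F$, so $U_F\neq\emptyset$.

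The family $\{U_F : F\subseteq S\text{ finite}\}$ has the finite intersection property, because $U_{F_1\cup\cdots\cup F_k}\subseteq U_{F_1}\cap\cdots\cap U_{F_k}$ and the left side is nonempty by the previous step. By compactness of $[r]^S$ there is some $c\in\bigcap_F U_F$. Then $c$ is an $r$-coloring of $S$ having no monochromatic (injective) solution of $p=0$ inside any finite subset of $S$; since every solution of $p=0$ involves only finitely many elements of $S$, $c$ has no monochromatic (injective) solution at all, contradicting (injective) $r$-partition regularity over $S$. This establishes (i). Part (ii) then follows at once: $p=0$ is (injectively) partition regular over $S$ iff it is (injectively) $r$-partition regular over $S$ for every $r\in\mathbb{N}$, which by (i) is equivalent to the assertion that for every $r$ it is (injectively) $r$-partition regular over some finite $F_r\subseteq S$.

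I do not expect a genuine obstacle here; it is a routine compactness argument. The only two points that need care are that $U_F$ is determined by finitely many coordinates (so it is closed and the finite-intersection-property computation goes through), and that any solution of $p=0$ with entries in $S$ automatically has entries in a finite subset of $S$, which is exactly what lets the ``diagonal'' coloring produced by compactness avoid all monochromatic (injective) solutions globally. The same argument handles the plain and the injective variants verbatim, since the only change is the word ``distinct'' in the definition of $U_F$.
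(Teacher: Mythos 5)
Your argument is correct. The paper does not actually prove this theorem---it records it as a known special case and refers the reader to \cite{GRSRamseyTheory} for the general treatment---and your Tychonoff/finite-intersection-property argument is exactly the standard compactness proof that reference supplies: the clopen sets $U_F$ determined by finitely many coordinates, the inclusion $U_{F_1\cup\cdots\cup F_k}\subseteq U_{F_1}\cap\cdots\cap U_{F_k}$, and the observation that any root involves only finitely many elements of $S$ are precisely the right ingredients, and the injective variant indeed goes through verbatim.
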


%%%%%%%%%%%%%%%%%%%%%%%%%%%%%%%%%%%%%%%%%%%%%%%%%%%%%%%%%%%%%%%%%%%%%%%%%%%%%%%%%%%%%%%%%%%%%%%%%%%%%%%%%%%%%%%%%%%%%%%%%%%%%%%%%%%%%%%%%%%%%%%%%%%%%%%%%
\subsection{Descriptive set theory}

Our main results are Theorems \ref{MainResultForSigma01} and \ref{MainResultForPi02}, and they determine the descriptive complexity of sets of polynomials that are of interest in Ramsey theory. For example, we show that for many integral domains $R$, the set of polynomials $p \in R[x_1,\cdots,x_n]$ for which $p(x_1,\cdots,x_n) = 0$ is partition regular over $R\setminus\{0\}$ is $\Pi_2^0$-complete, hence undecidable. In the case of $R = \mathbb{Z}$, we obtain this result conditional upon Hilbert's 10th problem for $\mathbb{Q}$, and for the other domains we obtain the result unconditionally. Consequently, we give here a brief review of the relevant notions of complexity. 

A set $A\subseteq \N^k$ is \textbf{computable}, 
or $\Delta^0_1$, if there is an algorithm which 
given input $\vec n=(n_1,\dots,n_k)$ will terminate in a finite number of steps with the answer as to 
whether or not $\vec n \in A$. A set $A\subseteq \N^k$ is \textbf{semi-computable}, or $\Sigma^0_1$, 
if there is a $B\subseteq \N^{k+1}$ which is $\Delta^0_1$ such that for all 
$\vec n=(n_1,\dots,n_k)$ we have $\vec n \in A$ if and only if there exists $m \in \mathbb{N}$ such that $ 
(n_1,\dots,n_k,m)\in B$.
Also, the existential quantifier over $m$ can be replaced by a finite string of existential quantifiers over $\N$
in the definition. That is, if  for all $\vec n$ we have $(n_1,\dots,n_k)\in A$ iff 
there exists $(m_1,\cdots,m_\ell) \in \mathbb{N}^\ell$ such that $(n_1,\dots,n_k,m_1,\dots,m_\ell)\in B$, where $B \subseteq \mathbb{N}^{k+\ell}$ is $\Delta^0_1$, then $A$ is $\Sigma^0_1$.

The $\Sigma^0_1$ sets are also known as the \textbf{computably enumerable (c.e.)} sets as they are 
the sets for which an algorithm can list the members of the set. A set $A\subseteq \N^k$ is $\Pi^0_1$ (or \textbf{co-c.e.})
if the complement $\N^k \setminus A$ is $\Sigma^0_1$. It is a standard fact that $\Delta^0_1=
\Sigma^0_1\cap \Pi^0_1$, that is, a set is computable iff both the set and its complement are 
$\Sigma^0_1$. 

The collections $\Delta^0_1$, $\Sigma^0_1$, $\Pi^0_1$ are the first few levels of the \textbf{lightface 
hierarchy} of sets of integers, which measures the complexity of such sets. The higher levels 
$\Delta^0_n$, $\Sigma^0_n$, $\Pi^0_n$ are defined as follows. $\Sigma^0_{n}=\exists^{\N} \Pi^0_{n-1}$. 
That is, $A\subseteq \N^k$ is $\Sigma^0_{n}$ if there is $B\subseteq \N^{k+1}$, with $B$ a $\Pi^0_{n-1}$ set,
such that $(n_1,\dots,n_k)\in A$ iff there exists $m \in \mathbb{N}$ such that $(n_1,\dots,n_k,m)\in B$. Again, it does not affect the definition
if we allow a finite string of existential quantifiers instead of a single existential quantifier. 
We then let $\Pi^0_n$ be the collection of sets whose complements are $\Sigma^0_n$, which is abbreviated 
$\Pi^0_n= \check{\Sigma}^0_n$. Finally, we set $\Delta^0_n=\Sigma^0_n \cap \Pi^0_n$, that is, a set 
$A\subseteq \N^k$ is $\Delta^0_{n}$  iff it is both a $\Sigma^0_n$ and a $\Pi^0_n$ set, or equivalently,
$A$ and $\N^k\setminus A$ are both $\Sigma^0_n$. 

A set $A\subseteq \mathbb{N}$ is said to be $\Sigma^0_n$-\textbf{universal} if every $\Sigma^0_n$ set $B\subseteq \mathbb{N}$
is {\bf computably reducible} to $A$. By this we mean that there is a computable function $f\colon \mathbb{N}\to \mathbb{N}$ 
such that $n \in B$ iff $f(n)\in A$ for any $n\in \mathbb{N}$. The function $f$ is called a reduction of $B$ to $A$. 
A set $A\subseteq \mathbb{N}$ is $\Sigma^0_n$-{\bf complete} if $A\in \Sigma^0_n$ and $A\notin \Pi^0_n$. 
Every $\Sigma^0_n$-universal set is $\Sigma^0_n$-complete.

Similarly,  we define the notion of $\Pi^0_n$-complete.

The collections $\Delta^0_n$, $\Sigma^0_n$, $\Pi^0_n$ form the lightface arithmetical hierarchy 
of subsets of integers. We are concerned in this paper with sets of integers (or things which can be coded by
integers such as polynomials), but the lightface hierarchy generalizes to a hierarchy of subsets of any reasonable 
(more precisely, recursively presented) Polish space $X$. 

A countably infinite integral domain $R$ is \textbf{computable} if there is a bijection $\phi:R\rightarrow\mathbb{N}$ such that the maps $+_\phi:\mathbb{N}^2\rightarrow\mathbb{N}$ and $\cdot_\phi:\mathbb{N}^2\rightarrow\mathbb{N}$ given by $n_1+_\phi n_2 = \phi(\phi^{-1}(n_1)+\phi^{-1}(n_2))$ and $n_1\cdot_\phi n_2 = \phi(\phi^{-1}(n_1)\cdot\phi^{-1}(n_2))$ respectively are computable, i.e., $n_1+_\phi n_2$ and $n_1\cdot_\phi n_2$ can each be calculated in a finite number of steps. 

We will also use the following notation that is common in logic. Given a set $S$, a subset $B \subseteq S$, and an element $m \in S$, we write $B(m)$ to abbreviate $m \in B$. Similarly, if $B \subseteq S\times S$ and $m,n \in S$, then we write $B(m,n)$ to abbreviate $(m,n) \in B$.

%%%%%%%%%%%%%%%%%%%%%%%%%%%%%%%%%%%%%%%%%%%%%%%%%%%%%%%%%%%%%%%%%%%%%%%%%%%%%%%%%%%%%%%%%%%%%%%%%%%%%%%%%%%%%%%%%%%%%%%%%%%%%%%%%%%%%%%%%%%%%%%%%%%%%%%%%%%%%%%%
\subsection{Hilbert's 10th Problem}\label{Hilberts10thProblemSubsection}

Hilbert's 10th problem asked whether there was an algorithm which, given a polynomial $p\in \mathbb{Z}[x_1,\dots,x_n]$ with integer coefficients in some finite number of variables, would decide if the polynomial has an integer root, that is, a tuple $(a_1,\dots,a_n)\in \mathbb{Z}^n$ with $p(A_1,\dots,a_n)=0$. 
In 1970, Matiyasevich \cite{SolutionToH10}, drawing on earlier work of Davis, Putnam, and Robinson, solved Hilbert's 10th problem by showing that there was no such algorithm, which is frequently phrased by saying the question is undecidable.
More precisely, it was shown that the set of polynomials with coefficients in $\mathbb{Z}$ which have integer roots is a $\Sigma^0_1$ complete set (identifying polynomials with integers via some reasonable coding). 
We refer the reader to \cite{Hilberts10thProblemIsUnsolvable} for an exposition of the solution to Hilbert's 10th problem, as well as a discussion of the history.
This seminal result led to much further investigation into the corresponding question over other integral domains, and also allowing the roots to lie in extensions of the base ring. 
In particular one important generalization is Hilbert's 10th problem for $\mathbb{Q}$. 
This asks whether there is an algorithm for deciding if a polynomial with integer coefficients (or equivalently with rational coefficients) has a root in $\mathbb{Q}$. This problem remains open. 
For other integral domains, the analog of Hilbert's 10th problem has been settled and the question has been similarly shown to be undecidable.  

In this paper we consider the complexity of partition regularity and related notions, both for polynomials over $\mathbb{Z}$ and over other computable integral domains.
A compactness argument (Theorem \ref{PartitionCompactnessPrinciple}
for partition regularity and Theorem~\ref{DensityCompactnessPrinciple} for density regularity) shows that the set polynomials $p$ for which the equation $p(x_1,\cdots,x_n) = 0$ is partition regular, is a $\Pi^0_2$ set (and likewise for density regularity). Our main results, Theorems~\ref{MainResultForPi02}
and \ref{MainResultForSigma01}, show that the set of partition regular polynomials is in fact $\Pi^0_2$-complete, and if we fix the number of colors
$\ell$ of the partitions, then the set is $\Sigma^0_1$-complete. 
Our proofs will involve reductions based on Hilbert's 10th problem for the quotient field ($\mathbb{Q}$ in the case of polynomials over $\mathbb{Z}$),
and thus depend on the $\Sigma^0_1$-completeness of the set of polynomials 
having a zero in the quotient field. For polynomials over $\mathbb{Z}$, this makes our theorem conditional on the undecidabilty of Hilbert's 10th problem for $\mathbb{Q}$, which as we noted is still open. For other integral domains, the corresponding result is known, and so we get an unconditional result. 

In all known cases of arithmetic interest where Hilbert's 10th problem for the integral domain $R$ (abbreviated HTP$(R)$) has been shown to be undecidable,
the proof actually gives a reduction of an arbitrary $\Sigma^0_1$ set $A\subseteq \N^k$ to the set of polynomials 
over $R$ having a root in $R$. That is, there is a computable function which assigns to each 
$\vec a=(a_1,\dots,a_k)\in \N^k$ a polynomial $p_{\vec a}\in R[y_1,\dots,y_\ell]$ such that 
$\vec a \in A$ iff $p_{\vec a}$ has a root in $R$. Since this equivalence holds in all known 
cases where HTP$(R)$ is known to be undecidable, we will henceforth take the phrase
``HTP$(R)$ is undecidable'' to mean that any $\Sigma^0_1$ subset of $\N^k$ can be computably reduced
to the set of polynomials over $R$ having a root in $R$. Similarly, we take the phrase ``HTP$(R\setminus\{0\})$ is undecidable'' to mean that any $\Sigma^0_1$ subset of $\N^k$ can be computably reduced to the set of polynomials over $R$ having a root in $R\setminus\{0\}$.
Lemma \ref{LemmaForHTPVariations} tells us that if HTP$(R)$ is undecidable then HTP$(R\setminus\{0\})$, is also undecidable.

Denef \cite{denef1978diophantine} showed that HTP$(K)$ is undecidable when $K = F(t)$ where $F$ is a totally real field. Pheidas \cite{pheidas1991hilbert} showed that HTP$(F(t))$ is undecidable when $F$ is a finite field of odd characteristic, and Videla \cite{videla1994hilbert} showed that HTP$(F(t))$ is undecidable when $F$ is a finite field of even characteristic. Later Shlapentokh \cite{shlapentokh1996diophantine} showed that HTP$(K)$ is undecidable when $K$ is an algebraic function field over a finite field of characteristic greater than two. More recently, Eisentr\"{a}ger and Shlapentokh \cite{eisentrager2017hilbert} showed that if $K$ is a countable function field that does not contain the algebraic closure of a finite field, then HTP$(K$) is undecidable.

In the proof of Hilbert's 10th problem for $\mathbb{Z}$, given any $\Sigma^0_1$ set $A\subseteq \mathbb{N}$ 
a computable function $f\colon \mathbb{N} \to$ polynomials is constructed so that for all $n$, $n \in A$ 
iff $f(n)$ has a root in $\mathbb{Z}$. Some simple variations of this will also be useful which we state in the next lemma.

\begin{lemma}\label{LemmaForHTPVariations}
Let $R$ be a computable, infinite integral domain. 
Let $P_0$ be the set of polynomials $p$ over $R$ 
which have a root in $R$. 
Let $P_1$ be the set of polynomials $p$ over $R$ 
which have a root in $R\setminus \{ 0\}$. If $P_0$ is $\Sigma^0_1$-complete, then $P_1$ is also 
$\Sigma^0_1$-complete. 
\end{lemma}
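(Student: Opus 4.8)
\emph{Proof proposal.}
The plan is to establish two facts: first, that $P_1 \in \Sigma^0_1$, and second, that there is a computable many-one reduction of $P_0$ to $P_1$. Granting the hypothesis that $P_0$ is $\Sigma^0_1$-complete, composing reductions shows that every $\Sigma^0_1$ set reduces to $P_1$; since moreover $P_0 \notin \Pi^0_1$ and $\Pi^0_1$ is closed under computable preimages, the reduction forces $P_1 \notin \Pi^0_1$, so $P_1$ is $\Sigma^0_1$-complete.

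For the first fact, I would use that $R$ is computable: polynomials over $R$ in finitely many variables can be coded by natural numbers, the number $n$ of variables of $p$ can be read off from its code, the evaluation map $(p, a_1, \dots, a_n) \mapsto p(a_1, \dots, a_n) \in R$ is computable (it only uses the ring operations of $R$, which are computable), and the predicates ``$a_i \neq 0_R$'' and ``$p(a_1,\dots,a_n) = 0_R$'' are $\Delta^0_1$ in the codes. Hence $p \in P_1$ if and only if there exists $(a_1,\dots,a_n) \in (R\setminus\{0\})^n$ with $p(a_1,\dots,a_n) = 0_R$, which exhibits $P_1$ as $\Sigma^0_1$.

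For the reduction, given $p \in R[x_1,\dots,x_n]$ I would form the polynomial $q \in R[y_0,y_1,\dots,y_n]$ defined by $q(y_0,y_1,\dots,y_n) := p(y_1 - y_0, y_2 - y_0, \dots, y_n - y_0)$ (when $n = 0$, i.e.\ $p$ is a constant, simply set $q(y_0) := p$). The assignment $p \mapsto q$ is computable, since producing $q$ from $p$ only requires expanding the substitution using the computable ring operations of $R$. I claim $p \in P_0 \iff q \in P_1$. If $q$ has a root $(b_0,b_1,\dots,b_n) \in (R\setminus\{0\})^{n+1}$, then $(b_1 - b_0, \dots, b_n - b_0)$ is a root of $p$ in $R$, so $p \in P_0$. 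Conversely, if $p$ has a root $(a_1,\dots,a_n) \in R^n$, then using that $R$ is infinite I would choose $b_0 \in R \setminus \left(\{0_R\} \cup \{-a_1, \dots, -a_n\}\right)$ and set $b_i := a_i + b_0$ for $1 \le i \le n$; then $b_0, b_1, \dots, b_n$ are all nonzero and $q(b_0,b_1,\dots,b_n) = p(a_1,\dots,a_n) = 0_R$, so $q \in P_1$.

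There is no real obstacle here; the only places the hypotheses are used are the appeal to infiniteness of $R$ to choose the shift $b_0$ outside a prescribed finite set, and the appeal to computability of $R$ both for coding polynomials, for carrying out the substitution $p \mapsto q$ effectively, and for the $\Sigma^0_1$-ness of $P_1$. The conclusion then follows: composing the reduction $P_0 \le_m P_1$ with the reductions witnessing $\Sigma^0_1$-completeness of $P_0$ shows every $\Sigma^0_1$ set reduces to $P_1$, and together with $P_1 \in \Sigma^0_1$ this gives that $P_1$ is $\Sigma^0_1$-complete.
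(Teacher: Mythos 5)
Your proof is correct and uses essentially the same idea as the paper: replace each variable by a shifted copy so that, using the infinitude of $R$, any root of $p$ can be moved off zero. The only cosmetic difference is that the paper substitutes $x_i \mapsto y_i + z_i$ with an independent shift for each variable (doubling the variable count), while you use a single common shift $y_0$; both reductions work, and your added verification that $P_1 \in \Sigma^0_1$ is a welcome detail the paper leaves implicit.
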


\begin{proof}
To see this, let $p(x_1,\dots,x_n)$ be a polynomial in
$\bigcup_{n = 1}^\infty \mathbb{Z}[x_1,\cdots,x_n]$, and let $p'$
be the polynomial $p'(y_1,\dots,y_n,z_1,\dots,z_n)= 
p((y_1+z_1),\dots,(y_n+z_n))$. 
Then $p$ has a root in $R$ if and only if $p'$ has a root in $R$ in which none of the variables are equal to $0$. 
It is clear that if $p'$ has a root in $R\setminus\{0\}$, then $p$ has a root in $R$, so let us now assume that $p$ has a root $(a_1,\dots,a_n)$ in $R$. Since $R$ is infinite, for each $1 \le i \le n$, we may pick and arbitrary $y_i \in R\setminus\{0,a_i\}$ and then let $z_i = a_i-y_i$.
Thus the map $p \mapsto p'$ is a computable reduction of $P_0$ to $P_1$. 
\end{proof}

We will be concerned with the $\Pi^0_2$ completeness of various sets, and our proofs wil make use of the following 
definition.

\begin{definition} \label{def:mp}
Let $R$ be an infinite computable integral domain with field of fractions $K$. 
Let $S\subseteq \N\times \Z$ be a $\Sigma^0_1$ set. A triple $(p,f,g)$ consisting of a polynomial $p\in R[x,y,z_1,\dots,z_k]$, a computable map $g:K\rightarrow\mathbb{Z}$ for which $g(0) = 0$ and $g$ is a surjective homomorphism from $(K^\times,\cdot)$ to $(\mathbb{Z},+)$, and a computable map $f:\N\to R$ is an \textbf{additive master polynomial} for $S$ 
if for all $m,n \in \N$ we have the following:
\begin{enumerate}
    \item $S(m,n)$ if and only if for every $b \in K$ with $g(b)=n$, the polynomial  $p(f(m), b,z_1,\dots,z_k)$
has a root $z_1,\cdots,z_k \in K^\times$.

    \item $\neg S(m,n)$ if and only if for every $b \in K$ with $g(b)=n$, the polynomial  $p(f(m), b,z_1,\dots,z_k)$
does not have a root $z_1,\cdots,z_k \in K$.
\end{enumerate}
If $(p,f,g)$ and $S$ are as above, but $g$ is a surjective homomorphism from $(K^\times,\cdot)$ to $(\mathbb{Q}^\times,\cdot)$ (instead of $(\mathbb{Z},+)$), and $S \subseteq \mathbb{N}\times\mathbb{Q}$ (instead of $S \subseteq \mathbb{N}\times\mathbb{Z}$), then $(p,f,g)$ is a \textbf{multiplicative master polynomial} for $S$.
The domain $R$ \textbf{admits master polynomials} if one of the following holds:
\begin{enumerate}[(i)]
\item Every $\Sigma_1^0$ set $S \subseteq \mathbb{N}\times\mathbb{Z}$ has a an additive master polynomial.\footnote{This is equivalent to saying that a universal $\Sigma_1^0$ set $S \subseteq \mathbb{N}\times\mathbb{Z}$ admits an additive master polynomial.}

\item Every $\Sigma_1^0$ set $S \subseteq \mathbb{N}\times\mathbb{Q}$ has a multiplicative master polynomial.\footnote{This is equivalent to saying that a universal $\Sigma_1^0$ set $S \subseteq \mathbb{N}\times\mathbb{Q}$ admits an additive master polynomial.}
\end{enumerate}
\end{definition}

The following property of multiplicative homomorphisms will be of use in Section 4. 

\begin{lemma}\label{MultiplicativeHomomorphismLemma}
    If $(G,+)$ is a countably infinite group and $g:(K^\times,\cdot)\rightarrow(G,+)$ is a surjective homomorphism, then $d^*_\times\left(g^{-1}(\{0\})\right) = 0$. 
\end{lemma}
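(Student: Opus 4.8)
The plan is to combine the alternative description of upper Banach density in Lemma~\ref{AlternativeCharacterizationOfUBD} with the simple observation that the fiber $H := g^{-1}(\{0\})$ is ``multiplicatively sparse'': translating it by powers of an element on which $g$ takes the value $1$ produces pairwise disjoint copies, so $H$ can occupy only a vanishing proportion of any large finite set.

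Concretely, I would first use surjectivity of $g$ to fix $a \in R^\times$ with $g(a) = 1$, and for each $N \in \mathbb{N}$ set $F_N := \{1, a, a^2, \dots, a^{N-1}\} \subseteq R^\times$; since $g(a^i) = i$, these are $N$ distinct elements. The key computation is that for every $s \in R^\times$ the set $F_N s = \{s, as, \dots, a^{N-1}s\}$ satisfies $|F_N s \cap H| \le 1$: if $a^i s \in H$ then $0 = g(a^i s) = i + g(s)$, which forces $i = -g(s)$, and at most one value of $i$ in $\{0,\dots,N-1\}$ can satisfy this. Because $R$ is an integral domain, $(R^\times,\cdot)$ is a countable cancellative abelian (hence left amenable) semigroup, so $|F_N s| = |F_N| = N$ and therefore $|F_N s \cap H| \le \tfrac{1}{N}|F_N|$.

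Finally I would plug $F = F_N$ into Lemma~\ref{AlternativeCharacterizationOfUBD} applied to $(R^\times,\cdot)$: it says that $d^*_\times(H)$ is the supremum of those $\alpha \ge 0$ for which every finite $F$ admits some $s$ with $|Fs \cap H| \ge \alpha|F|$. Any such $\alpha$ must satisfy $\alpha \le 1/N$ by the previous paragraph, and since $N$ is arbitrary we conclude $\alpha \le 0$, so the supremum is $0$, i.e. $d^*_\times(g^{-1}(\{0\})) = 0$.

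I do not expect a genuine obstacle here; the only points requiring a little care are that $d^*_\times$ must be interpreted in the multiplicative semigroup $(R^\times,\cdot)$ rather than in $(R,+)$, and that one should invoke cancellativity to know the translates $F_N s$ still have $N$ elements. One could instead argue directly with a multiplicative F\o lner sequence $\Phi$, using $|a\Phi_n \triangle \Phi_n|/|\Phi_n| \to 0$ to bound $\overline{d}_\Phi(H)$, but routing through Lemma~\ref{AlternativeCharacterizationOfUBD} sidesteps any left/right translation bookkeeping.
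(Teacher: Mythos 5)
Your proof is correct. The paper's own argument is based on the same underlying observation (powers of an element $r_0$ with $g(r_0)=1$ shift the fiber $H=g^{-1}(\{0\})$ off itself), but it converts this into ``density zero'' differently: it fixes a multiplicative F\o lner sequence $\mathcal{F}$ attaining $d^*_\times(H)$, notes that the sets $r_0^nH$ are pairwise disjoint and all have the same $\mathcal{F}$-density as $H$ (by left invariance of F\o lner density), and concludes that this common density must vanish since infinitely many disjoint sets cannot each have positive density. You instead dualize: rather than translating the infinite set $H$, you translate the finite test set $F_N=\{1,a,\dots,a^{N-1}\}$ and observe that every multiplicative translate $F_Ns$ meets $H$ in at most one point, then invoke Lemma \ref{AlternativeCharacterizationOfUBD} to get $d^*_\times(H)\le 1/N$ for every $N$. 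Your route buys a completely explicit quantitative bound and avoids having to justify that $\overline{d}_{\mathcal{F}}$ is invariant under left translation (and that the sup defining $d^*_\times$ is attained along some F\o lner sequence with an actual limit), at the cost of relying on the Johnson--Richter characterization in Lemma \ref{AlternativeCharacterizationOfUBD}; the paper's route is self-contained modulo standard F\o lner-density facts. Both verifications you flag (working in $(R^\times,\cdot)$ and using cancellativity so that $|F_Ns|=N$) are exactly the right points of care, and there is no gap.
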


\begin{proof}
    For each $h \in G$ let $r_h \in K$ be such that $g(r_h) = h$. 
    Let $\mathcal{F}$ be a F\o lner sequence in $(K^\times,\cdot)$ for which $d_{\mathcal{F}}(g^{-1}(\{0\})) = d^*_\times(g^{-1}(\{0\}))$. 
    Then $d_{\mathcal{F}}(g^{-1}(\{0\})) = d_{\mathcal{F}}(r_hg^{-1}(\{0\}))$ for all $h \in G$, and $r_{h_1}g^{-1}(\{0\})\cap r_{h_2}g^{-1}(\{0\}) = \emptyset$ for all distinct $h_1,h_2 \in G$, so $d_{\mathcal{F}}(g^{-1}(\{0\})) = 0$.
    %Let $r_0 \in R$ be such that $g(r_0) = 1$ and let $\mathcal{F}$ be a F\o lner sequence in $(K^\times,\cdot)$ for which $d_{\mathcal{F}}(g^{-1}(\{0\})) = d^*_\times(g^{-1}(\{0\}))$.
    %Then $d_{\mathcal{F}}(g^{-1}(\{0\})) = d_{\mathcal{F}}(r_0^ng^{-1}(F))$ for all $n \in \mathbb{Z}$ and $r_0^ng^{-1}(\{0\})\cap r_0^mg^{-1}(\{0\}) = \emptyset$ for all distinct $m,n \in \mathbb{Z}$, so $d_{\mathcal{F}}(g^{-1}(\{0\})) = 0$.
\end{proof}

Given a $\Sigma^0_1$ set $S\subseteq \N\times \N$, we can effectively compute from $S$ a $\Sigma^0_1$
set $S'\subseteq \N\times \Z$ with the following properties:
\begin{enumerate}
    \item For all $m \in \mathbb{N}$ we have $\neg S'(m,0)$.
    
    \item For all $m \in \N$, we have for all $n \in \mathbb{N}$ $S(m,n)$ if and only if 
for all $n \in \mathbb{Z}\setminus\{0\}$ $S'(m,n)$.

    \item If $\exists n_0\neq 0\ \neg S'(m,n_0)$ then $\neg S'(m,n)$ holds for all $|n| \ge |n_0|$.
\end{enumerate}
Namely, we can take $S'(m,n)\leftrightarrow (n\neq 0) \wedge  \forall p\leq |n|\ S(m,p)$. Thus, given a $\Pi^0_2$ set $A\subseteq \N$
we may assume $A(m)\leftrightarrow \forall n \neq 0\ S(m,n)$ where $S$ is $\Sigma^0_1$ and has the property
that for any $m$ if $\exists n \neq 0\ \neg S(m,n)$ then $\neg S(m,n)$ holds for cofinitely many $n$. $S$ also has the property that $\neg S(m,0)$ for all $m \in \mathbb{N}$.

Now let $\phi:\mathbb{Q}\rightarrow\mathbb{Z}$ be any computable bijection for which $\phi(0) = 0$, $\phi(q) > 0$ if $q > 0$, and $\phi(-q) = -\phi(q)$, then define the order $\le_\phi$ on $\mathbb{Q}$ by $q_1 \le_\phi q_2$ if and only if $\phi(q_1) \le \phi(q_2)$. For any $\Sigma_1^0$ set $S \subseteq \mathbb{N}\times\mathbb{N}$, we can effectively compute from $S$ a $\Sigma_1^0$ set $S'' \subseteq \mathbb{N}\times\mathbb{Q}$ with the following properties:

\begin{enumerate}
    \item For all $m \in \mathbb{N}$ we have $\neg S''(m,0)$.
    
    \item For all $m \in \N$, we have for all $n \in \mathbb{N}$ $S(m,n)$ if and only if 
for all $n \in \mathbb{Q}\setminus\{0\}$ $S''(m,n)$.

    \item If $\exists n_0\neq 0\ \neg S''(m,n_0)$ then $\neg S'(m,n)$ holds for all $|n| \ge_\phi |n_0|$.
\end{enumerate}
Namely, we can take $S''(m,n)\leftrightarrow S'(m,\phi(n))$. This will be a convenient starting point for our later arguments.

The additive/multiplicative master polynomial translates $\Sigma^0_1$ statments over $\N$ into Diophantine
relations over the ring or field in question. In all cases of interest for this paper we have that
master polynomials exist. We must first review some terminology in order to discuss these cases.

We use $\mathbb{F}_q$ to denote the finite field of $q$ elements. A \textbf{rational function field} over a base field $F$ is a field of the form $F(t_1,\cdots,t_n)$. 
The \textbf{ring of integers} of a rational function field is the polynomial ring $F[t_1,\cdots,t_n]$. 
An \textbf{algebraic function field} over a base field $F$ is a finite dimensional algebraic extension of a rational function field over $F$, i.e., a field of the form $F(t_1,\cdots,t_n)(\alpha)$ where $p(\alpha) = 0$ for some $p(x) \in F(t_1,\cdots,t_n)[x]$. 
The \textbf{ring of integers} of an algebraic function field $K = F(t_1,\cdots,t_n)(\alpha)$ is $R := \left\{y \in K\ |\ p(y) = 0\text{ for some monic }p(x) \in F(t_1,\cdots,t_n)[x]\right\}$. 
It is worth noting that if $K$ is an algebraic function field of the form $\mathbb{F}_q(t)(\alpha)$, then the ring of integers $R$ is isomorphic to $\mathbb{F}_{q^\ell}[t]$ for some $\ell \in \mathbb{N}$. A \textbf{prime} $\mathfrak{p}$ of an algebraic function field $K$ is an irreducible element of the ring of integers $R$.

\begin{lemma}
Let $R$ be a computable integral domain with field of fractions $K$.
\begin{enumerate}
\item
If we assume HTP$(\Q)$, then $\Z$ admits master polynomials.
\item
If $K=\mathbb{F}_q(t)$, then $R = \mathbb{F}_q[t]$ admits master polynomials. More generally, if $K$ is an algebraic function field over a finite field of characteristic greater than $2$, then the ring of integers $R$ admits master polynomials.
\end{enumerate}
\end{lemma}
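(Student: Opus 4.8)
Let $S\subseteq\N\times\Z$ be an arbitrary $\Sigma^0_1$ set; I will produce a master polynomial for it (by the footnote to Definition \ref{def:mp} it would even suffice to treat one fixed universal set, but nothing is lost by working directly). First I would fix two distinct primes $\mathfrak p_1,\mathfrak p_2$ of $R$: take $\mathfrak p_1=2,\ \mathfrak p_2=3$ when $R=\Z$, and take $\mathfrak p_1=t,\ \mathfrak p_2=t+1$ when $R\cong\mathbb F_{q'}[t]$ --- which, by the discussion preceding the lemma, is the form taken both by $\mathbb F_q[t]$ and by the ring of integers of any algebraic function field over a finite field. Let $g\colon R^\times\to\Z$ be $g(b)=v_{\mathfrak p_1}(b)-v_{\mathfrak p_2}(b)$. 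Then $g$ is a homomorphism (valuations are additive), it is surjective (since $g(\mathfrak p_1^k)=k$ and $g(\mathfrak p_2^k)=-k$), and it is computable, because divisibility is decidable in each of these rings, so one simply factors out the powers of $\mathfrak p_1$ and of $\mathfrak p_2$. For $f$ I would take any computable injection $\N\to R$ with computable image, say $f(m)=m$ when $R=\Z$ and the base-$q'$ digit encoding $m\mapsto\sum_i d_i t^i$ when $R\cong\mathbb F_{q'}[t]$, the only point being that the datum $m$ can be recovered from $f(m)$ by a polynomial condition (for $\Z$ it is literally $m$).

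The crux is to construct $p\in R[x,y,z_1,\dots,z_k]$ so that for every $m\in\N$ and every $b\in R^\times$,
\[
\Big(\exists (z_1,\dots,z_k)\in K^{k}:\ p\big(f(m),b,z_1,\dots,z_k\big)=0\Big)\ \Longleftrightarrow\ S\big(m,g(b)\big),
\]
and so that whenever the left-hand side holds the solution may be chosen with all $z_i\neq0$. Granting this, all four biconditionals of Definition \ref{def:mp} follow at once, using that each fibre $g^{-1}(n)$ is nonempty and that, by the displayed equivalence, $K$-solvability of $p(f(m),b,\cdot)=0$ is the same for every $b$ in a fixed fibre of $g$. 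To build $p$, I would start from a Diophantine representation of $S$ over $\Z$ coming from the uniform form of the solution of Hilbert's tenth problem over $\Z$, say $S(m,n)\iff\exists\vec w\in\Z^{r}\ Q(m,n,\vec w)=0$, and then rewrite this as the $K$-solvability of a single polynomial equation in which $m$ is supplied through $f(m)$ and $n$ through $g(b)$. Two ingredients are needed: (a) a uniformly Diophantine-over-$K$ coding of $\Sigma^0_1$ predicates, equivalently a Diophantine model of $(\N,+,\cdot)$ inside $R$; and (b) a Diophantine definition over $K$ of the fibres of $g$, i.e.\ of the integer-valued function $b\mapsto v_{\mathfrak p_1}(b)-v_{\mathfrak p_2}(b)$, which is what lets one feed $g(b)$ to the machine of (a) --- through the Diophantine graph of exponentiation one realizes $\mathfrak p_i^{\,e}$ as a field element and then uses Diophantine definitions of the local rings $\mathcal O_{\mathfrak p_i}$ to pin down $e=v_{\mathfrak p_i}(b)$. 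For the function-field cases, (a) and (b) are precisely what is furnished in the course of the cited theorems of Denef, Pheidas, Videla and Shlapentokh: those proofs produce a Diophantine model of arithmetic in $R$ together with Diophantine definitions of the relevant valuation rings, and it is here, via Shlapentokh's theorem, that the hypothesis of characteristic $>2$ enters in the general case. For $R=\Z$, the assumed undecidability of HTP$(\Q)$ provides (a) and, read in its natural uniform form (a Diophantine model of $\N$ in $\Q$, which is what all approaches would yield), also (b), via the classical Diophantine definitions of the localizations $\Z_{(\mathfrak p_i)}$ in $\Q$; this is exactly why the statement for $\Z$ is conditional. Finally, because $p$ refers to $b$ only through the Diophantine constraints of (b), $K$-solvability of $p(f(m),b,\cdot)=0$ is automatically a function of $g(b)$ alone, as required.

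To arrange that roots may be taken with all coordinates nonzero I would reuse the device from the proof of Lemma \ref{LemmaForHTPVariations}: replace each $z_i$ by a difference $z_i'-z_i''$ (or homogenize and work in projective coordinates), so that, $K$ being infinite, any $K$-root of the original can be perturbed to a $K$-root of the modified polynomial with no coordinate zero, while non-solvability is preserved; then absorb this into $p$ and relabel. Every step is effective in $(R,S)$, so $(p,f,g)$ is a master polynomial for $S$, and $R$ admits master polynomials.

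The main obstacle is the construction of $p$, and within it ingredient (b): obtaining a Diophantine definition over $K$ of the fibres of $g$ simultaneously with a uniform Diophantine-over-$K$ coding of $\Sigma^0_1$ predicates. In the function-field cases this amounts to quoting the cited results in sufficiently strong (uniform) form and keeping track of the characteristic hypothesis, the only real care being to extract a single defining polynomial from what those papers prove rather than a bare computable reduction; in the $\Z$ case it is where the conditional hypothesis HTP$(\Q)$ must be read uniformly, which is the content of the phrase ``conditional on Hilbert's tenth problem for $\Q$''.
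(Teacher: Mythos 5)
Your proposal is correct in outline and rests on the same foundation as the paper's proof: both defer the genuinely hard Diophantine work to the cited results (DPRM/HTP$(\Q)$ in its uniform, single-polynomial form for $\Z$; Pheidas, Videla and Shlapentokh for the function fields), and both take $g$ to be built from valuations and $f$ to be a computable section. The differences are worth recording. For the function-field case the paper simply takes $g=\mathrm{ord}_{\mathfrak p}$ for a single prime and asserts that the cited papers construct master polynomials with this $g$; you instead take $g=v_{\mathfrak p_1}-v_{\mathfrak p_2}$ for two primes, which has the advantage that $g$ is genuinely surjective from $R\setminus\{0\}$ onto $\Z$ (a single $\mathrm{ord}_{\mathfrak p}$ restricted to $\mathbb F_q[t]\setminus\{0\}$ only hits $\N_0$, so your choice is actually the more faithful reading of Definition \ref{def:mp}). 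For $R=\Z$ the paper takes $f=g=\mathrm{id}$ and observes that Definition \ref{def:mp} then literally becomes the uniform statement of HTP$(\Q)$; your route instead keeps $g$ multiplicative (as the definition demands) at the cost of needing, in addition to a Diophantine model of $\N$ in $\Q$, Diophantine definitions of the localizations $\Z_{(2)},\Z_{(3)}$ and of the graph of exponentiation in order to define the fibres of $g$ — heavier machinery than the paper invokes, though available under the same strong reading of HTP$(\Q)$. You are also more explicit than the paper on two points it passes over silently: that one needs a single defining polynomial rather than a bare computable reduction, and that clause 1 of the definition requires roots with all coordinates nonzero (your difference trick, as in Lemma \ref{LemmaForHTPVariations}, handles this). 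The one caveat is that, like the paper, your argument ultimately consists of asserting that the cited literature supplies ingredients (a) and (b) in sufficiently uniform form; neither you nor the paper verifies this in detail, so your proof is no more (and no less) complete than the original.
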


\begin{proof}
In the case $R=\Z$, $K=\Q$, we let $f$ and $g$ be the identity functions. 
Then Definition~\ref{def:mp} requires that for all $m,n \in \N$ that 
$S(m,n)$ iff $p(m,n,z_1,\dots,z_k)$ has a root in $\Q$. This is precisely the statement of HTP$(\Q)$, that is, 
the statement that we can computably reduce the $\Sigma^0_1$ set $S$ to the set of polynomials
in $\Z[x_1,\dots,x_k]$ having a root in $\Q$.

In the cases in which $K$ is one of the fields mentioned in 2,
the function $g$ in the definition of master polynomial is given by 
$g(x)=\text{ord}_{\mathfrak{p}}(x)$ for some prime $\mathfrak{p}$ of the ring of integers $R$. 
Since $\text{ord}_{\mathfrak{p}}(0)$ is undefined, we define $g(0) = 0$.
The results of \cite{pheidas1991hilbert,videla1994hilbert,shlapentokh1996diophantine} construct master polynomials using this $g$. The function 
$f$ in the definition of master polynomial is any computable function such that $f(n)$
satisfies $\text{ord}_{\mathfrak{p}}(f(n))=n$. 
\end{proof}

%%%%%%%%%%%%%%%%%%%%%%%%%%%%%%%%%%%%%%%%%%%%%%%%%%%%%%%%%%%%%%%%%%%%%%%%%%%%%%%%%%%%%%%%%%%%%%%%%%%%%%%%%%%%%%%%%%%%%%%%%%%%%%%%%%%%%%%%%%%%%%%%%%%%%%%%%
\section{Compactness and uniformity principles for density Ramsey theory}

Let $(S,\cdot)$ be a semigroup. A collection $\mathcal{A} \subseteq \mathscr{P}_f(S)$ is \textbf{right translation invariant} if for any $A \in \mathcal{A}$ and any $s \in S$, we have $As \in \mathcal{A}$. In the future sections, we will be considering $\mathcal{A}$ in two situations. The first situation is when $(S,\cdot) = (R,+)$ for a countable integral domain $R$, $p \in R[x_1,\cdots,x_n]$ is a polynomial satisfying $p(x_1,\cdots,x_n) = p(x_1+r,\cdots,x_n+r)$ for all $r \in R$, and $\mathcal{A}$ is the zero-set of $p$. The second situation is when $(S,\cdot) = (R,\cdot)$, the polynomial $p$ is homogeneous, and $\mathcal{A}$ is again the zero-set of $p$. While we will only need the density Ramsey theory compactness principle for cancellative abelian semigroups, we choose to also prove it for cancellative left amenable semigroups since the extra level of generality does not yield any additional difficulties and is of independent interest.

\begin{theorem}[Compactness Principle]\label{DensityCompactnessPrinciple}
    Let $S$ be a countably infinite cancellative left amenable semigroup, let $\delta \in (0,1]$, and let $\mathcal{A} \subseteq \mathscr{P}_f(S)$ be right translation invariant. The following are equivalent:
    \begin{enumerate}[(i)]
        \item\label{UpperBanachDensityNonstrict} If $B \subseteq S$ satisfies $d^*(B) \ge \delta$, then there exists $A \in \mathcal{A}$ with $A \subseteq B$.

        \item\label{UBDFinitization} There exists $K \in \mathscr{P}_f(S)$ and $\epsilon > 0$ such that for any $(K,\epsilon)$-invariant set $F \in \mathscr{P}_f(S)$ and for all $B \subseteq F$ with $|B| \ge \delta|F|$, there exists $A \in \mathcal{A}$ with $A \subseteq B$.

        \item\label{FiniteSetsCharacterization} There exists a $H \in \mathscr{P}_f(S)$ such that for all $B \subseteq H$ with $|B| \ge \delta|H|$, there exists $A \in \mathcal{A}$ with $A \subseteq B$.

        \item\label{FiniteSetsCharacterization2} There exists a $H \in \mathscr{P}_f(S)$ and a $\epsilon > 0$ such that for all $B \subseteq H$ with $|B| > (\delta-\epsilon)|H|$, there exists $A \in \mathcal{A}$ with $A \subseteq B$.
        \item\label{UpperBanachDensityNonstrict2} There exists $\epsilon > 0$ such that for all $B \subseteq S$ satisfying $d^*(B) > \delta-\epsilon$, there exists $A \in \mathcal{A}$ with $A \subseteq B$.
    \end{enumerate}
\end{theorem}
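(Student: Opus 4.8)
The plan is to establish the cycle of implications $(\ref{UpperBanachDensityNonstrict}) \Rightarrow (\ref{UBDFinitization}) \Rightarrow (\ref{FiniteSetsCharacterization2}) \Rightarrow (\ref{FiniteSetsCharacterization}) \Rightarrow (\ref{UpperBanachDensityNonstrict2})$ (or a convenient reordering), together with the easy reverse-type implications that close the loop; several of these are nearly immediate once the right Følner-set machinery is in place. The two genuinely substantive arrows are $(\ref{UpperBanachDensityNonstrict}) \Rightarrow (\ref{UBDFinitization})$ and $(\ref{FiniteSetsCharacterization}) \Rightarrow (\ref{UpperBanachDensityNonstrict2})$, and both rely crucially on the tiling technology recalled in Theorem \ref{CongruentTilingTheorem} and Lemma \ref{GoodTilingLemma}, applied not to $S$ directly but to its group of right quotients $G$, using Theorem \ref{EmbeddingSemigroupsIntoGroups} to see that $S$ sits inside the amenable group $G$ as a thick subset with the Følner sequences transferring back and forth.

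First I would dispatch the trivial implications: $(\ref{UBDFinitization}) \Rightarrow (\ref{FiniteSetsCharacterization})$ is immediate since a single $(K,\epsilon)$-invariant set $F$ exists (Følner sequences have such members), take $H = F$; $(\ref{FiniteSetsCharacterization}) \Rightarrow (\ref{FiniteSetsCharacterization2})$ is trivial with any $\epsilon > 0$; and $(\ref{UpperBanachDensityNonstrict2}) \Rightarrow (\ref{UpperBanachDensityNonstrict})$ is immediate since $d^*(B) \ge \delta > \delta - \epsilon$. So the real content is: (a) $(\ref{UpperBanachDensityNonstrict}) \Rightarrow (\ref{UBDFinitization})$, and (b) $(\ref{FiniteSetsCharacterization2}) \Rightarrow (\ref{UpperBanachDensityNonstrict2})$ (which subsumes what is needed, and one also wants $(\ref{FiniteSetsCharacterization}) \Rightarrow (\ref{UpperBanachDensityNonstrict2})$ or the chain arranged so every statement is reached).

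For $(\ref{UpperBanachDensityNonstrict}) \Rightarrow (\ref{UBDFinitization})$ I would argue by contraposition: suppose that for every $K \in \mathscr{P}_f(S)$ and every $\epsilon > 0$ there is a $(K,\epsilon)$-invariant set $F$ and a ``bad'' subset $B_F \subseteq F$ with $|B_F| \ge \delta|F|$ but containing no member of $\mathcal{A}$. Running this over an exhausting sequence $K_j \nearrow S$ with $\epsilon_j \to 0$ produces sets $F_j$ that form (after passing to a subsequence) a Følner sequence, along with bad subsets $B_j \subseteq F_j$ of relative size $\ge \delta$. The task is to glue the $B_j$ into a single set $B \subseteq S$ with $d^*(B) \ge \delta$ that still contains no element of $\mathcal{A}$; this is where right translation invariance of $\mathcal{A}$ enters — one places translated copies $B_j s_j$ of the bad sets along a sparse, widely separated sequence of translates inside $S$ (using thickness of $S$ in $G$ and a diagonalization to keep the pieces far apart so that no $A \in \mathcal{A}$, which is finite, can span two pieces), arranging that along the Følner sequence $(F_j s_j)$ the density of $B$ is at least $\delta$. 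The main obstacle is precisely this gluing: one must ensure simultaneously that (i) each finite $A \in \mathcal{A}$ lying in $B$ would have to lie in one translated piece $B_j s_j$, hence be a translate of a subset of $B_j$, contradicting badness via right translation invariance; and (ii) the upper Banach density computation genuinely yields $\ge \delta$ and not merely $\ge \delta - o(1)$ — here one exploits that $d^*$ is a supremum over Følner sequences, so it suffices to exhibit one Følner sequence witnessing density $\ge \delta$, and the separated-copies construction does this on the nose since each $F_j s_j$ meets $B$ in exactly (a translate of) $B_j$.

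For the implication from a finitary hypothesis back to an upper-Banach-density statement, i.e. $(\ref{FiniteSetsCharacterization2}) \Rightarrow (\ref{UpperBanachDensityNonstrict2})$ (and likewise $(\ref{FiniteSetsCharacterization})$ feeding $(\ref{UpperBanachDensityNonstrict2})$ with $\epsilon$ the one from $(\ref{FiniteSetsCharacterization2})$), I would take $B \subseteq S$ with $d^*(B) > \delta - \epsilon$, use Lemma \ref{AlternativeCharacterizationOfUBD} to find, for the finite set $H$ from the hypothesis (enlarged to $H H^{-1}$ and beyond as needed), a translate $s \in S$ with $|Hs \cap B| > (\delta-\epsilon)|H|$; then $B' := (Hs \cap B)s^{-1} \subseteq H$ has $|B'| > (\delta - \epsilon)|H|$, so by $(\ref{FiniteSetsCharacterization2})$ there is $A \in \mathcal{A}$ with $A \subseteq B'$, whence $As \subseteq Hs \cap B \subseteq B$, and $As \in \mathcal{A}$ by right translation invariance. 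This direction is short once Lemma \ref{AlternativeCharacterizationOfUBD} is invoked; the only care needed is matching the quantifier on $\epsilon$ between the two statements, which is why it is cleanest to prove $(\ref{FiniteSetsCharacterization2}) \Rightarrow (\ref{UpperBanachDensityNonstrict2})$ directly and then note $(\ref{FiniteSetsCharacterization}) \Rightarrow (\ref{FiniteSetsCharacterization2})$ and $(\ref{UpperBanachDensityNonstrict2}) \Rightarrow (\ref{UpperBanachDensityNonstrict}) \Rightarrow (\ref{UBDFinitization}) \Rightarrow (\ref{FiniteSetsCharacterization})$, so all five are equivalent. I expect the density-gluing step in $(\ref{UpperBanachDensityNonstrict}) \Rightarrow (\ref{UBDFinitization})$ to be the crux, and the tiling lemmas together with thickness of $S$ in $G$ to be the tools that make the separation-of-copies argument go through cleanly in the general left-amenable setting.
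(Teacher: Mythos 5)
Your overall architecture --- the cycle (i)$\Rightarrow$(ii)$\Rightarrow$(iii)$\Rightarrow$(iv)$\Rightarrow$(v)$\Rightarrow$(i), with (iv)$\Rightarrow$(v) handled via Lemma \ref{AlternativeCharacterizationOfUBD} and right translation invariance --- is exactly the paper's, and your treatment of the easy arrows and of (iv)$\Rightarrow$(v) is correct. The genuine gap is in the step you yourself identify as the crux, the gluing in (i)$\Rightarrow$(ii). You propose to place the bad sets $B_j$ as ``widely separated'' translates $B_js_j$ and to argue that no $A\in\mathcal{A}$ can span two pieces because each $A$ is finite. But $\mathcal{A}$ is an infinite right translation invariant family whose members can have arbitrarily large diameter: finiteness of each individual $A$ gives no uniform bound, so no amount of separation rules out a member of $\mathcal{A}$ with one point in $B_is_i$ and another in $B_js_j$. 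Already for $S=\mathbb{Z}$ and $\mathcal{A}$ generated under translation by pairs $\{0,n\}$ with $n$ ranging over a prescribed difference set $D$, whether a translate $s$ is admissible at a given stage is governed by arithmetic properties of $D$ (e.g.\ one may be forced to choose all the $s_j$ in a common congruence class), not by how far away $B_js_j$ is placed; for an abstract $\mathcal{A}$ there is no a priori reason that any choice of translates avoids all spanning configurations. If some $A\in\mathcal{A}$ does span two pieces, the badness of the individual $B_j$ yields no contradiction, so the constructed $B$ need not witness the failure of (i).

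The paper closes exactly this gap by making the pieces \emph{nested} rather than separated. It fixes a congruent sequence of tilings of the group of right quotients $G$ (Theorem \ref{CongruentTilingTheorem}), shows by averaging over the tiles of a $\mathcal{T}_k$-tiling (Lemma \ref{GoodTilingLemma}) that a bad, almost-$\delta$-dense subset of a level-$(k+1)$ shape restricts to a bad, $(\delta-\epsilon_k)$-dense subset of some level-$k$ tile, and then applies K\H{o}nig's lemma to the resulting locally finite connected graph to extract bad sets with $B_ks_{k-1}\subseteq B_{k+1}s_k\subseteq S$. The union $B=\bigcup_kB_ks_{k-1}$ is then increasing, so every finite subset of $B$ --- in particular every candidate $A\in\mathcal{A}$ --- lies inside a single right translate of a single bad set, and right translation invariance finishes the argument; some device of this kind is needed, and wide separation alone does not supply it. A second, smaller point: (iii)$\Rightarrow$(iv) is not ``trivial with any $\epsilon>0$'' --- (iv) is the \emph{stronger} statement, since its hypothesis on $B$ is weaker --- and you must take $\epsilon$ small enough that $|B|>(\delta-\epsilon)|H|$ forces $|B|\ge\delta|H|$ by integrality of $|B|$ (the paper even enlarges $H$ by one point when $\delta|H|\in\mathbb{N}$). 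The direction that is trivial for every $\epsilon>0$ is (iv)$\Rightarrow$(iii).
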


\begin{proof}
    Throughout this proof we let $G$ be the group of right quotients of $S$.

    We begin by showing that \eqref{UpperBanachDensityNonstrict}$\rightarrow$\eqref{UBDFinitization}. Our proof for this direction is motivated by the proof of uniformity of recurrence given in \cite[Section 5.4]{RigidityAndWeakMixingInAbelianGroups} (see also \cite[Proposition 1.3]{SetsOfRecurrenceAndGeneralizedPolynomials}). We will show the contrapositive, so let us assume that for any $K \in \mathscr{P}_f(S)$ and any $\epsilon > 0$, there exists a $(K,\epsilon)$-invariant set $F \in \mathscr{P}_f(S)$ and there exists $B \subseteq F$ with $|B| \ge \delta|F|$, such that there is no $A \in \mathcal{A}$ with $A \subseteq B$. Let us fix an exhaustion $\{e\} \subseteq K_1 \subseteq K_2 \subseteq \cdots \subseteq G$ of $G$ by finite sets, and let us fix a decreasing sequence of positive real numbers $(\epsilon_k)_{k = 1}^\infty$ with $\epsilon_1 < \delta$.

    Using Theorem \ref{CongruentTilingTheorem}, let $(\mathcal{T}_k)_{k = 1}^\infty$ be a congruent sequence of tilings of $G$ for which each shape $T \in \mathcal{S}(\mathcal{T}_k)$ is $(K_k,\epsilon_k)$-invariant. For each $T \in \mathcal{S}(\mathcal{T}_k)$ let $\mathcal{B}_T = \{B \subseteq T\ |\ |B| > (\delta-\epsilon_k)|T|\text{ and for all }A \in \mathcal{A}, A \not\subseteq B\}$. We create an infinite graph $(V,E)$ as follows. Let $V_k$ denote the union of all $\mathcal{B}_T$ with $T \in \mathcal{S}(\mathcal{T}_k)$, let $V_0 = \{\emptyset\}$, and let $V = \bigcup_{k = 0}^\infty V_k$. We produce an edge $(B_1,B_2) \in E$ if and only if there exists a $g \in G$ and a $k \in \mathbb{N}_0$ for which $B_1 \in V_k, B_2 \in V_{k+1}$, and $B_1 \subseteq B_2g$. Firstly, we will show that for all $k \in \mathbb{N}$ we have $V_k \neq \emptyset$. 
    
    To see that this is the case, let $U_k = \bigcup_{T \in \mathcal{S}(\mathcal{T}_k)}T$, let $F_k$ be a $\left(U_kU_k^{-1},\epsilon_k|U_kU_k^{-1}|^{-1}\right)$-invariant set, and let $B_k \subseteq F_k$ be such that $|B_k| > \delta|F_k|$ and $B_k$ contains no $A \in \mathcal{A}$. Lemma \ref{GoodTilingLemma} tells us that $\mathcal{T}_k$ $\epsilon_k$-tiles $F_k$, so let $V_K$ denote a union of tiles of $\mathcal{T}_k$ for which we have $V_k \subseteq F_k$ and $|F_K\setminus V_k| < \epsilon_k|F_k|$. Writing $T \in V_k$ to denote a tile $T$ of $\mathcal{T}_k$ that is contained in $V_k$, we see that

    \begin{alignat*}{2}
        \sum_{T \in V_k}|B_k\cap T| > |B_k\cap F_k| - \epsilon_k|F_k| > (\delta-\epsilon_k)|F_k| \ge \sum_{T \in V_k}(\delta-\epsilon_k)|T|,
    \end{alignat*}
    so there exists some $T \in V_k$ for which $|B_k\cap T| > (\delta-\epsilon_k)|T|$. Writing $T = T_kc$ for some $T_k \in \mathcal{S}(\mathcal{T}_k)$ and $c \in G$, we see that $|B_kc^{-1}\cap T_k| > (\delta-\epsilon_k)|T_k|$. Furthermore, since $B_k$ does not contain any $A \in \mathcal{A}$, and $\mathcal{A}$ is right translation invariant, we see that $B_kc^{-1}\cap T_k$ does not contain any $A \in \mathcal{A}$, so $B_kc^{-1} \in V_k$.
    
    Now we want to show that $(V,E)$ is a connected graph. It suffices to show that for all $k \in \mathbb{N}$, each vertex in $V_k$ is connected to a vertex in $V_{k-1}$. It is clear that all vertices in $V_1$ are connected to $V_0$, so let us now assume that $k > 1$. Let $T_k \in \mathcal{S}(\mathcal{T}_k)$ and $B \in \mathcal{B}_{T_k}$ both be arbitrary. Since $(\mathcal{T}_k)_{k = 1}^\infty$ is a congruent sequence of tilings, let $V_k$ denote the set of tiles of $\mathcal{T}_{k-1}$ for which $T_k = \bigcup_{T \in V_k}T$. We see that

    \begin{equation}
        \sum_{T \in V_k}|B\cap T| = |B| \ge (\delta-\epsilon_k)|T_k| \ge \sum_{T \in V_k}(\delta-\epsilon_{k-1})|T|,
    \end{equation}
    so there exists some $T \in V_k$ for which $|B\cap T| \ge (\delta-\epsilon_{k-1})|T|$. Letting $T = T_{k-1}c$ for some $T_{k-1} \in \mathcal{S}(\mathcal{T}_{k-1})$ and $c \in G$, we see that $|Bc^{-1}\cap T_{k-1}| \ge (\delta-\epsilon_{k-1})|T_{k-1}|$. Since $B$ does not contain any $A \in \mathcal{A}$, and $\mathcal{A}$ is right translation invariant, we see that $B' := Bc^{-1}\cap T_{k-1}$ does not contain any $A \in \mathcal{A}$, hence $B' \in V_{k-1}$ and $(B',B) \in E$.

    Now that we have shown $(V,E)$ to be a connected graph, we also observe that it is locally finite. Indeed, for any $k \in \mathbb{N}$, there are only finitely many vertices in $V_{k-1}\cup V_{k+1}$, and each vertex in $V_k$ can only have edges going to $V_{k-1}\cup V_{k+1}$. K\H{o}nigs Lemma (see, e.g. \cite[Lemma 8.1.2]{GraphTheoryByDiestel}) tells us that there exists an infinite sequence $(B_k)_{k = 1}^\infty$ for which $B_k \in V_k$ and $(B_k,B_{k+1}) \in E$. Let $g_k$ be such that $B_k \subseteq B_{k+1}g_k$, and let $g_k' \in S$ be such that $(\{g_k\}\cup T_{k+1}g_k)g_k' \subseteq S$, where $T_{k+1} \in \mathcal{S}(\mathcal{T}_{k+1})$ is such that $B_{k+1} \in \mathcal{B}_{T_{k+1}}$. Let $s_k = g_kg_k'g_{k-1}g_{k-1}'\cdots g_1g_1'$, then observe that $B_ks_{k-1} \subseteq B_{k+1}s_k \subseteq T_{k+1}s_k \subseteq S$. We define $B = \bigcup_{k = 1}^\infty B_ks_{k-1}$, and we observe that $B$ does not contain any $A \in \mathcal{A}$. Furthermore, $\mathcal{F} = (T_ks_{k-1})_{k = 1}^\infty$ is a left F\o lner sequence in $S$, and 

    \begin{equation}
        \overline{d}_{\mathcal{F}}(B) \ge \limsup_{k\rightarrow\infty}\frac{|B_ks_{k-1}|}{|T_ks_{k-1}|} \ge \limsup_{k\rightarrow\infty}(\delta-\epsilon_k) = \delta.
    \end{equation}

    It is clear that \eqref{UBDFinitization}$\rightarrow$\eqref{FiniteSetsCharacterization}, so we proceed to show that \eqref{FiniteSetsCharacterization}$\rightarrow$\eqref{FiniteSetsCharacterization2}. We consider the cases in which $\delta$ is rational and irrational separately. If $\delta$ is irrational, then we let $K = \lfloor\delta|H|\rfloor$, and we let $\epsilon = \delta-\frac{K}{|H|}$. We now see that if $B \subseteq H$ is such that $|B| > (\delta-\epsilon)|H| = K$, then $|B| \ge K+1 > \delta|H|$, so the desired result follows. If $\delta$ is rational, then we will assume without loss of generality that $K := \delta|H| \in \mathbb{N}$, otherwise we would repeat the previous proof. Now let $g \in G\setminus H$ be arbitrary, let $H' = H\cup\{g\}$, and let $\epsilon = \delta-\frac{K}{|H|+1}$. We see that if $B \subseteq H'$ with $|B| > (\delta-\epsilon)|H'| = K$, then $|B| \ge K+1$, so $|B\cap H| \ge K = \delta|H|$, which yields the desired result.
    
    It is clear that \eqref{UpperBanachDensityNonstrict2}$\rightarrow$\eqref{UpperBanachDensityNonstrict}, so it only remains to show that \eqref{FiniteSetsCharacterization2}$\rightarrow$\eqref{UpperBanachDensityNonstrict2}. Let $H \in \mathscr{P}_f(S)$ and $\epsilon > 0$ be such that for any $B \subseteq H$ with $|B| > (\delta-\epsilon)|F|$, there exists $A \in \mathcal{A}$ with $A \subseteq B$. Given $C \subseteq S$ with $d^*(C) > \delta-\frac{1}{2}\epsilon$, we may use Lemma \ref{AlternativeCharacterizationOfUBD} to pick a $s \in S \subseteq G$ for which $|H\cap Cs^{-1}| = |Hs\cap C| \ge (\delta-\epsilon)|H|$. Consequently, there exists $A \in \mathcal{A}$ for which $A \subseteq H\cap Cs^{-1} \subseteq Cs^{-1}$, and since $\mathcal{A}$ is right translation invariant, we see that $As \in \mathcal{A}$ and $As \subseteq C$.
\end{proof}

\begin{remark}\label{RemarkJustifyingAssumptionOfTranslationInvariance}
    We observe that in Theorem \ref{DensityCompactnessPrinciple}, the assumption that $\mathcal{A}$ is right translation invariant was only used in the proofs of (i)$\rightarrow$(ii) and (iv)$\rightarrow$(v). To see that this assumption was necessary for the proof of (iv)$\rightarrow$(v), it suffices to consider $\mathcal{A} = \{s_0\}$ where $s_0 \in S$ is arbitrary. We will now give an example to show that right translation invariance is also necessary for (i)$\rightarrow$(ii).

    Let $\mathcal{F} = (F_n)_{n = 1}^\infty$ be a left F\o lner sequence in $S$ for which $F_n\cap F_m = \emptyset$ when $n \neq m$. Let $\mathcal{A}$ be the collection of finite subsets $F$ of $S$ for which $F \not\subseteq F_m$ for any $m$. We see that if $B \subseteq S$ is an infinite set (which is certainly the case if $d^*(B) > 0$), then there exists $A \in \mathcal{A}$ for which $A \subseteq B$. However, for each $m \ge 1$, the set $F_m$ does not contain any member of $\mathcal{A}$.
\end{remark}

\begin{corollary}\label{DensityRamseyTheoryCompactnessPrincipleCorollary}
    Let $S$ be a countably infinite cancellative left amenable semigroup, let $\delta \in [0,1)$, and let $\mathcal{A} \subseteq \mathscr{P}_f(X)$ be right translation invariant. The following are equivalent:
    \begin{enumerate}[(i)]
        \item If $B \subseteq S$ satisfies $d^*(B) > \delta$, then there exists $A \in \mathcal{A}$ with $A \subseteq B$.

        \item For all $\gamma > \delta$, there exists $K \in \mathscr{P}_f(S)$ and $\epsilon > 0$ such that for any $(K,\epsilon)$-invariant set $F \in \mathscr{P}_f(S)$ and for all $B \subseteq F$ with $|B| \ge \gamma|F|$, there exists $A \in \mathcal{A}$ with $A \subseteq B$.

        \item For all $\gamma > \delta$ there exists $F \in \mathscr{P}_f(S)$ such that for all $B \subseteq F$ with $|B| \ge \gamma|F|$, there exists $A \in \mathcal{A}$ with $A \subseteq B$.
    \end{enumerate}
\end{corollary}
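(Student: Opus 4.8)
The plan is to deduce Corollary~\ref{DensityRamseyTheoryCompactnessPrincipleCorollary} directly from Theorem~\ref{DensityCompactnessPrinciple}, applying the latter once for each density threshold $\gamma$ with $\gamma>\delta$ and then moving the quantifier over $\gamma$ to the outside. Concretely, I would show that conditions (i), (ii), (iii) of the corollary coincide, respectively, with the statements ``for every $\gamma\in(\delta,1]$, condition~\eqref{UpperBanachDensityNonstrict} (resp.\ \eqref{UBDFinitization}, resp.\ \eqref{FiniteSetsCharacterization}) of Theorem~\ref{DensityCompactnessPrinciple} holds with $\delta$ replaced by $\gamma$,'' after which the corollary is immediate.

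First I would record the elementary fact that, since $d^*$ takes values in $[0,1]$, a set $B\subseteq S$ satisfies $d^*(B)>\delta$ if and only if there is some $\gamma\in(\delta,1]$ with $d^*(B)\ge\gamma$ (take $\gamma=d^*(B)$ for one direction and note $\gamma>\delta$ for the other; here is where the hypothesis $\delta<1$ enters, guaranteeing $(\delta,1]\neq\emptyset$). Using this, condition (i) of the corollary is literally equivalent to the assertion that for every $\gamma\in(\delta,1]$ condition~\eqref{UpperBanachDensityNonstrict} of the theorem holds at threshold $\gamma$. For conditions (ii) and (iii) there is nothing to check: they are written verbatim as ``for all $\gamma>\delta$, [condition~\eqref{UBDFinitization} resp.\ \eqref{FiniteSetsCharacterization} at threshold $\gamma$].'' Moreover, for $\gamma>1$ no $B\subseteq F$ can satisfy $|B|\ge\gamma|F|$, so those threshold statements are vacuously true and the range of $\gamma$ may harmlessly be shrunk to $(\delta,1]$ everywhere.

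Next I would fix an arbitrary $\gamma\in(\delta,1]$. Because $\delta\in[0,1)$ forces $\gamma\in(0,1]$, Theorem~\ref{DensityCompactnessPrinciple} applies verbatim with $\gamma$ playing the role of its $\delta$ (the right translation invariance of $\mathcal{A}$ is precisely the standing hypothesis of the theorem), yielding the mutual equivalence of \eqref{UpperBanachDensityNonstrict}, \eqref{UBDFinitization}, \eqref{FiniteSetsCharacterization} at threshold $\gamma$. Conjuncting these equivalences over all $\gamma\in(\delta,1]$: since for each fixed $\gamma$ the three threshold-$\gamma$ statements are equivalent, their universally quantified versions over $\gamma$ are equivalent as well, and by the previous paragraph these are exactly conditions (i), (ii), (iii) of the corollary.

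I do not expect a genuine obstacle: the argument is essentially a quantifier rearrangement. The only point demanding a little care is the step translating the single strict inequality $d^*(B)>\delta$ into the family of non-strict inequalities $\{\,d^*(B)\ge\gamma : \gamma\in(\delta,1]\,\}$, which relies on $d^*$ being bounded above by $1$ and thus on the assumption $\delta<1$; this is the mirror image of the way Theorem~\ref{DensityCompactnessPrinciple} used $\delta>0$ to obtain the strict-threshold equivalences \eqref{FiniteSetsCharacterization2} and \eqref{UpperBanachDensityNonstrict2} there.
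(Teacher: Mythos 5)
Your argument is correct and is exactly the intended one: the paper states this as an immediate corollary of Theorem~\ref{DensityCompactnessPrinciple} with no written proof, relying precisely on applying the theorem at each threshold $\gamma\in(\delta,1]$ and rearranging quantifiers as you do. Your care about the translation of $d^*(B)>\delta$ into the family $d^*(B)\ge\gamma$ and about the vacuity for $\gamma>1$ is appropriate and fills in the details the paper leaves implicit.
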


\begin{corollary}\label{TranslationInvarianceImpliesDensityRegularity}
    Let $S$ be a countably infinite cancellative left amenable semigroup and let $\mathcal{A} \subseteq \mathscr{P}_f(S)$ be right translation invariant. There exists $\delta \in [0,1)$ for which the following hold:
    \begin{enumerate}[(i)]
        \item If $B \subseteq S$ satisfies $d^*(B) > \delta$, then there exists $A \in \mathcal{A}$ with $A \subseteq B$.

        \item There exists a $B \subseteq S$ with $d^*(B) = \delta$, such that $B$ does not contain any member of $\mathcal{A}$.
    \end{enumerate}
\end{corollary}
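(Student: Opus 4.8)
The plan is to take $\delta$ to be the supremum of the upper Banach densities of all subsets of $S$ that \emph{avoid} $\mathcal{A}$; that is, set
\[
\delta := \sup\left\{d^*(B)\ |\ B \subseteq S\text{ and }A \not\subseteq B\text{ for every }A \in \mathcal{A}\right\},
\]
where we assume as usual that $\mathcal{A}$ is a nonempty collection of nonempty finite sets. With this choice, part (i) is essentially built into the definition: if $d^*(B) > \delta$, then $d^*(B)$ exceeds the upper Banach density of every $\mathcal{A}$-avoiding set, so $B$ cannot avoid $\mathcal{A}$, i.e.\ some $A \in \mathcal{A}$ has $A \subseteq B$. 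The two nontrivial points are that $\delta < 1$ and that the supremum defining $\delta$ is attained.

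To see $\delta < 1$, fix any $A_0 \in \mathcal{A}$ and put $k := |A_0|$. If $B \subseteq S$ avoids $\mathcal{A}$, then by right translation invariance $A_0 s \not\subseteq B$ for every $s \in S$. Suppose toward a contradiction that $d^*(B) > 1 - 1/k$, and choose $\alpha$ with $1 - 1/k < \alpha < d^*(B)$. Applying the alternative characterization of upper Banach density in Lemma \ref{AlternativeCharacterizationOfUBD} to the finite set $F = A_0$ yields an $s \in S$ with $|A_0 s \cap B| \ge \alpha|A_0| > k - 1$, hence $|A_0 s \cap B| = k$; since $|A_0 s| = |A_0| = k$ by cancellativity, this forces $A_0 s \subseteq B$, contradicting the choice of $B$. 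Therefore every $\mathcal{A}$-avoiding set has $d^*(B) \le 1 - 1/k$, and so $\delta \le 1 - 1/k < 1$.

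For the attainment of the supremum, the case $\delta = 0$ is immediate (the empty set is a witness), so assume $\delta > 0$ and argue by contradiction using Theorem \ref{DensityCompactnessPrinciple} with this $\delta \in (0,1]$ and the given right translation invariant $\mathcal{A}$. If condition (i) of that theorem held --- every $B$ with $d^*(B) \ge \delta$ contains a member of $\mathcal{A}$ --- then, since conditions (i) and (v) of that theorem are equivalent, there would be an $\epsilon > 0$ with the property that every $B$ satisfying $d^*(B) > \delta - \epsilon$ contains a member of $\mathcal{A}$; but the definition of $\delta$ as a supremum produces an $\mathcal{A}$-avoiding set $B$ with $d^*(B) > \delta - \epsilon$, a contradiction. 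Hence condition (i) of Theorem \ref{DensityCompactnessPrinciple} fails, so there is an $\mathcal{A}$-avoiding set $B$ with $d^*(B) \ge \delta$; together with $d^*(B) \le \delta$ (which holds by definition of $\delta$), this gives $d^*(B) = \delta$, which is exactly part (ii). I expect this attainment step to be the crux: an arbitrary supremum of this kind need not be realized, and it is precisely the equivalence (i) $\Leftrightarrow$ (v) in the compactness principle that forces $\delta$ to be attained.
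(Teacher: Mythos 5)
Your proposal is correct and follows essentially the same route as the paper: you define the critical threshold $\delta$ as the supremum of densities of $\mathcal{A}$-avoiding sets (the paper takes the equal quantity $\inf(H)$ from the other side), get (i) for free from the definition, and obtain the attainment in (ii) from exactly the equivalence (i)$\leftrightarrow$(v) of Theorem \ref{DensityCompactnessPrinciple}, which is indeed the crux. The only cosmetic difference is that you establish $\delta<1$ directly via Lemma \ref{AlternativeCharacterizationOfUBD} applied to a single $A_0\in\mathcal{A}$ (giving the explicit bound $\delta\le 1-1/|A_0|$), whereas the paper invokes the fact that thick sets have density $1$ and must contain a translate of $A_0$; both are fine.
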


\begin{proof}
    It is well known that $B \subseteq S$ is thick if and only if $d^*(B) = 1$. We refer the reader to \cite{ThicksSetsAndFolnerDensity1} for a treatment of this fact in full generality. Since $\mathcal{A}$ is right translation invariant, it is immediate that any thick set contains a member of $\mathcal{A}$. Now consider

    \begin{equation}
        H = \{\alpha \ge 0\ |\ \forall\ B \subseteq S\text{ with }d^*(B) \ge \alpha,\text{ there exists }A \in \mathcal{A}\text{ such that }A \subseteq B\},
    \end{equation}
    and observe that $1 \in H$. Let $\delta = \inf(H)$. It is immediate that (i) is satisfied. If $\delta = 0$, then we see that (ii) is also satisfied since $d^*(\emptyset) = 0$. Now let us assume for the sake of contradiction that $\delta > 0$ and (ii) is not satisfied. In particular, we have that for every $B \subseteq S$ with $d^*(B) = \delta$, there exists $A \in \mathcal{A}$ for which $A \subseteq B$. Using the equivalence of parts (i) and (v) of Theorem \ref{DensityCompactnessPrinciple}, we see that that is some $\epsilon > 0$ for which $\inf(H) \le \delta-\epsilon$, which yields the desired contradiction.
\end{proof}

In special cases such as $(S,\cdot) = (\mathbb{N},+)$, particular F\o lner sequences such as $\mathcal{F} = ([1,N])_{N = 1}^\infty$ are of interest. This motivates our next result.

\begin{theorem}\label{7Equivalences}
    Let $S$ be a countably infinite cancellative left amenable semigroup, let $G$ be the group of right quotients of $S$, let $\mathcal{F} = (F_n)_{n = 1}^\infty$ be a left F\o lner sequence in $S$, let $\delta \in [0,1)$ be arbitrary, and let $\mathcal{A} \subseteq \mathscr{P}_f(S)$ be arbitrary. We have (i)$\rightarrow$(ii)$\rightarrow$(iii), and that (iii)$-$(vii) are equivalent. 
    If $\mathcal{A}$ is right translation invariant, then (i)-(vii) are all equivalent.
    \begin{enumerate}[(i)]
        \item If $B \subseteq S$ satisfies $d^*(B) > \delta$, then there exists $A \in \mathcal{A}$ with $A \subseteq B$.
        
        \item If $B \subseteq S$ satisfies $\overline{d}_{\mathcal{F}}(B) > \delta$, then there exists $A \in \mathcal{A}$ with $A \subseteq B$.

        \item For any ergodic $G$-system $(X,\mathscr{B},\mu,(T_s)_{s \in G})$ and any $B \in \mathscr{B}$ with $\mu(B) > \delta$, there exists $A \in \mathcal{A}$ for which $\mu\left(\bigcap_{a \in A}T_a^{-1}B\right) > 0$.

        \item For any $G$-system $(X,\mathscr{B},\mu,(T_s)_{s \in G})$ and any $B \in \mathscr{B}$ with $\mu(B) > \delta$, there exists $A \in \mathcal{A}$ for which $\mu\left(\bigcap_{a \in A}T_a^{-1}B\right) > 0$.
        
        \item For any $S$-system $(X,\mathscr{B},\mu,(T_s)_{s \in S})$ and any $B \in \mathscr{B}$ with $\mu(B) > \delta$, there exists $A \in \mathcal{A}$ for which $\mu\left(\bigcap_{a \in A}T_a^{-1}B\right) > 0$.

        \item If $h:G\rightarrow G$ is a homomorphism\footnote{We recall that Lemma \ref{ExtendingHomomorphismsLemma} tells us that any endomorphism of $S$ extends to an endomorphism of $G$, but there exist endomorphisms of $G$ that do not restrict to endomorphism of $S$.} and $B \subseteq S$ satisfies $d^*(B) > \delta$, then there exists $A \in \mathcal{A}$ with $d^*\left(\bigcap_{a \in A}h(a)^{-1}B\right) > 0$.

        \item If $h:G\rightarrow G$ is a homomorphism and $B \subseteq S$ satisfies $\overline{d}_{\mathcal{F}}(B) > \delta$, then there exists $A \in \mathcal{A}$ with $\overline{d}_{\mathcal{F}}\left(\bigcap_{a \in A}h(a)^{-1}B\right) > 0$.
    \end{enumerate}
    Furthermore, for $\delta \in (0,1]$ the analogous results hold when the assumptions that $\mu(B),d^*(B),\overline{d}_{\mathcal{F}}(B)\allowbreak > \delta$ are replaced by $\mu(B),d^*(B),\allowbreak\overline{d}_{\mathcal{F}}(B) \ge \delta$.
\end{theorem}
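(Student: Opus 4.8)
The plan is to establish the cycle (i)$\rightarrow$(ii)$\rightarrow$(iii)$\rightarrow$(iv)$\rightarrow$(v)$\rightarrow$(vi) together with (v)$\rightarrow$(vii), and then, assuming $\mathcal{A}$ is right translation invariant, to close the loop via (vi)$\rightarrow$(i); since every statement is then reached from (i), this yields (i)$\leftrightarrow\cdots\leftrightarrow$(vii). The main external inputs are the pointwise ergodic theorem (Theorem \ref{PointwiseErgodicTheorem}) and the ergodic decomposition (Theorem \ref{ErgodicDecomposition}), used to pass from density statements to statements about measure preserving systems; the natural extension (Theorem \ref{GeneralNaturalExtension}), used to pass from $G$-systems to $S$-systems; and the Furstenberg correspondence principle (Theorem \ref{FurstenbergCorrespondencePrinciple}), used to return to density statements. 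A device used repeatedly is that $\mathcal{A}\subseteq\mathscr{P}_f(S)$ is \emph{countable}, so a positive- (or full-) measure set can be covered by the countably many measurable sets $\bigcap_{a\in A}T_a^{-1}B$, $A\in\mathcal{A}$, whence one of them has positive measure.

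The implication (i)$\rightarrow$(ii) is immediate from $\overline{d}_{\mathcal{F}}\le d^*$. For (ii)$\rightarrow$(iii), first pass to a tempered subsequence $\mathcal{F}'$ of $\mathcal{F}$ (every Følner sequence has one); since $\overline{d}_{\mathcal{F}'}\le\overline{d}_{\mathcal{F}}$, a set of $\mathcal{F}'$-density exceeding $\delta$ also has $\mathcal{F}$-density exceeding $\delta$, so (ii) applies to it. Given an ergodic $G$-system and $B\in\mathscr{B}$ with $\mu(B)>\delta$, apply Theorem \ref{PointwiseErgodicTheorem} to $\mathbbm{1}_B$ along $\mathcal{F}'$: on a full-measure set $X_0$ the point set $E_x:=\{g\in G:T_gx\in B\}$ satisfies $\overline{d}_{\mathcal{F}}(E_x)\ge\overline{d}_{\mathcal{F}'}(E_x)=\mu(B)>\delta$, and because each $F_n\subseteq S$ the same holds for $E_x\cap S$. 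By (ii) there is $A_x\in\mathcal{A}$ with $A_x\subseteq E_x$, i.e. $x\in\bigcap_{a\in A_x}T_a^{-1}B$. Thus $X_0\subseteq\bigcup_{A\in\mathcal{A}}\bigcap_{a\in A}T_a^{-1}B$, and countability of $\mathcal{A}$ forces $\mu(\bigcap_{a\in A}T_a^{-1}B)>0$ for some $A\in\mathcal{A}$.

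For (iii)$\rightarrow$(iv), write $\mu=\int_Y\mu_y\,d\nu(y)$ as a direct integral of ergodic components; $\mu(B)>\delta$ forces $\nu(\{y:\mu_y(B)>\delta\})>0$, and (iii) applied fibrewise together with the countability trick and $\mu(\cdot)=\int_Y\mu_y(\cdot)\,d\nu$ produces the required $A$. For (iv)$\rightarrow$(v), let $\mathcal{Y}=(Y,\mathscr{A},\nu,(R_g)_{g\in G})$ be the natural extension of the $S$-system $\mathcal{X}$ with factor map $\pi$ (Theorem \ref{GeneralNaturalExtension} applies since left amenable semigroups are left reversible); apply (iv) to $\mathcal{Y}$ and $\pi^{-1}B$, of $\nu$-measure $\mu(B)>\delta$, to get $A\in\mathcal{A}$ with $\nu(\bigcap_{a\in A}R_a^{-1}\pi^{-1}B)>0$, and use $R_a^{-1}\pi^{-1}B=\pi^{-1}T_a^{-1}B$ $\nu$-a.e. for $a\in S$ to conclude $\mu(\bigcap_{a\in A}T_a^{-1}B)>0$. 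For (v)$\rightarrow$(vi), pick a left Følner sequence $\mathcal{G}$ in $S$ with $\overline{d}_{\mathcal{G}}(B)>\delta$ and run the construction inside the proof of Theorem \ref{FurstenbergCorrespondencePrinciple}: it yields the Stone--Čech model $(\beta G,\mathscr{A},\nu,(T_g)_{g\in G})$ with $\hat{B}=\overline{B}$, $\nu(\hat{B})=\overline{d}_{\mathcal{G}}(B)>\delta$, and $T_g$ the continuous extension of $g'\mapsto h(g)g'$. Applying (v) to the $S$-system $(\beta G,\mathscr{A},\nu,(T_s)_{s\in S})$ and $\hat{B}$ gives $A\in\mathcal{A}$ with $\nu(\bigcap_{a\in A}T_a^{-1}\hat{B})>0$; since $\bigcap_{a\in A}T_a^{-1}\hat{B}=\overline{\bigcap_{a\in A}h(a)^{-1}B}$ is clopen, $\nu(\bigcap_{a\in A}T_a^{-1}\hat{B})\le\overline{d}_{\mathcal{G}}(\bigcap_{a\in A}h(a)^{-1}B)\le d^*(\bigcap_{a\in A}h(a)^{-1}B)$, which is the conclusion of (vi). The implication (v)$\rightarrow$(vii) is identical, feeding the given Følner sequence $\mathcal{F}$ into the correspondence principle and reading off $\overline{d}_{\mathcal{F}}(\bigcap_{a\in A}h(a)^{-1}B)\ge\nu(\bigcap_{a\in A}T_a^{-1}\hat{B})>0$.

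Finally assume $\mathcal{A}$ is right translation invariant. For (vi)$\rightarrow$(i), apply (vi) with $h=\mathrm{id}_G$: if $d^*(B)>\delta$ then some $A\in\mathcal{A}$ satisfies $d^*(\bigcap_{a\in A}a^{-1}B)>0$; the positivity is witnessed along a Følner sequence contained in $S$, so $\bigcap_{a\in A}a^{-1}B$ meets $S$, giving $s\in S$ with $As\subseteq B$, and $As\in\mathcal{A}$ by right translation invariance. This closes the cycle, so (i)--(vii) are all equivalent. For the last sentence of the theorem, none of the implications above used strictness of any density or measure inequality---the conclusions of (vi) and (vii) still assert ``$>0$'', which is all (vi)$\rightarrow$(i) needs---so the identical arguments give the equivalence with the non-strict thresholds $\mu(B),d^*(B),\overline{d}_{\mathcal{F}}(B)\ge\delta$ for $\delta\in(0,1]$. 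The steps I expect to require the most care are (ii)$\rightarrow$(iii) and the parallel bookkeeping in (iii)$\rightarrow$(iv): promoting the purely combinatorial ``$B$ contains a member of $\mathcal{A}$'' to the quantitative ``$\bigcap_{a\in A}T_a^{-1}B$ has positive measure'' genuinely needs the pointwise ergodic theorem (accessed via a tempered subsequence) together with the countability of $\mathcal{A}$; a secondary delicate point is that in (v)$\rightarrow$(vi)/(v)$\rightarrow$(vii) one must use the explicit Stone--Čech realization (rather than the inequality as stated in Theorem \ref{FurstenbergCorrespondencePrinciple}, which carries an extra intersectand) so that $\bigcap_{a\in A}T_a^{-1}\hat{B}$ is literally the closure of a set whose Følner density it computes.
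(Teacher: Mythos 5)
Your argument follows essentially the same route as the paper: pointwise ergodic theorem along a tempered subsequence for (ii)$\rightarrow$(iii), ergodic decomposition for (iii)$\rightarrow$(iv), the natural extension for (iv)$\rightarrow$(v), the Stone--\v{C}ech correspondence for (v)$\rightarrow$(vi) and (v)$\rightarrow$(vii), and the identity homomorphism plus translation invariance to return to (i). Your handling of (ii)$\rightarrow$(iii) is a slight (clean) variant: the paper applies the ergodic theorem to the products $\prod_{a\in A}\mathbbm{1}_B\circ T_a$ and deletes a null set so that ``positive measure iff nonempty,'' whereas you apply it only to $\mathbbm{1}_B$ and then cover a full-measure set by the countably many sets $\bigcap_{a\in A}T_a^{-1}B$; both work. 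You are also right that one must use the explicit $\beta G$ model in (v)$\rightarrow$(vi)/(vii) rather than the correspondence inequality as literally stated (which carries the extra intersectand $E$); the paper glosses over this.

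There is one genuine, though easily repaired, logical gap: your implication graph is (i)$\rightarrow\cdots\rightarrow$(vi)$\rightarrow$(i) together with (v)$\rightarrow$(vii), which makes (i)--(vi) equivalent and shows each of them implies (vii), but it establishes no implication \emph{out of} (vii). Hence your argument as written does not show that (vii) implies the others, and the claimed equivalence of all seven statements does not follow. The paper closes this by proving (vii)$\rightarrow$(ii) with exactly the argument you used for (vi)$\rightarrow$(i): take $h=\mathrm{id}$, obtain $A\in\mathcal{A}$ with $\overline{d}_{\mathcal{F}}\left(\bigcap_{a\in A}a^{-1}B\right)>0$, pick $s\in S$ in this intersection, and conclude $As\subseteq B$ with $As\in\mathcal{A}$ by right translation invariance. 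You should add this one line.
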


\begin{proof}
    It is clear that (i)$\rightarrow$(ii), so we proceed to show that (ii)$\rightarrow$(iii). By replacing $\mathcal{F} = (F_n)_{n = 1}^\infty$ with a subsequence, we will assume without loss of generality that $\mathcal{F}$ is tempered. Now let $(X,\mathscr{B},\mu,(T_g)_{g \in G})$ be an ergodic $G$-system and let $B \in \mathscr{B}$ be such that $\mu(B) > \delta$. Since $\mathcal{A}$ only contains countably many sets, we delete a set of measure $0$ from $B$ so that for all $A \in \mathcal{A}$ we have

    \begin{equation}
        \mu\left(\bigcap_{a \in A}T_a^{-1}B\right) = 0\text{ if and only if }\bigcap_{a \in A}T_a^{-1}B = \emptyset.
    \end{equation}
    We now invoke Theorem \ref{PointwiseErgodicTheorem} to pick some $x \in X$ such that for any $A \in \mathcal{A}$ we have

    \begin{equation}
        \lim_{N\rightarrow\infty}\frac{1}{|F_N|}\sum_{g \in F_N}\prod_{a \in A}\mathbbm{1}_B(T_{ag}x) = \mu\left(\bigcap_{a \in A}T_a^{-1}B\right).
    \end{equation}
    It follows that $E := \{g \in G\ |\ T_gx \in B\}$ satisfies $\overline{d}_{\mathcal{F}}(E) = \mu(B) > \delta$, so let $A \in \mathcal{A}$ be such that $A \subseteq E$. Since $E_A := \bigcap_{a \in A}a^{-1}E \neq \emptyset$, we see that for any $g \in E_A$ we have

    \begin{equation}
        T_gx \in \bigcap_{a \in A}T_a^{-1}B\text{, hence } \mu\left(\bigcap_{a \in A}T_a^{-1}B\right) > 0.
    \end{equation}
    
    To show that (iii)$\rightarrow$(iv), we let $\mathcal{X} := (X,\mathscr{B},\mu,(T_g)_{g \in G})$ be an arbitrary $G$-system, and we let $(Y,\mathscr{A},\nu)$ and $\{\mu_y\}_{y \in Y}$ be as in Theorem \ref{ErgodicDecomposition}. Let $Y' \in \mathscr{A}$ be given by $Y' := \{y \in Y\ |\ \mu_y(B) > \delta\}$, and observe that
    
    \begin{equation}
        \delta < \mu(B) = \int_Y\mu_y(B)d\nu(y) \le \nu(Y')+\delta(1-\nu(Y'))\text{, hence }\nu(Y') > 0.
    \end{equation}
    For each $y \in Y'$, let $A_y \in \mathcal{A}$ be such that

    \begin{equation}
        \mu_y\left(\bigcap_{a \in A_y}T_a^{-1}B\right) > 0.
    \end{equation}
    For $A \in \mathcal{A}$, let $Y_A = \{y \in Y'\ |\ A_y = A\}$. Since $\mathcal{A}$ is countable and $Y' = \bigcup_{A \in \mathcal{A}}Y_A$, let $A \in \mathcal{A}$ be such that $\nu(Y_A) > 0$. We now see that

    \begin{equation}
        \mu\left(\bigcap_{a \in A}T_a^{-1}B\right) = \int_Y\mu_y\left(\bigcap_{a \in A}T_a^{-1}B\right)d\nu(y) \ge \int_{Y_A}\mu_y\left(\bigcap_{a \in A}T_a^{-1}B\right)d\nu(y) > 0.
    \end{equation}
    
    To show that (iv)$\rightarrow$(v), we let $(Y,\mathscr{A},\nu,(R_s)_{s \in G})$ be the natural extension of $(X,\mathscr{B},\mu,(T_s)_{s \in S})$, and we let $\pi:Y\rightarrow X$ be the factor map. Since $\nu(\pi^{-1}B) = \mu(B) > \delta$, we pick $A \in \mathcal{A}$ for which

    \begin{equation}
        \mu\left(\bigcap_{a \in A}T_a^{-1}B\right) = \nu\left(\bigcap_{a \in A}R_a^{-1}\pi^{-1}B\right) > 0.
    \end{equation}

    To show that (v)$\rightarrow$(vi) we let $\Phi = (\Phi_n)_{n = 1}^\infty$ be a left F\o lner sequence for which $d^*(B) = \overline{d}_{\Phi}(B)$, and then we invoke Theorem \ref{FurstenbergCorrespondencePrinciple} to pick a $G$-system $(X,\mathscr{B},\mu,(T_s)_{s \in G})$ and a $C \in \mathscr{B}$ such that for any $k \in \mathbb{N}$ and any $g_1,\cdots,g_k \in G$, we have

    \begin{equation}
        \overline{d}_{\Phi}\left(h(g_1)^{-1}B\cap\cdots\cap h(g_k)^{-1}B\right) \ge \mu\left(T_{g_1}^{-1}C\cap\cdots\cap T_{g_k}^{-1}C\right).
    \end{equation}
    The desired result follows after observing that any $G$-system restricts to a $S$-system.
    
    The proof that (v)$\rightarrow$(vii) is the same as the proof that (v)$\rightarrow$(vi).
    The proofs that (vi) and (vii) each imply (iii) is the same as the proof that (ii)$\rightarrow$(iii).
    To show that (vi)$\rightarrow$(i) and that (vii)$\rightarrow$(ii) when $\mathcal{A}$ is right translation invariant, it suffices to let $h$ be the identity map and observe that if $b \in \bigcap_{a \in A}a^{-1}B$, then $Ab \subseteq B$.
\end{proof}

\begin{definition}\label{DefinitionOfDensityRegularity}
    Let $S$ be a countably infinite cancellative left amenable semigroup and let $\mathcal{A} \subseteq \mathscr{P}_f(S)$ be right translation invariant. Given $\delta \in [0,1)$, the collection $\mathcal{A}$ is \textbf{weakly $\delta$-density regular} if it satisfies any of the equivalent conditions appearing in Theorem \ref{7Equivalences}. If $\mathcal{A}$ is weakly $0$-density regular, then we say that $\mathcal{A}$ is \textbf{density regular}. Given $\delta \in [0,1]$, the collection $\mathcal{A}$ is \textbf{$\delta$-density regular} if for any $B \subseteq S$ satisfying $d^*(B) \ge \delta$, there exists $A \in \mathcal{A}$ for which $A \subseteq B$.
\end{definition}

Corollary \ref{TranslationInvarianceImpliesDensityRegularity} tells us that any right translation invariant collection is weakly $\delta$-density regular but not $\delta$-density regular for some $\delta \in [0,1)$.

Our next goal is to prove a uniformity principle for $\delta$-density regular collections. In order to give some context, let us recall the principle of uniformity of recurrence. Given a countably infinite semigroup $S$, a set $R \subseteq S$ is a \textbf{set of (measurable) recurrence} if for any $S$-system $(X,\mathscr{B},\mu,(T_s)_{s \in S})$ and any $A \in \mathscr{B}$ with $\mu(A) > 0$, there exists $r \in R$ for which $\mu\left(A\cap T_r^{-1}A\right) > 0$. To connect this definition to the previous discussion when $S$ is cancellative and left amenable, we observe that there is a bijection $\phi$ between right translation invariant collections $\mathcal{A} \subseteq S^2 \subseteq \mathscr{P}_f(S)$ and subsets $R \subseteq S$ given by $\phi(\mathcal{A}) := \{s \in S\ |\ \exists (r,t) \in \mathcal{A}\text{ with }sr=t\}$ and $\phi^{-1}(R) := \{(r,t) \in S^2\ |\ \exists s \in R\text{ with }sr=t\}$, and that $R$ is a set of recurrence if and only if $\mathcal{A} := \phi^{-1}(R)$ is density regular. The principle of uniformity of recurrence says that if $R$ is a set of recurrence, then for each $\delta > 0$ there is a $\gamma > 0$ and a finite set $R_\delta \subseteq R$ such that for any $S$-system $(X,\mathscr{B},\mu,(T_s)_{s \in S})$ and any $A \in \mathscr{B}$ with $\mu(A) \ge \delta$ there exists $r \in R_{\delta}$ for which $\mu\left(A\cap T_r^{-1}A\right) \ge \gamma$. In the set up of the present paper, the principle of uniformity of recurrence (roughly speaking) asserts that if the right translation invariant family $\mathcal{A} \subseteq S^2$ is density regular, then for each $\delta > 0$, there is a finite subcollection $\mathcal{A}_\delta \subseteq \mathcal{A}$ that is $\delta$-density regular. 

The uniformity of recurrence was proven for $\mathbb{Z}$ and even for an arbitrary countable group $G$ by Forrest as \cite[Theorem 2.1]{ForrestThesis} and \cite[Lemma 6.4]{ForrestThesis} respectively. 
Alternate proofs in the case of $\mathbb{Z}$ appears as \cite[Proposition 1.3]{SetsOfRecurrenceAndGeneralizedPolynomials} and \cite[Theorem 1.5]{fish2024inverse}, and the first of these proofs was extended to the case of countable abelian groups in \cite[Section 5]{RigidityAndWeakMixingInAbelianGroups}. The uniformity principle for Szemer\'edi's Theorem is classical (see, e.g. \cite[Chapter 2.5]{GRSRamseyTheory}), and the uniformity principle for the polynomial Szemer\'edi Theorem is well known in the folklore.\footnote{While Bergelson and Leibman \cite{PolynomialSzemeredi} do not mention any finitistic version of their results, Gowers \cite[Page 38]{gowers2000arithmetic} attributes such results to them anyways.} We also mention that \cite[Theorem A.2]{frantzikinakis2009powers} can be seen as a principle of uniformity of multiple recurrence for $\mathbb{Z}$. Our next result implies all of the previously mentioned uniformity principles, although it only implies \cite[Lemma 6.4]{ForrestThesis} when the group $G$ is amenable.
 
 \begin{theorem}[Uniformity of Density Regularity]\label{Uniformity}
     Let $S$ be a countably infinite cancellative left amenable semigroup, let $\delta \in (0,1]$ be arbitrary, and let $\mathcal{A} \subseteq \mathscr{P}_f(S)$ be right translation invariant and $\delta$-density regular. Then there exists a finite subcollection $\mathcal{A}' \subseteq \mathcal{A}$ and a $\gamma > 0$ such that the following hold:
     \begin{enumerate}[(i)]
         \item There exists $K \in \mathscr{P}_f(S)$ and $\epsilon > 0$ such that for any $(K,\epsilon)$-invariant set $F \in \mathscr{P}_f(S)$, and any $B \subseteq F$ with $|B| \ge (\delta-\epsilon)|F|$, there exists $A \in \mathcal{A}'$ for which $|F\cap\bigcap_{a \in A}a^{-1}B| > \gamma|F|$.

         \item If $\mathcal{F}$ is a left F\o lner sequence in $S$, and $B \subseteq S$ satisfies $\overline{d}_{\mathcal{F}}(B) \ge \delta$, then there exists $A \in \mathcal{A}'$ for which $\overline{d}_{\mathcal{F}}(\bigcap_{a \in A}a^{-1}B) \ge \gamma$.
     
         \item If $B \subseteq S$ satisfies $d^*(B) \ge \delta$, then there exists $A \in \mathcal{A}'$ for which $d^*(\bigcap_{a \in A}a^{-1}B) \ge \gamma$.

         \item For any $S$-system $\mathcal{X} := (X,\mathscr{B},\mu,(T_s)_{s \in S})$ and any $B \in \mathscr{B}$ with $\mu(B) \ge \delta$, there exists $A \in \mathcal{A}'$ for which $\mu\left(\bigcap_{a \in A}T_a^{-1}B\right) \ge \gamma$.
     \end{enumerate}
 \end{theorem}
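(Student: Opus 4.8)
The plan is to make the finitary statement (i) the core: extract the finite subcollection $\mathcal{A}'$ from a quantitatively robust form of the compactness principle, prove (i) by a counting argument, and then deduce (ii)--(iv) from (i) by soft arguments, letting the final $\gamma$ be the minimum of the finitely many positive constants produced along the way. Since $\mathcal{A}$ is right translation invariant and $\delta$-density regular, it satisfies condition (i) of Theorem~\ref{DensityCompactnessPrinciple}, so by the implication from condition (i) to condition (iv) of that theorem there are $H\in\mathscr{P}_f(S)$ and $\epsilon_0>0$ such that every $B\subseteq H$ with $|B|>(\delta-\epsilon_0)|H|$ contains a member of $\mathcal{A}$. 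For each such $B$ fix a witness $A_B\in\mathcal{A}$ with $A_B\subseteq B$, let $\mathcal{A}'$ be the finite collection of all these $A_B$, and put $M:=|\mathcal{A}'|$. Using the \emph{open} form (iv) rather than (iii) is essential: the strict inequality is what provides the slack $\epsilon_0$ needed for the averaging below.

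To prove (i), set $K:=H$ and fix $\epsilon\in\bigl(0,\frac{\epsilon_0}{2(1+|H|)}\bigr)$; suppose $F$ is $(K,\epsilon)$-invariant and $B\subseteq F$ with $|B|\ge(\delta-\epsilon)|F|$. Let $F^\circ:=\{s\in F: Hs\subseteq F\}$; approximate invariance together with cancellativity gives $|F^\circ|\ge(1-|H|\epsilon)|F|$, and from the identity $\sum_{s\in F^\circ}|B\cap Hs|=|H||F^\circ|-\sum_{s\in F^\circ}|(F\setminus B)\cap Hs|$ together with the bound $\#\{s\in S: x\in Hs\}\le|H|$ for each $x$ one gets that the average of $|B\cap Hs|/|H|$ over $s\in F^\circ$ is at least $\delta-(1+|H|)\epsilon\ge\delta-\frac{\epsilon_0}{2}$. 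Since this quantity lies in $[0,1]$, a Markov-type estimate produces a fixed proportion $c=c(\delta,\epsilon_0,|H|)>0$ of $s\in F^\circ$ with $|B\cap Hs|>(\delta-\epsilon_0)|H|$. For each such $s$, the set $(B\cap Hs)s^{-1}\subseteq H$ has more than $(\delta-\epsilon_0)|H|$ elements, hence contains some $A_B\in\mathcal{A}'$, and then $A_Bs\subseteq B$, i.e.\ $s\in\bigcap_{a\in A_B}a^{-1}B$. Pigeonholing over the $M$ members of $\mathcal{A}'$ yields a single $A\in\mathcal{A}'$ with $|F\cap\bigcap_{a\in A}a^{-1}B|\ge(c/M)|F|$, which is (i) with $\gamma_1:=c/M$.

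For (ii): if $\overline{d}_{\mathcal{F}}(B)\ge\delta$, pass to a subsequence of $\mathcal{F}$ along which $|B\cap F_n|/|F_n|\to\overline{d}_{\mathcal{F}}(B)$; for large $n$ in it, $F_n$ is $(K,\epsilon)$-invariant and $|B\cap F_n|>(\delta-\epsilon)|F_n|$, so (i) applied to $B\cap F_n\subseteq F_n$ yields $A_n\in\mathcal{A}'$ with $|F_n\cap\bigcap_{a\in A_n}a^{-1}B|\ge\gamma_1|F_n|$, and pigeonholing over $\mathcal{A}'$ fixes $A$ with $\overline{d}_{\mathcal{F}}(\bigcap_{a\in A}a^{-1}B)\ge\gamma_1$. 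For (iii): if $d^*(B)\ge\delta$, pick (as in the proof of Theorem~\ref{7Equivalences}, using the standard fact that the supremum defining $d^*$ is attained by some left Følner sequence) a left Følner sequence $\mathcal{F}$ with $\overline{d}_{\mathcal{F}}(B)=d^*(B)\ge\delta$, apply (ii), and use $d^*\ge\overline{d}_{\mathcal{F}}$. For (iv): fix one $(K,\epsilon)$-invariant $F\in\mathscr{P}_f(S)$ (which exists by amenability) and, given an $S$-system $(X,\mathscr{B},\mu,(T_s)_{s\in S})$ and $B\in\mathscr{B}$ with $\mu(B)\ge\delta$, set $\phi(x):=\frac{1}{|F|}\sum_{s\in F}\mathbbm{1}_B(T_sx)$; then $\int\phi\,d\mu=\mu(B)\ge\delta$ and $0\le\phi\le1$, so $X':=\{\phi\ge\delta-\epsilon\}$ has $\mu(X')\ge\epsilon/(1-\delta+\epsilon)>0$. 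For $x\in X'$, $B_x:=\{s\in S: T_sx\in B\}$ satisfies $|B_x\cap F|\ge(\delta-\epsilon)|F|$, so (i) applied to $B_x\cap F$ produces $A\in\mathcal{A}'$ with $|F\cap\bigcap_{a\in A}a^{-1}B_x|\ge\gamma_1|F|$, which (using $T_{as}=T_aT_s$) means $\frac{1}{|F|}\sum_{s\in F}\mathbbm{1}_{C_A}(T_sx)\ge\gamma_1$ for $C_A:=\bigcap_{a\in A}T_a^{-1}B$; hence $X'\subseteq\bigcup_{A\in\mathcal{A}'}Y_A$ with $Y_A:=\{x:\frac{1}{|F|}\sum_{s\in F}\mathbbm{1}_{C_A}(T_sx)\ge\gamma_1\}$, and choosing $A$ with $\mu(Y_A)\ge\mu(X')/M$ and integrating the defining inequality of $Y_A$ while using $\mu(T_s^{-1}C_A)=\mu(C_A)$ gives $\mu(C_A)\ge\gamma_1\mu(X')/M$. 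Taking $\gamma$ to be the least of $\gamma_1$ and $\gamma_1\epsilon/(M(1-\delta+\epsilon))$ finishes the argument.

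The main obstacle is the counting step in the proof of (i): one needs the proportion of translates $Hs$ on which $B$ is "dense enough" to be bounded below by a constant independent of $F$ and $B$, and against the hard threshold $\delta$ a Markov bound would be vacuous. This is precisely what is resolved by drawing $\mathcal{A}'$ from the open form (iv) of the compactness principle and then shrinking the invariance parameter $\epsilon$ far enough to absorb the $O(\epsilon)$ errors coming from approximate invariance; the remaining deductions (iii) aside are elementary, and the only other external input is the standard fact, already used in Theorem~\ref{7Equivalences}, that the supremum defining upper Banach density is attained by a Følner sequence.
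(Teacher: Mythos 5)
Your proposal is correct, and it reaches the conclusion by a genuinely different route in the two places where the paper does real work. For part (i), the paper decomposes an invariant set $F$ into tiles using the Downarowicz--Huczek--Zhang tiling theorem (Theorem \ref{CongruentTilingTheorem} via Lemma \ref{GoodTilingLemma}), finds a positive proportion of tiles on which $B$ is dense, and extracts $\mathcal{A}'$ as the members of $\mathcal{A}$ contained in the union of the shapes; you instead double-count over \emph{all} translates $Hs$ with $s\in F^\circ$, using only cancellativity to control multiplicities, and extract $\mathcal{A}'$ as one witness per dense subset of the single finite set $H$ supplied by condition (iv) of Theorem \ref{DensityCompactnessPrinciple}. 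Your averaging argument is more elementary and self-contained (no tiling machinery), at the cost of a worse but still explicit constant; the key point you correctly isolate --- that the open-threshold form of the compactness principle provides the slack $\epsilon_0$ that makes the Markov estimate non-vacuous --- is the same mechanism that drives the paper's tile count. For part (iv), the paper passes to the natural extension and invokes the converse Furstenberg correspondence principle, whereas you give a direct argument by integrating the averaged indicator $\phi$, applying (i) fiberwise to the return-time sets $B_x\cap F$, and pigeonholing over the measurable sets $Y_A$; this avoids both external inputs and is arguably cleaner. Two cosmetic repairs are needed: your stated threshold $\epsilon<\epsilon_0/(2(1+|H|))$ should also enforce $(1+|H|)\epsilon<\delta$ (harmless, just shrink $\epsilon$), and since (i) asks for a strict inequality $|F\cap\bigcap_{a\in A}a^{-1}B|>\gamma|F|$ you should take $\gamma_1$ strictly below the bound $c/(2M)$ your pigeonhole produces (note also that the fraction of good $s$ relative to $|F|$ rather than $|F^\circ|$ costs the extra factor $1-|H|\epsilon$). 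Neither affects the validity of the argument.
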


 \begin{proof}
     We will first prove (i), and we remark that the (ii) and (iii) follow immediately from (i). Since $\mathcal{A}$ is right translation invariant and $\delta$-density regular, we may use Theorem \ref{DensityCompactnessPrinciple} to pick some $K' \in \mathscr{P}_f(S)$ and $\epsilon_1 \in \left(0,\frac{1}{2}\right)$ such that for any $(K',\epsilon_1)$-invariant set $F \in \mathscr{P}_f(S)$ and all $B \subseteq F$ with $|B| \ge (\delta-\frac{1}{2}\epsilon_1)|F|$, there exists $A \in \mathcal{A}$ with $A \subseteq B$. Let $G$ be the group of right quotients of $S$, and using Theorem \ref{CongruentTilingTheorem} let $\mathcal{T}$ be a tiling of $G$ for which each shape $\{T_i\}_{i = 1}^I$ of $\mathcal{T}$ is $(K',\epsilon_1)$-invariant. Let $T = \bigcup_{i = 1}^IT_i$ and let $\mathcal{A}' = \{A \in \mathcal{A}\ |\ A \subseteq T\}$. 
     Now let $F \in \mathscr{P}_f(S)$ be $(T,\frac{1}{8}\epsilon_1|TT^{-1}|^{-1})$-invariant, so it will also be $(TT^{-1},\frac{1}{4}\epsilon_1|TT^{-1}|^{-1})$-invariant. Lemma \ref{GoodTilingLemma} tells us that $\mathcal{T}$ $\frac{1}{4}\epsilon_1$-tiles $F$. Let $\{W_i\}_{i = 1}^M$ be a collection of tiles of $\mathcal{T}$ for which $W := \bigcup_{i = 1}^MW_i$ satisfies $W \subseteq F$ and $|F\setminus W| < \frac{1}{4}\epsilon_1|F|$. Let $B' \subseteq F$ be such that $|B'| \ge \delta|F|$, and observe that $B := B'\cap W$ satisfies $|B| \ge \left(\delta-\frac{1}{4}\epsilon_1\right)|W|$. Let $R_1 = \{1 \le i \le M\ |\ |B\cap W_i| \ge \left(\delta-\frac{1}{2}\epsilon_1\right)|W_i|\}$ and let $R_2 = [1,M]\setminus R_1$. We will assume without loss of generality that $|T_1| \le |T_2| \le \cdots \le |T_I|$, and we let $M' = |T_I|/|T_1|$. We observe that

     \begin{alignat*}{2}
        &\left(\delta-\frac{1}{4}\epsilon_1\right)|W| \le |B| = \sum_{i = 1}^M|B\cap W_i| \le \sum_{i \in R_1}|W_i|+\left(\delta-\frac{1}{2}\epsilon_1\right)\sum_{i \in R_2}|W_i|\\
        = &\left(\delta-\frac{1}{2}\epsilon_1\right)|W|+\left(1-\delta+\frac{1}{2}\epsilon_1\right)\sum_{i \in R_1}|W_i| \le \left(\delta-\frac{1}{2}\epsilon_1\right)|W|+\left(1-\delta+\frac{1}{2}\epsilon_1\right)|R_1||T_I|\text{, hence }\\
        |R_1| &\ge \frac{\frac{1}{4}\epsilon_1|W|}{\left(1-\delta+\frac{1}{2}\epsilon_1\right)|T_I|}.
     \end{alignat*}
     For each $r \in R_1$, let $W_r = T_{i_r}s_r$ and let $A_r \in \mathcal{A}'$ be such that $A_rs_r \subseteq B\cap T_{i_r}s_r$, and observe that $s_r \in W_r\cap\bigcap_{a \in A_r}a^{-1}(B\cap W_r)$. Let $A \in \mathcal{A}'$ be such that $A_r = A$ for at least $|R_1|/|\mathcal{A}'|$ values of $r$, and observe that

     \begin{equation}
         |F\cap\bigcap_{a \in A}a^{-1}B| \ge \frac{|R_2|}{|\mathcal{A}'|} \ge \frac{\frac{1}{4}\epsilon_1|W|}{\left(1-\delta+\frac{1}{2}\epsilon_1\right)|\mathcal{A}'|\cdot|T_I|} \ge \frac{\epsilon_1|F|}{8\left(1-\delta+\frac{1}{2}\epsilon_1\right)2^{|T|}|T_I|}.
     \end{equation}
     We see that it suffices to take 

     \begin{equation}
         \gamma = \frac{\epsilon_1}{8\left(1-\delta+\frac{1}{2}\epsilon_1\right)2^{|T|}|T_I|}, \epsilon = \frac{1}{8}\epsilon_1|TT^{-1}|^{-1},
     \end{equation}
    and to let $K \in \mathscr{P}_f(S)$ be such that $KK^{-1} \supseteq TT^{-1}$.
    
    We now proceed to prove (iv). Firstly, we recall the converse to the Furstenberg Correspondence Principle, which recently appeared as \cite[Theorem 1.16]{SaulsvdC} (see also \cite[Theorem 1.2]{farhangi2024asymptotic}). In particular, \cite[Theorem 1.6]{SaulsvdC} states that if $(Y,\mathscr{A},\nu,(R_g)_{g \in G})$ is a $G$-system, and $\Phi$ is a left F\o lner sequence in $G$, then for any $A \in \mathscr{A}$ there exists $B \subseteq G$ satisfying

    \begin{equation}
        \nu\left(\bigcap_{f \in F}T_f^{-1}A\right) = d_{\Phi}\left(\bigcap_{f \in F}f^{-1}B\right),
    \end{equation}
    for all $F \in \mathscr{P}_f(G)$. We now see that (iv) follows from (ii) after applying the converse to the Furstenberg Correspondence Principle with $\Phi = \mathcal{F}$ to the invertible extension of $\mathcal{X}$.
 \end{proof}

%%%%%%%%%%%%%%%%%%%%%%%%%%%%%%%%%%%%%%%%%%%%%%%%%%%%%%%%%%%%%%%%%%%%%%%%%%%%%%%%%%%%%%%%%%%%%%%%%%%%%%%%%%%%%%%%%%%%%%%%%%%%%%%%%%%%%%%%%%%%%%%%%%%%%%%%%%%%%%%%
\section{Reduction to Hilbert's Tenth Problem}

Let $R$ be a computable integral domain with field of fractions $K$. For $p \in \bigcup_{n = 
 1}^\infty R[x_1,\cdots,x_n]$ let $\mathcal{A}_p \subseteq \mathscr{P}_f(R)$ denote the collection of zeroes of $p$. We remark that we are identify each root of the polynomial $p$ with a finite set in $R$ rather than a point in $R^n$. For example, if $p(x,y) = x-y$, then $\mathcal{A}_p = \{r\}_{r \in R}$. We let $\mathcal{A}_p' = \{(a_1,\cdots,a_n) \in \mathcal{A}_p\ |\ a_i \neq a_j\ \forall\ i \neq j\}$. Let $H_R \subseteq \bigcup_{n 
 = 1}^\infty R[x_1,\cdots,x_n]$ denote the set of homogeneous polynomials, i.e., $H_R = \bigcup_{n = 1}^\infty\{p \in R[x_1,\cdots,x_n]\ |\ p(rx_1,\cdots,rx_n) = r^{\text{deg}(p)}p(x_1,\cdots,x_n)\ \forall\ r \in R\}$. Let $T_R \subseteq R[x_1,\cdots,x_n]$ denote the set of additively translation invariant polynomials, i.e., $T_r = \bigcup_{n = 1}^\infty\{p \in R[x_1,\cdots,x_n]\ |\ p(x_1+r,\cdots,x_n+r) = p(x_1,\cdots,x_n)\ \forall\ r \in R\}$. For any $p \in \bigcup_{n 
 = 1}^\infty R[x_1,\cdots,x_n]$. For $\delta > 0$, let $IMDR_R(\delta)$ denote the set of polynomials $p(x_1,\cdots,x_n)$ for which every $B \subseteq R$ with $d^*_\times(B) \ge \delta$ contains an injective root of $p$, i.e., there exists $A \in \mathcal{A}_p'$ for which $A \subseteq B$. 
 For $\delta > 0$, let $IADR_R(\delta)$ denote the set of polynomials for which every $B \subseteq R$ with $d^*(B) \ge \delta$ contains an injective root of $p$. 
 Let $IMDR_R = \bigcap_{\delta > 0}IMDR_R(\delta)$ and let $IADR_R = \bigcap_{\delta > 0}IADR_R(\delta)$. We remark that IM(A)DR is an abbreviation for injectively multiplicative (additive) density regular. Let $(IPR_R(\ell))\ PR_R(\ell)$ denote the set of polynomials $p$ for which the equation $p(x_1,\cdots,x_n) = 0$ is (injectively) $\ell$-partition regular over $R\setminus\{0\}$. Let $PR_R := \bigcap_{\ell = 1}^\infty P_R(\ell)$ and let $IPR_R := \bigcap_{\ell = 1}^\infty IPR_R(\ell)$. 
 We observe that $p \in PR_R$ if and only if the equation $p(x_1,\cdots,x_n) = 0$ is partition regular over $R\setminus\{0\}$, and that $p \in IPR_R$ if and only if the equation $p(x_1,\cdots,x_n) = 0$ is injectively partition regular over $R\setminus\{0\}$. 
 Similarly, we see that $p \in IADR_R\ (p \in IMDR_R)$ if and only if for every $B \subseteq R$ with $d^*(B) > 0\ (d^*_\times(B) > 0)$, $B$ contains an injective root of $p$.

\begin{proposition}\label{DescriptiveComplexityOfRamseyTheory}
    Let $R$ be a countably infinite computable integral domain.
    \begin{enumerate}[(i)]
        \item For $\delta > 0$, the sets $IMDR_R(\delta)\cap H_R$ and $IADR_R(\delta)\cap T_R$ are $\Sigma_1^0$.

        \item For $\ell \in \mathbb{N}$, the sets $PR_R(\ell)$ and $IPR_R(\ell)$ are $\Sigma_1^0$.
        
        \item  The sets $IMDR_R\cap H_R, IADR_R\cap T_R, PR_R,$ and $IPR_R$ are $\Pi_2^0$.
    \end{enumerate}
\end{proposition}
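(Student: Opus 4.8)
The plan is to feed each of the six sets into the appropriate compactness principle --- Theorem~\ref{PartitionCompactnessPrinciple} for the partition-regularity sets and Theorem~\ref{DensityCompactnessPrinciple} for the density-regularity sets --- so that membership becomes ``there exists a finite witness of a decidable property.'' The $\Sigma_1^0$ bounds in (i) and (ii) are then immediate, and (iii) follows by writing each set as a countable intersection of the uniformly $\Sigma_1^0$ families produced in (i) and (ii).

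For (ii), first observe that, given a polynomial $p$, an integer $\ell$, and a finite set $F \subseteq R \setminus \{0\}$, the predicate ``$p(x_1,\dots,x_n) = 0$ is (injectively) $\ell$-partition regular over $F$'' is decidable: one runs over the finitely many maps $F \to \{1,\dots,\ell\}$ and, for each colour class, over the finitely many ($n$-element, in the injective case) tuples drawn from it, evaluating $p$ --- which is computable because $R$ is a computable integral domain with decidable equality. Enumerating the finite subsets of $R \setminus \{0\}$ is effective, so by clause (i) of Theorem~\ref{PartitionCompactnessPrinciple} we have $p \in PR_R(\ell)$ (resp. $p \in IPR_R(\ell)$) if and only if such an $F$ exists; this exhibits $PR_R(\ell)$ and $IPR_R(\ell)$ as $\Sigma_1^0$, and the construction is uniform in $\ell$.

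For (i), the key point is that the restriction to $H_R$, resp. $T_R$, is precisely what is needed to apply the density compactness principle. If $p$ is homogeneous, then the collection of injective roots of $p$ all of whose coordinates are nonzero is right translation invariant in the semigroup $(R\setminus\{0\},\cdot)$, since cancellation preserves distinctness and $p(ra_1,\dots,ra_n) = r^{\deg p}p(a_1,\dots,a_n)$; and if $p \in T_R$, then $\mathcal{A}_p'$ is right translation invariant in $(R,+)$. Both $(R\setminus\{0\},\cdot)$ and $(R,+)$ are countably infinite, cancellative, and abelian, hence left amenable, so for $\delta = 1/k \in (0,1]$ the equivalence of conditions (i) and (iii) in Theorem~\ref{DensityCompactnessPrinciple} gives that $p \in IMDR_R(1/k)$ (resp. $p \in IADR_R(1/k)$) if and only if some finite set $H$ in the relevant semigroup has the property that every $B \subseteq H$ with $|B| \ge |H|/k$ contains a member of the collection. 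For fixed $p$ and $H$ this is a finite, hence decidable, check. Since membership in $H_R$ and in $T_R$ is itself decidable --- each is the vanishing of an explicit polynomial that can be tested on the coefficients of $p$ --- it follows that $IMDR_R(1/k) \cap H_R$ and $IADR_R(1/k) \cap T_R$ are $\Sigma_1^0$, uniformly in $k$; the same argument works for any rational (indeed any computable real) $\delta > 0$, while for $\delta > 1$ both sets are trivially $\Delta_1^0$.

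For (iii), I would use the identities $PR_R = \bigcap_{\ell \ge 1} PR_R(\ell)$, $IPR_R = \bigcap_{\ell \ge 1} IPR_R(\ell)$, $IMDR_R \cap H_R = \bigcap_{k \ge 1}\big(IMDR_R(1/k) \cap H_R\big)$, and $IADR_R \cap T_R = \bigcap_{k \ge 1}\big(IADR_R(1/k) \cap T_R\big)$ --- the last two because $d^*_\times(B) \ge \delta$ implies $d^*_\times(B) \ge 1/k$ whenever $1/k \le \delta$, and symmetrically for $d^*$. Each right-hand side is a countable intersection of a family of sets that is $\Sigma_1^0$ uniformly in the index, i.e. a set of the form $\{p : \forall\, j\ \exists\, m\ R(p,j,m)\}$ with $R$ a decidable relation, which is by definition $\Pi_2^0$. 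There is no deep obstacle here once the two compactness principles are in hand; the only points requiring care are matching ``right translation invariant'' to the correct ambient semigroup (homogeneity $\leftrightarrow$ multiplicative invariance, additive translation invariance $\leftrightarrow$ additive invariance) and checking that the finitary clause of Theorem~\ref{DensityCompactnessPrinciple} is genuinely computable for rational $\delta$.
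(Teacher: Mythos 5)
Your proposal is correct and follows essentially the same route as the paper: Theorem~\ref{PartitionCompactnessPrinciple} and Theorem~\ref{DensityCompactnessPrinciple} reduce membership to the existence of a finite witness with a decidable property (with homogeneity, resp.\ additive translation invariance, supplying the right translation invariance needed to invoke the density compactness principle), and part (iii) follows by writing each set as a countable intersection of uniformly $\Sigma_1^0$ sets. Your additional remarks on the decidability of $H_R$ and $T_R$ and on the computability of the threshold $\delta|H|$ are points the paper passes over but handles implicitly in the same way.
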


\begin{proof}
    Since the integral domain $R$ is computable, we may assume without loss of generality that we are working over the set $\mathbb{N}$ endowed with an integral domain structure in which addition and multiplication are computable. Theorem \ref{DensityCompactnessPrinciple} tells us that $p \in IMDR_R(\delta)\cap H_R\ (p \in IADR_R(\delta)\cap T_R)$ if and only if $p \in H_R\ (p \in T_R)$ and there exists a finite set $F \subseteq R\setminus\{0\}$ such that for any $B \subseteq F$ with $|B| \ge \delta|F|$, there eixsts $A \in \mathcal{A}_p'$ with $A \subseteq B$. Since $H_R$ and $T_R$ are computable sets, it is computable to find all $B \subseteq F$ for which $|B| \ge \delta|F|$, and it is computable to check if there exists $A \in \mathcal{A}_p$ for which $A \subseteq B$, we have part $(i)$. Part (ii) follows from Theorem \ref{PartitionCompactnessPrinciple}(i) and the fact that it is computable to find all partitions of a finite set $F$ into $\ell$ pieces. Part (iii) is now immediate froms parts (i) and (ii).
\end{proof}

\begin{theorem}\label{MainResultForSigma01}
    If $R$ is a computable integral domain for which HTP$(K)$ is undecidable, then we have the following:
    \begin{enumerate}[(i)]
        \item For $\delta > 0$, the set $IADR_R(\delta)\cap H_R$ is $\Sigma_1^0$-complete.

        \item For $\delta > 0$, the set $IMDR_R(\delta)\cap T_R$ is $\Sigma_1^0$-complete.

        \item For $\ell \in \mathbb{N}$, the sets $PR_R(\ell), IPR_R(\ell), PR_R(\ell)\cap H_R$ and $IPR_R(\ell)\cap H_R$ are $\Sigma_1^0$-complete.

        \item The sets $PR_R,IPR_R,PR_R\cap H_R,$ and $IPR_R\cap H_R$ are $\Sigma_1^0$-hard.
    \end{enumerate}
    In fact, all of the sets in (i)-(iii) are $\Sigma_1^0$-universal.
\end{theorem}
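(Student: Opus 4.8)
The $\Sigma^0_1$ upper bound is exactly Proposition~\ref{DescriptiveComplexityOfRamseyTheory}, so the plan is to prove that each listed set is $\Sigma^0_1$-universal; together with membership in $\Sigma^0_1$ this yields $\Sigma^0_1$-completeness. I would fix an arbitrary $\Sigma^0_1$ set $A\subseteq\N$ and, using the undecidability of $\mathrm{HTP}(K)$, obtain a computable $n\mapsto Q_n\in R[w_1,\dots,w_{k_n}]$ with $n\in A$ iff $Q_n$ has a root in $K^{k_n}$. The first move is a normalization: replace $Q_n(w_1,\dots,w_{k_n})$ by $Q_n(w_1-w_1',\dots,w_{k_n}-w_{k_n}')$ (in $2k_n$ variables, then rename). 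Since $K$ is infinite one checks that after this substitution $n\in A$ iff $Q_n$ has a root in $(K^\times)^{k_n}$: a root with all coordinates nonzero is a fortiori a root, and from any root $\vec a$ one builds one with nonzero coordinates by choosing $u_i\in K\setminus\{0,a_i\}$ and setting $u_i'=u_i-a_i$.

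Next I would clear denominators: with $d_n=\deg Q_n$, set
\[
\bar Q_n(a_1,b_1,\dots,a_{k_n},b_{k_n})\ :=\ (b_1\cdots b_{k_n})^{d_n}\,Q_n\!\left(\tfrac{a_1}{b_1},\dots,\tfrac{a_{k_n}}{b_{k_n}}\right),
\]
a polynomial over $R$ every monomial of which has total degree $k_nd_n$, so $\bar Q_n$ is homogeneous of degree $k_nd_n$, and $\bar Q_n(\vec a,\vec b)=(b_1\cdots b_{k_n})^{d_n}Q_n(a_1/b_1,\dots,a_{k_n}/b_{k_n})$ whenever all $b_i\neq0$. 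For part (iii) I would take
\[
P_n(z_1,\dots,z_{3k_n}):=\bar Q_n\bigl(z_1-z_2,\,z_3,\,z_4-z_5,\,z_6,\,\dots,\,z_{3k_n-2}-z_{3k_n-1},\,z_{3k_n}\bigr),
\]
matching the shape of the system in Corollary~\ref{PartitionLemmaWithHomogeneity}; for parts (i)--(ii) I would take
\[
P_n(z_1,\dots,z_{4k_n}):=\bar Q_n\bigl(z_1-z_2,\,z_3-z_4,\,\dots,\,z_{4k_n-3}-z_{4k_n-2},\,z_{4k_n-1}-z_{4k_n}\bigr),
\]
matching Lemma~\ref{DensityLemmaWithHomogeneity}. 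In both cases $P_n$ is homogeneous of degree $k_nd_n$, hence $P_n\in H_R$; in the second case each argument of $\bar Q_n$ is a difference, so also $P_n\in T_R$; and the map $n\mapsto P_n$ is computable. I would then verify that $n\in A$ iff $P_n$ lies in the relevant set.

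For the forward direction, assume $n\in A$ and fix a root $\vec c\in(K^\times)^{k_n}$ of $Q_n$. In case (iii), given any $\ell$-colouring of $R\setminus\{0\}$, I would apply the injective part of Corollary~\ref{PartitionLemmaWithHomogeneity} (with $m=k_n$, $k_i=c_i$) to get pairwise distinct monochromatic $z_j$ with $\tfrac{z_{3i-2}-z_{3i-1}}{z_{3i}}=c_i$; in case (i) I would apply Lemma~\ref{DensityLemmaWithHomogeneity}(ii) to an arbitrary $B$ with $d^*_\times(B)\ge\delta$, and in case (ii) Lemma~\ref{DensityLemmaWithHomogeneity}(i) to an arbitrary $B$ with $d^*(B)\ge\delta$, to get pairwise distinct $z_j\in B$ with $\tfrac{z_{4i-3}-z_{4i-2}}{z_{4i-1}-z_{4i}}=c_i$. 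Distinctness of the $z_j$ together with $c_i\neq0$ makes every denominator occurring in $\bar Q_n$ nonzero on the tuple, so $P_n$ evaluates to $(\text{nonzero})\cdot Q_n(\vec c)=0$ there; hence $P_n$ is injectively $\ell$-partition regular — so it lies in $IPR_R(\ell)\cap H_R$ and thus in $PR_R(\ell)$, $IPR_R(\ell)$, $PR_R(\ell)\cap H_R$ — resp. $P_n\in IMDR_R(\delta)\cap H_R$, resp. $P_n\in IADR_R(\delta)\cap T_R$. For the converse, if $P_n$ lies in the relevant set: in case (iii) the $\ell$-colouring of $R\setminus\{0\}$ with a single nonempty cell forces a root $(z_1,\dots,z_{3k_n})\in(R\setminus\{0\})^{3k_n}$ of $P_n$, and since each $z_{3i}\neq0$ one may divide out to exhibit a root $\bigl(\tfrac{z_1-z_2}{z_3},\dots,\tfrac{z_{3k_n-2}-z_{3k_n-1}}{z_{3k_n}}\bigr)\in K^{k_n}$ of $Q_n$, so $n\in A$; in cases (i)--(ii) taking $B=R\setminus\{0\}$ (with $d^*_\times(B)=1\ge\delta$), resp. $B=R$ (with $d^*(B)=1\ge\delta$), produces an injective root of $P_n$, whose distinct coordinates force every denominator in $\bar Q_n$ to be nonzero, hence a root of $Q_n$ in $(K^\times)^{k_n}$ and $n\in A$. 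Since $A$ was an arbitrary $\Sigma^0_1$ set, each listed set is $\Sigma^0_1$-universal, hence $\Sigma^0_1$-complete.

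The substance here is really carried by Corollary~\ref{PartitionLemmaWithHomogeneity} and Lemma~\ref{DensityLemmaWithHomogeneity}; the remaining work is bookkeeping (including the routine checks that $P_n\in H_R$, resp.\ $\cap T_R$, and that the reduction is computable). I expect the only step that genuinely needs care to be the initial normalization making ``$Q_n$ has a $K$-root'' equivalent to ``$Q_n$ has a $K^\times$-root'': it is what makes the \emph{injective} forms of those two results available (they require the prescribed ratios $c_i$ to be nonzero), and simultaneously what guarantees that the $K^{k_n}$-root of $Q_n$ extracted in the converse — which a priori need not avoid $0$ — still certifies $n\in A$.
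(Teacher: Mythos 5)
Your proposal is correct and follows essentially the same route as the paper: the same normalization making $K$-roots equivalent to $K^\times$-roots (the paper's Lemma \ref{LemmaForHTPVariations}), the same denominator-cleared ratio substitutions (the paper's $P_1$ and $P_2$ are exactly your $\bar Q_n$ composed with the difference quotients), and the same two key inputs, Lemma \ref{DensityLemmaWithHomogeneity} and Corollary \ref{PartitionLemmaWithHomogeneity}. The only cosmetic difference is that you handle all four sets in (iii) by a single reduction from the $K^\times$-normalized problem, whereas the paper reduces $PR_R(\ell)$ directly from HTP$(K)$ and only the injective variants from HTP$(K^\times)$; both are valid.
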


\begin{proof}
In all 3 cases, we will give a computable reduction either to HTP$(K)$ or to HTP$(K^\times)$. Let $P \in R[x_1,\cdots,x_k]$ be arbitrary.

We now show part (i), that $IADR_R(\delta)\cap T_R$ is $\Sigma_1^0$-complete. Let

\begin{alignat*}{2}
    &P_1(z_1,\cdots,z_{4k}) = P\left(\frac{z_1-z_2}{z_3-z_4},\cdots,\frac{z_{4k-3}-z_{4k-2}}{z_{4k-1}-z_{4k}}\right)\left(\prod_{i = 1}^n(z_{4i-1}-z_{4i})\right)^{\text{deg}(P)}
\end{alignat*}
For the first direction, let us assume that $P(s_1,\cdots,s_k) = 0$ for some $s_1,\cdots,s_k \in K^\times$. Lemma \ref{DensityLemmaWithHomogeneity}(i) tells us that for any $B \subseteq R\setminus\{0\}$ with $d^*(B) > 0$, there exists distinct $z_1,\cdots,z_{4k} \in B$ for which $\frac{z_{4i-3}-z_{4i-2}}{z_{4i-1}-z_{4i}} = s_i$ for all $1 \le i \le k$, so $P_1 \in IADR_R \subseteq IADR_R(\delta)$. Since $P_1 \in T_R$, we see that $P_1 \in IADR_R(\delta)\cap T_R$. Conversely, if $P$ has no root in $K^\times$, then $P_1$ has no root $(z_1,\cdots,z_{4k})$  in $K^\times$ for which $z_i \neq z_j$ when $i \neq j$, so $P_1 \notin IADR_R(\delta)\cap T_R$. We have thus given a computable reduction from HTP$(K^\times)$ to $IADR_R(\delta)\cap T_R$.

We now show part (ii), that $IMDR_R(\delta)\cap H_R$ is $\Sigma_1^0$-complete. For the first direction, let us assume that $P(s_1,\cdots,s_k) = 0$ for some $s_1,\cdots,s_k \in K^\times$. Lemma \ref{DensityLemmaWithHomogeneity}(ii) tells us that for any $B \subseteq R\setminus\{0\}$ with $d^*_\times(B) > 0$, there exists distinct $z_1,\cdots,z_{4k} \in B$ for which $\frac{z_{4i-3}-z_{4i-2}}{z_{4i-1}-z_{4i}} = s_i$ for all $1 \le i \le k$, so $P_1 \in IMDR_R \subseteq IMDR_R(\delta)$. Since $P_1 \in H_R$, we see that $P_1 \in IMDR_R(\delta)\cap H_R$. Conversely, if $P$ has no roots in $K^\times$, then $P_1$ has no root $(z_1,\cdots,z_{4k})$ in $K$ for which $z_i \neq z_j$ when $i \neq j$, so $P_1 \notin IMDR_R(\delta)\cap H_R$.  We have thus given a computable reduction from HTP$(K^\times)$ to $IMDR_R(\delta)\cap H_R$.

We will now show parts (iii) and (iv). We form a new polynomial 

\begin{equation}
    P_2\left(z_1,\cdots,z_{3k}\right) = P\left(\frac{z_1-z_2}{z_3},\cdots,\frac{z_{3k-2}-z_{3k-1}}{z_{3k}}\right)\left(\prod_{i = 1}^kz_{3i}\right)^{\text{deg}(P)}.
\end{equation} 
For the first direction, let us assume that $P(s_1,\cdots,s_k) = 0$ for some $s_1,\cdots,s_k \in K$. Corollary \ref{PartitionLemmaWithHomogeneity} tells us that for any partition of the form $R\setminus\{0\} = \bigcup_{i = 1}^\ell C_i$, there exists $1 \le i_0 \le \ell$ and $z_1,\cdots,z_{3k} \in C_{i_0}$ for which $\frac{z_{3i-2}-z_{3i-1}}{z_{3i}} = s_i$ for all $1 \le i \le k$. 
Since $P_2 \in H_R$, we see that $P_2 \in PR_R\cap H_R \subseteq PR_R(\ell)\cap H_R$. 
Conversely, if $P$ has no roots in $K$, then $P_2$ has no roots in $K$, so $P_2 \notin PR_R(\ell)$.
We have thus given a simultaneous computable reduction from HTP$(K)$ to $PR_R\cap H_R, PR_R, PR_R(\ell)\cap H_R$ and $PR_R(\ell)$. 

To give a simultaneous computable reduction from HTP$(K^\times)$ to $IPR_R\cap H_R, IPR_R, IPR_R(\ell)\cap H_R$ and $IPR_R(\ell)$, we proceed as above and observe the following.
In the proof of the first direction, Corollary \ref{PartitionLemmaWithHomogeneity} allows us to take $z_1,\cdots,z_{3k}$ to be distinct since $s_1,\cdots,s_k \in K^\times$. 
Furthermore, in the proof of the converse direction, we assume that $P$ has no roots in $K^\times$, so $P_2$ has no injective roots in $K$.
\end{proof}

\begin{theorem}\label{MainResultForPi02}
    Let $R$ be a computable integral domain that admits master polynomials.
    \begin{enumerate}[(i)]            
        \item  The set $IMDR_R\cap H_R$ is $\Pi_2^0$-complete.

        \item The sets $PR_R\cap H_R, PR_R,IPR_R\cap H_R,$ and $IPR_R$ are $\Pi_2^0$-complete.
    \end{enumerate}
    In fact, all of the above sets are $\Pi_2^0$-universal.
\end{theorem}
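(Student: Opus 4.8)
The plan is as follows. All five sets are $\Pi^0_2$ by Proposition~\ref{DescriptiveComplexityOfRamseyTheory}(iii), so what remains is $\Pi^0_2$-universality of each. Fix a $\Pi^0_2$-universal $A\subseteq\N$, and, using the preprocessing recorded just before Definition~\ref{def:mp}, write $A(m)\Leftrightarrow\forall\,n\neq 0\ S(m,n)$ where $S\subseteq\N\times\Z$ is $\Sigma^0_1$, $\neg S(m,0)$ holds for all $m$, and whenever $\neg S(m,n_0)$ holds for some $n_0\neq 0$ one has $\neg S(m,n)$ for all $\abs{n}\ge\abs{n_0}$. Since $R$ admits master polynomials, fix a master polynomial $(p,f,g)$ for $S$, so $p\in R[x,y,z_1,\dots,z_k]$, $g\colon(R^\times,\cdot)\to(\Z,+)$ is a surjective computable homomorphism, and $f\colon\N\to R$ is computable. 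By Lemma~\ref{MultiplicativeHomomorphismLemma} the kernel $g^{-1}(\{0\})$ has $d^*_\times=0$, and the same holds for each coset, so no finite union of such cosets is multiplicatively piecewise syndetic or of positive multiplicative density.

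I would attach to each $m$ the polynomial
\[
 Q_m(y_1,y_2,u_1,\dots,u_{3k})\ :=\ p\!\left(f(m),\tfrac{y_1}{y_2},\tfrac{u_1-u_2}{u_3},\dots,\tfrac{u_{3k-2}-u_{3k-1}}{u_{3k}}\right)\bigl(y_2\textstyle\prod_{i=1}^{k}u_{3i}\bigr)^{\deg p}.
\]
Since $f(m)$ is a constant of $R$ and each of $\tfrac{y_1}{y_2}$ and $\tfrac{u_{3i-2}-u_{3i-1}}{u_{3i}}$ is unchanged when all of its variables are simultaneously scaled, $Q_m$ is homogeneous, and $m\mapsto Q_m$ is computable. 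A root of $Q_m$ with all coordinates in $R^\times$ records exactly a pair $b:=y_1/y_2\in K^\times$ and $\vec s:=\bigl(\tfrac{u_{3i-2}-u_{3i-1}}{u_{3i}}\bigr)_{i=1}^{k}\in K^k$ with $p(f(m),b,\vec s)=0$, and conversely such data can be realized inside a prescribed cell or dense set via the difference-ratio lemmas of Section~3. (For $IMDR_R\cap H_R$ I would use the same recipe but with the four-variable gadget $\tfrac{u_{4i-3}-u_{4i-2}}{u_{4i-1}-u_{4i}}$ in the last $k$ slots, which is again homogeneous and is the form supplied by Lemma~\ref{DensityLemmaWithHomogeneity}.)

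For the forward implication, assume $A(m)$. Given a finite partition $R^\times=\bigcup_{i=1}^\ell C_i$, apply Lemma~\ref{EnhancedPartitionLemmaWithHomogeneity} with $F:=g^{-1}(\{0\})$ to obtain a cell $C_{i_0}$ and $y_1,y_2\in C_{i_0}$ with $b:=y_1/y_2\in R^\times$ and $g(b)\neq 0$. Since $A(m)$ holds and $g(b)\neq 0$, $S(m,g(b))$ holds, so the master polynomial hands us a root $\vec s\in(K^\times)^k$ of $p(f(m),b,z_1,\dots,z_k)$. Feeding the $s_i$ as the constants in Lemma~\ref{EnhancedPartitionLemmaWithHomogeneity}(ii) (invoked only after $y_1,y_2$ are fixed, and whose underlying ultrafilter may first be intersected with $C_{i_0}\setminus\{y_1,y_2\}$) produces distinct $u_1,\dots,u_{3k}\in C_{i_0}$, distinct also from $y_1,y_2$, with $\tfrac{u_{3i-2}-u_{3i-1}}{u_{3i}}=s_i$; then $(y_1,y_2,u_1,\dots,u_{3k})$ is an injective monochromatic root of $Q_m$. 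Hence $Q_m\in IPR_R\cap H_R$, so $Q_m$ lies in all four of $PR_R\cap H_R$, $PR_R$, $IPR_R\cap H_R$, $IPR_R$. The density statement is handled the same way, using Corollary~\ref{CorollaryToTheIntersectivityLemma} and Lemma~\ref{DensityLemmaWithHomogeneity} in place of Lemma~\ref{EnhancedPartitionLemmaWithHomogeneity}, together with the fact that a set of positive multiplicative density is not contained in finitely many cosets of $g^{-1}(\{0\})$, so still contains suitable $y_1,y_2$ with $g(y_1)\neq g(y_2)$.

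For the reverse implication, assume $\neg A(m)$ and fix $N$ with $\neg S(m,n)$ for all $\abs{n}\ge N$; then $p(f(m),b,z_1,\dots,z_k)$ has no root in $K$ whenever $b\in R^\times$ has $\abs{g(b)}\ge N$, and likewise (since $\neg S(m,0)$) when $g(b)=0$. I would colour $R^\times$ by $g(\cdot)\bmod M$ for a fixed $M\ge N$: in any colour class $g(y_1)\equiv g(y_2)\pmod M$, so a monochromatic root of $Q_m$ records a value $b=y_1/y_2$ with $g(b)\in M\Z$, forcing $p(f(m),b,\vec s)\neq 0$ — a contradiction. Thus $Q_m$ is not $M$-partition regular, so $Q_m\notin PR_R$, and since $Q_m$ is homogeneous and $IPR_R\subseteq PR_R$ it lies in none of the four partition sets; for $IMDR_R$ one replaces the colouring by $B:=g^{-1}(M\Z)$, whose $M$ translates partition $R^\times$ so that $d^*_\times(B)\ge 1/M>0$ by finite subadditivity, and argues identically. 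The step that genuinely needs care — and which I expect to be the main obstacle — is that a monochromatic root only supplies $b=y_1/y_2\in K^\times$, whereas the master polynomial as stated controls $p(f(m),b,\cdot)$ only for $b\in R^\times$: one must verify that the master polynomials actually available for $R$ (built from $\mathrm{ord}_{\mathfrak{p}}$ in the function-field cases and from the HTP$(\Q)$ reduction for $\Z$) control the root behaviour of $p(f(m),b,\cdot)$ for every $b\in K^\times$, with $g$ read on $K^\times$. Once that is secured, the modular colouring (respectively the set $g^{-1}(M\Z)$) annihilates every monochromatic root, $m\mapsto Q_m$ is a computable reduction of $A$ into each of the five sets, and $\Pi^0_2$-universality — hence $\Pi^0_2$-completeness — follows from that of $A$.
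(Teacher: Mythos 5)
Your proposal follows the paper's proof essentially step for step: the upper bound via Proposition \ref{DescriptiveComplexityOfRamseyTheory}, the reduction built from a master polynomial $(p,f,g)$ by substituting $y_1/y_2$ and the difference-ratio gadgets and clearing denominators to force homogeneity, the forward direction via Lemmas \ref{EnhancedPartitionLemmaWithHomogeneity} and \ref{DensityLemmaWithHomogeneity} (the paper secures $y_1/y_2\in R$ in the density case by choosing $y_1\in (y_2R\cap B)\setminus y_2g^{-1}(\{0\})$), and the reverse direction via the colouring by $g(\cdot)\bmod M$, resp.\ the set $B=g^{-1}(M\Z)$. The obstacle you flag --- that a monochromatic root only yields $b=y_1/y_2\in K^\times$ while Definition \ref{def:mp} controls $p(f(m),b,\cdot)$ only for $b\in R$ --- is present in the paper's converse direction as well, where it is passed over silently and absorbed into what the concrete constructions of master polynomials (via $\mathrm{ord}_{\mathfrak{p}}$, resp.\ HTP$(\Q)$) actually provide.
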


\begin{proof}
Let $A \subseteq \mathbb{N}$ be an arbitrary $\Pi_2^0$ set, and let $S \subseteq \mathbb{N}\times\mathbb{N}$ be a $\Sigma_1^0$-set for which we have $A(m)$ if and only if $S(m,n)$ for all $n \in \mathbb{N}$. Let $S' \subseteq \mathbb{N}\times\mathbb{Z}$ and $S'' \subseteq \mathbb{N}\times\mathbb{Q}$ be as in Section \ref{Hilberts10thProblemSubsection}. Let $(MP(x,y,z_1,\cdots,z_k),f,g)$ be a master polynomial for $S'$ or $S''$ depending on the situation (recall Definition \ref{def:mp}).

We now show part (i), that $IMDR_R\cap H_R$ is $\Pi_2^0$-complete.
For each $m \in \mathbb{N}$, let

\begin{alignat*}{2}
    &MP_1(f(m),y_1,y_2,z_1,\cdots,z_{4k}) =\\
    &MP\left(f(m),\frac{y_1}{y_2},\frac{z_1-z_2}{z_3-z_4},\cdots,\frac{z_{4k-3}-z_{4k-2}}{z_{4k-1}-z_{4k}}\right)\left(y_2\prod_{i = 1}^n(z_{4i-1}-z_{4i})\right)^{\text{deg}(MP)}
\end{alignat*}
We observe that for all $m \in \mathbb{N}$, we have $MP_1\left(f(m),y_1,y_2,z_1,\cdots,z_{4k}\right) \in H_R$. For the first direction, let us assume $A(m)$, so $S'(m,n)$ for all $n \in \mathbb{Z}\setminus\{0\}$ (or $S''(m,n)$ for all $n \in \mathbb{Q}\setminus\{0\})$. 
Let $B \subseteq R\setminus\{0\}$ be such that $d^*_\times(B) > 0$ and let $y_2 \in B$ be arbitrary.
Since $d^*_\times(g^{-1}(\{0\})) = 0$, we see that $d^*_\times(B\setminus y_2g^{-1}(\{0\})) = d^*_\times(B) > 0$, so let $y_1 \in B\setminus y_2g^{-1}(\{0\})$ be arbitrary. 
Since $g\left(\frac{y_1}{y_2}\right) \neq 0$, we may use Lemma \ref{DensityLemmaWithHomogeneity}(ii) to see that $MP_1(f(m),y_1,y_2,z_1,\cdots,z_{4k}) = 0$ has a solution with $z_1,\cdots,z_{4k} \in B\setminus\{y_1,y_2\}$ all distinct, so $MP_1\left(f(m),y_1,y_2,z_1,\cdots,z_{4k}\right) \in IMDR_R$. 

Conversely, let us assume that $A(m)$ does not hold.

We first consider the case in which we have an additive master polynomial, i.e., $g$ is a homomorphism from $(K^\times,\cdot)$ to $(\mathbb{Z},+)$.
There exists $n_m$ such that for all $v \in g^{-1}(\{|n| > n_m\}\cup\{0\})$, $MP(f(m),v,z_1,\cdots,z_k) = 0$ has no solutions in $K$.
Let $r_0 \in K^\times$ be such that $g(r_0) = 1$, let $B = g^{-1}(\{k(n_m+1)\}_{k \in \mathbb{Z}})$, and let $\mathcal{F}$ be a F\o lner sequence in $(R\setminus\{0\},\cdot)$.
Observe that $K^\times = \bigcup_{k = -n_m}^0r_0^kB$, hence $d^*_\times(B\cap R) \ge d_\mathcal{F}(B\cap R) = d_{\mathcal{F}}(B) = \frac{1}{n_m+1} > 0$. 
However, for $y_1,y_2 \in B\cap R\setminus\{0\}$, we have $g\left(\frac{y_1}{y_2}\right) \in (n_m+1)\mathbb{Z}$, so $g\left(\frac{y_2}{y_1}\right) \notin [-n_m,n_m]\setminus\{0\}$.
It follows that for any $y_1,y_2 \in B$, $MP_2(m,y_1,y_2,z_1,\cdots,z_{4k})$ has no injective root $z_1,\cdots,z_{4k} \in K$, hence $MP_1(f(m),y_1,y_2,z_1,\cdots,z_{4k}) \notin IMDR_R$.

Now we consider the case in which we have a multiplicative master polynomial, i.e., $g$ is a homomorphism from $(K^\times,\cdot)$ to $(\mathbb{Q}^\times,\cdot)$.
Let $\phi:\mathbb{Q}\rightarrow\mathbb{Z}$ and $\le_\phi$ be as in Section \ref{Hilberts10thProblemSubsection}, and let $n_m$ such that for all $v \in g^{-1}(\{|n| \ge_\phi n_m+1\}\cup\{0\})$, $MP(f(m),v,z_1,\cdots,z_k) = 0$ has no solutions in $K$.
Let $T$ be a monotile for $(\mathbb{Q}\setminus\{0\},\cdot)$ for which $\{n \in \mathbb{Q}\ |\ -n_m \le_\phi n \le_\phi n_m\}\setminus\{0\} \subseteq T$, and let $C$ be a center set for $T$.
Let $B = g^{-1}(C)$, and observe that $K^\times = \bigcup_{t \in T}r_tB$, where $r_t \in g^{-1}(\{t\})$ is arbitrary, hence $d^*_\times(B\cap R) = \frac{1}{|T|} > 0$.
However, for $y_1,y_2 \in B\setminus\{0\}$, we have $g\left(\frac{y_1}{y_2}\right) \in CC^{-1}$, hence $g\left(\frac{y_1}{y_2}\right) \notin T$.
It follows that for any $y_1,y_2 \in B$, $MP_2(m,y_1,y_2,z_1,\cdots,z_{4k})$ has no injective root $z_1,\cdots,z_{4k} \in K$, hence $MP_1(f(m),y_1,y_2,z_1,\cdots,z_{4k}) \notin IMDR_R$.

We have thus given in both cases a computable reduction from $A$ to $IMDR_R\cap H_R$, and in fact we have simultaneously given a computable reduction from $A$ to $IMDR_R$.

We will now show part (ii). 
For each $m \in \mathbb{N}$ we form a new polynomial 

\begin{equation}
    MP_2\left(x,y_1,y_2,z_1,\cdots,z_{3k}\right) = MP\left(x,\frac{y_1}{y_2},\frac{z_1-z_2}{z_3},\cdots,\frac{z_{3k-2}-z_{3k-1}}{z_{3k}}\right)\left(y_2\prod_{i = 1}^kz_{3i}\right)^{\text{deg}(MP)}.
\end{equation} 
We observe that for each $m \in \mathbb{N}$, we have $MP_2(f(m),y_1,y_2,z_1,\cdots,z_{4k}) \in H_R$. 
Let us assume $A(m)$, so $S'(m,n)$ for all $n \in \mathbb{Z}\setminus\{0\}$ (or $S''(m,n)$ for all $n \in \mathbb{Q}\setminus\{0\})$. In particular, for all $n \in \mathbb{Z}\setminus\{0\}$ (all $n \in \mathbb{Q}\setminus\{0\})$ and for all  $b \in R$ with $g(b) = n$, there exists $s_1(b),\cdots,s_k(b) \in K^\times$ for which $MP(f(m),b,s_1(b),\cdots,s_k(b)) = 0$. 
Since $d^*_\times(g^{-1}(\{0\})) = 0$, we see that $d^*_\times(g^{-1}(\{0\})\cap R) = 0$, so $g^{-1}(\{0\})\cap R$ is not multiplicatively piecewise syndetic.
Lemma \ref{EnhancedPartitionLemmaWithHomogeneity} tells us that for any partition $R\setminus\{0\} = \bigcup_{i = 1}^\ell C_i$, there exists a cell $C_{i_0}$ containing distinct $y_1,y_2,z_1,\cdots,z_{3n}$ such that $b := \frac{y_1}{y_2} \in R\setminus g^{-1}(\{0\})$ and $\frac{z_{3i-2}-z_{3i-1}}{z_{3i}} = s_i(b)$ for all $1 \le i \le k$, so $MP_2(f(m),y_1,y_2,z_1,\cdots,z_{4k}) \in IPR_R$.

Conversely, let us assume that $A(m)$ does not hold.

We first consider the case in which we have an additive master polynomial, i.e., $g$ is a homomorphism from $(K^\times,\cdot)$ to $(\mathbb{Z},+)$.
There exists $n_m$ such that for all $b \in g^{-1}(\{|n| > n_m\}\cup\{0\})$ there is no solution to $MP(f(m),b,z_1,\cdots,z_k) = 0$ in $K$. 
Let $K^\times = \bigcup_{i = 0}^{n_m}C_i$ be the partition given by $C_i = g^{-1}((n_m+1)\mathbb{Z}+i)$. 
We see that if $y_1,y_2 \in C_i$ for some $i$, then $n := g\left(\frac{y_1}{y_2}\right) \in (n_m+1)\mathbb{Z}$. 
It follows that $MP(f(m),\frac{y_1}{y_2},z_1,\cdots,z_k) = 0$ has no solutions, hence $MP_2(f(m),y_1,y_2,z_1,\cdots,z_{3k}) = 0$ is not partition regular over $K^\times$, i.e., $MP_2(f(m),y_1,y_2,z_1,\cdots,z_{3k}) \notin PR_R$.

Now we consider the case in which we have a multiplicative master polynomial, i.e., $g$ is a homomorphism from $(K^\times,\cdot)$ to $(\mathbb{Q}^\times,\cdot)$.
Let $\phi:\mathbb{Q}\rightarrow\mathbb{Z}$ and $\le_\phi$ be as in Section \ref{Hilberts10thProblemSubsection}, and let $n_m$ such that for all $v \in g^{-1}(\{|n| \ge_\phi n_m+1\}\cup\{0\})$, $MP(f(m),v,z_1,\cdots,z_k) = 0$ has no solutions in $K$.
Let $T$ be a monotile for $(\mathbb{Q}^\times,\cdot)$ for which $\{n \in \mathbb{Q}\ |\ -n_m \le_\phi n \le_\phi n_m\}\setminus\{0\} \subseteq T$, and let $C$ be a center set for $T$.
Consider the partition $K^\times = \bigcup_{t \in T}g^{-1}(Ct)$, and observe for $y_1,y_2 \in Ct$, we have $g\left(\frac{y_1}{y_2}\right) \in CC^{-1}$, hence $g\left(\frac{y_1}{y_2}\right) \notin T$.
It follows that for any $y_1,y_2 \in B$, $MP_2(m,y_1,y_2,z_1,\cdots,z_{4k})$ has no root $z_1,\cdots,z_{4k} \in K$, hence $MP_1(f(m),y_1,y_2,z_1,\cdots,z_{4k}) \notin PR_R$.

We have thus given in both cases simultaneous computable reductions from $A$ to $PR_R\cap H_R$, $PR_R$, $IPR_R\cap H_R$, and $IPR_R$. 
\end{proof}

\begin{theorem}\label{Pi02ForAdditiveDensityRegularity}\ 
    \begin{enumerate}[(i)]
        \item If HTP$(\mathbb{Q})$ is undecidable, then $IADR_{\mathbb{Z}}\cap T_{\mathbb{Z}}$ is $\Pi_2^0$-complete, hence $IADR_{\mathbb{Z}}$ is $\Pi_2^0$-hard.

        \item If $K$ is an algebraic function field over a finite field of odd characteristic, or $K = \mathbb{F}_{2^\ell}(t)$, and $R$ is the ring of integers of $K$, then $IADR_K$ is $\Pi_2^0$-hard.
    \end{enumerate}    
    In fact, all of the above sets are $\Pi_2^0$-universal.
\end{theorem}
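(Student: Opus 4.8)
The plan is to run a master-polynomial reduction in the spirit of Theorem~\ref{MainResultForPi02}, but to encode the integer parameter $n$ of the $\Sigma^0_1$ set by a \emph{raw difference} $y_1-y_2$ of two elements of the ambient set, rather than by a quotient $y_1/y_2$ as in the multiplicative case. The reason is that for additive density neither obvious alternative works: a set of positive additive density can be squeezed into a single residue class modulo a prime $\mathfrak p$, so $y_1/y_2$ can be forced to have $g$-value $0$ (which kills the positive direction), while a difference-ratio $\tfrac{z_1-z_2}{z_3-z_4}$ can be made equal to \emph{any} element of $K^\times$ by Lemma~\ref{DensityLemmaWithHomogeneity}(i) (which kills the negative direction). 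A raw difference is the right compromise: it can be forced into a prescribed subgroup for the negative direction, and yet, by pigeonhole, is divisible by $\mathfrak p$ for some pair of elements in any infinite set, which is what the positive direction needs. A side benefit is that $y_1-y_2$ and the ratios $\tfrac{z_{4i-3}-z_{4i-2}}{z_{4i-1}-z_{4i}}$ are invariant under the diagonal translation $x_j\mapsto x_j+r$, so the polynomial we build lies in $T_R$.

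Concretely, fix an arbitrary $\Pi^0_2$ set $A\subseteq\N$ and, as arranged in Section~2.8, a $\Sigma^0_1$ set $S\subseteq\N\times\Z$ with $A(m)\leftrightarrow(\forall n\neq 0)\,S(m,n)$, with $\neg S(m,0)$ always, and with $\neg S(m,n)$ for all $|n|\ge n_m$ whenever $\neg A(m)$. Let $(MP(x,y,z_1,\dots,z_k),f,g)$ be a master polynomial for $S$, which exists by hypothesis — via HTP$(\Q)$ for $R=\Z$, and via the results of Pheidas, Videla, and Shlapentokh for the function fields of (ii); for $R=\Z$ we take $f$ (and the querying of the $y$-slot) to be the identity, and otherwise $g=\mathrm{ord}_{\mathfrak p}$ for a prime $\mathfrak p$ of $R$ and $f(n)\in R$ chosen so that $\mathrm{ord}_{\mathfrak p}(f(n))=n$. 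Set
\[
Q_m(y_1,y_2,z_1,\dots,z_{4k})=MP\!\left(f(m),\,y_1-y_2,\,\frac{z_1-z_2}{z_3-z_4},\dots,\frac{z_{4k-3}-z_{4k-2}}{z_{4k-1}-z_{4k}}\right)\left(\prod_{i=1}^{k}(z_{4i-1}-z_{4i})\right)^{\deg MP}.
\]
Clearing denominators shows $Q_m\in R[y_1,y_2,z_1,\dots,z_{4k}]$, it is visibly additively translation invariant, hence lies in $T_R$, and $m\mapsto Q_m$ is computable.

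For the forward direction assume $A(m)$ and let $B\subseteq R$ with $d^*(B)>0$. In case~(i) take any distinct $y_1,y_2\in B$ and put $n:=y_1-y_2\neq 0$; in case~(ii) use the finiteness of $R/\mathfrak p$ and the infinitude of $B$ to find distinct $y_1,y_2\in B$ with $y_1\equiv y_2\pmod{\mathfrak p}$, so that $n:=g(y_1-y_2)\ge 1$. Since $S(m,n)$ holds, clause~(1) of the master polynomial applied to $b=y_1-y_2$ gives $s_1,\dots,s_k\in K^\times$ with $MP(f(m),y_1-y_2,s_1,\dots,s_k)=0$; now Lemma~\ref{DensityLemmaWithHomogeneity}(i) applied to $B\setminus\{y_1,y_2\}$ (still of positive upper Banach density) produces distinct $z_1,\dots,z_{4k}\in B\setminus\{y_1,y_2\}$ with $\tfrac{z_{4i-3}-z_{4i-2}}{z_{4i-1}-z_{4i}}=s_i$, so $Q_m$ vanishes on the injective tuple $(y_1,y_2,z_1,\dots,z_{4k})$ and $Q_m\in IADR_R$. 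For the backward direction assume $\neg A(m)$ and take $B=n_m\Z$ in case~(i) and $B=\mathfrak p^{\,n_m}$ in case~(ii); either choice has positive upper Banach density ($\mathfrak p^{\,n_m}$ is an additive subgroup of $R$ of finite index, since $R/\mathfrak p^{\,n_m}$ is finite). For any injective tuple from $B$ we have $y_1-y_2\neq 0$ with $|y_1-y_2|\ge n_m$ (resp.\ $g(y_1-y_2)\ge n_m$), so $\neg S(m,g(y_1-y_2))$ and clause~(2) of the master polynomial forbids any root of $MP(f(m),y_1-y_2,\,\cdot\,)$ in $K$; since injectivity keeps the denominator-clearing factor nonzero, $Q_m$ does not vanish on the tuple, and $B$ witnesses $Q_m\notin IADR_R$. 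Thus $m\mapsto Q_m$ computably reduces the arbitrary $\Pi^0_2$ set $A$ to $IADR_R\cap T_R$, and a fortiori to $IADR_R$. Combined with Proposition~\ref{DescriptiveComplexityOfRamseyTheory}, this gives that for $R=\Z$ the set $IADR_{\Z}\cap T_{\Z}$ is $\Pi^0_2$-universal and $IADR_{\Z}$ is $\Pi^0_2$-hard, and for the fields of (ii) that $IADR_R$ is $\Pi^0_2$-hard.

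The step I expect to require the most care is the elementary but fiddly bookkeeping around the prime $\mathfrak p$ in case~(ii): verifying that $\mathfrak p^{\,n_m}$ has positive upper Banach density — which comes down to $[R:\mathfrak p^{\,n_m}]<\infty$, i.e.\ to the residue field $R/\mathfrak p$ being finite, which holds for rings of integers of function fields over finite fields — and that every set of positive additive density, being infinite, meets some residue class mod $\mathfrak p$ in at least two points. A secondary routine check is that the exponent $\deg MP$ in the definition of $Q_m$ really does clear every denominator introduced by substituting the ratios into $MP$.
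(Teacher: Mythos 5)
Your proof of part (i) is correct and is essentially the paper's own argument verbatim: the same polynomial $MP_1$ with the $y_1-y_2$ slot, the same use of Lemma \ref{DensityLemmaWithHomogeneity}(i) for the positive direction, and the subgroup $n_m\Z$ (the paper uses $(n_m+1)\Z$) for the negative direction.

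Part (ii) is where you genuinely diverge. The paper abandons the translation-invariant $y_1-y_2$ encoding there and instead substitutes a raw variable $y$ into the master polynomial; since the resulting $MP_2$ is not in $T_R$, the paper only obtains $\Pi_2^0$-hardness of $IADR_R$, and it explicitly remarks that the $MP_1$-style construction ``could not work'' in part (ii) and leaves the completeness of $IADR_R\cap T_R$ open (Question \ref{QuestionAboutStrengtheningResults}(ii)(c)). You keep the difference encoding and make it work with two observations the paper does not use: for the positive direction, any $B$ with $d^*(B)>0$ is infinite, so by pigeonhole on the finite quotient $R/\mathfrak p$ it contains distinct $y_1\equiv y_2\pmod{\mathfrak p}$, giving $g(y_1-y_2)\ge 1$; for the negative direction, $B=\mathfrak p^{n_m}R$ is a finite-index additive subgroup, hence of positive density, in which every difference of distinct elements has $\mathrm{ord}_{\mathfrak p}\ge n_m$. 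I cannot find a gap in this: the obstruction the paper cites (that every positive-density set might contain a pair with $y_1-y_2\in g^{-1}(\{\pm1\})$) is simply irrelevant to your choice of $B$ in the negative direction, and your pigeonhole step also sidesteps what appears to be a soft spot in the paper's own positive direction for (ii), where $d^*(g^{-1}(\{0\}))=0$ is asserted for additive density in $R$ although the translates used to prove it live in $K$ (note $1+\mathfrak pR$ has positive additive density in $R$ and lies entirely in $g^{-1}(\{0\})$). Be aware, though, of what your argument would then prove: since your $Q_m$ lies in $T_R$, you get $\Pi_2^0$-hardness of $IADR_R\cap T_R$, which together with Proposition \ref{DescriptiveComplexityOfRamseyTheory}(iii) would make $IADR_R\cap T_R$ $\Pi_2^0$-complete and thereby settle the paper's open Question \ref{QuestionAboutStrengtheningResults}(ii)(c). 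That is a stronger conclusion than the stated theorem, so you should re-examine the two pivot points — that the master polynomials supplied by the cited literature really satisfy both clauses of Definition \ref{def:mp} for all $b$ with a given $\mathrm{ord}_{\mathfrak p}$, and that $d^*$ in the definition of $IADR_R$ is indeed additive Banach density computed inside $(R,+)$ — before claiming the improvement; under the paper's stated definitions both check out.
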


\begin{proof}
    Let $A \subseteq \mathbb{N}$ be an arbitrary $\Pi_2^0$ set, and let $S \subseteq \mathbb{N}\times\mathbb{N}$ be a $\Sigma_1^0$-set for which we have $A(m)$ if and only if $S(m,n)$ for all $n \in \mathbb{N}$. 
    Let $S' \subseteq \mathbb{N}\times\mathbb{Z}$ and $S'' \subseteq \mathbb{N}\times\mathbb{Q}$ be as in Section \ref{Hilberts10thProblemSubsection}.
    
    We first prove $(i)$. 
    Let $(MP,\text{Id},\text{Id})$ be a multiplicative master polynomial for $S''$. 
    For each $m \in \mathbb{N}$ we form a new polynomial 

\begin{alignat*}{2}
    &MP_1\left(x,y_1,y_2,z_1,\cdots,z_{4k}\right) =\\
    &MP\left(x,y_1-y_2,\frac{z_1-z_2}{z_3-z_4},\cdots,\frac{z_{4k-3}-z_{4k-2}}{z_{4k-1}-z_{4k}}\right)\left(\prod_{i = 1}^k(z_{4i-1}-z_{4i})\right)^{\text{deg}(MP)}.
\end{alignat*} 
We observe that for each $m \in \mathbb{N}$, we have $MP_1(m,y_1,y_2,z_1,\cdots,z_{4k}) \in T_{\mathbb{Z}}$. Let us assume $A(m)$, so for all $n \in \mathbb{Q}\setminus\{0\}$, there exists $s_1(n),\cdots,s_k(n) \in K$ for which $MP(m,n,s_1(n),\cdots,s_k(n)) = 0$. Let $B \subseteq \mathbb{Z}$ be such that $d^*(B) > 0$, let $y_1 \neq y_2 \in B$ be arbitrary, and let $n = y_1-y_2$. Lemma \ref{DensityLemmaWithHomogeneity}(i) tells us that $B\setminus\{y_1,y_2\}$ contains distinct $z_1,\cdots,z_{4k}$ for which $\frac{z_{4i-3}-z_{4i-2}}{z_{4i-1}-z_{4i}} = s_i(n)$ for all $1 \le i \le k$, so $MP_1 \in IADR_{\mathbb{Z}}$. Conversely, let us assume $\neg A(m)$. 
Let $\phi:\mathbb{Q}\rightarrow\mathbb{Z}$ and $\le_\phi$ be as in Section \ref{Hilberts10thProblemSubsection}. 
There exists $n_m \in \mathbb{Q}$ such that for all $|n| \ge_\phi n_m+1$ and for $n = 0$, there is no solution to $MP(m,n,z_1,\cdots,z_k) = 0$ in $\mathbb{Q}$. 
Let $T$ be a monotile for $(\mathbb{Q},+)$ for which $\{n \in \mathbb{Q}\ |\ -n_m \le_\phi n \le_\phi n_m\} \subseteq T$ and let $C$ be a center set for $T$.
We see that $d^*(C) = \frac{1}{|T|}$ and if $y_1,y_2 \in C$, then $n := y_1-y_2 \in C-C$, hence $n \notin T$. 
It follows that $MP(m,y_1-y_2,z_1,\cdots,z_k) = 0$ has no solutions in $\mathbb{Q}$, hence $MP_1(m,y_1,y_2,z_1,\cdots,z_{4k}) \notin IADR_{\mathbb{Z}}$. 
We have thus given simultaneous computable reductions from $A$ to $IADR_{\mathbb{Z}}\cap T_{\mathbb{Z}}$ and $IADR_{\mathbb{Z}}$. 

We now proceed to prove $(ii)$. 
Let $(MP,f,g)$ be an additive master polynomial for $S'$.
Let $\mathfrak{p}$ be an irreducible of $R$ for which $g(k) = \text{ord}_{\mathfrak{p}}(k)$ for all $k \in K^\times$. 
Let $N \in \mathbb{N}$ be arbitrary.
Since $\left(g^{-1}([-N,N])+\mathfrak{p}^{-n(2N+1)}\right)\cap \left(g^{-1}([-N,N])+\mathfrak{p}^{-m(2N+1)}\right) = \emptyset$ for distinct $m,n \in \mathbb{N}$, we see that $d^*(g^{-1}([-N,N])) = 0$, and in particular, $d^*(g^{-1}(\{0\})) = 0$.
For each $m \in \mathbb{N}$ we form a new polynomial 

\begin{alignat*}{2}
    &MP_2\left(x,y,z_1,\cdots,z_{4k}\right) = MP\left(x,y,\frac{z_1-z_2}{z_3-z_4},\cdots,\frac{z_{4k-3}-z_{4k-2}}{z_{4k-1}-z_{4k}}\right)\left(\prod_{i = 1}^k(z_{4i-1}-z_{4i})\right)^{\text{deg}(MP)}.
\end{alignat*} 
Let us assume $A(m)$, so for all $y \in K\setminus g^{-1}(\{0\})$, there exists $s_1(y),\cdots,s_k(y) \in K^\times$ for which $MP(f(m),y,s_1(y),\cdots,s_k(y)) = 0$. 
Let $B \subseteq K$ be such that $d^*(B) > 0$ and let $y \in B\setminus g^{-1}(\{0\})$ be arbitrary. 
Lemma \ref{DensityLemmaWithHomogeneity}(i) tells us that $B\setminus\{y\}$ contains distinct $z_1,\cdots,z_{4k}$ for which $\frac{z_{4i-3}-z_{4i-2}}{z_{4i-1}-z_{4i}} = s_i(y)$ for all $1 \le i \le k$, so $MP_2 \in IADR_K$. 
Conversely, let us assume $\neg A(m)$, so there exists $n_m$ such that for all $y \in g^{-1}(\{|n| \ge n_m\}\cup\{0\})$, there is no solution to $MP(f(m),y,z_1,\cdots,z_k) = 0$ in $K$. 
Since $d^*(g^{-1}([-n_m,n_m])) = 0$, we see that for $B := g^{-1}(\{|n| > n_m\})$ we have $d^*(B) = 1$. 
We observe that $MP(f(m),y,z_1,\cdots,z_k) = 0$ has no solutions in $B$, hence $MP_2(f(m),y,z_1,\cdots,z_{4k}) \notin IADR_K$. 
We have thus given a computable reduction from $A$ to $IADR_K$. 
\end{proof}

\begin{remark}
    The reason we only obtained $\Pi_2^0$-hardness in part (ii) is that we do not currently have an upper bound on the complexity of $IADR_R$, we only have an upper bound on the complexity of $IADR_R\cap T_R$.
    In part (ii), we could not work with $MP_1$ instead of $MP_2$, because it seems possible, for example, that for any $B \subseteq R$ with $d^*(B) > 0$, there exists $y_1,y_2 \in B$ for which $y_1-y_2 \in g^{-1}(\{-1,1\})$.
    The reason we only obtained $\Pi_2^0$-hardness of $IADR_K$ and not $IADR_R$, is that we were only able to show that $d^*(g^{-1}([-N,N])) = 0$ in $K$, but not in $R$. 
    While multiplicative upper Banach density in $(K^\times,\cdot)$ and $(R\setminus\{0\},\cdot)$ are closely related, this is not the case for additive upper Banach density in $(K,+)$ and $(R,+)$.
\end{remark}

When discussing the polynomial Szemer\'edi theorem in the introduction, we saw that it could be rephrased in terms of partial density regularity of a polynomial equation. Similarly, it was shown in \cite{frantzikinakis2023partition} that for any partition $\mathbb{N} = \bigcup_{i = 1}^\ell C_i$, there exists a $1 \le i_0 \le \ell$ and $x,y \in C_{i_0}$ for which $x^2+y^2 = z^2$ for some $z \in \mathbb{N}$, so partial partition regularity is also of interest. We will now show that questions relating to partial regularity have the same complexity as their analogous that pertain to regularity.

Define $IADR_R(n,\delta)$ to be the collection of polynomials $p(x_1,\cdots,x_m)$ with coefficients in $R$, such that for any $B \subseteq R$ with $d^*(B) \ge \delta$ there exists $a_1,\cdots,a_n \in R\setminus\{0\}$ and $a_{n+1},\cdots,a_m \in B$ for which $p(a_1,\cdots,a_m) = 0$ and $a_i \neq a_j$ when $i \neq j$. Define $PR_R(n,\ell)$ to be the collection of polynomials $p(x_1,\cdots,x_m)$ with coefficients in $R$, such that for any partition $R\setminus\{0\} = \bigcup_{i = 1}^\ell C_i$, there exists $1 \le i_0 \le \ell$, $a_1,\cdots,a_n \in R\setminus\{0\}$, and $a_{n+1},\cdots,a_m \in C_{i_0}$ for which $p(a_1,\cdots,a_n) = 0$. We define $IPR_R(n,\ell), PR_R(n), IPR_R(n),\allowbreak IADR_R(n), IMDR_R(n,\delta),$ and $IMDR_R(n)$ analogously.

\begin{lemma}
    Let $R$ be a computable integral domain for which the field of fractions $K$ is totally real, and let $n \in \mathbb{N}$ be arbitrary. For each of the following pairs of sets $\{A,B\}$, there is a computable reduction from $A$ to $B$, and a computable reduction from $B$ to $A$, so $A$ and $B$ have the same descriptive complexity:        
    \begin{multicols}{2}
        \begin{enumerate}
            \item $\{PR_R(\ell)\cap H_R,PR_R(n,\ell)\cap H_R\}$

            \item $\{PR_R\cap H_R,PR_R(n)\cap H_R\}$

            \item $\{IPR_R(\ell)\cap H_R,IPR_R(n,\ell)\cap H_R\}$

            \item $\{IPR_R\cap H_R,IPR_R(n)\cap H_R\}$

            \item $\{IADR_R(\delta),IADR_R(n,\delta)\}$

            \item $\{IADR_R,IADR_R(n)\}$

            \item $\{IMDR_R(\delta),IMDR_R(n,\delta)\}$

            \item $\{IMDR_R,IMDR_R(n)\}$

            \item $\{IADR_R(\delta)\cap T_R,\allowbreak IADR_R(n,\delta)\cap T_R\}$

            \item $\{IADR_R\cap T_R,IADR_R(n)\cap T_R\}$

            \item $\{IMDR_R(\delta)\cap H_R,IMDR_R(n,\delta)\cap H_R\}$

            \item $\{IMDR_R\cap H_R,IMDR_R(n)\cap H_R\}$
        \end{enumerate}
    \end{multicols}
\end{lemma}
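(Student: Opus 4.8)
The plan is to give, for every pair $\{A,B\}$ in the list, a computable function reducing $A$ to $B$ and one reducing $B$ to $A$. One direction is uniform across all the pairs: to reduce the ``full'' regularity set to its ``partial'' counterpart, send $p(x_1,\dots,x_m)$ to the very same polynomial, now viewed in $R[w_1,\dots,w_n,x_1,\dots,x_m]$, so that the $n$ new variables $w_1,\dots,w_n$ are the $n$ free slots. This is plainly computable; it leaves membership in $H_R$ and in $T_R$ unchanged (homogeneity and additive translation invariance are insensitive to variables that do not occur); and the image lies in the partial set precisely when $p$ lies in the full one, because the free slots can always be filled with elements of $R\setminus\{0\}$ (using that $R$ is infinite), taken pairwise distinct and distinct from the other coordinates for the injective variants, and never constrained to lie in the witnessing set $B$ or colour class.

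For the reductions in the opposite direction one internalises the free slots by a gadget in fresh variables. In the partition-type pairs the substitution is $x_i\mapsto u_i-v_i$: it preserves homogeneity monomial by monomial, and $u_i-v_i$ sweeps out all of $R$ as $u_i,v_i$ run over $R\setminus\{0\}$. To keep $u_i-v_i$ nonzero, and, in the injective versions, to keep all the realised values distinct, one adjoins Schur-type relations $u_i=v_i+s_i$ with $s_i$ fresh and runs the argument of Corollary~\ref{PartitionLemmaWithHomogeneity}, using Theorem~\ref{RadosTheoremForIntegralDomains} and Lemma~\ref{InfinitePartitionRegularityLemma} to extract from an arbitrary finite partition of $R\setminus\{0\}$ a single cell carrying the whole configuration; since $K$ is totally real, Remark~\ref{RemarkReducingPolynomialSystemsToSinglePolynomials} lets us recombine the resulting system into a single polynomial equation (homogeneous if we started with a homogeneous $p$). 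For the density-type pairs the analogous gadget is $x_i\mapsto \tfrac{u_i-v_i}{w_i-t_i}$, followed by clearing denominators; here Corollary~\ref{PartitionLemmaWithHomogeneity} is replaced by Lemma~\ref{DensityLemmaWithHomogeneity} together with Corollary~\ref{CorollaryToTheIntersectivityLemma}, which inside any $B$ of positive (additive or multiplicative) density produces an injective choice of gadget variables realising any prescribed value of the slot while avoiding finitely many already-fixed points. The translation-invariant pairs are handled the same way, but applied to the differences $x_i-x_m$ rather than to $x_i$ directly, so that the substitution respects $T_R$.

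The step I expect to be genuinely delicate is the ``partial $\Rightarrow$ full'' implication for the density pairs that carry neither an $H_R$ nor a $T_R$ constraint (items $5$--$8$): there the free slots of $p$ may legitimately take any value in $R\setminus\{0\}$, while the difference gadget only reaches $B-B$ and the ratio gadget only reaches $K^{\times}$, so one must be careful that the value actually realised comes from a root of the combined system (the equation $p=0$ together with the gadget relations) rather than being chosen in advance; bundling everything into one system before invoking Lemma~\ref{DensityLemmaWithHomogeneity} is the way I would arrange this. For every pair that does carry an $H_R$ or $T_R$ constraint there is also a cleaner, more structural route that avoids the issue altogether: a compactness argument paralleling Proposition~\ref{DescriptiveComplexityOfRamseyTheory} (via Theorems~\ref{PartitionCompactnessPrinciple} and~\ref{DensityCompactnessPrinciple}) shows the partial set sits in the same lightface class ($\Sigma^0_1$ or $\Pi^0_2$) as the full one, whence by Theorems~\ref{MainResultForSigma01} and~\ref{MainResultForPi02} both members of the pair are complete for that class and therefore mutually computably reducible; the first paragraph already supplies one of the two reductions, so only the direct gadget construction above is really needed for items $5$--$8$.
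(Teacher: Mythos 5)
Your ``full $\to$ partial'' direction via dummy variables is essentially fine and even a little simpler than the paper's, which instead multiplies by $(y_1^2+\cdots+y_n^2)$ (nonvanishing on $(R\setminus\{0\})^n$ because $K$ is totally real) so that the new variables genuinely occur in the polynomial. The genuine gap is in the ``partial $\to$ full'' direction for the partition pairs (items 1--4). Your gadget $y_i\mapsto u_i-v_i$ with the Schur relation $u_i=v_i+s_i$ forces the realised slot value to equal $s_i$, an element of the colour class itself; but the value $y_i$ supplied by partial partition regularity of $p$ is an arbitrary element of $R\setminus\{0\}$ determined by the root, and nothing places it in (or even in the difference set of) the cell carrying the $x_j$. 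Concretely, over $R=\mathbb{Z}$ take $p(y,x)=yx-2x^2\in H_{\mathbb{Z}}$: then $p\in PR_{\mathbb{Z}}(1,\ell)$ for every $\ell$ (take $y=2x$), but your combined system forces $s=2x$ with $s,x$ monochromatic, and $\{x,2x\}$ is not even $2$-partition regular (colour $n$ by $\lfloor\log_2|n|\rfloor\bmod 2$). Dropping the Schur relation breaks the converse implication instead: $p(y,x)=yx$ is not in $PR_R(1,\ell)$, yet $(u-v)x$ has the constant monochromatic root $u=v=x=c$. The paper's gadget is $y_i\mapsto\frac{z_{4i-3}-z_{4i-2}}{z_{4i-1}}\,z_{4i}$ with denominators cleared: by Corollary~\ref{PartitionLemmaWithHomogeneity} the ratio $\frac{z_1-z_2}{z_3}$ can be made equal to \emph{any} prescribed $k\in K$ by monochromatic $z_1,z_2,z_3$, and Lemma~\ref{TranslationInvariancePartitionTheorem} (applicable because $p\in H_R$ makes the relevant configuration families multiplicatively translation invariant) is used twice -- first to place the $x_j$, and then, once the targets $y_i$ are determined, to place the gadget variables with $k_i=y_i/z_{4i}$ -- all inside one multiplicatively piecewise syndetic cell. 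This two-step use of piecewise syndeticity is the idea missing from your argument.

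Two smaller points. For the density pairs your quotient gadget is essentially the paper's, but the argument must be run sequentially (first find the $x_j\in B$, hence the targets $y_i$; then invoke Lemma~\ref{DensityLemmaWithHomogeneity} for the now-fixed ratios inside $B$ minus finitely many points), not by ``bundling into one system,'' since Lemma~\ref{DensityLemmaWithHomogeneity} only applies to prescribed constants $k_i$; for the $T_R$ pairs the paper shifts the gadget by a fresh variable, $y_i\mapsto \frac{z_{5i-4}-z_{5i-3}}{z_{5i-2}-z_{5i-1}}+z_{5i}$, to keep the output additively translation invariant. Finally, your proposed shortcut via completeness is not available here: the lemma assumes only that $R$ is computable with totally real fraction field, whereas Theorems~\ref{MainResultForSigma01} and~\ref{MainResultForPi02} additionally require HTP$(K)$ to be undecidable, respectively that $R$ admits master polynomials, so completeness or universality of the full sets cannot be invoked unconditionally.
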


\begin{proof}
    We will give the proof for $\{PR_R(\ell)\cap H_R,PR_R(n,\ell)\cap H_R\}$ and $\{IADR_R(\delta)\cap T_R,IADR_R(n,\delta)\cap T_R\}$, and we remark the the proofs for the other situations are similar.

    To give a computable reduction from $PR_R(\ell)\cap H_R$ to $PR_R(n,\ell)\cap H_R$, we see that for any polynomial $p(x_1,\cdots,x_m)$, we have $p \in PR_R(\ell)\cap H_R$ if and only if we have $p' \in PR_R(n,\ell)\cap H_R$ where
    \begin{equation}
        p'(y_1,\cdots,y_n,x_1,\cdots,x_m) = (y_1^2+\cdots+y_n^2)p(x_1,\cdots,x_m).
    \end{equation}
    To give a computable reduction from $PR_R(n,\ell)\cap H_R$ to $PR_R(\ell)\cap H_R$, we will show that for any polynomial $p(y_1,\cdots,y_n,x_1,\cdots,x_m)$, we have $p \in PR_R(n,\ell)\cap H_R$ if and only if we have $p' \in PR_R(\ell)\cap H_R$, where

    \begin{equation*}
        p'(z_1,\cdots,z_{4n},x_1,\cdots,x_m) = p\left(\frac{z_1-z_2}{z_3}z_4,\cdots,\frac{z_{4n-3}-z_{4n-2}}{z_{4n-1}}z_{4n},x_1,\cdots,x_m\right).
\left(\prod_{i = 1}^nz_{4i-1}\right)^{\text{deg}(p)}    \end{equation*}
    It is clear that if $p' \in PR_R(\ell)\cap H_R$, then $p \in PR_R(n,\ell)\cap H_R$, so it remains to show the other direction. Since $p \in H_R$, we see that $\mathcal{A} := \{\{x_1,\cdots,x_m\} \subseteq R\ |\ \exists\ \{y_1,\cdots,y_n\} \subseteq R\text{ such that }p(y_1,\cdots\allowbreak,y_n,x_1,\cdots,x_m) = 0\}$ is multiplicatively translation invariant.
    Since $p \in PR_R(n,\ell)$, $\mathcal{A}$ is partition regular, so Lemma \ref{TranslationInvariancePartitionTheorem} tells us that any multiplicatively piecewise syndetic set $B$ contains a member of $\mathcal{A}$. Now let $R\setminus\{0\} = \bigcup_{i = 1}^\ell C_i$ be a partition, and assume without loss of generality that $C_1$ is multiplicatively piecewise syndetic. 
    Let $\{x_1,\cdots,x_m\} \in \mathcal{A}$ be such that $x_i \in C_1$ for all $i$, and let $y_1,\cdots,y_n \in R\setminus\{0\}$ be such that $p(y_1,\cdots,y_n,x_1,\cdots,x_m) = 0$. 
    Let $z_4,z_8,\cdots,z_{4n} \in C_1\setminus\{x_1,\cdots,x_m\}$ be arbitrary, and let $$\mathcal{A}_1 = \left\{\{z_1,z_2,z_3,z_5,\cdots,z_{4n-1}\} \subseteq R\ |\ \allowbreak\frac{z_{4i-3}-z_{4i-2}}{z_{4i-1}} = \frac{y_i}{z_{4i}}\ \forall\ \allowbreak 1 \le i \le n\ \&\ z_i\neq z_j\text{ when }i \neq j\right\}.$$ 
    We see that $\mathcal{A}_1$ is multiplicatively translation invariant, and Corollary \ref{PartitionLemmaWithHomogeneity} tells us that $\mathcal{A}_1$ is partition regular, so Lemma \ref{TranslationInvariancePartitionTheorem} tells us that $C_1\setminus\{x_1,\cdots,x_m,z_4,\cdots,z_{4n}\}$ contains a member of $\mathcal{A}_1$, which completes the proof.

    To give a computable reduction from $IADR_R(\delta)\cap T_R$ to $IADR_R(n,\delta)\cap T_R$, we see that for any polynomial $p(x_1,\cdots,x_m)$, we have $p \in IADR_R(\delta)\cap T_R$ if and only if we have $p' \in IADR_R(n,\delta)\cap T_R$ where
    \begin{equation}
        p'(y_1,\cdots,y_n,x_1,\cdots,x_m) = ((y_1-x_1)^2+\cdots+(y_n-x_1)^2)p(x_1,\cdots,x_m).
    \end{equation}
    To give a computable reduction from $IADR_R(n,\delta)\cap T_R$ to $IADR_R(\delta)\cap T_R$, we will show that for any polynomial $p(y_1,\cdots,y_n,x_1,\cdots,x_m)$, we have $p \in IADR_R(n,\delta)\cap T_R$ if and only if we have $p' \in IADR_R(\delta)\cap T_R$, where

    \begin{alignat*}{2}
        &p'(z_1,\cdots,z_n,x_1,\cdots,x_m) = \\
        &p\left(\frac{z_1-z_2}{z_3-z_4}+z_5,\cdots,\frac{z_{5n-4}-z_{5n-3}}{z_{5n-2}-z_{5n-1}}+z_{5n},x_1,\cdots,x_m\right)\left(\prod_{i = 1}^n(z_{5i-2}-z_{5i-1})\right)^{\text{deg}(p)}.
    \end{alignat*}
    Since $p \in T_R$, we see that $p' \in T_R$. We also see that if $p' \in IADR_R(\delta)$, then $p \in IADR_R(n,\delta)$, so it remains to show the other direction. 
    We see that $\mathcal{A} := \{\{x_1,\cdots,x_m\} \subseteq R\ |\ \exists\ \{y_1,\cdots,y_n\} \subseteq R\setminus\{0\}\text{ such that }p(y_1,\cdots,y_n,x_1,\cdots,x_m) = 0\}$ is additively translation invariant. 
    Now let $B \subseteq R$ be such that $d^*(B) \ge \delta$, and let $x_1,\cdots,x_m \in B$ be such that there exist $y_1,\cdots,y_n \in R\setminus\{0\}$ for which $p(y_1,\cdots,y_n,x_1,\cdots,x_m) = 0$. Let $z_5,z_{10},\cdots,z_{5n} \in B\setminus\{x_1,\cdots,x_m\}$ be distinct.
    Lemma \ref{DensityLemmaWithHomogeneity}(i) provides us with distinct $z_1,z_2,z_3,z_4,z_6,\cdots,z_{5n-1} \in B\setminus\{x_1,\cdots,x_m,z_5,\cdots,z_{5n}\}$ for which $\frac{z_{5i-4}-z_{5i-3}}{z_{5i-2}-z_{5i-1}} = y_i-z_{5i}$ for all $1 \le i \le n$, which completes the proof.
\end{proof}
%%%%%%%%%%%%%%%%%%%%%%%%%%%%%%%%%%%%%%%%%%%%%%%%%%%%%%%%%%%%%%%%%%%%%%%%%%%%%%%%%%%%%%%%%%%%%%%%%%%%%%%%%%%%%%%%%%%%%%%%%%%%%%%%%%%%%%%%%%%%%%%%%%%%%%%%%%%%%%%%%%%%%%%%%%%%%%%%%%%%%%%%%%%%%
\section{Questions and discussion}
Let us first recall a restatement of Rado's Theorem that appears in \cite[Page 74]{GRSRamseyTheory}.

\begin{theorem}[Rado's Theorem restated]\label{RadosTheoremRestated}
A finite system of homogeneous linear equations $F$ with coefficients in $\mathbb{Z}$ is partition regular over $\mathbb{N}$, if and only if for each prime $p$, the system $F$ has a solution in some cell of the partition $C_p = \{C_{p,i}\}_{i = 1}^{p-1}$, where $C_{p,i}$ is the set of natural numbers whose first nonzero digit in the base $p$ expansion is $i$.
\end{theorem}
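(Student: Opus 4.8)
The plan is to reduce this restatement to Rado's theorem in its columns‑condition form, after which it becomes an elementary arithmetic fact about the colorings $C_p$. Write the system $F$ as $\mathbf{A}\vec{x}=\vec{0}$ with $\mathbf{A}\in M_{m\times n}(\mathbb{Z})$ having columns $\vec{c}_1,\dots,\vec{c}_n$. By Rado's theorem \cite{RadosTheorem}, $F$ is partition regular over $\mathbb{N}$ if and only if $\mathbf{A}$ satisfies the columns condition (Definition \ref{DefinitionOfColumnsCondition}, with $K=\mathbb{Q}$), so it suffices to prove that $\mathbf{A}$ satisfies the columns condition if and only if $F$ has a monochromatic solution in $C_p$ for every prime $p$. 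One implication is immediate: each $C_p=\{C_{p,i}\}_{i=1}^{p-1}$ is a finite partition of $\mathbb{N}$ (writing $n=\sum_{j\ge 0}a_jp^j$, the first nonzero digit is $a_{v_p(n)}\in\{1,\dots,p-1\}$, where $p^{v_p(n)}$ is the largest power of $p$ dividing $n$), so if $\mathbf{A}$ satisfies the columns condition then $F$ is partition regular over $\mathbb{N}$ and in particular has a monochromatic solution in each $C_p$.

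For the converse I will prove the contrapositive: if $\mathbf{A}$ fails the columns condition, then for every prime $p$ exceeding a bound $B(\mathbf{A})$ depending only on $\mathbf{A}$ there is no solution $\vec{x}\in\mathbb{N}^n$ of $\mathbf{A}\vec{x}=\vec{0}$ all of whose coordinates have the same first nonzero base‑$p$ digit. Given such a hypothetical solution of common digit $i$, set $\ell=\min_j v_p(x_j)$ and group the indices by valuation: $C_{t+1}=\{\,j:v_p(x_j)=\ell+t\,\}$ for $t\ge 0$ (discarding empty parts), an ordered partition of $\{1,\dots,n\}$ into finitely many parts with $C_1\neq\emptyset$. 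The plan is to show this partition witnesses the columns condition, which is the desired contradiction. Working in the saturated‑sublattice quotient $\mathbb{Z}^m/L_t$, where $L_t=\mathbb{Z}^m\cap\operatorname{span}_{\mathbb{Q}}\{\vec{c}_j:j\in C_1\cup\cdots\cup C_t\}$ (so $\mathbb{Z}^m/L_t$ is free), one applies the quotient map $q_t$ to $\sum_j x_j\vec{c}_j=\vec{0}$ — killing the $C_1\cup\cdots\cup C_t$ terms since their columns lie in $L_t$ — and then reads the resulting identity modulo $p^{\ell+t+1}$, which kills the terms with $v_p(x_j)\ge\ell+t+1$ and replaces $x_j$ by $ip^{\ell+t}$ for $j\in C_{t+1}$ (using $x_j\equiv ip^{\ell+t}\pmod{p^{\ell+t+1}}$). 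This yields $ip^{\ell+t}q_t\!\left(\sum_{j\in C_{t+1}}\vec{c}_j\right)\equiv\vec{0}\pmod{p^{\ell+t+1}}$, hence $p\mid q_t\!\left(\sum_{j\in C_{t+1}}\vec{c}_j\right)$ in $\mathbb{Z}^m/L_t$ since $\gcd(i,p)=1$. For $p>B(\mathbf{A})$ this forces $\sum_{j\in C_{t+1}}\vec{c}_j\in\operatorname{span}_{\mathbb{Q}}\{\vec{c}_j:j\in C_1\cup\cdots\cup C_t\}$, which is condition (i) of the columns condition when $t=0$ and condition (ii) when $t\ge 1$.

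Here $B(\mathbf{A})$ is chosen so large that, in fixed bases of the finitely many free groups $\mathbb{Z}^m/(\mathbb{Z}^m\cap\operatorname{span}_{\mathbb{Q}}\{\vec{c}_j:j\in J\})$ with $J\subseteq\{1,\dots,n\}$, every image of a column‑sum $\sum_{j\in J'}\vec{c}_j$, $J'\subseteq\{1,\dots,n\}$, has all coordinates of absolute value less than $B(\mathbf{A})$; there are only finitely many such data, so a single $B(\mathbf{A})$ suffices. The only genuine point of the argument is this last step — passing from the congruence "$p\mid q_t(\vec{s})$" to the honest linear dependence "$q_t(\vec{s})=\vec{0}$" — which is precisely why $p$ must be taken large and why one passes to the saturated quotients $\mathbb{Z}^m/L_t$, so that the ambient group is torsion‑free and divisibility by a sufficiently large prime forces vanishing for vectors of bounded height. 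Note that no Ramsey‑theoretic input is needed in this direction: van der Waerden's theorem enters only through the cited Rado theorem, in the implication "columns condition $\Rightarrow$ partition regular over $\mathbb{N}$."
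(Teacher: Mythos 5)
Your proposal is correct. Note that the paper offers no proof of this statement at all --- it is quoted verbatim from \cite[p.~74]{GRSRamseyTheory} as a known restatement of Rado's theorem --- so there is no internal argument to compare against; what you have written is essentially the classical proof of the necessity of the columns condition, routed through the $p$-adic-valuation coloring. Both directions check out: the forward implication is indeed immediate once one observes that $C_p$ is a finite partition (under the correct reading of ``first nonzero digit'' as the digit in position $v_p(n)$, i.e.\ the least significant nonzero digit --- the leading-digit reading would not support the argument, and the GRS source uses the least significant one), and your converse correctly handles the only delicate point, namely upgrading the congruence $p\mid q_t\bigl(\sum_{j\in C_{t+1}}\vec{c}_j\bigr)$ to an exact linear dependence by working in the torsion-free quotient by the saturated lattice $L_t$ and taking $p$ larger than a height bound depending only on $\mathbf{A}$. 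Two cosmetic points: after discarding empty valuation classes the modulus in the $t$-th step should be $p^{\ell_t+1}$ where $\ell_t$ is the common valuation on the $t$-th surviving class (rather than literally $p^{\ell+t+1}$), and one should remark that deleting empty classes does not disturb the columns condition since empty sums lie in every span; neither affects the validity of the argument.
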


Usually Rado's Theorem is stated in terms of the columns condition (see Definition \ref{DefinitionOfColumnsCondition}) instead of the formulation given in Theorem \ref{RadosTheoremRestated}, because the columns condition is a computable condition. 
A significant aspect of Rado's characterization of which finite systems of linear equations are partition regular is precisely the extraction of the columns condition from Theorem \ref{RadosTheoremRestated}.
An attempt to do something similar in the setting of polynomial equations was already done in \cite{NonlinearRado}, which built upon the work in \cite{NonStandardRamseyTheory}. 
In particular, the authors considered a countable collection of partitions of $\mathbb{N}$ that are related to the partitions $C_p$, and deduced that if a polynomial (equation) with coefficients in $\mathbb{Z}$ is partition regular over $\mathbb{N}$ then it satisfies the \textbf{maximal Rado condition}. 
Similarly, they
showed that in some instances the polynomial must also satisfy the \textbf{minimal Rado condition}. 
While we do not explicitly define the maximal and minimal Rado conditions here, one can check that both Rado conditions are $\Pi_1^0$ conditions. 
%It follows that if $R$ is a computable integral domain for which $PR_R$ is $\Pi_2^0$-complete, the maximal and minimal Rado conditions (and their generalizations to rings other than $\mathbb{Z}$) cannot characterize the set $PR_R$. 
It is also worth noting that for $n \ge 3$, the polynomial $p(x,y,z) = x^n+y^n-z^n$ satisfies the maximal and minimal Rado conditions, but the equation $p(x,y,z) = 0$ is not partition regular over $\mathbb{Z}\setminus\{0\}$ as a consequence of Fermat's last theorem.
Now let us consider the set $L$ of polynomials $p(x_1,\cdots,x_n) \in \mathbb{Z}[x_1,\cdots,x_n]$ that have infinitely many roots in $\mathbb{N}$ and for which $p(w,\cdots,w)$ is a nonzero homogeneous linear polynomial. Conjecture 2.20 of \cite{NonlinearRado} asserts that for $p \in L$, we have $p \in L\cap PR_{\mathbb{Z}}$ if and only if $p$ satisfies the maximal and minimal Rado conditions. 
Since $PR_{\mathbb{Z}}\cap L$ may have a lower lightface complexity than that of $PR_{\mathbb{Z}}$, we are unable to resolve this conjecture. 
However, the previous discussion motivates the following conjecture.

\begin{conjecture}\label{ConjectureForPartitionRegularity}
    Let $R$ be a computable integral domain. There exists a computable collection of finite partitions $\{\mathcal{C}_n\}_{n = 1}^\infty$ of $R\setminus\{0\}$\footnote{This means that there is an algorithm that takes as input $n \in \mathbb{N}$ and $r \in R\setminus\{0\}$ and determines which cell of $\mathcal{C}_n = \{C_{n,i}\}_{i = 1}^{n_\ell}$ the element $r$ belongs to.} such that $p \in (I)PR_R$ if and only if for every $n \in \mathbb{N}$, $p$ has a(n) (injective) root in some cell of $\mathcal{C}_n$.
\end{conjecture}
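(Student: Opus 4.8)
The ``only if'' direction holds no matter how the family $\{\mathcal{C}_n\}$ is chosen: if $p \in (I)PR_R$ then, since each $\mathcal{C}_n$ is in particular a finite partition of $R\setminus\{0\}$, the definition of (injective) partition regularity immediately produces a cell of $\mathcal{C}_n$ containing an (injective) root of $p$. Thus the entire content of the conjecture lies in producing the family and establishing the ``if'' direction, which I would attack by contrapositive: given $p \notin (I)PR_R$, exhibit an $n$ for which no cell of $\mathcal{C}_n$ contains an (injective) root of $p$.

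The template to imitate is Rado's theorem in the form of Theorem~\ref{RadosTheoremRestated}, where the ``leading base-$p$ digit'' partitions form a computable family detecting partition regularity of linear systems. Accordingly I would take $\{\mathcal{C}_n\}$ to be a computable enumeration of a sufficiently rich class of structured partitions of $R\setminus\{0\}$: starting from the trivial partition $\{R\setminus\{0\}\}$, adjoining for each prime $\mathfrak{p}$ of $R$ (these are computably enumerable since $R$ is computable) the ``leading $\mathfrak{p}$-adic coefficient'' partitions and the residue partitions modulo $\mathfrak{p}^k$, and closing under finite common refinements. Each $\mathcal{C}_n$ so produced has computably decidable cell membership, so ``$p$ has an (injective) root in some cell of $\mathcal{C}_n$'' is uniformly $\Sigma^0_1$ in $n$, consistent with the $\Pi^0_2$ upper bound from Theorem~\ref{PartitionCompactnessPrinciple} and Proposition~\ref{DescriptiveComplexityOfRamseyTheory}.

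For the ``if'' direction I would first invoke the compactness principle (Theorem~\ref{PartitionCompactnessPrinciple}) to pass to a finitary statement: $p \notin (I)PR_R$ means there is an $\ell$ such that $p$ fails to be (injectively) $\ell$-partition regular over \emph{every} finite $F\subseteq R\setminus\{0\}$, so, threading a K\H{o}nig's lemma argument through the finite witnessing colorings, there is an $\ell$-coloring $\chi$ of all of $R\setminus\{0\}$ admitting no (injective) monochromatic root of $p$. The crux is then to replace $\chi$ by a member of the structured family --- a polynomial analogue of the extraction of the columns condition from Theorem~\ref{RadosTheoremRestated}: one wants a structure theorem asserting that whenever a polynomial equation admits \emph{some} bad finite coloring it already admits a structured (leading-coefficient or residue) bad coloring, localized at finitely many primes. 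A reasonable first target is the homogeneous case, where $\mathcal{A}_p$ is multiplicatively translation invariant and Lemma~\ref{TranslationInvariancePartitionTheorem} identifies partition regularity with the existence of a root in every multiplicatively piecewise syndetic set, a setting in which the $C_{\mathfrak{p}}$-type partitions are the natural test objects.

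The main obstacle is precisely this last step. Unlike the linear case there is no known finitary certificate for non-partition-regularity of polynomial equations; the $\Pi^0_2$-completeness established in Theorems~\ref{MainResultForPi02} and~\ref{MainResultForSigma01} rules out certificates of very low complexity, and the notoriously open cases (the Pythagorean equation $x^2+y^2=z^2$, the equation $16x+17y=wz^8$) are exactly those for which one cannot presently decide whether any structured coloring witnesses non-regularity. A successful proof would thus seem to require either a genuine dichotomy theorem for polynomial partition regularity generalizing Rado's columns condition, or a more clever, self-referential choice of $\{\mathcal{C}_n\}$ built directly from a computable enumeration of finite test configurations together with the compactness theorem, for which the reverse implication can be verified without first understanding the obstruction to partition regularity.
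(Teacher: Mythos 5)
The statement you are asked about is labelled a \emph{conjecture} in the paper, and the paper supplies no proof of it; the authors only observe that the non-computable version is trivial (for each of the countably many $p \notin (I)PR_R$ choose one witnessing bad finite partition and enumerate these), and that the $\Pi^0_2$-completeness of $PR_R$ and $IPR_R$ constrains, but does not contradict, the conjectured characterization. Your proposal is therefore not wrong so much as it is not a proof: you correctly dispose of the ``only if'' direction (which is immediate for any choice of family), you correctly note that the condition ``for all $n$, $p$ has a root in some cell of $\mathcal{C}_n$'' is $\Pi^0_2$ whenever cell membership is computable and each $\mathcal{C}_n$ is finite (so the conjecture is complexity-theoretically plausible), and you correctly locate the entire difficulty in the ``if'' direction. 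But the step you flag as ``the crux'' --- a structure theorem asserting that a polynomial admitting \emph{some} bad coloring already admits a bad coloring drawn from a fixed computable structured family --- is exactly the open content of the conjecture, and nothing in the paper or in your sketch supplies it. Two smaller points: (1) obtaining a global bad $\ell$-coloring from $p \notin (I)PR_R$ needs no K\H{o}nig's lemma argument, since failure of $\ell$-partition regularity over $R\setminus\{0\}$ is by definition the existence of such a coloring (compactness runs in the other direction, from global to finite); (2) your candidate family of leading-$\mathfrak{p}$-adic-digit and residue partitions is essentially the family underlying the maximal and minimal Rado conditions of \cite{NonlinearRado}, and the paper's surrounding discussion emphasizes that these conditions are only known to be necessary, not sufficient, so there is no evidence that this particular family works.

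In short: the easy direction is fine, the strategy for the hard direction is a reasonable research program consistent with the paper's own commentary, but the proposal leaves open precisely the implication that makes the statement a conjecture rather than a theorem, and you acknowledge as much in your final paragraph.
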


Since there are only countably many polynomials with coefficients in $R$, Conjecture \ref{ConjectureForPartitionRegularity} is easily seen to be true if we do not require the collection to be computable. 
If $R$ is an integral domain for which $PR_R$ and $IPR_R$ are $\Pi_2^0$-complete, then we cannot have a computable condition, or even a $\Pi_1^0$ condition characterizing (injective) partition regularity. 
In particular, if Conjecture \ref{ConjectureForPartitionRegularity} is true in one of these cases, then it cannot be simplified.
For another example, consider the following statement which describes a set of polynomials that is $\Sigma_1^0$, hence the statement is false when $PR_R$ is $\Pi_2^0$-complete.\\

\noindent\textbf{False Statement:} For each polynomial $p \in \bigcup_{n = 1}^\infty R[x_1,\cdots,x_n]$ there exists a polynomial $q \in \bigcup_{n = 1}^\infty R[x_1,\cdots,x_n]$ that is a computable function of $p$, such that $p(x_1,\cdots,x_n) = 0$ is partition regular over $R\setminus\{0\}$ if and only if $q$ has a root in $K$.\\

On the other hand, the fact that $PR_R(\ell)$ is $\Sigma_1^0$ tells us that we have the following result when HTP$(K)$ is undecidable, and that it cannot be simpliefied when $PR_R(\ell)$ is $\Sigma_1^0$-complete.

\begin{theorem}\label{LogicalTheoremForellPartitionRegularity}
    For each $p \in R[x_1,\cdots,x_n]$ and each $\ell \in \mathbb{N}$, there exists a $q \in R[x_1,\cdots,x_m]$ that is a computable function of $p$ and $\ell$, such that the equation $p(x_1,\cdots,x_n) = 0$ is $\ell$-partition regular over $R\setminus\{0\}$ if and only if $q$ has a root in $K$.
\end{theorem}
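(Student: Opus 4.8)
The plan is to exhibit the required map $p\mapsto q$ as a concrete computable reduction of $PR_R(\ell)$ to Hilbert's tenth problem over $K$. The point is a complexity mismatch that works in our favour: by compactness the predicate ``$p=0$ is $\ell$-partition regular'' sits at level $\Sigma_1^0$, while --- under the standing assumption that HTP$(K)$ is undecidable --- the set of polynomials with a root in $K$ is $\Sigma_1^0$-universal, so the former is computably reducible to the latter, and that reduction is exactly the assignment $p\mapsto q$ we seek.

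First I would verify that the relation ``$p(x_1,\dots,x_n)=0$ is $\ell$-partition regular over $R\setminus\{0\}$'', read as a subset of codes for pairs $(p,\ell)$, is $\Sigma_1^0$ and uniformly so in $\ell$; this is just Proposition~\ref{DescriptiveComplexityOfRamseyTheory}(ii) made uniform. Indeed, by Theorem~\ref{PartitionCompactnessPrinciple}(i) the equation is $\ell$-partition regular over $R\setminus\{0\}$ if and only if it is $\ell$-partition regular over some finite $F\subseteq R\setminus\{0\}$, and, $R$ being computable, one can effectively enumerate the finite subsets $F$ and, for each $F$ and each $\ell$, effectively test every $\ell$-colouring of $F$ for a monochromatic root of $p$. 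So membership in $PR_R(\ell)$ is witnessed by the existence of such a finite $F$, a single existential quantifier over a decidable predicate, with $\ell$ entering only as a computable parameter.

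Next I would invoke the hypothesis in the form used throughout Section~2: HTP$(K)$ undecidable means every $\Sigma_1^0$ subset of $\N^k$ is computably reducible to the set of polynomials having a root in $K$. Clearing denominators --- which multiplies a polynomial by a nonzero element of $R$ and hence leaves its $K$-zero set unchanged, and is trivial in the cases of interest since there $R$ is already the ring of integers of $K$ --- lets us take these polynomials to have coefficients in $R$, so $\{q\in\bigcup_m R[x_1,\dots,x_m]\ :\ q\text{ has a root in }K\}$ is $\Sigma_1^0$-universal. Applying this universality to the $\Sigma_1^0$ set of the previous paragraph produces a computable function $(p,\ell)\mapsto q\in R[x_1,\dots,x_m]$ such that $p(x_1,\dots,x_n)=0$ is $\ell$-partition regular over $R\setminus\{0\}$ if and only if $q$ has a root in $K$, which is exactly the assertion of Theorem~\ref{LogicalTheoremForellPartitionRegularity}.

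I expect no substantive obstacle: the Ramsey-theoretic input has been entirely absorbed into the $\Sigma_1^0$ bound of Proposition~\ref{DescriptiveComplexityOfRamseyTheory}, and the number-theoretic input is nothing but the Davis--Putnam--Robinson--Matiyasevich-type encoding of $\Sigma_1^0$ sets that ``HTP$(K)$ is undecidable'' abbreviates here. The only delicate points are bookkeeping --- the uniformity in $\ell$ of the compactness step, and the harmless normalisation of coefficients from the ring of integers of $K$ to $R$. Conceptually, Theorem~\ref{LogicalTheoremForellPartitionRegularity} is the converse, ``easy'', half of Theorem~\ref{MainResultForSigma01}: there one reduces HTP$(K)$ to $PR_R(\ell)$, here one reduces $PR_R(\ell)$ back to HTP$(K)$, and together they say the $\Sigma_1^0$ bound is realised by honest reductions in both directions.
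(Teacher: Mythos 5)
Your proposal is correct and is essentially the paper's own argument: the paper justifies this theorem in one sentence by observing that $PR_R(\ell)$ is $\Sigma_1^0$ (Proposition \ref{DescriptiveComplexityOfRamseyTheory}(ii), via the compactness principle) and that the paper's convention ``HTP$(K)$ is undecidable'' means precisely that every $\Sigma_1^0$ set computably reduces to the set of polynomials with a root in $K$. Your additional remarks on uniformity in $\ell$ and on normalising coefficients to lie in $R$ are correct bookkeeping that the paper leaves implicit.
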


All of the ideas of the previous discussion can be applied to the setting of density regularity as well, which leads us to the following assertions.

\begin{conjecture}\label{ConjectureForDensityRegularity}
    Let $R$ be a computable integral domain. There exists a computable collection $\{B_n\}_{n = 1}^\infty$ of subsets of $R\setminus\{0\}$\footnote{This means that there is an algorithm that takes as input $n \in \mathbb{N}$ and $r \in R\setminus\{0\}$ and determines if $r \in B_n$.} with $d^*(B_n) > 0\ (d^*_\times(B_n) > 0)$, such that $p \in IADR_R\ (p \in IMDR_R)$ if and only if for every $n \in \mathbb{N}$, $p$ has an injective root in $B_n$. 
\end{conjecture}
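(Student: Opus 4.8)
The plan is to split the biconditional into its two implications, observe that the forward one is formal, and concentrate all the work on constructing the test sets $\{B_n\}_{n=1}^\infty$ so that they witness failure of density regularity uniformly in $p$. For the forward implication: once we have produced sets with $d^*(B_n)>0$ (resp.\ $d^*_\times(B_n)>0$), the fact that $p\in IADR_R$ (resp.\ $IMDR_R$) means by definition that $p$ has an injective root in \emph{every} set of positive upper Banach density, so in particular in each $B_n$. The real content is the converse, i.e.\ producing a single \emph{computable} sequence $\{B_n\}$, independent of $p$, such that whenever $p\notin IADR_R$ some $B_n$ of positive density omits every injective root of $p$.

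For the converse I would start from the compactness principle, Theorem \ref{DensityCompactnessPrinciple} (in the strengthened form of Corollary \ref{DensityRamseyTheoryCompactnessPrincipleCorollary}): for a polynomial $p$ whose injective zero-collection $\mathcal{A}_p'$ is right translation invariant — which holds for $p\in T_R$ in the additive case and for $p\in H_R$ in the multiplicative case — membership in $IADR_R$ (resp.\ $IMDR_R$) is equivalent to the finitary assertion that for every rational $\gamma>0$ there is a finite $H\subseteq R\setminus\{0\}$ with the property that every $B\subseteq H$ of relative size $\ge\gamma$ contains an injective root of $p$. Hence $p\notin IADR_R$ is witnessed by some level $\gamma=1/k$ together with, for arbitrarily large finite sets $H$, a ``bad'' subset $B_H\subseteq H$ of relative size $\ge 1/k$ omitting all injective roots. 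The construction of $B_n$ would then proceed by a tiling/diagonalization: fix a computable exhaustion of $R\setminus\{0\}$ by finite sets (available since $R$ is computable), use congruent tilings of the ambient group of right quotients $G$ (Theorem \ref{CongruentTilingTheorem}) to place, on pairwise disjoint translates (resp.\ dilates) of the $n$-th finite set, \emph{all} of its subsets of relative size $\ge 1/n$ at once, so that a single set $B_n$ simultaneously ``contains a copy'' of every potential bad configuration of density $\ge 1/n$ at that scale; translation invariance of $\mathcal{A}_p'$ lets one transport a configuration found inside $B_n$ back to an arbitrary scale. The lower bound $d^*(B_n)>0$ (resp.\ $d^*_\times(B_n)>0$) would be extracted from the uniformity principle, Theorem \ref{Uniformity}, and Lemma \ref{AlternativeCharacterizationOfUBD}. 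This is the density-regularity analogue of the reformulation of Rado's theorem via the first-digit partitions $\mathcal{C}_p$ recorded in Theorem \ref{RadosTheoremRestated}.

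The step I expect to be the main obstacle is the reduction of arbitrary $p$ to the translation-invariant case \emph{while keeping $\{B_n\}$ computable}. In the additive setting $\mathcal{A}_p'$ need not be additively translation invariant, and this is precisely why the paper obtains an upper bound only for $IADR_R\cap T_R$ and not for $IADR_R$; a computable characterization of the form ``for all $n$, $p$ has an injective root in $B_n$'' would force $IADR_R$ (and, in the companion Conjecture \ref{ConjectureForPartitionRegularity}, $PR_R$) to be $\Pi_2^0$, which is consistent with — but sits exactly at the boundary of — the completeness results of Theorems \ref{MainResultForPi02} and \ref{Pi02ForAdditiveDensityRegularity}. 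Consequently I expect that resolving the conjecture requires either a new finitary reformulation of injective density regularity that dispenses with the translation-invariance hypothesis, or an ``effective'' version of the compactness principle that converts the existential witnesses furnished by Theorem \ref{DensityCompactnessPrinciple} into a single computable list of positive-density test sets.
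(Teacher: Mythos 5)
The statement you are trying to prove is Conjecture \ref{ConjectureForDensityRegularity}: the paper does not prove it, but poses it as an open problem (the authors note only that the analogous assertion is ``easily seen to be true if we do not require the collection to be computable,'' by enumerating, for each $p\notin IADR_R$, one positive-density witness set omitting its injective roots). So there is no proof in the paper to compare against, and your proposal does not supply one either: you correctly dispose of the trivial direction (every $B_n$ has positive density, so $p\in IADR_R$ forces an injective root in each $B_n$), but the converse remains a strategy sketch whose decisive step you yourself flag as an obstacle. That concession is accurate — what you have written is a plausible research programme, not a proof.

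Two concrete gaps are worth naming. First, the compactness machinery (Theorem \ref{DensityCompactnessPrinciple} and Corollary \ref{DensityRamseyTheoryCompactnessPrincipleCorollary}) requires $\mathcal{A}_p'$ to be right translation invariant, which holds only for $p\in T_R$ (additively) or $p\in H_R$ (multiplicatively); for general $p$ even the $\Pi_2^0$ \emph{upper bound} on $IADR_R$ is left open in the paper (see Question \ref{QuestionAboutDensityLightFaceComplexity} and the remark after Theorem \ref{Pi02ForAdditiveDensityRegularity}), whereas the conjecture, if true, would immediately place $IADR_R$ in $\Pi_2^0$ in a very specific form. Second, your tiling construction points in the wrong direction: to witness $p\notin IADR_R$ you need a \emph{single} positive-density set $B_n$ containing \emph{no} injective root of $p$, but a set assembled so as to ``contain a copy of every subset of relative size $\ge 1/n$'' of a given finite scale will contain, alongside the one bad configuration relevant to $p$, many other configurations that do carry injective roots of $p$; such a $B_n$ therefore fails to certify $p\notin IADR_R$. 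What is actually needed is a computable, $p$-independent family that is \emph{cofinal among counterexamples} — every positive-density root-free set for every non-regular $p$ must be reflected by some $B_n$ that is itself root-free for that $p$ — and nothing in the compactness or uniformity principles produces such a family effectively. Until that is supplied, the conjecture stands open.
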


\noindent\textbf{False Statement:} Suppose that $R$ is a computable integral domain for which $IADR_R\cap T_R\allowbreak\ (IMDR_R\cap H_R)$ is $\Pi_2^0$-complete. For each polynomial $p \in \bigcup_{n = 1}^\infty R[x_1,\cdots,x_n]$ there exists a polynomial $q \in \bigcup_{n = 1}^\infty R[x_1,\cdots,x_n]$ that is a computable function of $p$, such that $p \in IADR_R\cap T_R\ (p \in IMDR_R\cap H_R)$ if and only if $q$ has a root in $K$.

\begin{theorem}
    For each $p \in R[x_1,\cdots,x_n]$ and each $\delta \in (0,1]$ there exists a $q \in R[x_1,\cdots,x_m]$ that is a computable function of $p$ and $\delta$, such that $p \in IADR_R(\delta)\cap T_R\ (p \in IMDR_R\cap H_R)$ if and only if $q$ has a root in $K$.
\end{theorem}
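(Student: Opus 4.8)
The plan is to deduce the statement directly from complexity facts already in hand: the $\Sigma^0_1$ upper bound on these density–regularity sets, together with the $\Sigma^0_1$-universality of the set of polynomials over $R$ having a root in $K$ — which is exactly our standing convention for ``HTP$(K)$ is undecidable'', the hypothesis under which, as for Theorem~\ref{LogicalTheoremForellPartitionRegularity}, the results of this section are stated. The argument is a soft reduction, and, as I explain below, essentially has to be: there is no clean single polynomial identity describing membership in $IADR_R(\delta)\cap T_R$ for an arbitrary $p\in T_R$, only for the engineered polynomials $P_1$ of Theorem~\ref{MainResultForSigma01}(i).

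First I would record that, for rational $\delta$, both $IADR_R(\delta)\cap T_R$ and $IMDR_R(\delta)\cap H_R$ are $\Sigma^0_1$ subsets of the effectively coded space of polynomials over $R$: this is Proposition~\ref{DescriptiveComplexityOfRamseyTheory}(i), whose witnessing existential — via the compactness principle, Theorem~\ref{DensityCompactnessPrinciple} — is the search for a finite $F\subseteq R\setminus\{0\}$ such that every $B\subseteq F$ with $|B|\ge\delta|F|$ contains an (injective) root of $p$, a predicate that is decidable once $F$ and $p$ are given. The point to extract is that this $\Sigma^0_1$ description is uniformly computable from the rational parameter $\delta$, so it furnishes a $\Sigma^0_1$-index for $IADR_R(\delta)\cap T_R$ (and for $IMDR_R(\delta)\cap H_R$) depending computably on $\delta$.

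Next, under the hypothesis that HTP$(K)$ is undecidable, the set $Z_K$ of polynomials over $R$ with a root in $K$ is $\Sigma^0_1$ and $\Sigma^0_1$-universal, hence $\Sigma^0_1$-complete. Being $\Sigma^0_1$-universal, $Z_K$ admits a computable reduction from every $\Sigma^0_1$ set; applying this to $IADR_R(\delta)\cap T_R$, and doing so uniformly in $\delta$ via the $\Sigma^0_1$-index of the previous paragraph and the $s$-$m$-$n$ theorem, produces a map computable in $(p,\delta)$ that sends $p$ to a polynomial $q=q(p,\delta)$ with $p\in IADR_R(\delta)\cap T_R$ if and only if $q\in Z_K$, i.e.\ if and only if $q$ has a root in $K$. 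The parenthetical statement follows in exactly the same way with $IMDR_R(\delta)\cap H_R$ in place of $IADR_R(\delta)\cap T_R$; note that it must be read with $IMDR_R(\delta)\cap H_R$, not $IMDR_R\cap H_R$, since the latter is $\Pi^0_2$-complete when $R$ admits master polynomials (Theorem~\ref{MainResultForPi02}(i)) and so cannot be reducible to the $\Sigma^0_1$ set $Z_K$.

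The one point of care — the ``main obstacle'', mild as it is — is that $\Sigma^0_1$-universality yields \emph{some} computable $p\mapsto q$ but no canonical polynomial, unlike the explicit $P_1$- and master-polynomial constructions of Theorems~\ref{MainResultForSigma01} and~\ref{Pi02ForAdditiveDensityRegularity}. A more concrete $q$ could in principle be obtained by composing the finitary compactness characterization of $IADR_R(\delta)\cap T_R$ with the particular $\Sigma^0_1$-reduction into $\{$polynomials with a root in $K^\times\}$ that comes out of the proof that HTP$(K)$ is undecidable, and transporting $K^\times$ back to $K$ by Lemma~\ref{LemmaForHTPVariations}; but for an arbitrary $p\in T_R$ membership in $IADR_R(\delta)\cap T_R$ admits no single-identity description — e.g.\ for $R=\Z$ the polynomial $(x_1-x_2)(x_3-x_4)-1$ has injective roots in $\Z$ yet fails to lie in $IADR_{\Z}(\delta)$ for $\delta\le\frac12$ (witness $2\Z$) — so the soft route is the natural one. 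Finally, ``computable in $\delta$'' presupposes $\delta$ rational (or computable), which is harmless, and when HTP$(K)$ happens to be decidable the assertion is not needed for our purposes and in any case follows from the explicit difference-form reduction together with decidability of HTP$(K^\times)$.
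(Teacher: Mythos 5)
Your proposal is correct and follows exactly the route the paper intends: membership of $IADR_R(\delta)\cap T_R$ (resp.\ $IMDR_R(\delta)\cap H_R$) is $\Sigma^0_1$ uniformly in rational $\delta$ by Proposition \ref{DescriptiveComplexityOfRamseyTheory}(i) via the compactness principle, and the $\Sigma^0_1$-universality of the set of polynomials with a root in $K$ (the standing reading of ``HTP$(K)$ is undecidable'') then yields the computable map $p\mapsto q$, just as for Theorem \ref{LogicalTheoremForellPartitionRegularity}. Your observation that the parenthetical must be read as $IMDR_R(\delta)\cap H_R$ rather than $IMDR_R\cap H_R$ (which is $\Pi^0_2$-complete and so cannot reduce to a $\Sigma^0_1$ set) is also correct.
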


In the setting of density regularity, there are still many unresolved questions. The reader will have noticed that despite defining sets such as $IADR_R$ and $IMDR_R(\delta)$ for a computable integral domain $R$, we did not define $ADR_R$ and $MDR_R(\delta)$, so let us do this now. For $\delta > 0$ let $ADR_R(\delta)$ denote the set of polynomials $p \in \bigcup_{n = 1}^\infty R[x_1,\cdots,x_n]$ for which every $B \subseteq R$ with $d^*(B) \ge \delta$ contains a (not necessarily injective) root of $p$. Similarly, for $\delta > 0$ let $MDR_R(\delta)$ denote the set of polynomials $p \in \bigcup_{n = 1}^\infty R[x_1,\cdots,x_n]$ for which every $B \subseteq R$ with $d^*_\times(B) \ge \delta$ contains a (not necessarily injective) root of $p$. Let $ADR_R := \bigcap_{\delta > 0}ADR_R(\delta)$ and let $MDR_R := \bigcap_{\delta > 0}MDR_R(\delta)$. 

\begin{question}\label{QuestionAboutDensityLightFaceComplexity}
    Let $R$ be a computable integral domain. What are the lightface complexities of the sets $ADR_R(\delta), MDR_R(\delta), ADR_R, IADR_R, MDR_R, IMDR_R,\allowbreak ADR_R(\delta)\cap T_R,\allowbreak MDR_R(\delta)\cap H_R, ADR_R\cap T_R,\allowbreak$ and $MDR_R\cap H_R$? 
\end{question}

There are two reasons that we were not able to answer Question \ref{QuestionAboutDensityLightFaceComplexity}. Firstly, the upperbound of the lightface complexity of $IADR_R(\delta)\cap T_R$ (for example) used the translation invariance provided by $T_R$ in order to apply the Compactness Principle for Density Ramsey Theory, i.e., to apply Theorem \ref{DensityCompactnessPrinciple}. Secondly, when we consider the polynomials $MP$ and $MP_1$ either from the proof of Theorem \ref{MainResultForPi02} or from the proof of Theorem \ref{Pi02ForAdditiveDensityRegularity}, we used the injectivity of the roots of $MP_1$ in order to associate them with a root of $MP$.

Another direction in which we can ask questions is in regards to the general relationship between Hilbert's tenth problem and the lightface complexity of the various sets considered in this article. 

\begin{question}\label{QuestionAboutStrengtheningResults}
    Let $R$ be a computable integral domain with field of fractions $K$.
    \begin{enumerate}[(i)]
        \item Is there a converse to Theorem \ref{MainResultForSigma01}? For example, if $IMDR_R(\delta)\cap H_R$ is $\Sigma_1^0$-complete, must HTP$(K)$ be undecidable?

        \item Can Theorem \ref{MainResultForPi02} be improved by weakening the assumption about admitting master polynomials? Similarly, can Theorem \ref{Pi02ForAdditiveDensityRegularity} be improved? For example,
        \begin{enumerate}[(a)]
            \item If $IMDR_R(\delta)\cap H_R$ is $\Sigma_1^0$-complete for all $\delta > 0$, must $IMDR_R\cap H_R$ be $\Pi_2^0$-complete?

            \item If $R$ is the ring of integers of one of the fields $K$ discussed in \cite{denef1978diophantine,shlapentokh1996diophantine,eisentrager2017hilbert}, can we show $IMDR_R\cap H_R$ and $(I)PR_R(\cap H_R)$ to be $\Pi_2^0$-complete?

            \item If $K$ is an algebraic function field over a finite field of odd characteristic, or $K = F_{2^\ell}(t)$, and $R$ is the ring of integers of $K$, is $IADR_R\cap T_R$ $\Pi_2^0$-complete?
        \end{enumerate}
    \end{enumerate}
\end{question}

Lastly, we are left with some natural questions regarding density Ramsey theory. We observe that the characterization of upper Banach density given in Lemma \ref{AlternativeCharacterizationOfUBD} can be used to define upper Banach density in any cancellative (not necessarily amenable) semigroup $S$. Consequently, it is natural to ask which of the results of Section 3 generalize to arbitrary cancellative semigroups.

\begin{question}\label{QuestionAboutDensityRamseyTheoryInCancellativeSemigroups}
    Let $S$ be a countably infinite semigroup that embeds in a group.
    \begin{enumerate}[(i)]
        \item Is there a compactness principle for density Ramsey theory on $S$? In particular, if $\mathcal{A} \subseteq \mathscr{P}_f(S)$ is right translation invariant and $\delta > 0$, are the following equivalent?
        \begin{enumerate}
            \item For every $B \subseteq S$ with $d^*(B) \ge \delta$, there exists $A \in \mathcal{A}$ with $A \subseteq B$.

            \item There exists $H \in \mathscr{F}_f(S)$ such that for every $B \subseteq H$ with $|B| \ge \delta$, there exists $A \in \mathcal{A}$ with $A \subseteq B$.
        \end{enumerate}

         %\item Is there some form of the Furstenberg correspondence principle for $S$? A special case of this would be to determine whether or not the following are equivalent for a right translation invariant  collection $\mathcal{A} \subseteq \mathscr{P}_f(S)$:
        %\begin{enumerate}[(a)]
         %   \item For all $B \subseteq S$ with $d^*(B) > 0$, there exists $A \in \mathcal{A}$ with $d^*\left(\bigcap_{a \in A}a^{-1}B\right) > 0$.

          %  \item If $(X,\mathscr{B},\mu,(T_s)_{s \in S})$ is a $S$-system, and $B \in \mathscr{B}$ satisfies $\mu(B) > 0$, then there exists $A \in \mathcal{A}$ for which $\mu\left(\bigcap_{a \in A}T_a^{-1}B\right) > 0$.
        %\end{enumerate}

        \item Is there a uniformity principle for density Ramsey theory on $S$? In particular, if $\mathcal{A} \subseteq \mathscr{P}_f(S)$ is right translation invariant and has the property that there exists $\delta > 0$, such that for all $B \subseteq S$ with $d^*(B) \ge \delta$ there exists $A \in \mathcal{A}$ for which $d^*(\bigcap_{a \in A}a^{-1}B) > 0$, does there exist a finite subcollection $\mathcal{A}' \subseteq \mathcal{A}$ and some $\gamma > 0$ for which at least one of the following holds?
        \begin{enumerate}[(a)]
            \item If $B \subseteq S$ satisfies $d^*(B) \ge \delta$, then there exists $A \in \mathcal{A}'$ for which $d^*(\bigcap_{a \in A}a^{-1}B) \ge \gamma$.
            
            \item For all $n \in \mathbb{N}$ there exists $F_n \in \mathscr{P}_f(S)$ with $|F_n| \ge n$, such that for any $B \subseteq F$ with $|B| \ge \delta|F_n|$, there exists $A \in \mathcal{A}'$ for which $|\bigcap_{a \in A}a^{-1}B| \ge \gamma|F_n|$.

            \item If $(X,\mathscr{B},\mu,(T_s)_{s \in S})$ is a $S$-system, and $B \in \mathscr{B}$ satisfies $\mu(B) \ge \delta$, then there exists $A \in \mathcal{A}'$ for which $\mu\left(\bigcap_{a \in A}T_a^{-1}B\right) \ge \gamma$.
        \end{enumerate}
    \end{enumerate}
\end{question}
\bibliographystyle{abbrv}
\begin{center}
	\bibliography{references}
\end{center}
\end{document}